\DeclareMathOperator{\GL}{GL}
\DeclareMathOperator{\gl}{\mathfrak{gl}}
\DeclareMathOperator{\SL}{SL}
\let\sl\relax
\DeclareMathOperator{\sl}{\mathfrak{sl}}
\DeclareMathOperator{\upO}{O}
\DeclareMathOperator{\upS}{S}
\DeclareMathOperator{\SO}{SO}
\DeclareMathOperator{\so}{\mathfrak{so}}
\DeclareMathOperator{\Sp}{Sp}
\let\sp\relax
\DeclareMathOperator{\sp}{\mathfrak{sp}}
\DeclareMathOperator{\su}{\mathfrak{su}}
\DeclareMathOperator{\spin}{\mathfrak{spin}}
\DeclareMathOperator{\upU}{U}
\newcommand{\fraka}{\mathfrak{a}}
\newcommand{\frake}{\mathfrak{e}}
\newcommand{\frakf}{\mathfrak{f}}
\newcommand{\frakg}{\mathfrak{g}}
\newcommand{\frakh}{\mathfrak{h}}
\newcommand{\frakj}{\mathfrak{j}}
\newcommand{\frakk}{\mathfrak{k}}
\newcommand{\frakl}{\mathfrak{l}}
\newcommand{\frakm}{\mathfrak{m}}
\newcommand{\frakn}{\mathfrak{n}}
\newcommand{\frakp}{\mathfrak{p}}
\newcommand{\frakq}{\mathfrak{q}}
\newcommand{\fraks}{\mathfrak{s}}
\newcommand{\frakt}{\mathfrak{t}}
\newcommand{\fraku}{\mathfrak{u}}
\newcommand{\frakz}{\mathfrak{z}}
\renewcommand{\AA}{\mathbb{A}}
\newcommand{\CC}{\mathbb{C}}
\newcommand{\FF}{\mathbb{F}}
\newcommand{\GG}{\mathbb{G}}
\newcommand{\HH}{\mathbb{H}}
\newcommand{\KK}{\mathbb{K}}
\newcommand{\MM}{\mathbb{M}}
\newcommand{\NN}{\mathbb{N}}
\newcommand{\PP}{\mathbb{P}}
\newcommand{\RR}{\mathbb{R}}
\newcommand{\ZZ}{\mathbb{Z}}
\newcommand{\calD}{\mathcal{D}}
\newcommand{\calE}{\mathcal{E}}
\newcommand{\calV}{\mathcal{V}}
\newcommand{\0}{\textbf{0}}
\renewcommand{\1}{{\rm\bf 1}}
\DeclareMathOperator{\Ind}{Ind}
\DeclareMathOperator{\tr}{tr}
\DeclareMathOperator{\ad}{ad}
\DeclareMathOperator{\Ad}{Ad}
\DeclareMathOperator{\Hom}{Hom}
\DeclareMathOperator{\id}{id}
\DeclareMathOperator{\sgn}{sgn}
\DeclareMathOperator{\diag}{diag}
\DeclareMathOperator{\blank}{\hspace{.05em}\cdot\hspace{.05em}}
\DeclareMathOperator{\supp}{supp}
\DeclareMathOperator{\linspan}{span}
\DeclareMathOperator{\rank}{rank}
\DeclareMathOperator{\Sh}{Sh}
\renewcommand\Re{\operatorname{Re}}
\renewcommand{\min}{{\textup{min}}}
\theoremstyle{plain}
\newtheorem{theorem}{Theorem}[section]
\newtheorem*{theorem*}{Theorem}
\newtheorem{proposition}[theorem]{Proposition}
\newtheorem{lemma}[theorem]{Lemma}
\newtheorem{corollary}[theorem]{Corollary}
\newtheorem{conjecture}[theorem]{Conjecture}
\newtheorem{fact}[theorem]{Fact}
\newtheorem*{fact*}{Fact}
\newtheorem{thmalph}{Theorem}
\newtheorem{coralph}[thmalph]{Corollary}
\newtheorem{factrom}{Fact}
\theoremstyle{definition}
\newtheorem{definition}[theorem]{Definition}
\newtheorem{example}[theorem]{Example}
\newtheorem{remark}[theorem]{Remark}
\numberwithin{equation}{section}
\title[Symmetry breaking operators for strongly spherical reductive pairs]{Symmetry breaking operators for strongly spherical reductive pairs}
\author{Jan Frahm}
\address{Department of Mathematics, Aarhus University, Ny Munkegade 118, 8000 Aarhus C, Denmark}
\email{frahm@math.au.dk}
\begin{document}

\subjclass[2010]{Primary 22E46; Secondary 11F70, 53C30.}

\keywords{Symmetry breaking operators, real reductive groups, strongly spherical reductive pair, finite multiplicities, multiplicity one pairs, Gross--Prasad conjecture, Shintani functions}

\maketitle

\begin{abstract}
A real reductive pair $(G,H)$ is called strongly spherical if the homogeneous space $(G\times H)/\diag(H)$ is real spherical. This geometric condition is equivalent to the representation theoretic property that $\dim\Hom_H(\pi|_H,\tau)<\infty$ for all smooth admissible representations $\pi$ of $G$ and $\tau$ of $H$. In this paper we explicitly construct for all strongly spherical pairs $(G,H)$ intertwining operators in $\Hom_H(\pi|_H,\tau)$ for $\pi$ and $\tau$ spherical principal series representations of $G$ and $H$. These so-called \textit{symmetry breaking operators} depend holomorphically on the induction parameters and we further show that they generically span the space $\Hom_H(\pi|_H,\tau)$. In the special case of multiplicity one pairs we extend our construction to vector-valued principal series representations and obtain generic formulas for the multiplicities between arbitrary principal series.\\
As an application, we prove an early version of the Gross--Prasad conjecture for complex orthogonal groups, and also provide lower bounds for the dimension of the space of Shintani functions.
\end{abstract}

\setcounter{tocdepth}{1}
\tableofcontents

\section*{Introduction}

One major question in the representation theory of real reductive groups is how an irreducible representation of a group $G$ decomposes if restricted to a subgroup $H$. In the context of infinite-dimensional representations of non-compact Lie groups this leads to the study of the multiplicities
$$ \dim\Hom_H(\pi|_H,\tau) \in \NN\cup\{\infty\}, $$
where $\pi$ and $\tau$ are irreducible representations of $G$ and $H$, usually assumed to be smooth admissible Fréchet representations of moderate growth. In general these multiplicities might be infinite, so to find a good setting for studying them one is interested in pairs $(G,H)$ of real reductive groups where $\dim\Hom_H(\pi|_H,\tau)$ is always finite. Elements of $\Hom_H(\pi|_H,\tau)$ are also referred to as \textit{symmetry breaking operators}, a term coined by Kobayashi~\cite{Kob15}.

Following \cite{KKPS17} we call a pair $(G,H)$ consisting of a real reductive group $G$ and a reductive subgroup $H$ \textit{strongly spherical} provided the homogeneous space $(G\times H)/\diag(H)$ is real spherical, i.e. a minimal parabolic subgroup $P_G\times P_H$ of $G\times H$ has an open orbit. We note that this is equivalent to the double coset space $P_H\backslash G/P_G$ being finite. The interest in strongly spherical pairs in the context of representation theory is due to the following result by Kobayashi--Oshima~\cite{KO13}:

\begin{factrom}[{see \cite[Theorem C]{KO13}}]\label{fact:FiniteMultiplicities}
If $(G,H)$ is strongly spherical then
$$ \dim\Hom_H(\pi|_H,\tau)<\infty $$
for all smooth admissible representations $\pi$ of $G$ and $\tau$ of $H$. If additionally $G$ and $H$ are defined algebraically over $\RR$, then also the converse statement holds.
\end{factrom}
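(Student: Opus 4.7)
The plan is to translate $\Hom_H(\pi|_H,\tau)$ into a space of biequivariant distributions on $G$ and then exploit the finiteness of the double coset space $P_H\backslash G/P_G$; the forward direction is the substantive one. I would first reduce to principal series by the Casselman--Wallach subrepresentation theorem, which embeds every smooth admissible Fréchet representation of moderate growth into some principal series. It therefore suffices to bound the multiplicity for $\pi=\Ind_{P_G}^G(\xi_G)$ and $\tau=\Ind_{P_H}^H(\xi_H)$ with $\xi_G,\xi_H$ finite-dimensional representations of the respective minimal parabolics. A standard identification, combining Frobenius reciprocity with the Schwartz kernel theorem, then realizes $\Hom_H(\pi|_H,\tau)$ as a space of distributions $u\in\calD'(G)$ transforming on the left under $P_H$ and on the right under $P_G$ according to the twists dictated by $\xi_H,\xi_G$.

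Next I would stratify $G$ by the orbits of $P_H\times P_G$ acting by left and right multiplication. Strong sphericality is precisely the assertion that this orbit set is finite, say $P_H\backslash G/P_G=\{\calO_1,\dots,\calO_N\}$. Filtering $\calD'(G)$ by support along orbit closures reduces the problem to bounding, for each $i$, the equivariant distributions supported on $\overline{\calO_i}$ modulo those on the boundary. Such a distribution is determined by its transverse normal jets along $\calO_i$, and each transverse jet level identifies with a Hom-space between two finite-dimensional representations of the stabilizer, hence is finite-dimensional. Summing over $i$ and over jet orders then yields the claimed uniform finiteness.

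For the converse, assuming $(G,H)$ is algebraic over $\RR$ but $P_H\backslash G/P_G$ is infinite, I would produce a pair of generic principal series $(\pi,\tau)$ for which infinitely many orbits contribute linearly independent intertwiners. The idea is to construct, on each orbit closure, a meromorphic family of equivariant distributions via holomorphic continuation of Knapp--Stein-type integrals in the induction parameters, and to use linear independence of residues at distinct orbits to embed an infinite-dimensional subspace into $\Hom_H(\pi|_H,\tau)$. The algebraicity hypothesis is needed here to ensure sufficient parameter control and rationality of the orbit structure.

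The principal obstacle is the uniform bound on transverse jet orders in the forward direction. Without it the orbit filtration only proves finite-dimensionality at each fixed jet level, not in total. This is precisely the content of Kobayashi--Oshima's \emph{holographic principle} on real spherical spaces, whose proof relies on the geometry of real spherical embeddings, the existence of suitable equivariant compactifications, and careful estimates for invariant differential operators transverse to orbit strata. Once that input is granted, the remaining ingredients---Casselman reduction, orbit filtration, and orbit-by-orbit transverse analysis---are essentially bookkeeping.
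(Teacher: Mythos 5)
This statement is not proved in the paper at all: it is quoted as Fact~\ref{fact:FiniteMultiplicities} from \cite[Theorem C]{KO13}, so there is no in-paper argument to compare with; your proposal has to stand on its own, and as it stands it has a genuine gap at exactly the decisive point. Your forward direction is: reduce to principal series, identify $\Hom_H(\pi|_H,\tau)$ with $(P_H\times P_G)$-equivariant distributions on $G$, filter by the finitely many orbits, and control each orbit by transverse jets. The orbit-by-orbit jet analysis indeed gives a finite-dimensional space \emph{at each fixed jet order}, but the sum over jet orders is the whole difficulty: for special induction parameters the equivariance condition does not force the contributing jet orders to terminate, and you concede this by outsourcing the ``uniform bound on transverse jet orders'' to a Kobayashi--Oshima principle --- which is essentially the theorem you are asked to prove. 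That this cannot be waved away is visible inside the present paper: the same Bruhat-type filtration is carried out in Section~\ref{sec:InvariantDistributions}, and it yields upper bounds only under the genericity hypothesis \eqref{eq:GenericCondition} of Theorem~\ref{thm:MultiplicityBounds}, precisely because non-open orbits can contribute at special parameters. The actual proof in \cite{KO13} does not proceed by this orbit-by-orbit transverse jet bookkeeping; it runs through equivariant compactifications of the real spherical space $(G\times H)/\diag(H)$ and the theory of systems of PDEs with regular singularities, which is where the uniform finiteness really comes from. So the ``remaining bookkeeping'' framing inverts the weight of the argument.

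Two further points. First, your reduction step is oriented the wrong way: embedding $\pi$ into a principal series $I_G$ gives a restriction map $\Hom_H(I_G|_H,\tau)\to\Hom_H(\pi|_H,\tau)$ which need not be surjective, so it does not bound $\dim\Hom_H(\pi|_H,\tau)$. One must instead realize $\pi$ as a \emph{quotient} of a principal series (dual form of the subrepresentation theorem) and $\tau$ as a \emph{subrepresentation} of one, so that pullback and pushforward give injections of $\Hom_H(\pi|_H,\tau)$ into $\Hom_H(I_G|_H,I_H)$. Second, the converse sketch (residues of Knapp--Stein-type families attached to infinitely many orbits) is only a heuristic: when $P_H\backslash G/P_G$ is infinite one has positive-dimensional families of non-open orbits, and the known argument produces an infinite-dimensional space of equivariant distributions from such a continuum for suitable parameters; nothing in your outline explains why the residues you propose are nonzero or linearly independent, nor where algebraicity enters beyond a vague appeal to ``parameter control.'' As written, the proposal is a plausible roadmap toward the generic upper bounds of Theorem~\ref{thm:MultiplicityBounds}, but it does not constitute a proof of Fact~\ref{fact:FiniteMultiplicities}.
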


Among the strongly spherical pairs, Kobayashi--Oshima~\cite{KO13} also characterized those with uniformly bounded multiplicities. The corresponding pairs of Lie algebras essentially form five families, and choosing the right Lie groups yields five families of groups $(G,H)$ whose multiplicities are uniformly bounded by one. This multiplicity one property is due to Sun--Zhu~\cite{SZ12}:

\begin{factrom}[{see \cite[Theorem B]{SZ12}}]\label{fact:MultiplicityOne}
If $(G,H)$ is one of the pairs
\begin{equation*}
\begin{gathered}
 (\GL(n+1,\CC),\GL(n,\CC)), \quad (\GL(n+1,\RR),\GL(n,\RR)), \quad (\upU(p,q+1),\upU(p,q)),\\
 (\SO(n+1,\CC),\SO(n,\CC)), \quad (\SO(p,q+1),\SO(p,q)),
\end{gathered}
\end{equation*}
then
$$ \dim\Hom_H(\pi|_H,\tau) \leq 1 $$
for all irreducible smooth admissible Fr\'{e}chet representations $\pi$ of $G$ and $\tau$ of $H$ of moderate growth.
\end{factrom}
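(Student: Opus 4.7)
My plan is to follow the Gelfand--Kazhdan involution strategy. First consider the product group $\widetilde{G} := G \times H$ together with its diagonal subgroup $\widetilde{H} := \diag(H)$; by standard reciprocity one has
\[
 \Hom_H(\pi|_H, \tau) \;\cong\; \Hom_{\widetilde{H}}\bigl(\pi \boxtimes \widetilde{\tau}, \, \CC\bigr),
\]
so the assertion is equivalent to $(\widetilde{G}, \widetilde{H})$ being a generalized Gelfand pair. I would choose an anti-involution $\sigma$ of $G$ preserving $H$: the transpose for the real $\GL$ pair, its obvious complex/conjugate analogues for the other $\GL$ and $\upU$ cases, and conjugation by a suitable element of the ambient orthogonal group for the $\SO$ cases. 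One then verifies the Moeglin--Vign\'{e}ras--Waldspurger-type identity $\pi^\sigma \cong \widetilde{\pi}$ for every irreducible Casselman--Wallach representation $\pi$ of $G$, and similarly for $H$. By the standard Gelfand--Kazhdan criterion, multiplicity one then reduces to the single statement that every $\widetilde{H} \times \widetilde{H}$-bi-invariant tempered distribution on $\widetilde{G}$ is fixed by the anti-involution $\sigma \times \sigma|_H$.

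To prove this distributional symmetry, I would invoke Bernstein's localization principle. Since $\widetilde{H} \backslash \widetilde{G} / \widetilde{H}$ is naturally identified with the set of $H$-conjugacy classes in $G$, the question unfolds to: every $H$-invariant tempered distribution $T$ on $G$ (with $H$ acting by conjugation) satisfies $\sigma_* T = T$. Next I would run Harish--Chandra descent along the Jordan decomposition: a neighborhood of an $H$-orbit through a semisimple element $s \in G$ is modelled, via the Luna slice, on a neighborhood of the nilpotent cone in the centralizer $Z_\frakg(s)$, and the descended distribution is invariant under the smaller reductive subgroup $Z_H(s)$. An induction on $\dim G$ thereby reduces the problem to the base case in which $T$ is supported on the nilpotent cone $\mathcal{N}(\frakg)$ itself.

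The main obstacle will be this nilpotent base case. The plan here is to combine the wavefront-set inclusion $\operatorname{WF}(T) \subset \mathcal{N}(\frakg)$ with Harish--Chandra's regularity theorem, so that $T$ is determined by its germs along the finitely many $H$-orbits in $\mathcal{N}(\frakg)$; one then enumerates these orbits and checks that each either lies in the fixed locus of $\sigma$ or supports no non-zero anti-$\sigma$-invariant distribution. For the $\GL$ pairs this orbit-by-orbit analysis is the Aizenbud--Gourevitch--Rallis--Schiffmann argument, which succeeds because the relevant partitions labelling the nilpotent orbits are particularly symmetric under $\sigma$. For the $\SO$ and $\upU$ pairs a handful of ``exceptional'' orbits survive the naive symmetry argument, and handling them is the hard step: it requires the more delicate geometric input of Sun--Zhu, in particular a subtle modification of the involution together with a careful parametrization of the $H$-orbits in $\mathcal{N}(\frakg)$ by combinatorial (Young-diagram) data attached to the embedding $H \hookrightarrow G$. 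Once these exceptional orbits are disposed of, the multiplicity-one bound follows uniformly for all five families in the statement.
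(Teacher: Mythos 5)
The paper does not prove Fact~\ref{fact:MultiplicityOne} at all: it is imported verbatim from Sun--Zhu \cite{SZ12}, and the citation \emph{is} the proof. What you are proposing is therefore to reprove Sun--Zhu's theorem itself, and your sketch is essentially a roadmap of the known argument: the Gelfand--Kazhdan criterion adapted to Casselman--Wallach representations, reduction to conjugation-invariant generalized functions on $G$, generalized Harish-Chandra/Luna-slice descent in the style of Aizenbud--Gourevitch, and a base case on the nilpotent cone in the style of Aizenbud--Gourevitch--Rallis--Schiffmann. At that level the architecture is correct, and in the context of the present paper simply citing \cite{SZ12} is the appropriate move.

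As a proof, however, the proposal has genuine gaps. The decisive step --- excluding $\sigma$-skew invariant distributions supported on the nilpotent cone --- is precisely the mathematical content of \cite{SZ12}, and you defer it to ``the more delicate geometric input of Sun--Zhu,'' which is circular. The tools you name for that step are also not the right ones: Harish-Chandra's regularity theorem is unavailable here, since the distributions are invariant only under the small group $H$ acting by conjugation and are not eigendistributions for the center of $U(\frakg_\CC)$; what is actually used is the Fourier transform combined with homogeneity arguments and careful control of transversal derivatives, because in the Archimedean setting a distribution supported on an orbit closure is not determined by its restriction to the orbit (its germs along the orbits do not suffice). Likewise, saying that the $\GL$ case works ``because the partitions labelling the nilpotent orbits are particularly symmetric under $\sigma$'' misplaces the difficulty: $\sigma$-stability of each orbit is easy, and the real issue is ruling out $\sigma$-anti-invariant distributions carried by the orbit closures. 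Finally, the preliminary reductions are not free: the reciprocity $\Hom_H(\pi|_H,\tau)\simeq\Hom_{\diag(H)}(\pi\boxtimes\widetilde{\tau},\CC)$ rests on the Casselman--Wallach/automatic continuity formalism, and passing from the Gelfand--Kazhdan conclusion (a bound on the product of the multiplicities for $(\pi,\tau)$ and their contragredients) to the asserted bound $\leq 1$ requires the identity $\pi^\sigma\simeq\widetilde{\pi}$, which is itself a theorem to be established case by case (with extra care for the $\SO$ pairs, where the anti-involution must be chosen compatibly with the special orthogonal subgroup rather than the full orthogonal group). None of these steps is impossible --- they are all in the literature --- but as written the proposal is an outline of \cite{SZ12} with its hardest ingredient assumed, not an independent proof.
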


Both Facts~\ref{fact:FiniteMultiplicities} and \ref{fact:MultiplicityOne} lead to the natural problem of determining $\dim\Hom_H(\pi|_H,\tau)$ for given irreducible representations $\pi$ and $\tau$ of $G$ and $H$, as advocated by Kobayashi~\cite{Kob15} in his ABC program as Stages B and C. For the multiplicity one pairs of general linear groups, this question is linked to the famous Rankin--Selberg integrals, and for the pairs of orthogonal and unitary groups the Gross--Prasad and Gan--Gross--Prasad conjectures make predictions about when the multiplicities are non-zero.

Since every irreducible smooth admissible Fréchet representation of moderate growth is by the Harish-Chandra Subquotient Theorem a subquotient of a principal series representation, it is reasonable to study this problem for principal series representations $\pi$ and $\tau$. This has previously been done for special cases by Kobayashi--Speh~\cite{KS15,KS18} and Clerc~\cite{Cle16a,Cle16b}. In this paper, we construct for all strongly spherical reductive pairs $(G,H)$ explicit families of symmetry breaking operators between principal series representations that depend holomorphically on the principal series parameters. Our key new ingredient is the systematic use of matrix coefficients of finite-dimensional representations of $G$ and $H$ in the construction of the corresponding distribution kernels, and it provides a conceptual way to determine these kernels explicitly. This establishes lower bounds for the multiplicities in question and provides a first step to the full classification. Together with upper bounds (obtained using Bruhat's theory of invariant distributions) we are able to compute $\dim\Hom_H(\pi|_H,\tau)$, at least generically, for all strongly spherical pairs $(G,H)$ and spherical principal series representations, and for the multiplicity one pairs also for non-spherical principal series representations. In the case $(G,H)=(\SO(n+1,\CC),\SO(n,\CC))$ this proves the local Gross--Prasad conjecture at the Archimedean place $k=\CC$ as initially stated in \cite{GP92}.

\subsection*{Statement of the main results}

Let $G$ be a real reductive group in the Harish-Chandra class and $H\subseteq G$ be a reductive subgroup such that the pair $(G,H)$ is strongly spherical. Let $\frakg$ and $\frakh$ denote the corresponding Lie algebras and assume that the pair $(\frakg,\frakh)$ is indecomposable, i.e. it cannot be written as the direct sum of two non-trivial pairs of reductive Lie algebras. A classification of all such pairs $(\frakg,\frakh)$ (under a slightly stronger indecomposability assumption) was established by Kobayashi--Matsuki~\cite{KM14} and Knop--Kr\"{o}tz--Pecher--Schlichtkrull~\cite{KKPS16,KKPS17}, and we summarize the result in Theorem~\ref{thm:ClassificationStronglySphericalPairs}. We further assume that $(\frakg,\frakh)$ is non-trivial, i.e. $\frakg\neq\frakh$. (In fact, our results do not hold in the case $\frakg=\frakh$ since intertwining operators between principal series of a real reductive group $G$ can only exist if the induction parameters are related by an element of the Weyl group.)

Let $P_G=M_GA_GN_G\subseteq G$ and $P_H=M_HA_HN_H\subseteq H$ be minimal parabolic subgroups and write $\fraka_G$ and $\fraka_H$ for the Lie algebras of $A_G$ and $A_H$. For irreducible finite-dimensional representations $\xi$ of $M_G$, $\eta$ of $M_H$ and $\lambda\in\fraka_{G,\CC}^\vee$, $\nu\in\fraka_{H,\CC}^\vee$ we consider the principal series representations (smooth normalized parabolic induction)
$$ \pi_{\xi,\lambda} = \Ind_{P_G}^G(\xi\otimes e^\lambda\otimes\1), \qquad \tau_{\eta,\nu} = \Ind_{P_H}^H(\eta\otimes e^\nu\otimes\1). $$
In case $\xi=\1$ and $\eta=\1$ are the trivial representations of $M_G$ and $M_H$, we abbreviate $\pi_\lambda=\pi_{\1,\lambda}$ and $\tau_\nu=\tau_{\1,\nu}$. Our first main result relates $\Hom_H(\pi_\lambda|_H,\tau_\nu)$ to $(P_H\backslash G/P_G)_{\rm open}$, the set of open double cosets in $P_H\backslash G/P_G$.

\begin{thmalph}[{see Corollary~\ref{cor:LowerBoundsMultiplicities} and \ref{cor:GenericMultiplicities}}]\label{intro:MainThm}
Assume that $(G,H)$ is a strongly spherical reductive pair such that $(\frakg,\frakh)$ is non-trivial and indecomposable. Then for all $(\lambda,\nu)\in\fraka_{G,\CC}^\vee\times\fraka_{H,\CC}^\vee$ we have the lower multiplicity bound
$$ \dim\Hom_H(\pi_\lambda|_H,\tau_\nu) \geq \#(P_H\backslash G/P_G)_{\rm open}, $$
and for generic $(\lambda,\nu)\in\fraka_{G,\CC}^\vee\times\fraka_{H,\CC}^\vee$ (see \eqref{eq:GenericCondition} for the precise condition) we have
$$ \dim\Hom_H(\pi_\lambda|_H,\tau_\nu) = \#(P_H\backslash G/P_G)_{\rm open}. $$
\end{thmalph}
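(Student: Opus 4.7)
The plan is to realize $\Hom_H(\pi_\lambda|_H,\tau_\nu)$ as a space of $(P_H\times P_G)$-equivariant distributions on $G$, via the Schwartz kernel theorem and a version of Bruhat's reciprocity: an intertwining operator corresponds to a distribution $K\in\calD'(G)$ transforming under a character $\chi_{\lambda,\nu}$ of $P_H\times P_G$ acting by $(h,p)\cdot g=h^{-1}gp$. Since $(G,H)$ is strongly spherical the double coset space $P_H\backslash G/P_G$ is finite, so every such distribution has support contained in a finite union of $(P_H\times P_G)$-orbits, and it is natural to filter the space by the dimension of this support.

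For the lower bound I would construct, for each open double coset $\calO=P_H g_\calO P_G$, a nonzero holomorphic family $A_{\calO,\lambda,\nu}$. Since $\calO$ is open, the character $\chi_{\lambda,\nu}$ restricted to the stabilizer of $g_\calO$ in $P_H\times P_G$ is trivial by a standard modulus computation, so up to scalar there is a unique $(P_H\times P_G)$-equivariant smooth density on $\calO$ depending holomorphically on $(\lambda,\nu)$. Extending it as a distribution on $G$ requires analytic continuation through the lower-dimensional boundary orbits; this is carried out by the classical Bernstein--Sato / Riesz-distribution technique (inserting $\Gamma$-factors to cancel the poles along the lower-stratum divisors), yielding a family $K_{\calO,\lambda,\nu}$ holomorphic on all of $\fraka_{G,\CC}^\vee\times\fraka_{H,\CC}^\vee$. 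The resulting operators $A_{\calO,\lambda,\nu}$ have pairwise disjoint essential supports and are therefore linearly independent for every $(\lambda,\nu)$, which gives the lower bound.

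For the generic equality I would apply Bruhat's theory in the form of a filtration
$$ \calD'(G)^{\chi_{\lambda,\nu}}_{P_H\times P_G} \;=\; \bigcup_k F_k,$$
where $F_k$ consists of equivariant distributions supported on the union of orbits of codimension at least some index. The associated graded is controlled orbit by orbit: on a non-open orbit $\calO'$, the space of equivariant distributions of given transversal order $m\geq 0$ is governed by the representation of the stabilizer on the $m$-th symmetric power of the conormal bundle, and the equivariance forces $(\lambda,\nu)$ to satisfy an affine equation depending on $\calO'$ and $m$. The generic set \eqref{eq:GenericCondition} is the complement of the countable union of these hyperplanes together with those arising from possible poles in the holomorphic renormalization of the open-orbit kernels. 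For $(\lambda,\nu)$ in this complement, only the open-orbit contributions survive, each contributing a one-dimensional subspace, which matches the lower bound and gives equality.

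The main obstacle is the last step: one must identify, for each non-open orbit, the characters by which the stabilizer acts on the conormal representations, and then verify that the resulting resonance locus is a \emph{proper} analytic subset of $\fraka_{G,\CC}^\vee\times\fraka_{H,\CC}^\vee$. This amounts to an orbit-by-orbit analysis that rests on the Kobayashi--Matsuki and Knop--Kr\"otz--Pecher--Schlichtkrull classification (Theorem~\ref{thm:ClassificationStronglySphericalPairs}) together with an explicit uniform description of the $(P_H\times P_G)$-orbit structure on $G$; this is the technical heart of the argument, whereas the construction of the basis on open orbits and its linear independence are by comparison routine.
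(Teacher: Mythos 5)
Your overall strategy coincides with the paper's: identify symmetry breaking operators with $(P_H\times P_G)$-equivariant distribution kernels, build a holomorphic family attached to each open double coset by analytic continuation of an equivariant density, and use Bruhat theory on the orbit stratification for the generic upper bound. However, two steps in your outline have genuine gaps. First, your claim that the continued operators ``have pairwise disjoint essential supports and are therefore linearly independent for every $(\lambda,\nu)$'' does not hold: after the Bernstein--Sato/$\Gamma$-factor normalization, the kernel attached to an open orbit $\Omega$ has support equal to $\overline{\Omega}$ only \emph{generically}; at special parameters it may vanish identically or collapse onto the lower-dimensional boundary orbits, which different open orbits share, so both nonvanishing and disjointness fail exactly where you need them. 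The paper closes this gap with a renormalization argument (Lemma~\ref{lem:RenormHol}, in the spirit of Kobayashi--Speh): from a family that is generically linearly independent one extracts, at each fixed $(\lambda_0,\nu_0)$, linearly independent limits by taking suitable linear combinations divided by powers of the parameter along a complex line through $(\lambda_0,\nu_0)$. Without this step your argument only yields the lower bound for generic parameters, not for all $(\lambda,\nu)$ as the theorem asserts.

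Second, the continuation step is not the ``classical'' application of Bernstein--Sato you describe. To invoke a Bernstein--Sato identity one must first write the equivariant density on the open orbit as a product of complex powers $F_1^{s_1}\cdots F_r^{s_r}$ of globally defined real-analytic functions in which the exponents $(s_1,\ldots,s_r)$ realize \emph{all} of $\fraka_{G,\CC}^\vee\times\fraka_{H,\CC}^\vee$; the existence of enough such $(P_H\times P_G)$-equivariant matrix coefficients of finite-dimensional spherical representations is precisely Theorem~\ref{thm:EnoughWeights}, proved case-by-case from the classification. So the classification enters on the construction side, not only in your orbit-by-orbit analysis of non-open strata. Moreover, the functional equation holds a priori only for the full complex power; to continue the kernel cut off to a single open orbit, $\chi_\Omega K_{s_1,\ldots,s_r}$, one must show the identity persists across $\partial\Omega$, which requires that the $F_j$ vanish on every non-open orbit (Lemma~\ref{lem:VanishingOnNonOpenOrbits}) together with the regularity statement this yields; that vanishing in turn rests on the structural Theorem~\ref{thm:AHProjectionStabilizers} about the $\fraka_H$-projection of stabilizers of non-open orbits, the same theorem that drives the Bruhat-theoretic upper bound and the explicit genericity condition \eqref{eq:GenericCondition}. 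Your proposal treats these points as routine and locates the technical heart elsewhere, but they are where the real work (and the use of the classification) lies.
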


Let us briefly explain the method of proof. First of all, the generic upper multiplicity bound $\leq\#(P_H\backslash G/P_G)_{\rm open}$ is established by identifying intertwining operators with their distribution kernels which are certain $(P_H\times P_G)$-invariant distributions on $G$ (see Section~\ref{sec:InvariantDistrbutionsSub}). To bound the dimension of the space of invariant distributions we use Bruhat's theory. Here, the non-open $(P_H\times P_G)$-orbits in $G$ are particularly important, and we prove a new structural result about them (see Theorem~\ref{thm:AHProjectionStabilizers}) which is the necessary technical ingredient in the proof of generic upper bounds (see Theorem~\ref{thm:MultiplicityBounds}). The lower multiplicity bounds are established by explicitly constructing a non-trivial holomorphic family $A_{\lambda,\nu}\in\Hom_H(\pi_\lambda|_H,\tau_\nu)$ of symmetry breaking operators for every open double coset in $P_H\backslash G/P_G$. The construction of this family of operators is in terms of their distribution kernels which turn out to be products of complex powers of matrix coefficients belonging to finite-dimensional spherical representations of $G$. The technical ingredients in this part are the existence of \textit{enough} such matrix coefficients (see Theorem~\ref{thm:EnoughWeights}) and the meromorphic/holomorphic extension of the complex powers (see Theorem~\ref{thm:MeromorphicContinuation}). Then the lower bounds are established by regularizing the families $A_{\lambda,\nu}$ in the holomorphic parameters $(\lambda,\nu)$.

For the multiplicity one pairs in Fact~\ref{fact:MultiplicityOne} we also consider non-spherical principal series representations and show the following result:

\begin{thmalph}[{see Theorem~\ref{thm:MultiplicitiesMultOnePairs}}]\label{intro:ThmMultOne}
Assume that $(G,H)$ is a multiplicity one pair. Then for all $(\xi,\eta)\in\widehat{M}_G\times\widehat{M}_H$ and $(\lambda,\nu)\in\fraka_{G,\CC}^\vee\times\fraka_{H,\CC}^\vee$ we have the lower multiplicity bound
$$ \dim\Hom_H(\pi_{\xi,\lambda}|_H,\tau_{\eta,\nu}) \geq 1 \qquad \mbox{whenever }\Hom_M(\xi|_M,\eta|_M)\neq\{0\}, $$
and for generic $(\lambda,\nu)\in\fraka_{G,\CC}^\vee\times\fraka_{H,\CC}^\vee$ we have
$$ \dim\Hom_H(\pi_{\xi,\lambda}|_H,\tau_{\eta,\nu}) = \begin{cases}1&\mbox{for $\Hom_M(\xi|_M,\eta|_M)\neq\{0\}$,}\\0&\mbox{for $\Hom_M(\xi|_M,\eta|_M)=\{0\}$.}\end{cases} $$
Here $M\subseteq M_G\cap M_H$ denotes the stabilizer of the unique open $P_H$-orbit in $G/P_G$.
\end{thmalph}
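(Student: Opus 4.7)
The plan is to extend the distribution-kernel construction behind Theorem~\ref{intro:MainThm} to vector-valued principal series, exploiting that multiplicity-one pairs have a unique open double coset in $P_H\backslash G/P_G$ and that, for generic $(\lambda,\nu)$, Fact~\ref{fact:MultiplicityOne} automatically furnishes the upper bound $\leq 1$.

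First, as in the scalar case I identify
\[
\Hom_H(\pi_{\xi,\lambda}|_H,\tau_{\eta,\nu})
\]
with the space of $\Hom(\xi,\eta)$-valued distributions $K$ on $G$ that are $(P_H\times P_G)$-equivariant, twisted by $\eta\otimes e^{\nu}$ on the left and $\xi^{\vee}\otimes e^{-\lambda}$ on the right (with the usual $\rho$-shifts). Pick a representative $g_0$ of the unique open double coset. Restricting the equivariance at $g_0$ to the pairs $(p_H,p_G)\in P_H\times P_G$ that fix $g_0$ forces
\[
K(g_0)\in\Hom_M(\xi|_M,\eta|_M),
\]
with $M\subseteq M_G\cap M_H$ as in the statement, and conversely any $T\in\Hom_M(\xi|_M,\eta|_M)$ extends uniquely to a smooth $\Hom(\xi,\eta)$-valued section $K^T$ on the open stratum $P_Hg_0P_G$.

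For the lower bound I fix $0\neq T\in\Hom_M(\xi|_M,\eta|_M)$ and multiply $K^T$ by the scalar meromorphic family of $(P_H\times P_G)$-equivariant distributions constructed in the proof of Theorem~\ref{intro:MainThm} from products of complex powers of matrix coefficients of finite-dimensional spherical representations (Theorems~\ref{thm:EnoughWeights} and~\ref{thm:MeromorphicContinuation}). The scalar factor regularises $K^T$ across the boundary of the open orbit, and the standard regularisation in $(\lambda,\nu)$ from the scalar case then produces a non-zero holomorphic family $A^T_{\xi,\eta,\lambda,\nu}\in\Hom_H(\pi_{\xi,\lambda}|_H,\tau_{\eta,\nu})$, giving the universal lower bound $\geq 1$ whenever $\Hom_M(\xi|_M,\eta|_M)\neq\{0\}$.

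The main obstacle is the generic vanishing when $\Hom_M(\xi|_M,\eta|_M)=\{0\}$. The open-orbit contribution vanishes automatically by the $M$-equivariance identified above, so one must rule out $\Hom(\xi,\eta)$-valued kernels supported on the non-open $(P_H\times P_G)$-orbits. I would rerun the Bruhat-theoretic argument that yields Theorem~\ref{thm:MultiplicityBounds} in this twisted setting: the structural result Theorem~\ref{thm:AHProjectionStabilizers} on non-open orbits is insensitive to $(\xi,\eta)$, and each normal-derivative Bruhat contribution along a non-open stratum now imposes a finite set of affine conditions on $(\lambda,\nu)$ depending on $(\xi,\eta)$ through the jets of $\Hom(\xi,\eta)$-valued equivariant sections, so only a locally finite union of hyperplanes in $\fraka_{G,\CC}^\vee\times\fraka_{H,\CC}^\vee$ is excluded. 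For generic $(\lambda,\nu)$ outside this union, both principal series are irreducible and Fact~\ref{fact:MultiplicityOne} gives $\leq 1$, matching the lower bound; together with the vanishing argument this yields the exact formula.
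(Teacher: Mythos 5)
Your reduction to equivariant $\Hom(\xi,\eta)$-valued kernels and your treatment of the generic upper bound are fine; in fact the Bruhat-theoretic bound you propose to ``rerun'' is already proved in the paper for arbitrary $(\xi,\eta)$ (Theorem~\ref{thm:MultiplicityBounds}), giving $\leq\#(P_H\backslash G/P_G)_{\rm open}\cdot\dim\Hom_M(\xi^{\tilde w_0}|_M,\eta|_M)$ under \eqref{eq:GenericCondition} -- note the $\tilde w_0$-twist, which you drop; that $\dim\Hom_M(\xi^{\tilde w_0}|_M,\eta|_M)=\dim\Hom_M(\xi|_M,\eta|_M)$ is itself checked case by case in Proposition~\ref{prop:ExGRepsForMultOnePairs}~(1). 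The genuine gap is in your lower bound. You extend $T\in\Hom_M(\xi|_M,\eta|_M)$ equivariantly over the open orbit to get $K^T$ and assert that multiplying by the scalar family of Theorem~\ref{thm:MeromorphicContinuation} ``regularises $K^T$ across the boundary.'' But the meromorphic continuation in the scalar case rests on a Bernstein--Sato identity for the specific products $F_1^{s_1}\cdots F_r^{s_r}$ of \emph{globally defined, non-negative} real-analytic functions, together with the $C^k$-regularity of $\chi_\Omega K_{s_1,\ldots,s_r}$ for $\Re s_j$ large; neither ingredient transfers to the product $\chi_\Omega K^T K_{s_1,\ldots,s_r}$, because $K^T$ is defined only on the open orbit and in general does not extend even continuously to its closure, and the Bernstein--Sato operator differentiates $K^T$ as well (whose derivatives can blow up at $\partial\Omega$). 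One cannot dualize either, since pairing against $(K^T)^{\top}\varphi$ leaves the class of test functions. The explicit kernels for $(\GL(n+1,\RR),\GL(n,\RR))$ in Section~\ref{sec:ExGLn} illustrate the problem: the vector part contributes factors $\sgn(\Phi_i)^{\delta_i}\sgn(\Psi_j)^{\varepsilon_j}$, so the needed family is built from \emph{signed} powers, not the non-negative scalar family times a harmless factor, and for the pairs with nonabelian $M$-part the matrix-valued factor is genuinely nonconstant along the orbit.

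This missing regularisation is exactly what the paper supplies by a different mechanism: the translation principle (Theorem~\ref{thm:TranslationPrinciple}, Corollary~\ref{cor:TranslationPrincipleSupport}) writes the kernel as the \emph{globally real-analytic} matrix coefficient $\sigma(g)=p\circ\eta\circ\varphi(g\tilde w_0)\circ i$ of a finite-dimensional representation tensored with the scalar holomorphic family $K_{\lambda,\nu}$, so all singular behaviour sits in the already-continued scalar factor and non-triviality follows from the support statement plus Lemma~\ref{lem:RenormHol}. The price is the case-by-case existence, for every $(\xi,\eta)$ with $\Hom_M(\xi|_M,\eta|_M)\neq\{0\}$, of finite-dimensional $E$, $F$ with $\xi\simeq E^{\frakn_G}|_{M_G}$, $\eta\simeq F^{\frakn_H}|_{M_H}$ and $\Hom_H(E|_H,F)\neq\{0\}$ (Proposition~\ref{prop:ExGRepsForMultOnePairs}~(2)), which your proposal omits. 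To repair your argument you would need either such a globally analytic vector-valued equivariant section, or a vector-valued Bernstein--Sato identity together with boundary control of $K^T$ and its derivatives; as written, the step producing the holomorphic family $A^T_{\xi,\eta,\lambda,\nu}$ is unjustified.
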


The passage from spherical principal series $\pi_\lambda$ as treated in Theorem~\ref{intro:MainThm} to general principal series $\pi_{\xi,\lambda}$ uses a variant of the Jantzen--Zuckerman \textit{translation principle}. Here both $\pi_\lambda$ and $\tau_\nu$ are tensored with certain finite-dimensional representations of $G$ and $H$, and we show the existence of such representations case-by-case (see Theorem~\ref{prop:ExGRepsForMultOnePairs}). We illustrate the construction of symmetry breaking operators for the multiplicity one pair $(G,H)=(\GL(n+1,\RR),\GL(n,\RR))$ by providing explicit formulas for the distribution kernels (see Section~\ref{sec:ExGLn}).

We remark that Theorems~\ref{intro:MainThm} and \ref{intro:ThmMultOne} leave open the question of determining the multiplicities $\dim\Hom_H(\pi_{\xi,\lambda}|_H,\tau_{\eta,\nu})$ for all parameters. In general, this question is much more involved and has so far only been solved in special cases (see Clerc~\cite{Cle16a,Cle16b} and Kobayashi--Speh~\cite{KS15}). In these cases, all operators in $\Hom_H(\pi_{\xi,\lambda}|_H,\tau_{\eta,\nu})$ arise from a holomorphic family of operators, so that our explicit construction of meromorphic families provides a first major step for the full classification for general strongly spherical reductive pairs. We hope to return to this point later.

\subsection*{Applications}

Let us present two interesting applications of the main results. Combining Theorem~\ref{intro:ThmMultOne} with Fact~\ref{fact:MultiplicityOne} we immediately obtain the following corollary:

\begin{coralph}[{see Corollary~\ref{cor:GPConjecture}}]
Assume that $(G,H)$ is a multiplicity one pair, where we additionally assume $p=q$ or $p=q+1$ in the case of indefinite orthogonal or unitary groups. Then, if both $\pi_{\xi,\lambda}$ and $\tau_{\eta,\nu}$ are irreducible we have
$$ \dim\Hom_H(\pi_{\xi,\lambda}|_H,\tau_{\eta,\nu}) = 1. $$
\end{coralph}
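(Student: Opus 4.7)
The approach is to combine the Sun--Zhu multiplicity one bound (Fact~\ref{fact:MultiplicityOne}) with the lower multiplicity bound supplied by Theorem~\ref{intro:ThmMultOne}. Irreducibility of $\pi_{\xi,\lambda}$ and $\tau_{\eta,\nu}$ together with Fact~\ref{fact:MultiplicityOne} immediately gives
$$ \dim\Hom_H(\pi_{\xi,\lambda}|_H,\tau_{\eta,\nu}) \leq 1, $$
while Theorem~\ref{intro:ThmMultOne} furnishes the matching lower bound $\geq 1$ as soon as $\Hom_M(\xi|_M,\eta|_M)\neq\{0\}$, where $M\subseteq M_G\cap M_H$ denotes the stabilizer of the unique open $P_H$-orbit in $G/P_G$. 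Sandwiching the two inequalities yields the desired equality $\dim\Hom_H = 1$.

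The remaining task is therefore to verify the branching condition $\Hom_M(\xi|_M,\eta|_M)\neq\{0\}$ for every $(\xi,\eta)\in\widehat{M}_G\times\widehat{M}_H$. I would carry this out case by case across the five multiplicity one families of Fact~\ref{fact:MultiplicityOne}, starting from an explicit pair of compatible minimal parabolics $P_G$ and $P_H$ and computing the stabilizer $M$ of the open orbit inside $M_G\cap M_H$. The crucial point --- and this is precisely what the hypothesis $p=q$ or $p=q+1$ is there to enforce in the indefinite orthogonal and unitary cases, while being automatic for the general linear and complex orthogonal pairs --- is that $M$ turns out to be small enough, essentially a common compact abelian or finite ``Cartan-like'' subgroup, so that the restrictions $\xi|_M$ and $\eta|_M$ share a common irreducible constituent for every choice of irreducible $\xi$ and $\eta$. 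This reduces the $\Hom_M$ non-vanishing to elementary character theory on a well-understood compact group.

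The principal obstacle is organizational rather than conceptual: one must identify $M$ inside each $(M_G,M_H)$ combinatorially for each family, which can be intricate in the indefinite orthogonal and unitary settings where $M_G$ and $M_H$ are products of copies of $\upO(1)$ or $\upU(1)$ with smaller classical groups, and where the open-orbit stabilizer has to be unwound from the Bruhat decomposition. Once these identifications are in place, both multiplicity bounds hold simultaneously and the corollary follows ``immediately'' from Fact~\ref{fact:MultiplicityOne} and Theorem~\ref{intro:ThmMultOne}, as asserted.
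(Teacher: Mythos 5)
Your overall skeleton coincides with the paper's: the upper bound $\leq 1$ comes from Fact~\ref{fact:MultiplicityOne} applied to the irreducible representations $\pi_{\xi,\lambda}$ and $\tau_{\eta,\nu}$, the lower bound $\geq 1$ from Theorem~\ref{thm:MultiplicitiesMultOnePairs} under the condition $\Hom_M(\xi|_M,\eta|_M)\neq\{0\}$ (this combination is exactly Corollary~\ref{cor:IrreducibleMultiplicitiesMultOnePairs}), and the remaining task is to show that this branching condition holds for \emph{every} $(\xi,\eta)\in\widehat{M}_G\times\widehat{M}_H$.

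However, your justification of that last step is flawed. You claim that the hypothesis $p=q$ or $p=q+1$ forces $M$ to be ``small enough, essentially a common compact abelian or finite Cartan-like subgroup, so that $\xi|_M$ and $\eta|_M$ share a common irreducible constituent for every choice of irreducible $\xi$ and $\eta$.'' Smallness or abelianness of $M$ is not sufficient for this conclusion: if $M$ were a nontrivial common subgroup such as $\upO(1)^k$, $\upU(1)$ or $\SO(2)$, the restrictions $\xi|_M$ and $\eta|_M$ are sums of characters of $M$ that need not have any constituent in common. This is exactly what happens for the indefinite pairs outside the admissible range of $(p,q)$: the computations in Section~\ref{sec:Iwasawa} give $M\simeq\upU(p-q-1)$ (for $p\geq q+1$, and $\upU(q-p)$ for $p\leq q$) in the unitary case and $M\simeq\SO(q-p)$ (for $p\leq q$) in the orthogonal case, and for nontrivial $M$ the condition $\Hom_M(\xi|_M,\eta|_M)\neq\{0\}$ genuinely fails for suitable $(\xi,\eta)$, so that the generic multiplicity is $0$ by Theorem~\ref{thm:MultiplicitiesMultOnePairs}. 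The fact you actually need — and which the paper verifies by the explicit Iwasawa-decomposition computations of the generic stabilizer in Sections~\ref{sec:UpperBoundsMultiplicities} and \ref{sec:Iwasawa} — is that under the hypothesis $p=q$ or $p=q+1$ (and unconditionally for the general linear pairs and $(\SO(n+1,\CC),\SO(n,\CC))$) the stabilizer of the open orbit is \emph{trivial}, $M=\{\1\}$, whence $\Hom_M(\xi|_M,\eta|_M)\neq\{0\}$ is automatic and the sandwich closes. Your planned case-by-case computation would indeed reveal this triviality, but as written the inference ``$M$ abelian/finite $\Rightarrow$ the branching condition holds for all $(\xi,\eta)$'' is incorrect and must be replaced by the statement $M=\{\1\}$.
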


For $(G,H)=(\SO(n+1,\CC),\SO(n,\CC))$ this proves the local Gross--Prasad conjecture~\cite[Conjecture 11.5]{GP92} at the Archimedean place $k=\CC$ (see Conjecture~\ref{conj:GrossPrasadComplex}). We remark that the Gross--Prasad conjecture has been extended by several people to the case of reducible principal series, for which we cannot make precise statements in the generality discussed in this paper. We expect that our results also provide some information towards the local Gross--Prasad conjecture at the Archimedean place $k=\RR$ where one considers the pairs $(G,H)=(\SO(p,q+1),\SO(p,q))$. However, for real groups it will be necessary to also consider principal series representations induced from more general cuspidal parabolic subgroups. We hope to return to this topic in a future work.

Another application concerns the study of Shintani functions for the pair $(G,H)$ which was recently taken up by Kobayashi~\cite{Kob14}. For $(\lambda,\nu)\in\fraka_{G,\CC}^\vee\times\fraka_{H,\CC}^\vee$ we write $\Sh(\lambda,\nu)$ for the space of Shintani functions for $(G,H)$ of type $(\lambda,\nu)$ and $\Sh_{\rm mod}(\lambda,\nu)$ for its subspace of Shintani functions of moderate growth (see Section~\ref{sec:ShintaniFunctions} for the precise definitions). Kobayashi~\cite{Kob14} showed that $\dim\Sh(\lambda,\nu)<\infty$ for all $(\lambda,\nu)\in\fraka_{G,\CC}^\vee\times\fraka_{H,\CC}^\vee$ if and only if $(G,H)$ is strongly spherical. Combining results from \cite{Kob14} with Theorem~\ref{intro:MainThm} shows the following corollary:

\begin{coralph}[{see Theorem~\ref{thm:LowerBoundsShintaniFunctions}}]
Assume that $(G,H)$ is a strongly spherical reductive pair such that $(\frakg,\frakh)$ is non-trivial and indecomposable. Then for all $(\lambda,\nu)\in\fraka_{G,\CC}^\vee\times\fraka_{H,\CC}^\vee$ we have
$$ \dim\Sh(\lambda,\nu) \geq \dim\Sh_{\rm mod}(\lambda,\nu) \geq \#(P_H\backslash G/P_G)_{\rm open}, $$
and for generic $(\lambda,\nu)\in\fraka_{G,\CC}^\vee\times\fraka_{H,\CC}^\vee$ we have
$$ \dim\Sh_{\rm mod}(\lambda,\nu) = \#(P_H\backslash G/P_G)_{\rm open}. $$
\end{coralph}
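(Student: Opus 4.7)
The plan is to deduce the corollary from Theorem~\ref{intro:MainThm} using Kobayashi's realization of Shintani functions as matrix coefficients of symmetry breaking operators~\cite{Kob14}. Given $A\in\Hom_H(\pi_\lambda|_H,\tau_\nu)$, evaluating $A$ on a distinguished spherical vector of $\pi_\lambda$ and pairing with a distinguished spherical covector of $\tau_\nu$ produces a smooth function on $G$ that satisfies the transformation and joint eigenequation conditions defining a Shintani function of type $(\lambda,\nu)$. Conversely, Kobayashi shows that every moderate-growth Shintani function arises this way, so that this assignment yields a linear map
\[
 \Psi\colon\Hom_H(\pi_\lambda|_H,\tau_\nu)\longrightarrow\Sh_{\rm mod}(\lambda,\nu)
\]
which is injective for all $(\lambda,\nu)$ and bijective for generic $(\lambda,\nu)$.

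The inequality $\dim\Sh(\lambda,\nu)\geq\dim\Sh_{\rm mod}(\lambda,\nu)$ is trivial since moderate-growth Shintani functions form a subspace of all Shintani functions. For the second inequality I would push the holomorphic family $A_{\lambda,\nu}$ of symmetry breaking operators produced in the proof of Theorem~\ref{intro:MainThm} through the injection $\Psi$: this family spans a subspace of dimension $\#(P_H\backslash G/P_G)_{\rm open}$ in $\Hom_H(\pi_\lambda|_H,\tau_\nu)$ at every parameter, and hence yields the same number of linearly independent moderate-growth Shintani functions.

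For the generic equality I would combine the generic bijectivity of $\Psi$ with the generic equality $\dim\Hom_H(\pi_\lambda|_H,\tau_\nu)=\#(P_H\backslash G/P_G)_{\rm open}$ from Theorem~\ref{intro:MainThm}. Both conditions cut out Zariski-open dense subsets of $\fraka_{G,\CC}^\vee\times\fraka_{H,\CC}^\vee$, and on their intersection, which is still dense open, one obtains the desired equality $\dim\Sh_{\rm mod}(\lambda,\nu)=\#(P_H\backslash G/P_G)_{\rm open}$.

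The main obstacle is not the numerical comparison, which is a formal consequence once $\Psi$ is in place, but verifying that the symmetry breaking operators $A_{\lambda,\nu}$ of Theorem~\ref{intro:MainThm} actually produce Shintani functions of \emph{moderate} growth rather than merely arbitrary ones. This reduces to an asymptotic analysis of the explicit distribution kernels, which are products of complex powers of matrix coefficients of finite-dimensional spherical representations of $G$; the necessary growth estimates are available in \cite{Kob14} within the strongly spherical setting.
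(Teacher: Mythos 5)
Your proposal follows essentially the same route as the paper: both invoke Kobayashi's identification (\cite[Theorem 8.2 and Remark 8.3]{Kob14}) of symmetry breaking operators with moderate-growth Shintani functions and then feed in the multiplicity bounds of Corollaries~\ref{cor:LowerBoundsMultiplicities} and~\ref{cor:GenericMultiplicities}. The only details worth noting are that the cited embedding is stated for the dominant/anti-dominant Weyl representatives $\lambda_+\in W(\fraka_G)\lambda$ and $\nu_-\in W(\fraka_H)(-\nu)$ rather than for $(\lambda,\nu)$ itself (harmless, since $\Sh(\lambda,\nu)$ depends only on the Weyl orbits and the lower bound holds for all parameters), and that the moderate-growth property is already built into Kobayashi's result, so the separate asymptotic analysis of the kernels you flag as the main obstacle is not actually needed.
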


\subsection*{Relation to other work}

Let us first mention previous works in which holomorphic families of intertwining operators $A_{\lambda,\nu}\in\Hom_H(\pi_\lambda|_H,\tau_\nu)$ appear. The construction of operators $A_{\lambda,\nu}$ for the pairs $(\frakg,\frakh)=(\so(1,n)+\so(1,n),\diag\so(1,n))$ can be found in the work of Oksak~\cite{Oks73} for $n=3$ (the case of $\so(1,3)\simeq\sl(2,\CC)$), Bernstein--Reznikov~\cite{BR04} for $n=2$ (the case of $\so(1,2)\simeq\sl(2,\RR)$) and Clerc--Kobayashi--{\O}rsted--Pevzner~\cite{CKOP11} for arbitrary $n\geq2$. Later Clerc~\cite{Cle16a,Cle16b} gave a complete description of the space $\Hom_H(\pi_\lambda|_H,\tau_\nu)$ for all parameters $(\lambda,\nu)$. For $(\frakg,\frakh)=(\so(1,n+1),\so(1,n))$ Kobayashi--Speh~\cite{KS15} obtained a full classification of all symmetry breaking operators in terms of the holomorphic family $A_{\lambda,\nu}$. In fact, some of the analytic arguments we use can also be found in \cite{KS15,KS18} (see e.g. \cite[Lemma 11.10]{KS15} which deduces lower multiplicity bounds from meromorphic families of intertwining operators). For some symmetric pairs of low rank, in particular for all symmetric pairs $(\frakg,\frakh)$ with $\frakg$ of rank one, the work of M\"{o}llers--{\O}rsted--Oshima~\cite{MOO16} yields holomorphic families of symmetry breaking operators. We also note that for $(\frakg,\frakh)=(\gl(n+1,\RR),\gl(n,\RR))$ the kernel functions given in Section~\ref{sec:ExGLn} can be found in the work of Murase--Sugano~\cite{MS96} in the context of $p$-adic groups, and in a slightly different form also in the recent work of Neretin~\cite{Ner15} in the context of finite-dimensional representations. The conceptual construction via finite-dimensional matrix-coefficients that we present in this paper seems to be new, and generalizes all previous constructions to the setting of strongly spherical pairs.

We remark that some of our statements can also be proven differently using the recent work of Gourevitch--Sahi--Sayag~\cite{GSS16} on the extension of invariant distributions. In fact, their work uses a similar idea, namely the extension of an invariant distribution to a meromorphic family of distributions. However, their meromorphic families only depend on one complex parameter whereas our constructed families depend on $(\lambda,\nu)$ and hence contain more information. Moreover, we provide a method to explicitly determine the meromorphic families in terms of matrix coefficients, while in \cite{GSS16} the construction of the invariant distributions is more indirect.

Although our holomorphic families of symmetry breaking operators generically span the space of all intertwining operators, it is much more difficult to determine $\Hom_H(\pi_{\xi,\lambda}|_H,\tau_{\eta,\nu})$ for singular parameters $(\lambda,\nu)\in\fraka_{G,\CC}^\vee\times\fraka_{H,\CC}^\vee$. Theorems~\ref{intro:MainThm} and \ref{intro:ThmMultOne} provide lower bounds for the multiplicities, but it turns out that for singular parameters the multiplicities can be larger. A systematic study of multiplicities and symmetry breaking operators for $(\frakg,\frakh)=(\so(1,n+1),\so(1,n))$ was initiated by Kobayashi, and we refer the reader to the relevant articles by Kobayashi--Speh~\cite{KS15,KS18} and Kobayashi--Kubo--Pevzner~\cite{KKP16} (see also the work of Fischmann--Juhl--Somberg~\cite{FJS16}). We expect that our holomorphic families play a major role in the full classification of symmetry breaking operators as is the case for $(\frakg,\frakh)=(\so(1,n+1),\so(1,n))$ (see \cite{KS15}), and therefore view our general construction as a first step into this direction for the class of strongly spherical reductive pairs.

\subsection*{Outlook}

The principal series representations considered in this paper are all induced from a minimal parabolic subgroup. However, by the Langlands classification every smooth admissible Fréchet representation of moderate growth is the unique irreducible quotient of a generalized principal series representation, induced from an arbitrary cuspidal parabolic subgroup. At least for the multiplicity one pairs a generalization of our construction to cuspidal parabolic subgroups is desirable.

Our holomorphic families of symmetry breaking operators also allow an interpretation as invariant distribution vectors. More precisely, one can view the distribution kernels as $\diag(H)$-invariant distribution vectors on principal series representations of $G\times H$. As such, they are expected to contribute to the most continuous part of the Plancherel formula for the real spherical homogeneous spaces $(G\times H)/\diag(H)$. It would be interesting to investigate this topic further, especially in connection with the recent advances in the harmonic analysis on real spherical spaces (see e.g. \cite{KS16b} and references therein).

Another possible application of symmetry breaking operators is the explicit construction of branching laws for unitary representations. This was successfully carried out for the pair $(G,H)=(\upO(1,n+1),\upO(1,m+1)\times\upO(n-m))$ in the case of unitary principal series and complementary series representations by M\"{o}llers--Oshima~\cite{MO15}. For $(G,H)=(\GL(n+1,\CC),\GL(n,\CC))$ a similar suggestion was made by Neretin~\cite[Section 5]{Ner15}.

Let us also mention connections to boundary value problems~\cite{MOZ16} and automorphic forms~\cite{BR04,MO14} that were established for $(G,H)=(\upO(1,n+1),\upO(1,n))$ and might be of interest also for more general strongly spherical pairs.

\subsection*{Structure of the paper}

In Section~\ref{sec:StructureTheory} we recall some structure theory and the classification of strongly spherical real reductive pairs. The main new result here is a characterization of the open double cosets in $P_H\backslash G/P_G$ (see Theorem~\ref{thm:AHProjectionStabilizers}). The construction of $(P_H\times P_G)$-equivariant matrix coefficients on $G$ is the content of Section~\ref{sec:FiniteDimBranching}, and Theorem~\ref{thm:EnoughWeights} ensures the existence of enough such matrix coefficients. Section~\ref{sec:ConstructionOfSBOs} deals with the explicit construction of symmetry breaking operators between spherical principal series representations of strongly spherical pairs (see Theorem~\ref{thm:MeromorphicContinuation}). This construction uses the results of Sections~\ref{sec:StructureTheory} and \ref{sec:FiniteDimBranching} in a crucial way, and implies the claimed lower bounds for multiplicities. The upper bounds are established in Section~\ref{sec:InvariantDistributions} using Bruhat's theory of invariant distributions (see Theorem~\ref{thm:MultiplicityBounds}). The application of this technique depends heavily on the results of Section~\ref{sec:StructureTheory}. In Section~\ref{sec:ShintaniFunctions} the previous results are applied to obtain bounds for the space of Shintani functions (see Theorem~\ref{thm:LowerBoundsShintaniFunctions}), following Kobayashi's recent approach via symmetry breaking operators. The topic of Section~\ref{sec:MultiplicityOnePairs} is the construction of symmetry breaking operators between general principal series from symmetry breaking operators between spherical principal series for multiplicity one pairs. To prove the main statement Theorem~\ref{thm:MultiplicitiesMultOnePairs} we employ a variant of the translation principle which we apply case-by-case to all multiplicity one pairs. Finally, Section~\ref{sec:ExGLn} illustrates symmetry breaking operators between principal series for the pair $(G,H)=(\GL(n+1,\RR),\GL(n,\RR))$ by explicit formulas.

\subsection*{Acknowledgements} We thank Yoshiki Oshima for helpful and inspiring conversations on the topic of this paper. We further thank an anonymous referee for pointing out a classification-free proof of Theorem~\ref{thm:EnoughWeights} using the local structure theorem for real spherical varieties.

\subsection*{Notation} $\NN=\{0,1,2,\ldots,\}$, $V^\vee=\Hom_\CC(V,\CC)$.

\section{The structure of strongly spherical reductive pairs}\label{sec:StructureTheory}

We discuss strongly spherical reductive pairs $(G,H)$ and their structure theory following \cite{KKPS17,KM14}. First, using results from \cite{KKPS17}, we reduce the study of general strongly spherical reductive pairs to that of strongly spherical symmetric pairs. For the latter we recall some structure theory as developed in \cite[Section 3]{KM14}. This is used to derive some new results about the double coset space $P_H\backslash G/P_G$ for $P_G\subseteq G$ and $P_H\subseteq H$ minimal parabolic subgroups (see Theorem~\ref{thm:AHProjectionStabilizers}). These results are used both in Section~\ref{sec:ConstructionOfSBOs} for the construction of symmetry breaking operators and in Section~\ref{sec:InvariantDistributions} for their uniqueness.

We remark that for complex groups the double coset space $P_H\backslash G/P_G$ was studied in \cite{HNOO13} with similar techniques.

\subsection{Strongly spherical reductive pairs}

Consider a real reductive pair $(\frakg,\frakh)$ of Lie algebras, i.e. $\frakg$ is a reductive Lie algebra and $\frakh$ a reductive subalgebra of $\frakg$. A pair of Lie groups $(G,H)$ with $H$ a closed subgroup of $G$ is called real reductive if the underlying pair $(\frakg,\frakh)$ of Lie algebras is real reductive. In this paper we will additionally assume that $G$ is of \textit{Harish-Chandra class} (see e.g. \cite[Chapter VII.2]{Kna02} for the precise definition).

\begin{definition}\label{def:StronglySpherical}
A real reductive pair $(\frakg,\frakh)$ of Lie algebras is called \textit{strongly spherical} if there exist minimal parabolic subalgebras $\frakp_G\subseteq\frakg$ and $\frakp_H\subseteq\frakh$ such that
$$ \frakg = \frakp_G + \frakp_H. $$
A real reductive pair $(G,H)$ of Lie groups is called \textit{strongly spherical} if the corresponding pair $(\frakg,\frakh)$ of Lie algebras is strongly spherical.
\end{definition}

This property for a real reductive pair $(G,H)$ was introduced by Kobayashi--Oshima~\cite{KO13} as property (PP). In this paper we use the notion \textit{strongly spherical}, following \cite{KKPS17}. In view of Fact~\ref{fact:FiniteMultiplicities}, strongly spherical reductive pairs are sometimes also referred to as \textit{finite-multiplicity pairs} (see e.g. \cite{KM14,KO13}). The following characterization of strongly spherical pairs follows e.g. from \cite[Lemma 5.3~(1)]{KO13} and \cite[Theorem 1.1]{KS16a}:

\begin{proposition}\label{prop:CharacterizationStronglySpherical}
Let $(G,H)$ be a real reductive pair of Lie groups and let $P_G\subseteq G$ and $P_H\subseteq H$ be minimal parabolic subalgebras. Then the following statements are equivalent:
\begin{enumerate}
\item $(G,H)$ is strongly spherical.
\item The homogeneous space $(G\times H)/\diag(H)$ is real spherical.
\item There exists an open double coset in $P_H\backslash G/P_G$.
\item $\#(P_H\backslash G/P_G)<\infty$.
\end{enumerate}
\end{proposition}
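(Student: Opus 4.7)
The plan is to close the loop by establishing $(2) \Leftrightarrow (3)$, $(1) \Leftrightarrow (3)$, and $(4) \Leftrightarrow (3)$ in turn, where only the direction $(3) \Rightarrow (4)$ will require invoking an outside theorem.

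For $(2) \Leftrightarrow (3)$ I would use the standard identification of $(G\times H)/\diag(H)$ with $G$ via the $(G\times H)$-equivariant diffeomorphism $(g,h)\diag(H)\mapsto gh^{-1}$, where $G\times H$ acts on $G$ by $(g,h)\cdot x = gxh^{-1}$. A minimal parabolic subgroup of $G\times H$ is precisely $P_G\times P_H$, and its orbits on $G$ under this action are the subsets $P_G\, x\, P_H$, i.e.\ (via inversion) the elements of $P_H\backslash G/P_G$. Hence real sphericality of $(G\times H)/\diag(H)$ is by definition the existence of an open double coset.

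For $(1)\Leftrightarrow(3)$ I would carry out the tangent space calculation: differentiating the action $(g,h)\cdot x = gxh^{-1}$ at $x$ and translating to $T_eG=\frakg$ by left multiplication with $x^{-1}$ shows that the tangent space of the orbit $P_G\, x\, P_H$ at $x$ corresponds to the subspace $\Ad(x^{-1})\frakp_G+\frakp_H$ of $\frakg$. Hence the orbit is open if and only if this sum equals $\frakg$. Since $\Ad(x^{-1})\frakp_G$ is itself a minimal parabolic subalgebra of $\frakg$, the implication $(3)\Rightarrow(1)$ is immediate. Conversely, given any pair $\frakp_G'\subseteq\frakg$, $\frakp_H'\subseteq\frakh$ of minimal parabolic subalgebras with $\frakp_G'+\frakp_H'=\frakg$, the $G$- and $H$-conjugacy of minimal parabolic subalgebras yields $x\in G$ and $y\in H$ with $\frakp_G'=\Ad(x^{-1})\frakp_G$ and $\frakp_H'=\Ad(y)\frakp_H$; applying $\Ad(y^{-1})$ then produces $\Ad((xy)^{-1})\frakp_G+\frakp_H=\frakg$, so the orbit through $xy$ is open.

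Finally, $(4)\Rightarrow(3)$ is elementary because $(P_G\times P_H)$-orbits on $G$ are locally closed smooth submanifolds, so a finite disjoint decomposition of $G$ into them forces at least one orbit to have dimension $\dim G$ and hence to be open. The nontrivial direction, and the main obstacle to a fully self-contained proof, is $(3)\Rightarrow(4)$: the mere existence of an open minimal parabolic orbit on a real reductive homogeneous space implies that the entire orbit space is finite. I would cite this as a black box, invoking \cite[Theorem 1.1]{KS16a} applied to $(G\times H)/\diag(H)$ with the minimal parabolic $P_G\times P_H$.
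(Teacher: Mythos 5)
Your proposal is correct and takes essentially the same route as the paper, which offers no argument of its own but simply points to \cite[Lemma 5.3~(1)]{KO13} and \cite[Theorem 1.1]{KS16a}: you supply the routine equivalences (1)$\Leftrightarrow$(2)$\Leftrightarrow$(3) and (4)$\Rightarrow$(3) directly (the tangent-space computation and the conjugacy argument are exactly the standard verifications) and delegate the only substantive implication (3)$\Rightarrow$(4) to the same Kr\"otz--Schlichtkrull theorem. One small polish: for (4)$\Rightarrow$(3) it is safer to argue via Baire category---each double coset is $\sigma$-compact, so among finitely many one must have nonempty interior and is then open by homogeneity---rather than asserting that the orbits are locally closed submanifolds, since that local closedness is not automatic for double cosets and is itself most easily deduced from the finiteness you are assuming.
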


A reductive pair $(\frakg,\frakh)$ is called \textit{non-trivial} if $\frakg\neq\frakh$. We call $(\frakg,\frakh)$ \textit{indecomposable} if there does not exist a non-trivial decomposition $\frakg=\frakg_1\oplus\frakg_2$ with ideals $\frakg_i\subseteq\frakg$ such that $\frakh=(\frakh\cap\frakg_1)\oplus(\frakh\cap\frakg_2)$. Every reductive pair $(\frakg,\frakh)$ of Lie algebras can be written as a direct sum of indecomposable pairs, and we therefore assume that $(\frakg,\frakh)$ is indecomposable.

The main result of this section is a statement about non-open double cosets in $P_H\backslash G/P_G$. Note that for $P_HgP_G\in P_H\backslash G/P_G$ the stabilizer of the $P_H$-orbit through $gP_G\in G/P_G$ is given by $P_H\cap gP_Gg^{-1}$. Let $P_G=M_GA_GN_G$ and $P_H=M_HA_HN_H$ be Langlands decompositions of $P_G$ and $P_H$ and write $\frakm_G$, $\fraka_G$, $\frakn_G$ and $\frakm_H$, $\fraka_H$, $\frakn_H$ for the Lie algebras of $M_G$, $A_G$, $N_G$ and $M_H$, $A_H$, $N_H$.

\begin{theorem}\label{thm:AHProjectionStabilizers}
Assume that $(G,H)$ is a strongly spherical reductive pair such that $(\frakg,\frakh)$ is non-trivial and indecomposable. Then for a double coset $P_HgP_G\in P_H\backslash G/P_G$ the following are equivalent:
\begin{enumerate}
\item $P_HgP_G$ is open in $G$.
\item The projection of $\frakp_H\cap\Ad(g)\frakp_G$ to $\fraka_H$ along $\frakm_H\oplus\frakn_H$ is trivial.
\end{enumerate}
\end{theorem}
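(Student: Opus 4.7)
My plan is to reformulate both conditions Lie-algebraically and then apply the structure theory of strongly spherical pairs. Set $\frakl_g := \frakp_H \cap \Ad(g)\frakp_G$, the Lie algebra of the stabilizer $P_H \cap gP_Gg^{-1}$ of $gP_G \in G/P_G$. Then $P_HgP_G$ is open in $G$ if and only if $\frakp_H + \Ad(g)\frakp_G = \frakg$, equivalently
\[
\dim\frakl_g = d_0 := \dim\frakp_H + \dim\frakp_G - \dim\frakg,
\]
which is the universal lower bound on $\dim\frakl_g$. Since $\frakm_H + \frakn_H$ is an ideal of $\frakp_H$ with abelian quotient $\fraka_H$, the projection $\pi_{\fraka_H}\colon\frakp_H\to\fraka_H$ is a Lie algebra homomorphism, so condition~(2) is equivalent to the inclusion $\frakl_g \subseteq \frakm_H + \frakn_H$.

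For the direction (1) $\Rightarrow$ (2) I would argue by contradiction. Suppose $P_HgP_G$ is open and pick $X = M + H + N \in \frakl_g$ with $M\in\frakm_H$, $N\in\frakn_H$ and $0 \neq H\in\fraka_H$. Carrying out the real Jordan decomposition of $X$ in $\frakp_H$, one conjugates $X$ by a unipotent $u \in N_H$ so that $\Ad(u)X$ decomposes as a Levi part in $\frakm_H \oplus \fraka_H$ plus a commuting nilpotent in $\frakn_H$; since $\frakm_H$ consists of elliptic semisimple elements, the $\RR$-split semisimple part of $X$ in $\frakg$ is then $H' := \Ad(u^{-1})H \neq 0$. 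Because Jordan components of an element in a parabolic subalgebra remain in that parabolic, $H'$ lies in $\frakp_H \cap \Ad(g)\frakp_G = \frakl_g$. The remaining---and main---task is to show that the existence of a nonzero $\RR$-split element in $\frakl_g$ obstructs $\frakp_H + \Ad(g)\frakp_G = \frakg$; equivalently, passing to Killing-orthogonal complements (recall $\frakp_G^\perp = \frakn_G$, $\frakp_H^\perp = \frakn_H$), to produce a nonzero vector in $\frakn_H \cap \Ad(g)\frakn_G$. For this I would appeal to the structure theory of Section~\ref{sec:StructureTheory}, reducing via \cite{KKPS17} to the symmetric case of \cite{KM14} and exploiting the root-system constraints for strongly spherical pairs: the shared $\RR$-split direction $H'$ forces a partial alignment between the $\fraka_H$-restricted and $\Ad(g)\fraka_G$-restricted root systems that leaves a common nilpotent weight vector in $\frakn_H \cap \Ad(g)\frakn_G$.

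The direction (2) $\Rightarrow$ (1) is less delicate: under (2) the stabilizer $\frakl_g$ is anisotropic modulo its unipotent part, bounding $\dim\frakl_g \leq \dim\frakp_H - \dim\fraka_H$; combined with the lower bound $\dim\frakl_g \geq d_0$ and the classification of strongly spherical strictly indecomposable pairs (Theorem~\ref{thm:ClassificationStronglySphericalPairs}), the only possible value of $\dim\frakl_g$ is $d_0$, yielding openness. The main obstacle I expect is furnishing a uniform, classification-free treatment of (1) $\Rightarrow$ (2). A natural route is via a Matsuki-type parameterization of open $P_H$-orbits on $G/P_G$ for strongly spherical symmetric pairs, where the generic stabilizer is known to be anisotropic; an alternative is to verify the root-combinatorial obstruction case by case on the classification list.
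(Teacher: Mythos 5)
Both directions of your proposal have genuine gaps. For (2)$\Rightarrow$(1), the dimension count does not close: condition (2) only gives $\frakl_g\subseteq\frakm_H\oplus\frakn_H$, hence $\dim\frakl_g\leq\dim\frakp_H-\dim\fraka_H$, while openness requires the much stronger equality $\dim\frakl_g=d_0=\dim\frakp_H+\dim\frakp_G-\dim\frakg$. The gap between these two bounds is $\dim\frakn_G-\dim\fraka_H$, which is strictly positive in essentially every case (e.g.\ for $(\GL(n+1,\RR),\GL(n,\RR))$ it is $\tfrac{n(n+1)}{2}-n$), so ``anisotropic modulo unipotent'' plus the classification does not force $\dim\frakl_g=d_0$; no argument is actually supplied, and this is in fact the hard direction. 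The paper proves it by contrapositive through the structural analysis of Section~\ref{sec:StructureTheory}: every non-open coset is moved to a representative $\exp(X)\tilde w$ with $X\in\frakn^{-\sigma}$ (Lemma~\ref{lem:RepresentativesDoubleCosets}), the degenerate case $X_\beta=0$ is handled via Lemma~\ref{lem:RealSphImpliesTransitiveActionOnSpheres}, the open Bruhat cell via the linearization of Lemma~\ref{lem:PHOrbitsInOpenBruhatCell}, and for non-open Bruhat cells an explicit curve $p_t=n_ta_tm_t$ in the stabilizer is constructed whose derivative, analyzed through the root-space Lemmas~\ref{lem:WeylGroupLemma}--\ref{lem:RemainingCase}, exhibits $Z=Z_M+Z_A+Z_N\in\frakp_H\cap\Ad(g)\frakp_G$ with $Z_A\neq0$. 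None of this delicate construction is replaced by anything in your sketch.

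For (1)$\Rightarrow$(2), the Jordan-decomposition reduction to a nonzero $\RR$-split element $H'\in\frakl_g$ is reasonable, but the decisive step --- that such an $H'$ ``forces a partial alignment of root systems'' producing a nonzero vector in $\frakn_H\cap\Ad(g)\frakn_G$ --- is only asserted, and it is exactly where all the work lies; note also that $\frakp_H^{\perp}$ taken in $\frakg$ is strictly larger than $\frakn_H$ (only $\frakn_H\subseteq\frakp_H^{\perp}$ holds), so your ``equivalently'' is false, although the one-way implication you need (a nonzero vector in $\frakn_H\cap\Ad(g)\frakn_G$ obstructs openness) survives. The paper proves this direction by a completely different and genuinely used mechanism: the finite-dimensional spherical matrix coefficients of Proposition~\ref{prop:MatrixCoefficients}, whose abundance is the content of Theorem~\ref{thm:EnoughWeights} (itself verified case by case through the classification). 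Such an $F\geq0$ is real-analytic, nonzero, hence nonvanishing somewhere on the open coset, and evaluating its equivariance \eqref{eq:EquivarianceMatrixCoefficients} along $\exp(tZ)$ gives $\lambda(X_A)+\nu^\vee(Z_A)=0$ for a spanning set of pairs $(\lambda,\nu)$, forcing $Z_A=0$ (Corollary~\ref{cor:OpenDoubleCosetImpliesTrivialAHStabilizer}). So your proposal inverts the actual difficulty of the two implications and, as it stands, proves neither: to make it work you would have to supply the root-theoretic argument you allude to, which amounts to redoing the paper's Section~\ref{sec:ProofOfAHProjectionStabilizersTheorem}, or else import something like the matrix-coefficient argument above.
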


We first show that Theorem~\ref{thm:AHProjectionStabilizers} can be reduced to the case of semisimple $\frakg$. For this write
$$ \frakg = \frakg_{\rm n} \oplus \frakg_{\rm el}, \qquad \frakh = \frakh_{\rm n} \oplus \frakh_{\rm el}, $$
where $\frakg_{\rm n}$ resp. $\frakh_{\rm n}$ is the direct sum of all simple non-compact ideals and $\frakg_{\rm el}$ resp. $\frakh_{\rm el}$ the sum of all simple compact and all abelian ideals. Denote by $p:\frakg\to\frakg_{\rm n}$ the projection map onto the $\frakg_{\rm n}$ along $\frakg_{\rm el}$.

\begin{lemma}\label{lem:ReductionToSemisimple}
	Let $(\frakg,\frakh)$ be an indecomposable reductive pair.
	\begin{enumerate}
		\item $\ker p|_{\frakh}$ does not contain any non-compact abelian ideals of $\frakh$.
		\item If $(\frakg,\frakh)$ is strongly spherical then $(\frakg_{\rm n},p(\frakh))$ is strongly spherical.
	\end{enumerate}
\end{lemma}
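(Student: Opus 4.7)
The plan is to establish (1) first and to feed it as the main technical input into (2).

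For (1) I would argue by contradiction. Suppose $\fraka\subseteq\ker p|_{\frakh}=\frakh\cap\frakg_{\rm el}$ is a non-compact abelian ideal of $\frakh$. As an abelian ideal of the reductive algebra $\frakh$, $\fraka$ lies in $Z(\frakh)$, and ``non-compact'' means $\fraka$ sits in the $(-1)$-eigenspace of a Cartan involution $\theta$ of $\frakh$, chosen to extend one of $\frakg$. Now $\frakg_{\rm el}$ is the direct sum of the compact simple ideals of $\frakg$ (each fixed by $\theta$) and the centre $Z(\frakg)$, so the intersection of $\frakg_{\rm el}$ with the $(-1)$-eigenspace of $\theta$ on $\frakg$ is precisely the split part of $Z(\frakg)$. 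Hence $\fraka$ is a $\theta$-stable central ideal of $\frakg$ contained in $\frakh$. Picking an ideal complement $\frakg=\fraka\oplus\frakg'$, the inclusion $\fraka\subseteq\frakh$ immediately yields the compatible splitting $\frakh=\fraka\oplus(\frakh\cap\frakg')$, contradicting the indecomposability of $(\frakg,\frakh)$.

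For (2), the plan is to apply $p$ to the identity $\frakg=\frakp_G+\frakp_H$ furnished by strong sphericity and check that both summands land in parabolic subalgebras on the right. Two structural inputs are needed. First, part (1) implies that $\ker p|_{\frakh}=\frakh\cap\frakg_{\rm el}$ is a \emph{compact} reductive ideal of $\frakh$: its semisimple part embeds into the sum of the compact simple ideals of $\frakg$, while (1) rules out any non-compact abelian summand. This kernel is $\theta$-stable (because $\frakg_{\rm el}$ is), so writing $\frakh=(\frakh\cap\frakg_{\rm el})\oplus\frakh'$ as a $\theta$-stable ideal decomposition, the restriction $p|_{\frakh'}\colon\frakh'\to p(\frakh)$ is a Lie algebra isomorphism. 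Second, since a compact reductive algebra contains no non-trivial split abelian or nilpotent subalgebras, choosing the Iwasawa data of $\frakh$ compatibly with this ideal decomposition places $\fraka_H$ and $\frakn_H$ entirely inside $\frakh'$. Transporting the Iwasawa decomposition of $\frakh'$ through $\frakh'\cong p(\frakh)$ then shows that $p(\frakp_H)$ is a minimal parabolic subalgebra of $p(\frakh)$.

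On the $\frakg$-side the analogous analysis is immediate: $\frakp_G=\frakp_{\frakg_{\rm n}}\oplus\frakg_{\rm el}$ (the minimal parabolic of a compact-plus-abelian algebra being the whole algebra), so $p(\frakp_G)=\frakp_{\frakg_{\rm n}}$. Applying $p$ to $\frakg=\frakp_G+\frakp_H$ therefore yields $\frakg_{\rm n}=\frakp_{\frakg_{\rm n}}+p(\frakp_H)$, which is exactly the strong sphericity of $(\frakg_{\rm n},p(\frakh))$. The main obstacle is precisely the compactness of $\ker p|_{\frakh}$: without (1) a non-compact abelian summand of the kernel could survive, $p(\fraka_H)$ would fall short of a maximal split abelian subalgebra of $p(\frakh)$, and $p(\frakp_H)$ would only sit in a strict parabolic, breaking the projected identity. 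All the substance of (2) is thus channelled through (1).
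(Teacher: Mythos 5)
Your part (1) is essentially the paper's proof: the paper also reduces to the observation that a non-compact abelian ideal $\fraka\subseteq\frakh\cap\frakg_{\rm el}$ is forced to be an ideal of $\frakg$ (you make this explicit via a Cartan involution, identifying $\frakg_{\rm el}\cap\frakg^{-\theta}$ with the split part of the centre of $\frakg$), and then splits $\frakg=\fraka\oplus\frakg'$ to contradict indecomposability. The only point you pass over is the degenerate case $\fraka=\frakg$, where no non-trivial splitting arises; the paper excludes it by the standing non-triviality assumption $\frakg\neq\frakh$.

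For part (2) your skeleton (apply $p$ to $\frakg=\frakp_G+\frakp_H$ and use $\frakg_{\rm el}\subseteq\frakp_G$, so $p(\frakp_G)=\frakp_G\cap\frakg_{\rm n}$ is minimal parabolic in $\frakg_{\rm n}$) is the paper's, but the logical structure is genuinely different: the paper does \emph{not} use (1) here, and indeed (1) is not needed. For any ideal decomposition $\frakh=\frakj\oplus\frakh'$ with $\frakj=\ker p|_\frakh$, a maximal split abelian subalgebra $\fraka_H\subseteq\frakp_H$ satisfies $\fraka_H=(\fraka_H\cap\frakj)\oplus(\fraka_H\cap\frakh')$ (the two projections are commuting split abelian subalgebras, so maximality forces equality), every restricted root space lies in $\frakj$ or in $\frakh'$, and $Z_\frakh(\fraka_H)$ decomposes accordingly; hence $\frakp_H=(\frakp_H\cap\frakj)\oplus(\frakp_H\cap\frakh')$ and $p(\frakp_H)\cong\frakp_H\cap\frakh'$ is a minimal parabolic of $p(\frakh)$ whether or not $\frakj$ is compact. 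So your closing claim that without (1) ``$p(\frakp_H)$ would only sit in a strict parabolic'' is incorrect, and channelling all of (2) through (1) is an unnecessary detour. The detour can be made rigorous, but the sentence ``choosing the Iwasawa data of $\frakh$ compatibly with this ideal decomposition'' is the delicate spot: $\frakp_H$ is the specific minimal parabolic furnished by strong sphericity, so you may only re-choose its internal Langlands data, and you must check that the given $\frakp_H$ is adapted to $\frakh=\frakj\oplus\frakh'$ (once $\frakj$ is compact this follows since $\frakj$ centralizes $\fraka_H$, so $\frakj\subseteq\frakm_H\oplus\fraka_H\subseteq\frakp_H$); choosing a fresh minimal parabolic would forfeit the hypothesis $\frakg=\frakp_G+\frakp_H$. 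Also ``a compact reductive algebra contains no non-trivial nilpotent subalgebras'' is not literally true (tori are nilpotent); what you need is that it has no nonzero elements that are ad-nilpotent in $\frakh$. In terms of trade-offs: the paper's route is shorter and shows (2) is independent of (1); your route makes explicit the compactness of $\ker p|_\frakh$, which is the fact the paper actually exploits later (injectivity of $p|_{\fraka_H}$), but it is logically heavier than what the statement requires.
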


\begin{proof}
	\begin{enumerate}
		\item Let $\fraka\subseteq\ker p|_{\frakh}$ be a non-compact abelian ideal, then $\fraka\subseteq\frakg_{\rm el}$ and since $\fraka$ is non-compact it has to be an ideal in $\frakg$. Write $\frakg=\fraka\oplus\frakg'$ as sum of ideals, then $\frakh=\fraka\oplus(\frakg'\cap\frakh)$ so that the indecomposability of $(\frakg,\frakh)$ forces $\fraka=0$ or $(\frakg,\frakh)=(\fraka,\fraka)$. Since $(\frakg,\frakh)$ is assumed to be non-trivial, the latter case cannot occur.
		\item We have $\frakp_G+\frakp_H=\frakg$ for some choice of minimal parabolic subalgebras $\frakp_G\subseteq\frakg$ and $\frakp_H\subseteq\frakh$. Since $\frakg_{\rm el}\subseteq\frakp_G$ we have $\frakp_G=(\frakp_G\cap\frakg_{\rm n})\oplus\frakg_{\rm el}$ and $\frakp_G\cap\frakg_{\rm n}\subseteq\frakg_{\rm n}$ is a minimal parabolic subalgebra. This implies $p(\frakp_H)+(\frakp_G\cap\frakg_{\rm n})=\frakg_{\rm n}$. Clearly $p(\frakp_H)$ is a minimal parabolic subalgebra of $p(\frakh)$ so the claim follows.\qedhere
	\end{enumerate}
\end{proof}

By the previous lemma, $p|_{\fraka_H}:\fraka_H\to\frakg_{\rm n}$ is injective, and using $\Ad(g)\frakg_{\rm n}=\frakg_{\rm n}$ it is easy to see that Theorem~\ref{thm:AHProjectionStabilizers} holds for $(\frakg,\frakh)$ if and only if it holds for $(\frakg_{\rm n},p(\frakh))$. Hence, it suffices to show Theorem~\ref{thm:AHProjectionStabilizers} in the case where $\frakg$ is semisimple.

In this section we prove the implication (2)$\Rightarrow$(1) by showing that for every non-open double coset $P_HgP_G$ there exists $Z=Z_M+Z_A+Z_N\in\frakp_H\cap\Ad(g)\frakp_G$ with $Z_A\neq0$. The remaining implication (1)$\Rightarrow$(2) is proved in Section~\ref{sec:MatrixCoefficients} (see Corollary~\ref{cor:OpenDoubleCosetImpliesTrivialAHStabilizer}). Note that Theorem~\ref{thm:AHProjectionStabilizers} is not used in Section~\ref{sec:FiniteDimBranching}.

\subsection{Classification of strongly spherical reductive pairs}\label{sec:ClassificationStronglySphericalPairs}

To state an efficient classification of strongly spherical pairs, we need an additional assumption to avoid exotic situations as in the following example:

\begin{example}\label{ex:ExoticStronglySphericalPairs}
	As observed in \cite{KKPS17}, there exist indecomposable strongly spherical pairs $(\frakg,\frakh)$ such that $\frakg$ has arbitrarily many non-compact simple factors. For instance we have the indecomposable strongly spherical reductive pair
	$$ (\frakg,\frakh) = (\sp(p,q+1)+\cdots+\sp(p,q+1),\sp(p,q)+\cdots+\sp(p,q)+\sp(1)), $$
	where each of the $k$ factors $\sp(p,q)$ of $\frakh$ is embedded into the corresponding factor $\sp(p,q+1)$ of $\frakg$ in the standard way, and $\sp(1)$ is embedded diagonally into $\frakg$ as the centralizer of $\sp(p,q)$ in each factor $\sp(p,q+1)$.
\end{example}

 Following \cite{KKPS17} we call a pair $(\frakg,\frakh)$ \textit{strictly indecomposable} if both $(\frakg,\frakh)$ and $(\frakg_{\rm n},\frakh_{\rm n})$ are indecomposable. Note that $\frakh_{\rm n}\subseteq\frakg_{\rm n}$ is automatic from the definition of $\frakg_{\rm n}$ and $\frakh_{\rm n}$. This definition excludes exotic reductive pairs as in Example~\ref{ex:ExoticStronglySphericalPairs} but still allows certain central extensions of $\frakg$ and $\frakh$ as for the multiplicity one pairs $(\frakg,\frakh)=(\gl(n+1,\FF),\gl(n,\FF))$, $\FF=\RR,\CC$, and $(\fraku(p,q+1),\fraku(p,q))$.
 
 By Lemma~\ref{lem:ReductionToSemisimple}, the study of strictly indecomposable reductive pairs $(\frakg,\frakh)$ can be reduced to the case where $\frakg$ is semisimple. In this case, a classification was obtained by Kobayashi--Matsuki~\cite{KM14} for symmetric pairs and by Knop--Kr\"{o}tz--Pecher--Schlichtkrull~\cite{KKPS16,KKPS17} for arbitrary reductive pairs:

\begin{theorem}[see \cite{KKPS16,KKPS17,KM14}]\label{thm:ClassificationStronglySphericalPairs}
Let $(\frakg,\frakh)$ be a strictly indecomposable reductive pair with $\frakg$ semisimple. Then $(\frakg,\frakh)$ is strongly spherical if and only if it is isomorphic to one of the following pairs:
\begin{enumerate}
\item[{\rm A)}] Trivial case: $\frakg=\frakh$.
\item[{\rm C)}] Compact case: $\frakg$ is the Lie algebra of a compact simple Lie group.
\item[{\rm D)}] Compact subgroup case: $\frakh=\frakk$ is the Lie algebra of a maximal compact subgroup $K$ of a non-compact simple Lie group $G$ with Lie algebra $\frakg$, or $(\frakg,\frakh)$ is one of the following subpairs of $(\frakg,\frakk)$:
\begin{itemize}
\item $(\so(1,2n),\su(n)+\frakf)$ ($n\geq2$) with $\frakf\subseteq\fraku(1)$.
\item $(\so(1,4n),\sp(n)+\frakf)$ ($n\geq1$) with $\frakf\subseteq\sp(1)$.
\item $(\so(1,16),\spin(9))$.
\item $(\so(p,7),\so(p)+\frakg_2)$ ($p=1,2$).
\item $(\so(p,8),\so(p)+\spin(7))$ ($p=1,2,3$).
\item $(\su(1,2n),\sp(n)+\frakf)$ ($n\geq1$) with $\frakf\subseteq\fraku(1)$.
\item $(\sp(1,n),\sp(n)+\frakf)$ ($n\geq1$) with $\frakf\subseteq\sp(1)$.
\item $(\su(p,q),\su(p)+\su(q))$ ($p,q\geq1$, $p\neq q$).
\item $(\so^*(2n),\su(n))$ ($n\geq3$ odd).
\item $(\frake_{6(-14)},\so(10))$.
\end{itemize}
\item[{\rm E)}] Split rank one case ($\rank_\RR\frakg=1$):
\begin{enumerate}
\item[{\rm E1)}] $(\so(1,p+q),\so(1,p)+\so(q))$ ($p,q\geq1$) or one of the following subpairs:
\begin{itemize}
\item $(\so(1,p+2q),\so(1,p)+\su(q)+\frakf)$ ($p\geq1,q\geq2$) with $\frakf\subseteq\fraku(1)$.
\item $(\so(1,p+4q),\so(1,p)+\sp(q)+\frakf)$ ($p\geq1,q\geq2$) with $\frakf\subseteq\sp(1)$.
\item $(\so(1,p+7),\so(1,p)+\frakg_2)$ ($p\geq0$).
\item $(\so(1,p+8),\so(1,p)+\spin(7))$ ($p\geq0$).
\item $(\so(1,p+16),\so(1,p)+\spin(9))$ ($p\geq0$).
\end{itemize}
\item[{\rm E2)}] $(\su(1,p+q),\su(1,p)+\su(q)+\fraku(1))$ ($p,q\geq1$) or one of the following subpairs:
\begin{itemize}
\item $(\su(1,p+q),\su(1,p)+\su(q))$ ($p,q\geq1$, $p+q\geq3$).
\item $(\su(1,p+2q),\su(1,p)+\sp(q)+\frakf)$ ($p,q\geq1$) with $\frakf\subseteq\fraku(1)$.
\end{itemize}
\item[{\rm E3)}] $(\sp(1,p+q),\sp(1,p)+\sp(q))$ ($p,q\geq1$) or the following subpair:
\begin{itemize}
\item $(\sp(1,p+1),\sp(1,p))$ ($p\geq1$).
\end{itemize}
\item[{\rm E4)}] $(\frakf_{4(-20)},\so(8,1))$.
\end{enumerate}
\item[{\rm F)}] Strong Gelfand pairs and their real forms:
\begin{enumerate}
\item[{\rm F1)}] $(\sl(n+1,\CC),\gl(n,\CC))$ ($n\geq2$).
\item[{\rm F2)}] $(\so(n+1,\CC),\so(n,\CC))$ ($n\geq2$).
\item[{\rm F3)}] $(\sl(n+1,\RR),\gl(n,\RR))$ ($n\geq1$).
\item[{\rm F4)}] $(\su(p,q+1),\su(p,q)+\frakf)$ ($p,q+1\geq1$) with $\frakf\subseteq\fraku(1)$ and $\frakf=\fraku(1)$ for $p=q,q+1$.
\item[{\rm F5)}] $(\so(p,q+1),\so(p,q))$ ($p+q\geq2$).
\end{enumerate}
\item[{\rm G)}] Group case: $(\frakg,\frakh)=(\frakg'+\frakg',\diag\frakg')$
\begin{enumerate}
\item[{\rm G1)}] $\frakg'$ is the Lie algebra of a compact simple Lie group.
\item[{\rm G2)}] $\frakg'=\so(1,n)$ ($n\geq2$).
\end{enumerate}
\item[{\rm H)}] Other cases:
\begin{enumerate}
\item[{\rm H1)}] $(\so(2,2n),\su(1,n)+\frakf)$ ($n\geq1$) with $\frakf\subseteq\fraku(1)$.
\item[{\rm H2)}] $(\su^*(2n+2),\su^*(2n)+\RR+\frakf)$ ($n\geq2$) with $\frakf\subseteq\su(2)$.
\item[{\rm H3)}] $(\so^*(2n+2),\so^*(2n)+\frakf)$ ($n\geq1$) with $\frakf\subseteq\so(2)$.
\item[{\rm H4)}] $(\sp(p,q+1),\sp(p,q)+\frakf)$ ($p,q+1\geq1$) with $\frakf\subseteq\sp(1)$.
\item[{\rm H5)}] $(\frake_{6(-26)},\so(9,1)+\RR)$.
\end{enumerate}
\end{enumerate}
\end{theorem}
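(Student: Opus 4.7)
The statement collates Kobayashi--Matsuki's classification of strongly spherical symmetric pairs with Knop--Kr\"{o}tz--Pecher--Schlichtkrull's classification of the non-symmetric strongly spherical pairs, so my plan is to organize the proof as a reduction to those two classifications together with a case-by-case verification that each listed pair really is strongly spherical.

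For the forward direction (each listed pair satisfies $\frakg=\frakp_G+\frakp_H$ for suitable minimal parabolics), I would proceed family-by-family. Cases A and C are trivial, since $\frakp_G=\frakg$ when $G$ is compact and everything collapses when $\frakh=\frakg$. Case D (and its listed subpairs) is the Iwasawa decomposition $\frakg=\frakk+\frakp_G$, with the subpair refinements amounting to the classical fact that the listed reductive subgroups of $K$ act transitively on $K/(K\cap P_G)$, a spherical flag variety of $K$. The rank-one family E is handled by exploiting $\frakn_G=\bigoplus_{\alpha>0}\frakg_\alpha$ with very few positive roots and verifying $\frakh+\frakp_G=\frakg$ directly from the restricted root decomposition. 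For the symmetric families F and H, verification reduces to Matsuki's criterion for the existence of an open $H$-orbit on $G/P_G$, which for these pairs follows from a short restricted root computation. For the group case G, the identity $\frakg'+\frakg'=\diag(\frakg')+(\frakp_{G'}\times\frakp_{G'}')$ for an opposite pair $(\frakp_{G'},\frakp_{G'}')$ reduces to the open Bruhat cell being nonempty in rank $\leq1$, which forces exactly G1 and G2.

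For the reverse direction, I would first apply Lemma~\ref{lem:ReductionToSemisimple} to reduce to $\frakg$ semisimple. Strict indecomposability then forces $\frakg$ to be either simple, or a direct sum of two isomorphic simple factors with $\frakh$ embedded diagonally. In the latter situation, strong sphericity is the statement that $\frakg'\subseteq\frakp_{G'}+w\frakp_{G'}w^{-1}$ for generic $w$, which by rank considerations is possible only in the compact case (G1) and in real rank one (G2). In the simple case, I would separate into the symmetric subcase (settled by Kobayashi--Matsuki~\cite{KM14} via Matsuki's root-theoretic criterion applied to the restricted root system of the symmetric pair) and the non-symmetric subcase (settled by Knop--Kr\"{o}tz--Pecher--Schlichtkrull~\cite{KKPS16,KKPS17} by descending from the classification of spherical subalgebras of complex reductive Lie algebras to their real forms, and then pruning via the real-sphericity requirement).

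The main obstacle is the non-symmetric portion of the reverse direction, which is the substantive content of \cite{KKPS16,KKPS17}. One has to enumerate all reductive subalgebras $\frakh$ of a simple $\frakg$ that are real-spherical in the sense that $(G\times H)/\diag(H)$ carries an open $(P_G\times P_H)$-orbit, and this requires comparing dimensions via $\dim\frakg\leq\dim\frakp_G+\dim\frakp_H$ together with detailed information on the adjoint action of $\frakh$ on $\frakg/\frakp_G$ to exclude candidate pairs that satisfy the dimension inequality but fail to fill out $\frakg$. For the symmetric subcase the analogous obstacle is resolved by Matsuki's combinatorial criterion on restricted roots, which is much cleaner; consequently the bulk of the new work sits in the non-symmetric enumeration, and my proposal would lean on \cite{KKPS16,KKPS17,KM14} rather than reprove it from scratch.
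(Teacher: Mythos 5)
Your overall plan coincides with what the paper actually does for this statement: Theorem~\ref{thm:ClassificationStronglySphericalPairs} is not proved in the paper at all, but assembled by citation --- the enumeration is copied from \cite[Theorem 1.3]{KM14} (the symmetric pairs), with the non-symmetric cases taken from \cite[Table 9]{KKPS17} and the compact-$\frakh$ cases from \cite{KKPS16}. So deferring the substantive content, in particular the non-symmetric half of the reverse direction, to those references is exactly the paper's route, and your assessment of where the real work sits is accurate.

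The verification sketches you add on top of this, however, contain a concrete error: in several places you test real sphericity of $G/H$, i.e.\ $\frakg=\frakp_G+\frakh$, instead of the defining condition of strong sphericity, $\frakg=\frakp_G+\frakp_H$ with a \emph{minimal parabolic subalgebra of $\frakh$}; the two agree only when $\frakh$ is compact (case D). This matters most in the group case G: the identity you invoke, $\frakg'+\frakg'=\diag(\frakg')+(\frakp_{G'}\oplus\overline{\frakp}_{G'})$, equivalently ``$\frakg'=\frakp_{G'}+w\frakp_{G'}w^{-1}$ for generic $w$'', holds for \emph{every} reductive $\frakg'$ (it is nothing but the open Bruhat cell), so it cannot single out G1 and G2. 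The correct condition is $\frakg'\oplus\frakg'=(\frakp_1\oplus\frakp_2)+\diag(\frakp_3)$ with three minimal parabolic subalgebras of $\frakg'$, i.e.\ a minimal parabolic of $\frakg'$ must have an open orbit on the product of two full real flag manifolds; a dimension count ($\dim\frakm+\dim\fraka\geq\dim\frakn$) plus a short argument is what forces $\frakg'$ compact or $\so(1,n)$. The same conflation appears in your treatment of family E, where $\frakh$ is noncompact, so ``verifying $\frakh+\frakp_G=\frakg$'' is strictly weaker than what is needed: for instance $(\su(1,2),\su(1,1))$ satisfies $\dim\frakh+\dim\frakp_G=\dim\frakg$ but $\dim\frakp_H+\dim\frakp_G=7<8$, which is exactly why the constraints $p+q\geq3$, resp.\ the mandatory $\fraku(1)$-factor, appear in E2 and are invisible to the weaker test. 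Finally, your claim that strict indecomposability alone forces $\frakg$ to be simple or a sum of two isomorphic simple factors with $\frakh$ diagonal is an output of the classification, not an a priori reduction (the paper's Example~\ref{ex:ExoticStronglySphericalPairs} shows what has to be excluded), so it cannot serve as the unargued first step of the reverse direction.
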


The list and its enumeration is a copy of the list in \cite[Theorem 1.3]{KM14} to which we added the non-symmetric cases obtained in \cite[Table 9]{KKPS17} and the cases with $\frakh$ compact which are listed in \cite{KKPS16}. This is the reason why B) is missing since it was listed in \cite{KM14} as the abelian case $(\frakg,\frakh)=(\RR,0)$, which we exclude by assuming that $\frakg$ is semisimple.

It is immediate from the classification that every non-trivial strictly indecomposable strongly spherical reductive pair $(\frakg,\frakh)$ with $\frakg$ semisimple is contained inside a non-trivial symmetric pair $(\frakg,\frakg^\sigma)$ (see also \cite[Lemma 1.4]{KKPS16}). This statement is still true if one replaces \emph{strictly indecomposable} by \emph{indecomposable}:

\begin{corollary}\label{cor:ReductiveContainedInSymmetric}
Let $(\frakg,\frakh)$ be a non-trivial indecomposable strongly spherical reductive pair with $\frakg$ semisimple. Then there exists a non-trivial involution $\sigma$ of $\frakg$ such that $\frakh\subseteq\frakg^\sigma$, $\frakh_{\rm n}=(\frakg^\sigma)_{\rm n}$, and $\frakh_{\rm el}$ and $(\frakg^\sigma)_{\rm el}$ only differ in compact factors.
\end{corollary}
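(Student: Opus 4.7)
The plan is to verify the corollary by a case-by-case inspection of the classification in Theorem~\ref{thm:ClassificationStronglySphericalPairs}. In each case I would identify a suitable non-trivial involution $\sigma$ of $\frakg$ and check directly the three conditions $\frakh\subseteq\frakg^\sigma$, $\frakh_{\rm n}=(\frakg^\sigma)_{\rm n}$, and that $\frakh_{\rm el}$ and $(\frakg^\sigma)_{\rm el}$ differ only in compact factors.

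First I would dispose of the cases in which $(\frakg,\frakh)$ is itself a symmetric pair: this covers parts F1--F3, F5 and G, together with the ``base forms'' of D, E1--E4, H5, and those choices of the auxiliary parameter $\frakf$ in F4 and H1--H4 for which the pair happens to be symmetric. Taking $\sigma$ to be the defining involution then gives $\frakh=\frakg^\sigma$ and all three conclusions with equality. The compact case C is handled at once: since $\frakg$ is compact simple we have $\frakg_{\rm n}=\frakh_{\rm n}=0$, so the non-compact condition is vacuous, and the elliptic condition is vacuous as well because every simple factor in sight is compact; the existence of a non-trivial $\sigma$ with $\frakh\subseteq\frakg^\sigma$ is part of the compact classification used in \cite{KKPS16}. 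The bulk of the work is the genuinely non-symmetric subcases in D, E1, E2, F4 and H2--H4. In each of these, $\frakh$ is obtained from a larger symmetric subalgebra by \emph{replacing one of its compact simple or abelian factors} via a classical embedding such as
\[
 \su(n)+\fraku(1)\hookrightarrow\so(2n),\qquad \sp(n)+\sp(1)\hookrightarrow\so(4n),\qquad \sp(n)\hookrightarrow\su(2n),
\]
or an exceptional embedding like $\frakg_2\hookrightarrow\so(7)$, $\spin(7)\hookrightarrow\so(8)$, $\spin(9)\hookrightarrow\so(16)$, or $\su(n)\hookrightarrow\so^*(2n)$. For each subcase I would point to the corresponding symmetric overpair $(\frakg,\frakg^\sigma)$ already present in the list --- for instance $(\so(1,p+2q),\so(1,p)+\so(2q))$ sits above $(\so(1,p+2q),\so(1,p)+\su(q)+\frakf)$, and $(\so(p,7),\so(p)+\so(7))$ sits above $(\so(p,7),\so(p)+\frakg_2)$ --- and observe that the replacement touches only compact summands. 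Hence $\frakh_{\rm n}=(\frakg^\sigma)_{\rm n}$ and $\frakh_{\rm el},(\frakg^\sigma)_{\rm el}$ differ only in compact factors, as required.

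The only real obstacle is bookkeeping: one has to match every non-symmetric subcase with its correct symmetric overpair, and consistently track the admissible choices of $\frakf\subseteq\fraku(1),\sp(1),\so(2),\su(2)$ in the families D, E1, E2, F4 and H1--H4. No further analytic or structural input is needed beyond the classification, the standard list of Riemannian and pseudo-Riemannian symmetric pairs, and the well-known compact embeddings recalled above.
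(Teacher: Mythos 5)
Your overall strategy coincides with the paper's: the text offers no argument for this corollary beyond the remark that the classification in Theorem~\ref{thm:ClassificationStronglySphericalPairs} immediately implies it (with a pointer to \cite{KKPS16}), so a case-by-case matching of each entry of the list with a symmetric overpair is exactly the intended proof. For the entries with $\frakg$ non-compact your verification is correct, and in places simpler than you present it: for every subpair in case D one can take $\sigma=\theta$ outright, since there $\frakh\subseteq\frakk=\frakg^\theta$ and the whole $\frakh$-side is compact, so no intermediate embeddings such as $\su(n)+\fraku(1)\hookrightarrow\so(2n)$ need to be invoked; for the subcases of E1, E2, F4 and H1--H4 the symmetric overpair is the base form of the same item, and passing from $\frakg^\sigma$ to $\frakh$ only shrinks compact simple or compact abelian summands, which is precisely what the conclusion tolerates.

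The genuine gap is case C. There the classification imposes no condition on $\frakh$ whatsoever (any reductive subalgebra of a compact simple $\frakg$ yields a strongly spherical, strictly indecomposable pair), and your assertion that the existence of a non-trivial involution $\sigma$ with $\frakh\subseteq\frakg^\sigma$ ``is part of the compact classification used in \cite{KKPS16}'' is not a citable fact and is false in general: $\su(3)\subseteq\frakg_2$, $\frakg_2\subseteq\so(7)$ and $\spin(7)\subseteq\so(8)$ are non-trivial reductive subalgebras contained in no proper symmetric subalgebra, since the fixed-point algebras of involutions of $\frakg_2$, $\so(7)$, $\so(8)$ are $\su(2)+\su(2)$, $\so(k)+\so(7-k)$, and $\so(k)+\so(8-k)$ or $\fraku(4)$, none of which contains the subalgebra in question (irreducibility on $\RR^7$, resp.\ $\RR^8$, and dimension counts). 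So your argument does not cover the compact case --- and, read literally, neither does the corollary itself; the paper's one-line proof glosses over this. The defect is harmless for everything the corollary is used for, because in case C one has $\fraka_G=\fraka_H=\{0\}$, the single double coset in $P_H\backslash G/P_G$ is open, and both Theorem~\ref{thm:AHProjectionStabilizers} and Theorem~\ref{thm:EnoughWeights} are trivial or proved directly there. To make your write-up correct you should therefore either exclude case C explicitly with this justification, or supply a separate (necessarily different) argument for it rather than appeal to \cite{KKPS16}; for all remaining cases your bookkeeping, as you say, needs no input beyond the classification.
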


\begin{proof}
	Write $(\frakg,\frakh_{\rm n})=(\frakg_1,\frakh_1)\oplus\cdots(\frakg_p,\frakh_p)$ with each $(\frakg_j,\frakh_j)$ indecomposable. Let $p_j:\frakg\to\frakg_j$ denote the projection onto $\frakg_j$ in the decomposition $\frakg=\frakg_1\oplus\cdots\oplus\frakg_p$. Then each pair $(\frakg_j,\frakh_j\oplus p_j(\frakh_{\rm el})))$ is strongly spherical, strictly indecomposable and non-trivial, so by the classification in Theorem~\ref{thm:ClassificationStronglySphericalPairs} there exists an involution $\sigma_j$ on $\frakg_j$ such that $\frakh_j\subseteq\frakg_j^{\sigma_j}\subseteq\frakg_j$, $(\frakh_j)_{\rm n}=(\frakg_j^{\sigma_j})_{\rm n}$, and $p_j(\frakh_{\rm el})$ and $(\frakg_j^{\sigma_j})_{\rm el}$ only differ in compact factors. Define $\sigma$ on $\frakg=\frakg_1\oplus\cdots\oplus\frakg_p$ by
	$$ \sigma(X_1+\cdots+X_p) = \sigma_1(X_1)+\cdots+\sigma_p(X_p), \qquad (X_j\in\frakg_j), $$
	then clearly $\frakh\subseteq\frakg^\sigma\subseteq\frakg$ and  $\frakh_{\rm n}=(\frakh_1)_{\rm n}\oplus\cdots\oplus(\frakh_p)_{\rm n}=(\frakg_1^{\sigma_1})_{\rm n}\oplus\cdots\oplus(\frakg_p^{\sigma_p})_{\rm n}=(\frakg^\sigma)_{\rm n}$. It remains to show that $(\frakg^\sigma)_{\rm el}=(\frakg_1^{\sigma_1})_{\rm el}\oplus\cdots\oplus(\frakg_p^{\sigma_p})_{\rm el}$ and $\frakh_{\rm el}$ only differ in compact factors. Since for every $j$ the subalgebras $p_j(\frakh_{\rm el})$ and $(\frakg_j^{\sigma_j})_{\rm el}$ only differ in compact factors, it suffices to show that a non-compact abelian ideal $\fraka\subseteq\frakh_{\rm el}$ is already contained in one of the subalgebras $\frakg_j^{\sigma_j}$. Let $\frakh_{\rm el}=\fraka\oplus\frakh_{\rm el}'$ with $\fraka$ a non-compact abelian ideal and assume that $p_j(\fraka)\neq\{0\}$ for some $j$. Since $p_j(\fraka)$ is an ideal in $\frakh_j\oplus p_j(\frakh_{\rm el})$ and the pair $(\frakg_j,\frakh_j\oplus p_j(\frakh_{\rm el}))$ is in the list in Theorem~\ref{thm:ClassificationStronglySphericalPairs}, it follows from a case-by-case inspection that $p_j(\fraka)$ is one-dimensional and that $(\frakg_j,\frakh_j\oplus p_j(\frakh_{\rm el}'))$ is \emph{not} strongly spherical. Assume now that $p_k(\fraka)\neq\{0\}$ for some $k\neq j$; then by the same argument $p_k(\fraka)$ is one-dimensional and $(\frakg_k,\frakh_k\oplus p_k(\frakh_{\rm el}'))$ is \emph{not} strongly spherical. This contradicts the fact that $(\frakg,\frakh)$ is strongly spherical by a simple dimension count. Hence, $\fraka\subseteq\frakg_j$, and another look at the classification shows that $\fraka\subseteq\frakg_j^{\sigma_j}$. This finishes the proof.
\end{proof}

It is clear that in this case $(\frakg,\frakg^\sigma)$ is a strongly spherical symmetric pair. This observation will be used to reduce several statements to the case of symmetric pairs.

\subsection{Structure of strongly spherical reductive pairs}\label{sec:StructureStronglySphericalReductivePairs}

We adapt the structure theory developed in \cite{KM14} for strongly spherical symmetric pairs to the case of strongly spherical reductive pairs. For the rest of this section let $(G,H)$ be a strongly spherical reductive pair with $\frakg$ semisimple such that $(\frakg,\frakh)$ is non-trivial and indecomposable. By Corollary~\ref{cor:ReductiveContainedInSymmetric} there exists an involution $\sigma$ of $G$ such that $\frakh\subseteq\frakg^\sigma$, $\frakh_{\rm n}=(\frakg^\sigma)_{\rm n}$, and $\frakh_{\rm el}$ and $(\frakg^\sigma)_{\rm el}$ differ only in compact factors. We first choose minimal parabolic subgroups $P_G\subseteq G$ and $P_H\subseteq H$ in a compatible way.

There exists a Cartan involution $\theta$ of $G$ which commutes with $\sigma$ and leaves $H$ invariant, and hence
$$ K = G^\theta \subseteq G \qquad \mbox{and} \qquad H\cap K = H^\theta \subseteq H $$
are maximal compact subgroups of $G$ and $H$. Fix a maximal abelian subspace $\fraka_H\subseteq\frakh^{-\theta}=\frakg^{\sigma,-\theta}$ and extend it to a maximal abelian subspace $\fraka_G$ in $\frakg^{-\theta}$. Then $\fraka_G$ is $\sigma$-stable and $\fraka_G=\fraka_H\oplus\fraka_G^{-\sigma}$ with $\fraka_H=\fraka_G^\sigma$. We put
$$ A_G = \exp(\fraka_G), \qquad A_H = \exp(\fraka_H). $$

For $\alpha\in\fraka_G^\vee$ and $\beta\in\fraka_H^\vee$ we write
\begin{align*}
 \frakg(\fraka_G;\alpha) &= \{X\in\frakg:[H,X]=\alpha(H)X\,\forall\,H\in\fraka_G\},\\
 \frakg(\fraka_H;\beta) &= \{X\in\frakg:[H,X]=\beta(H)X\,\forall\,H\in\fraka_H\}.
\end{align*}
for the corresponding weight spaces. Let $\Sigma(\frakg,\fraka_G)$ and $\Sigma(\frakg,\fraka_H)$ denote the respective non-zero weights with non-trivial weight spaces; then both sets form root systems. Denote by $\overline{\alpha}=\alpha|_{\fraka_H}$ the restriction of a root $\alpha\in\Sigma(\frakg,\fraka_G)$ to $\fraka_H$, then $\overline{\alpha}\in\Sigma(\frakg,\fraka_H)\cup\{0\}$. We choose compatible positive systems $\Sigma^+(\frakg,\fraka_G)$ and $\Sigma^+(\frakg,\fraka_H)$ in the sense that
$$ \overline{\alpha}\in\Sigma^+(\frakg,\fraka_H)\cup\{0\} \quad \forall\,\alpha\in\Sigma^+(\frakg,\fraka_G). $$
As usual, for $\alpha\in\Sigma(\frakg,\fraka_G)$ we write $\alpha>0$ if $\alpha\in\Sigma^+(\frakg,\fraka_G)$ and $\alpha<0$ if $-\alpha\in\Sigma^+(\frakg,\fraka_G)$.

Further, define the nilpotent subalgebras
$$ \frakn_G = \bigoplus_{\alpha\in\Sigma^+(\frakg,\fraka_G)} \frakg(\fraka_G;\alpha), \qquad \frakn = \bigoplus_{\substack{\alpha\in\Sigma^+(\frakg,\fraka_G)\\\overline{\alpha}\neq0}} \frakg(\fraka_G;\alpha) = \bigoplus_{\beta\in\Sigma^+(\frakg,\fraka_H)} \frakg(\fraka_H;\beta). $$
Then $\frakn$ is $\sigma$-stable and therefore we have a direct sum decomposition
$$ \frakn = \frakn^\sigma \oplus \frakn^{-\sigma}. $$
Put $\frakn_H=\frakn^\sigma$ and
$$ N_G = \exp(\frakn_G), \qquad N = \exp(\frakn), \qquad N_H = \exp(\frakn_H). $$

Finally, we define
$$ M_G = Z_K(\fraka_G), \qquad L = Z_G(\fraka_H), \qquad M_H = Z_{H\cap K}(\fraka_H). $$
Then $P_G=M_GA_GN_G$ is a minimal parabolic subgroup of $G$, $Q=LN$ is another parabolic subgroup of $G$, and $P_H=M_HA_HN_H$ is a minimal parabolic subgroup of $H$ such that
$$ P_G\subseteq Q\supseteq P_H. $$

\subsection{The double coset space $P_H\backslash G/P_G$}

To study the $P_H$-orbits in $G/P_G$ we use the Bruhat decomposition of $G$ with respect to the parabolic subgroups $P_G$ and $Q$. Let $W=W(\fraka_G)=N_K(A_G)/A_G$ denote the Weyl group of $\Sigma(\frakg,\fraka_G)$ and pick a representative $\tilde{w}\in N_K(A_G)$ for every $w\in W$. Then, since $G$ is of Harish-Chandra class, we have the Bruhat decomposition (see e.g. \cite[Proposition 1.2.1.10]{War72})
$$ G = \bigcup_{w\in W_Q\backslash W} Q\tilde{w}P_G, $$
where
$$ W_Q = Z_W(A_H) = \{w\in W:\tilde{w}\in L\cap K\}. $$
Since $P_H\subseteq Q$, every $P_H$-orbit $P_H\cdot gP_G$ is contained in a Bruhat cell $Q\cdot\tilde{w}P_G\subseteq G/P$. As homogeneous spaces we have
$$ Q\cdot\tilde{w}P_G \simeq Q/(Q\cap\tilde{w}P_G\tilde{w}^{-1}) $$
whence we are led to study the $P_H$-orbits in $Q/(Q\cap\tilde{w}P_G\tilde{w}^{-1})$ for $w\in W$. The following result is shown in \cite[Lemma 3.5~(2)]{KM14} for strongly spherical symmetric pairs, and we extend it to the context of strongly spherical reductive pairs:

\begin{lemma}\label{lem:MGplusMH}
The extension of $\fraka_H$ to $\fraka_G$ can be chosen such that
$$ \frakl\cap\frakk = Z_\frakk(\fraka_H) = \frakm_H+\frakm_G \qquad \mbox{and} \qquad L\cap K = Z_K(\fraka_H) = M_HM_G. $$
\end{lemma}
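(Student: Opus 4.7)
The plan is to deduce the statement from its symmetric-pair analog established in \cite[Lemma 3.5(2)]{KM14}, and then to account for the ``extra'' compact factors which distinguish the general reductive situation from the symmetric one. First I would invoke Corollary~\ref{cor:ReductiveContainedInSymmetric} to fix a non-trivial involution $\sigma$ of $\frakg$ with $\frakh\subseteq\frakh':=\frakg^\sigma$ such that $\frakh$ and $\frakh'$ share the same non-compact simple and the same abelian ideals, differing only by a compact summand $\frakc$ of $\frakh'$. Then $\frakh'=\frakh\oplus\frakc$ as a direct sum of ideals in $\frakh'$, $\frakc\subseteq\frakk$ (being compact), and $\frakc$ centralizes $\frakh$. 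Since $\frakc^{-\theta}=0$, we get $\frakh'^{-\theta}=\frakh^{-\theta}$, so the chosen maximal abelian subspace $\fraka_H\subseteq\frakh^{-\theta}$ is automatically maximal abelian in $\frakh'^{-\theta}$, i.e.\ we may identify $\fraka_H=\fraka_{H'}$.

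Next, I would apply \cite[Lemma 3.5(2)]{KM14} to the strongly spherical symmetric pair $(\frakg,\frakh')$: the extension of $\fraka_H$ to $\fraka_G\subseteq\frakg^{-\theta}$ can be chosen so that $Z_\frakk(\fraka_H)=\frakm_{H'}+\frakm_G$ and $Z_K(\fraka_H)=M_{H'}M_G$. Since $\frakc$ commutes with $\fraka_H$, one has
\begin{equation*}
 \frakm_{H'} = Z_{\frakh'\cap\frakk}(\fraka_H) = Z_{(\frakh\cap\frakk)\oplus\frakc}(\fraka_H) = \frakm_H + \frakc,
\end{equation*}
and analogously at the group level $M_{H'}=M_H\cdot C$ for $C:=\exp\frakc\subseteq H'\cap K$. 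The desired statement for $(\frakg,\frakh)$ therefore reduces to proving the inclusion $\frakc\subseteq\frakm_H+\frakm_G$; once this is established, the group-level assertion follows formally from $L\cap K=M_{H'}M_G=M_H\cdot C\cdot M_G\subseteq M_H\cdot(M_HM_G)\cdot M_G=M_HM_G$, using that $M_H$ and $M_G$ are subgroups of $K$.

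The main obstacle is thus to prove $\frakc\subseteq\frakm_H+\frakm_G$, and I expect to verify it case-by-case using the classification Theorem~\ref{thm:ClassificationStronglySphericalPairs}. The families in which $\frakh\neq\frakh'$ fall into two types. For the compact subgroup subpairs of case D), one has $\frakh\subsetneq\frakk$ and $\fraka_H=0$, so $Z_\frakk(\fraka_H)=\frakk$ and $\frakm_H=\frakh$; the claim then reduces to $\frakk=\frakh+\frakm_G$, which follows from strong sphericity $\frakg=\frakp_H+\frakp_G=\frakh+\frakm_G+\fraka_G+\frakn_G$ (with $P_H=H$ since $H$ is compact) by projecting along $\fraka_G+\frakn_G$. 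For the remaining ``$\frakf$-subpairs'' (in cases E1), E2), F4) and several of H1)--H5)), $\frakc$ is a small compact factor of type $\fraku(1)$, $\sp(1)$, $\so(2)$, $\su(2)$, or $\RR$ sitting centrally in a Levi subalgebra of $\frakg$ containing $\frakh_{\rm n}$, and an explicit computation inside each family (of the sort I illustrated for $\fraks(\fraku(p,q)+\fraku(1))\subseteq\su(p,q+1)$) writes a basis of $\frakc$ as sums of diagonal elements in $\frakm_H$ and $\frakm_G$, settling the inclusion.
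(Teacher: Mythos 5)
Your reduction hinges on the claim that Corollary~\ref{cor:ReductiveContainedInSymmetric} yields a splitting $\frakg^\sigma=\frakh\oplus\frakc$ with $\frakc$ a compact \emph{ideal} centralizing $\frakh$, so that $Z_{\frakg^\sigma\cap\frakk}(\fraka_H)=\frakm_H+\frakc$ and $Z_{K^\sigma}(\fraka_H)=M_H\cdot C$. That is false for a large part of the classification: ``differ only in compact factors'' only means that the elliptic part of $\frakh$ sits inside that of $\frakg^\sigma$ with compact discrepancy, not that it is an ideal complement. In the subpairs of E1) and E2) the compact part of $\frakh$ is for instance $\su(q)+\frakf\subseteq\so(2q)$, $\sp(q)+\frakf\subseteq\su(2q)$ or $\so(4q)$, $\frakg_2\subseteq\so(7)$, $\spin(7)\subseteq\so(8)$, $\spin(9)\subseteq\so(16)$, and in H2), H4) with $\frakf=\fraku(1)$ one has $\fraku(1)\subsetneq\su(2)$ resp.\ $\fraku(1)\subsetneq\sp(1)$; in none of these cases does an ideal $\frakc$ with $\frakg^\sigma=\frakh\oplus\frakc$ exist, so neither the identity $\frakm_{H'}=\frakm_H+\frakc$ nor your target inclusion ``$\frakc\subseteq\frakm_H+\frakm_G$'' makes sense. (The E-subpairs happen to be harmless because there $\rank_\RR\frakg=\rank_\RR\frakh=1$ forces $\fraka_H=\fraka_G$, so the lemma is trivial --- but that is not the reason you give; H2) and H4) with $\frakf=\fraku(1)$ and unequal split ranks are genuinely non-trivial and escape your scheme.) The correct reduction from \cite[Lemma 3.5(2)]{KM14} would be the inclusion $Z_{\frakg^\sigma\cap\frakk}(\fraka_H)\subseteq\frakm_H+\frakm_G$ together with its group analogue, which you neither state nor prove; moreover it would have to be verified for the particular extension $\fraka_G$ that KM14 provides (or you must show you can still modify that extension without losing KM14's conclusion), a point your sketch glosses over. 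The group-level step inherits the same problem, since $M_{H'}=M_H\cdot C$ again presupposes the ideal splitting, and even then the components of $Z_{K^\sigma}(\fraka_H)$ would need a separate argument.

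For comparison, the paper's proof is uniform and uses no classification: strong sphericity of $(G,H)$ itself gives an open coset $P_HgP_G$, which by \cite[Lemma 3.7]{KM14} applied to the symmetric pair $(G,G^\sigma)$ lies in $P_{G^\sigma}\exp(X)\tilde{w}_0P_G$ with $X\in\frakn^{-\sigma}$; after replacing the $P_{G^\sigma}$-factor by some $m\in Z_{K^\sigma}(\fraka_H)$, the openness identity $\Ad(\exp(-X))\Ad(m^{-1})\frakp_H+\overline{\frakp}_G=\frakg$ combined with $\frakg=\overline{\frakn}\oplus\frakl\oplus\frakn$ yields $\frakm_H+\Ad(m)\frakm_G=\frakl\cap\frakk$, and the freedom asserted in the lemma is used precisely to replace $\fraka_G$ by $\Ad(m)\fraka_G$; the group identity then follows because $M_G$ meets every component of $L$. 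Your Iwasawa argument for case D) is fine as far as the Lie algebra identity goes, but the general strategy needs to be repaired along the paper's lines or by proving the corrected inclusion $Z_{\frakg^\sigma\cap\frakk}(\fraka_H)\subseteq\frakm_H+\frakm_G$ case by case.
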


\begin{proof}
Let $P_{G^\sigma}=Z_{K^\sigma}(\fraka_H)A_HN_H$, a minimal parabolic subgroup of $G^\sigma$. Since $(G,H)$ is strongly spherical, there exists $g\in G$ such that $P_HgP_G$ is open. Now $P_H\subseteq P_{G^\sigma}$ and hence $P_HgP_G$ is contained in an open double coset in $P_{G^\sigma}\backslash G/P_G$. By \cite[Lemma 3.7]{KM14} such open double cosets have representatives in $\exp(\frakn^{-\sigma})\widetilde{w}_0$ so that $P_HgP_G\subseteq P_{G^\sigma}\exp(X)\widetilde{w}_0P_G$ for some $X\in\frakn^{-\sigma}$. Hence, $g\in p\exp(X)\widetilde{w}_0P_G$ with $p\in P_{G^\sigma}$ so that
$$ P_HgP_G = P_Hp\exp(X)\widetilde{w}_0P_G. $$
Now $P_Hp=P_Hm$ with $m\in Z_{K^\sigma}(\fraka_H)$ and $(m^{-1}P_Hm)\exp(X)\widetilde{w}_0P_G\subseteq G$ is open. This implies
$$ \Ad(\exp(-X))\Ad(m^{-1})\frakp_H+\overline{\frakp}_G=\frakg, $$
where $\overline{\frakp}_G=\Ad(\widetilde{w}_0)\frakp_G$ denotes the opposite parabolic subalgebra. Note that for $Z=Z_M+Z_A+Z_N\in\Ad(m)^{-1}\frakm_H+\fraka_H+\frakn_H=\Ad(m^{-1})\frakp_H$ we have
$$ \Ad(\exp(-X))Z = \underbrace{Z_M+Z_A}_{\in\Ad(m^{-1})\frakm_H+\fraka_H\subseteq\frakl} + \underbrace{(1-\Ad(\exp(-X)))(Z_M+Z_A)+\Ad(\exp(-X))Z_N}_{\in\frakn} $$
and further
$$ \overline{\frakp}_G = \underbrace{(\frakl\cap\overline{\frakp}_G)}_{\subseteq\frakl} \oplus \underbrace{\overline{\frakn}_G}_{\subseteq\overline{\frakn}}. $$
From the decomposition $\frakg=\overline{\frakn}\oplus\frakl\oplus\frakn$ it now follows that $\Ad(m^{-1})\frakm_H+(\frakl\cap\overline{\frakp}_G)=\frakl$. Intersecting with $\frakk$ we obtain $\Ad(m^{-1})\frakm_H+\frakm_G=\frakl\cap\frakk$, or equivalently $\frakm_H+\Ad(m)\frakm_G=\frakl\cap\frakk$. Replacing $\fraka_G$ by $\Ad(m)\fraka_G$ changes $\frakm_G$ to $\Ad(m)\frakm_G$ and hence we may assume $\frakm_H+\frakm_G=\frakl\cap\frakk$. The identity $L\cap K=M_HM_G$ then follows since $M_GA_G\subseteq L$ is the Levi factor of a minimal parabolic subgroup of $L$ and hence $M_G$ meets every connected component of $L$.
\end{proof}

For the rest of this section we choose the extension $\fraka_G$ of $\fraka_H$ as in Lemma~\ref{lem:MGplusMH}. Then we obtain the following generalization of \cite[Lemma 3.7]{KM14} to not necessarily symmetric $(G,H)$:

\begin{lemma}\label{lem:RepresentativesDoubleCosets}
For every $w\in W$ we have
$$ Q = M_HN_H\exp(\frakn^{-\sigma})(L\cap\tilde{w}P_G\tilde{w}^{-1}). $$
In particular, every $P_H$-orbit in $Q/(Q\cap\tilde{w}P_G\tilde{w}^{-1})$ has a representative of the form $\exp(X)$ for some $X\in\frakn^{-\sigma}$, or equivalently every $P_H$-orbit in $G/P_G$ has a representative of the form $\exp(X)\tilde{w}$ for some $X\in\frakn^{-\sigma}$, $w\in W$.
\end{lemma}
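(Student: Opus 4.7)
The plan is to establish the identity $Q = M_H N_H \exp(\frakn^{-\sigma})(L \cap \tilde{w}P_G\tilde{w}^{-1})$ by combining three ingredients: the $\sigma$-decomposition of the unipotent radical $N$, an Iwasawa-type factorization of the Levi $L$ relative to the parabolic $L \cap \tilde{w}P_G\tilde{w}^{-1}$, and the identity $L \cap K = M_H M_G$ supplied by Lemma~\ref{lem:MGplusMH}. The ``in particular'' statement then drops out by projecting to $Q/(Q \cap \tilde{w}P_G\tilde{w}^{-1})$.

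First I would decompose $N$. Since $\frakn$ is nilpotent and $\sigma$-stable with $\frakn^\sigma = \frakn_H$ a subalgebra satisfying $[\frakn^\sigma,\frakn^{-\sigma}]\subseteq\frakn^{-\sigma}$, a standard argument (used in the symmetric case in \cite{KM14}) shows that the product map $N_H \times \exp(\frakn^{-\sigma}) \to N$, $(n,x)\mapsto nx$, is a diffeomorphism, so $N = N_H\exp(\frakn^{-\sigma})$. Because $L$ normalizes $N$ in $Q = LN$, we obtain $Q = NL = N_H\exp(\frakn^{-\sigma})L$, and the remaining task is to rewrite the trailing factor $L$.

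Next I would show $L = M_H(L \cap \tilde{w}P_G\tilde{w}^{-1})$. Since $\tilde{w}\in N_K(A_G)$, the conjugate $\tilde{w}P_G\tilde{w}^{-1}$ is a minimal parabolic of $G$ containing $A_G$, and inspection of the $\fraka_G$-root spaces identifies $L \cap \tilde{w}P_G\tilde{w}^{-1}$ as a parabolic subgroup of the reductive group $L$. Since $L$ is $\theta$-stable, $L\cap K$ is its maximal compact subgroup, and the standard factorization $L = (L\cap K)P'$ for any parabolic $P'\subseteq L$ gives
$$L = (L \cap K)(L \cap \tilde{w}P_G\tilde{w}^{-1}).$$
By Lemma~\ref{lem:MGplusMH}, $L \cap K = M_H M_G$; and since $\tilde{w}$ normalizes $M_G = Z_K(A_G)$, we have $M_G\subseteq \tilde{w}P_G\tilde{w}^{-1}$, hence $M_G\subseteq L\cap\tilde{w}P_G\tilde{w}^{-1}$. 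This yields $L = M_H M_G(L\cap\tilde{w}P_G\tilde{w}^{-1}) = M_H(L\cap\tilde{w}P_G\tilde{w}^{-1})$.

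To conclude, I would commute $M_H$ past $N_H\exp(\frakn^{-\sigma})$: $M_H$ normalizes $N_H$ since $P_H = M_HA_HN_H$ is a group; $\Ad(M_H)$ preserves $\frakn^{-\sigma}$ because $\frakm_H\subseteq\frakg^\sigma$ (so $\Ad(M_H)$ commutes with $\sigma$ on $\frakg$, with the component-group issue harmless in the Harish-Chandra-class setup); and $M_H\subseteq L$ normalizes $N$. Hence $N_H\exp(\frakn^{-\sigma})M_H = M_H N_H\exp(\frakn^{-\sigma})$, and we reach
$$Q = N_H\exp(\frakn^{-\sigma}) M_H(L\cap\tilde{w}P_G\tilde{w}^{-1}) = M_H N_H\exp(\frakn^{-\sigma})(L\cap\tilde{w}P_G\tilde{w}^{-1}).$$
The final claim is then immediate: since $L\cap\tilde{w}P_G\tilde{w}^{-1}\subseteq Q\cap\tilde{w}P_G\tilde{w}^{-1}$, every coset in $Q/(Q\cap\tilde{w}P_G\tilde{w}^{-1})$ has a representative in $M_HN_H\exp(\frakn^{-\sigma})$, and absorbing the $M_HN_H\subseteq P_H$ factor into the $P_H$-orbit leaves a representative $\exp(X)$ with $X\in\frakn^{-\sigma}$; transporting through the $P_H$-equivariant isomorphism $Q\tilde{w}P_G/P_G\cong Q/(Q\cap\tilde{w}P_G\tilde{w}^{-1})$ gives representatives $\exp(X)\tilde{w}$ for each $P_H$-orbit in $G/P_G$. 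The main conceptual step is recognizing that $L\cap\tilde{w}P_G\tilde{w}^{-1}$ is itself a parabolic of $L$ (regardless of $w$), so that Iwasawa applied to $L$ combined with Lemma~\ref{lem:MGplusMH} delivers precisely the $M_H$-factor needed.
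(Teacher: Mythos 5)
Your proposal is correct and follows essentially the same route as the paper: the intersection $L\cap\tilde{w}P_G\tilde{w}^{-1}$ is parabolic in $L$, the Iwasawa decomposition of $L$ together with Lemma~\ref{lem:MGplusMH} gives $L=M_H(L\cap\tilde{w}P_G\tilde{w}^{-1})$, and $N=N_H\exp(\frakn^{-\sigma})$ from \cite[Lemma 3.6]{KM14} finishes the factorization. The only difference is cosmetic ordering; note that your commuting step is immediate from the set equality $N_H\exp(\frakn^{-\sigma})=N$ and the fact that $M_H\subseteq L$ normalizes $N$, so no appeal to $\Ad(M_H)$ preserving $\frakn^{-\sigma}$ is needed.
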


\begin{proof}
First note that $\tilde{w}P_G\tilde{w}^{-1}$ is a minimal parabolic subgroup of $G$. Since $L$ is a reductive subgroup of $G$, the intersection $L\cap\tilde{w}P_G\tilde{w}^{-1}$ is parabolic in $L$. By the Iwasawa decomposition for $L$ we find that $L=(L\cap K)(L\cap\tilde{w}P_G\tilde{w}^{-1})$. Now, $L\cap K=M_HM_G$ by Lemma~\ref{lem:MGplusMH}, whence
$$ L = M_HM_G(L\cap\tilde{w}P_G\tilde{w}^{-1}) = M_H(L\cap\tilde{w}P_G\tilde{w}^{-1}). $$
Inserting this into the Langlands decomposition for $Q$ we find
$$ Q = NL = NM_H(L\cap\tilde{w}P_G\tilde{w}^{-1}) = M_HN(L\cap\tilde{w}P_G\tilde{w}^{-1}). $$
Finally, $N=N_H\exp(\frakn^{-\sigma})$ by \cite[Lemma 3.6]{KM14} and the proof is complete.
\end{proof}

\subsection{$P_H$-orbits in the open Bruhat cell}

Let $w_0\in W$ denote the longest Weyl group element. Then the Bruhat cell $Q\tilde{w}_0P_G$ is open and dense in $G$. Hence, $Q\tilde{w}_0P_G$ is the unique open Bruhat cell.

Now let us consider the double cosets $P_HgP_G$ which are contained in the open Bruhat cell $Q\tilde{w}_0P_H$. These can be identified with the orbits of the adjoint action of $(M_G\cap M_H)A_H$ on $\frakn^{-\sigma}$:

\begin{lemma}\label{lem:PHOrbitsInOpenBruhatCell}
The natural inclusion
$$ \frakn^{-\sigma}\stackrel{\exp}{\to} N \hookrightarrow Q $$
induces a bijection
\begin{equation}
 \frakn^{-\sigma}/(M_G\cap M_H)A_H \simeq P_H\backslash Q/(Q\cap\tilde{w}_0P_G\tilde{w}_0^{-1}).\label{eq:LinearizationOpenBruhatCell}
\end{equation}
\end{lemma}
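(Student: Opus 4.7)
The plan is to produce the bijection by combining uniqueness of the Langlands decomposition $Q=LN$ with uniqueness of the Iwasawa decomposition of $L$. First I would determine the right stabilizer $Q\cap\tilde{w}_0P_G\tilde{w}_0^{-1}$. Writing $\overline{P}_G=\tilde{w}_0P_G\tilde{w}_0^{-1}=M_GA_G\overline{N}_G$, on the Lie algebra level $\frakn$ consists of $\fraka_G$-weight spaces for roots $\alpha$ with $\overline{\alpha}\neq 0$ while $\overline{\frakn}_G$ consists of negative $\fraka_G$-roots, so $N\cap\overline{N}_G=\{e\}$. Combining this with $M_GA_G\subseteq L$ yields
$$ Q\cap\overline{P}_G \;=\; L\cap\overline{P}_G \;=\; M_GA_G\,\overline{N}_G^L, \qquad \overline{N}_G^L:=L\cap\overline{N}_G. $$
This immediately shows that the map $\Phi:X\mapsto P_H\exp(X)(Q\cap\overline{P}_G)$ descends to $(M_G\cap M_H)A_H$-orbits on $\frakn^{-\sigma}$: the group $(M_G\cap M_H)A_H$ sits inside both $P_H$ (via $M_HA_H$) and $Q\cap\overline{P}_G$ (via $M_GA_G$), so conjugation by its elements preserves the double coset. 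Surjectivity of $\Phi$ is exactly Lemma~\ref{lem:RepresentativesDoubleCosets}.

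The substance is injectivity. Assume $\exp(X_1)=p_H\exp(X_2)q$ with $p_H=m_Ha_Hn_H\in M_HA_HN_H$ and $q=m_G'a_G'\overline{n}_L\in M_GA_G\overline{N}_G^L$. Rearranging in $Q=LN$,
$$ \exp(X_1) \;=\; \bigl(m_Ha_H\,m_G'a_G'\overline{n}_L\bigr)\cdot\Ad\bigl((m_G'a_G'\overline{n}_L)^{-1}\bigr)\bigl(n_H\exp(X_2)\bigr), $$
a product of an element of $L$ and an element of $N$. Since $\exp(X_1)\in N$, uniqueness of the decomposition $Q=LN$ forces the $L$-factor to be trivial, i.e.\
$$ m_G'\,a_G'\,\overline{n}_L \;=\; m_H^{-1}\,a_H^{-1}. $$
I now read both sides as Iwasawa decompositions in $L=(L\cap K)A_G\overline{N}_G^L$. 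On the left this is tautological since $M_G\subseteq L\cap K$; on the right, Lemma~\ref{lem:MGplusMH} gives $M_H\subseteq L\cap K=M_HM_G$, so $m_H^{-1}\in L\cap K$, $a_H^{-1}\in A_H\subseteq A_G$, and the $\overline{N}_G^L$-part is trivial. Iwasawa uniqueness in $L$ then forces $m_G'=m_H^{-1}$, $a_G'=a_H^{-1}$ and $\overline{n}_L=e$; the first of these promotes $m_H$ from $M_H$ to $M_G\cap M_H$.

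Substituting back, the residual identity in $N$ reads
$$ \exp(X_1) \;=\; \Ad(m_Ha_H)(n_H\exp(X_2)) \;=\; \Ad(m_Ha_H)(n_H)\cdot\exp\bigl(\Ad(m_Ha_H)X_2\bigr). $$
Because $m_Ha_H\in L\cap H$ commutes with $\sigma$ and normalizes $\frakn$, it preserves both $\frakn^\sigma$ and $\frakn^{-\sigma}$, so the right hand side has the form $n_H'\cdot\exp(X')$ with $n_H'\in N_H$ and $X'\in\frakn^{-\sigma}$. Applying uniqueness of the product decomposition $N=N_H\cdot\exp(\frakn^{-\sigma})$ — which holds since $\frakn=\frakn^\sigma\oplus\frakn^{-\sigma}$ as vector spaces and $\exp:\frakn\to N$ is a diffeomorphism — yields $n_H'=e$ (hence $n_H=e$) and $X_1=\Ad(m_Ha_H)X_2$, the required equivalence modulo $(M_G\cap M_H)A_H$.

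The main technical point is the double use of uniqueness: first for $Q=LN$, which kills the $N$-components and reduces to an identity inside $L$, and then Iwasawa in $L$, where Lemma~\ref{lem:MGplusMH} is essential for upgrading $m_H\in M_H$ to $m_H\in M_G\cap M_H$ — the step that pins down the ambiguity exactly to $(M_G\cap M_H)A_H$ rather than to $M_HA_H$. A secondary point to treat carefully is the uniqueness of $N=N_H\cdot\exp(\frakn^{-\sigma})$, which I would justify by the standard fact that for any vector-space splitting $\frakn=\fraku\oplus\frakv$ of a nilpotent Lie algebra the map $(U,V)\mapsto\exp(U)\exp(V)$ is a diffeomorphism onto $N$.
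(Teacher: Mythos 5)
Your argument is correct, and it is essentially the proof the paper delegates to \cite[Lemma 3.7]{KM14} (``translates literally''): well-definedness because $(M_G\cap M_H)A_H$ sits in both $P_H$ and $Q\cap\tilde{w}_0P_G\tilde{w}_0^{-1}=M_GA_G(L\cap\overline{N}_G)$, surjectivity from Lemma~\ref{lem:RepresentativesDoubleCosets}, and injectivity by peeling off first the Levi decomposition $Q=L\ltimes N$, then Iwasawa uniqueness in $L$ (which is what forces $m_H\in M_G\cap M_H$ and kills the $\overline{N}_G$-part), and finally the unique decomposition $N=N_H\exp(\frakn^{-\sigma})$. One caveat: your closing ``standard fact'' is false as stated for an arbitrary vector-space splitting of a nilpotent Lie algebra --- e.g.\ in the Heisenberg algebra with $\fraku=\operatorname{span}(X,Y)$ and $\frakv=\operatorname{span}(X+Z)$ the map $(U,V)\mapsto\exp(U)\exp(V)$ is not surjective --- the correct statement requires the left factor to be (the group of) a subalgebra; this is harmless here, since $\frakn^{\sigma}$ is a subalgebra (fixed points of $\sigma$ on $\frakn$), and in any case the needed diffeomorphism $N_H\times\frakn^{-\sigma}\to N$ is exactly \cite[Lemma 3.6]{KM14}, which the paper itself uses with uniqueness in Section~\ref{sec:ProofOfAHProjectionStabilizersTheorem}.
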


\begin{proof}
The proof of \cite[Lemma 3.7]{KM14} for the case of symmetric pairs translates literally to our situation.
\end{proof}

Since $\fraka_H$ preserves $\frakn^{\pm\sigma}$ we can write
\begin{equation}
 \frakn^{\pm\sigma} = \bigoplus_{\beta\in\Delta(\frakn^{\pm\sigma})} \frakg^{\pm\sigma}(\fraka_H;\beta),\label{eq:DecompositionN-sigma}
\end{equation}
where $\frakg^{\pm\sigma}(\fraka_H;\beta)=\frakg(\fraka_H;\beta)\cap\frakg^{\pm\sigma}$ and
$$ \Delta(\frakn^{\pm\sigma}) = \{\beta\in\Sigma^+(\frakg,\fraka_H):\frakg(\fraka_H;\beta)\cap\frakn^{\pm\sigma}\neq\{0\}\}. $$
Clearly $M_H$ preserves this decomposition. We can therefore endow $\frakn^{-\sigma}$ with an $M_H$-invariant inner product such that the decomposition~\eqref{eq:DecompositionN-sigma} is orthogonal. For each $\beta\in\Delta(\frakn^{-\sigma})$, denote by $S_\beta\subseteq\frakg^{-\sigma}(\fraka_H;\beta)$ the unit sphere with respect to this inner product. We then have the following version of \cite[Proposition 3.11]{KM14}:

\begin{lemma}\label{lem:RealSphImpliesTransitiveActionOnSpheres}
If the pair $(G,H)$ is strongly spherical, then
\begin{enumerate}
\item\label{lem:RealSphImpliesTransitiveActionOnSpheres1} the orbits of $(M_G\cap M_H)$ on $\prod_{\beta\in\Delta(\frakn^{-\sigma})}S_\beta$ are unions of connected components, in particular open and compact;
\item\label{lem:RealSphImpliesTransitiveActionOnSpheres2} $\Delta(\frakn^{-\sigma})$ is a basis of $\fraka_H^\vee$.
\end{enumerate}
\end{lemma}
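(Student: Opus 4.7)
My plan is to analyse the combined $(M_G\cap M_H)A_H$-action on $\frakn^{-\sigma}$ on the open dense subset $\frakn^{-\sigma}_{\textup{gen}}=\prod_\beta(\frakg^{-\sigma}(\fraka_H;\beta)\setminus\{0\})$ on which polar coordinates give a diffeomorphism
\[
\frakn^{-\sigma}_{\textup{gen}}\xrightarrow{\sim}(\RR_{>0})^{|\Delta(\frakn^{-\sigma})|}\times\prod_{\beta\in\Delta(\frakn^{-\sigma})}S_\beta,\qquad (X_\beta)_\beta\mapsto\bigl((\|X_\beta\|)_\beta,(X_\beta/\|X_\beta\|)_\beta\bigr).
\]
Since $\fraka_H$ acts by the weight $\beta$ on $\frakg^{-\sigma}(\fraka_H;\beta)$ and $M_G\cap M_H\subseteq M_H$ preserves the $M_H$-invariant inner product used to define the spheres $S_\beta$, $A_H$ acts only on the radial factor via $H\cdot(r_\beta)_\beta=(e^{\beta(H)}r_\beta)_\beta$, $M_G\cap M_H$ acts only on the spherical factor, and the two actions commute inside $L$. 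Consequently, every $(M_G\cap M_H)A_H$-orbit in $\frakn^{-\sigma}_{\textup{gen}}$ factors as a product of an $A_H$-orbit in $(\RR_{>0})^{|\Delta|}$ and an $(M_G\cap M_H)$-orbit in $\prod_\beta S_\beta$.

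For the first claim, Lemma~\ref{lem:PHOrbitsInOpenBruhatCell} combined with Proposition~\ref{prop:CharacterizationStronglySpherical} shows that $\frakn^{-\sigma}$ has only finitely many $(M_G\cap M_H)A_H$-orbits, so the same holds on $\frakn^{-\sigma}_{\textup{gen}}$; the $(M_G\cap M_H)$-equivariant and $A_H$-invariant projection onto $\prod_\beta S_\beta$ then transfers this finiteness to the spherical factor. Both $M_G$ and $M_H$ are compact, so $M_G\cap M_H$ is a compact Lie group, and its orbits on the compact space $\prod_\beta S_\beta$ are closed. A finite partition of any topological space into closed pieces consists of clopen pieces, each of which is a union of connected components.

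For the second claim, strong sphericity provides an open $(M_G\cap M_H)A_H$-orbit $\calO\subseteq\frakn^{-\sigma}$, which necessarily meets the open dense set $\frakn^{-\sigma}_{\textup{gen}}$. The product decomposition then writes $\calO\cap\frakn^{-\sigma}_{\textup{gen}}$ as an open product in polar coordinates; in particular the $A_H$-orbit on $(\RR_{>0})^{|\Delta|}$ is open. Taking logarithms, this openness is equivalent to surjectivity of the linear map $\fraka_H\to\RR^{|\Delta|}$, $H\mapsto(\beta(H))_\beta$, which by duality amounts to $\Delta(\frakn^{-\sigma})$ being linearly independent in $\fraka_H^\vee$. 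To promote this to a basis I would verify that the common kernel $\frakz=\{H\in\fraka_H:\beta(H)=0\text{ for all }\beta\in\Delta(\frakn^{-\sigma})\}$ is trivial. For $H\in\frakz$ one has $[H,\frakn^{-\sigma}]=0$ by definition; applying the Cartan involution $\theta$ together with $\theta H=-H$ yields $[H,\overline{\frakn}^{-\sigma}]=0$; and $[H,\frakl^{-\sigma}]=0$ is automatic since $\frakl=Z_\frakg(\fraka_H)$ centralises $\fraka_H$. Summing over the decomposition $\frakg^{-\sigma}=\frakl^{-\sigma}\oplus\frakn^{-\sigma}\oplus\overline{\frakn}^{-\sigma}$ produces $H\in Z_\frakg(\frakg^{-\sigma})\cap\fraka_H$.

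The main obstacle is to rule out nonzero elements of this intersection, and this is where strict indecomposability enters. The hypotheses of strict indecomposability and non-triviality together with Corollary~\ref{cor:ReductiveContainedInSymmetric} force $\sigma$ to restrict to a non-trivial involution on every simple ideal of $\frakg_{\rm n}$: if $\sigma|_{\frakg_i}$ were trivial then $\frakg_i\subseteq(\frakg^\sigma)_{\rm n}=\frakh_{\rm n}$, so $(\frakg_i,\frakg_i)$ would split off as a direct summand of $(\frakg_{\rm n},\frakh_{\rm n})$ or reduce $(\frakg,\frakh)$ to a decomposable pair. For a simple Lie algebra $\frakg_i$ with non-trivial $\sigma|_{\frakg_i}$ the subspace $\frakg_i^{-\sigma}$ is nonzero and the subalgebra it generates is $\ad\frakg_i$-invariant (using $[\frakg_i^\sigma,\frakg_i^{-\sigma}]\subseteq\frakg_i^{-\sigma}$ and a Jacobi induction), hence a nonzero ideal, hence all of $\frakg_i$; consequently $Z_{\frakg_i}(\frakg_i^{-\sigma})\subseteq Z(\frakg_i)=0$. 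Summing over simple factors gives $Z_\frakg(\frakg^{-\sigma})\subseteq\frakg_{\rm el}$, and since $\frakg$ is semisimple $\frakg_{\rm el}$ consists only of compact simple ideals, which sit inside $\frakk=\frakg^\theta$ and meet $\fraka_H\subseteq\frakg^{-\theta}$ only in zero. Thus $H=0$, $\frakz=\{0\}$, and the linearly independent set $\Delta(\frakn^{-\sigma})$ is a basis of $\fraka_H^\vee$.
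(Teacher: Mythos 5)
Your proposal is essentially sound, and for the spanning statement in part (2) it takes a genuinely different route from the paper. Part (1) and the linear independence of $\Delta(\frakn^{-\sigma})$ are obtained here by citing the proof of \cite[Proposition 3.11]{KM14}; your polar-coordinate factorization of the $(M_G\cap M_H)A_H$-action together with the finiteness of orbits coming from Lemma~\ref{lem:PHOrbitsInOpenBruhatCell} and Proposition~\ref{prop:CharacterizationStronglySpherical} is in the same spirit and is correct (finitely many closed orbits of the compact group $M_G\cap M_H$ on the compact sphere product are clopen; openness of the $A_H$-factor of the open orbit is exactly linear independence). The real divergence is the spanning part: the paper deduces it from the implication (1)$\Rightarrow$(2) of Theorem~\ref{thm:AHProjectionStabilizers}, i.e.\ from Corollary~\ref{cor:OpenDoubleCosetImpliesTrivialAHStabilizer}, whose proof rests on the matrix-coefficient machinery of Theorem~\ref{thm:EnoughWeights} (a case-by-case argument through the classification). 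You instead argue directly that an element $H$ of the common kernel of $\Delta(\frakn^{-\sigma})$ centralizes $\frakg^{-\sigma}=\frakl^{-\sigma}\oplus\frakn^{-\sigma}\oplus\overline{\frakn}^{-\sigma}$ and that $Z_\frakg(\frakg^{-\sigma})\cap\fraka_H=\{0\}$; if this works it is more self-contained, staying entirely within Section~\ref{sec:StructureTheory}.

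There is, however, one concrete gap in that last step: you assert that $\sigma$ restricts to a (non-trivial) involution of every simple ideal of $\frakg_{\rm n}$, but $\sigma$ need not preserve the simple ideals at all. In the group case G2), $(\frakg,\frakh)=(\frakg'+\frakg',\diag\frakg')$ with $\frakg'=\so(1,n)$, the involution is the factor swap, so neither non-compact simple ideal is $\sigma$-stable, the eigenspace $\frakg_i^{-\sigma}$ is not defined, and your generation argument does not apply as written; yet this pair is squarely within the scope of the lemma. The conclusion you need still holds there: for a $\sigma$-swapped pair of simple ideals $\frakg_i\oplus\sigma(\frakg_i)$ the $(-\sigma)$-eigenspace is the antidiagonal $\{X-\sigma X:X\in\frakg_i\}$, and since distinct ideals commute, its centralizer in $\frakg_i\oplus\sigma(\frakg_i)$ is $Z(\frakg_i)\oplus Z(\sigma(\frakg_i))=\{0\}$. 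So you should organize the argument by $\sigma$-orbits of simple ideals: $\sigma$-stable non-compact ideals with $\sigma$ non-trivial contribute nothing to $Z_\frakg(\frakg^{-\sigma})$ by your generation argument; $\sigma$-stable non-compact ideals with $\sigma$ trivial are excluded by strict indecomposability and non-triviality via Corollary~\ref{cor:ReductiveContainedInSymmetric}, as you argue; swapped pairs contribute nothing by the remark above; and everything remaining lies in $\frakg_{\rm el}$, which meets $\fraka_H\subseteq\frakg^{-\theta}$ trivially by the sign of the Killing form. With this repair the proof is complete.
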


\begin{proof}
The proof of \cite[Proposition 3.11]{KM14} implies \eqref{lem:RealSphImpliesTransitiveActionOnSpheres1} and linear independence of $\Delta(\frakn^{-\sigma})$ in \eqref{lem:RealSphImpliesTransitiveActionOnSpheres2}. Assume that $\Delta(\frakn^{-\sigma})$ does not span $\fraka_H^\vee$; then there exists $0\neq H\in\fraka_H$ such that $\beta(H)=0$ for all $\beta\in\Delta(\frakn^{-\sigma})$, or equivalently $\ad(H)|_{\frakn^{-\sigma}}=0$. But this implies $H\in\frakp_H\cap\Ad(e^X\tilde{w}_0)\frakp_G=\frakp_H\cap\Ad(e^X)\overline{\frakp}_G$ for all $X\in\frakn^{-\sigma}$. Since by Lemma~\ref{lem:RepresentativesDoubleCosets} every open double coset in $P_HgP_G\in P_H\backslash G/P_G$ has a representative of the form $g=e^X\tilde{w}_0$, this shows that the projection of $\frakp_H\cap\Ad(g)\frakp_G$ to $\fraka_H$ is non-trivial, which contradicts Theorem~\ref{thm:AHProjectionStabilizers}.
\end{proof}

\subsection{Proof of Theorem~\ref{thm:AHProjectionStabilizers}}\label{sec:ProofOfAHProjectionStabilizersTheorem}

Let $P_H\cdot gP_G$ be a non-open $P_H$-orbit. By Lemma~\ref{lem:RepresentativesDoubleCosets} there exists $p\in P_H$ such that $pgP_G=\exp(X)\tilde{w}P_G$ for some $X\in\frakn^{-\sigma}$ and $w\in W$. Then the stabilizers of $gP_G$ and $\exp(X)\tilde{w}P_G$ are conjugate in $P_H$ via $p$. Since the projection of $\Ad(p)\fraka_H\subseteq\frakp_H$ to $\fraka_H$ is equal to $\fraka_H$, we may without loss of generality assume that $g=\exp(X)\tilde{w}$.

Write
\begin{align*}
 S(X,\tilde{w}) &= P_H\cap(\exp(X)\tilde{w}P_G\tilde{w}^{-1}\exp(-X)) = \{p\in P_H:p\exp(X)\in\exp(X)(\tilde{w}P_G\tilde{w}^{-1})\}
\intertext{for the stabilizer of $\exp(X)\tilde{w}P_G$ in $P_H$ and}
 \fraks(X,\tilde{w}) &= \{Z\in\frakp_H:\exp(\RR Z)\exp(X)\subseteq\exp(X)(\tilde{w}P_G\tilde{w}^{-1})\} = \{Z\in\frakp_H:e^{-\ad(X)}Z\in w\cdot\frakp_G\}
\end{align*}
for its Lie algebra. Note that the Lie algebra $w\cdot\frakp_G$ of $\tilde{w}P_G\tilde{w}^{-1}$ is given by
$$ w\cdot\frakp_G = \frakm_G\oplus\fraka_G\oplus\bigoplus_{\substack{\alpha\in\Sigma(\frakg,\fraka_G)\\w^{-1}\alpha>0}}\frakg(\fraka_G;\alpha). $$
In view of the decomposition \eqref{eq:DecompositionN-sigma} we can write
$$ X = \sum_{\beta\in\Delta(\frakn^{-\sigma})}X_\beta $$
with $X_\beta\in\frakg^{-\sigma}(\fraka_H;\beta)$.

\subsubsection{Reduction to generic $X$}\label{sec:ReductionToGenericX}

Assume first that $X_\beta=0$ for some $\beta\in\Delta(\frakn^{-\sigma})$. It follows from Lemma~\ref{lem:RealSphImpliesTransitiveActionOnSpheres}~(2) that the intersection
$$ \bigcap_{\beta'\in\Delta(\frakn^{-\sigma})\setminus\{\beta\}} \ker\beta' \subseteq \fraka_H $$
is non-trivial, so there exists $Z_A\in\fraka_H\setminus\{0\}$ with $\beta'(Z_A)=0$ for all $\beta'\neq\beta$. Then $[Z_A,X]=0$ since $X_\beta=0$ and $[Z_A,X_{\beta'}]=0$ for $\beta'\neq\beta$, and hence $e^{tZ_A}e^X=e^Xe^{tZ_A}$ for all $t\in\RR$. Since $Z_A\in\fraka_H\subseteq w\cdot\frakp_G$ this implies $Z_A\in\fraks(X,\tilde{w})$.

We may therefore assume $X_\beta\neq0$ for all $\beta\in\Delta(\frakn^{-\sigma})$ for the rest of the proof. By Lemma~\ref{lem:RealSphImpliesTransitiveActionOnSpheres} this guarantees that the $(M_G\cap M_H)A_H$-orbit of $X$ in $\frakn^{-\sigma}$ is open.

\subsubsection{Double cosets in the open Bruhat cell}

Now suppose $Q\tilde{w}P_G=Q\tilde{w}_0P_G$, i.e. the double coset $P_H\exp(X)\tilde{w}P_G$ is contained in the open Bruhat cell $Q\tilde{w}_0P_G$. Without loss of generality we may assume $w=w_0$. If the double coset $P_H\exp(X)\tilde{w}_0P_G$ is not open, then by Lemma~\ref{lem:PHOrbitsInOpenBruhatCell} the $(M_G\cap M_H)A_H$-orbit of $X$ in $\frakn^{-\sigma}$ is not open. By Lemma~\ref{lem:RealSphImpliesTransitiveActionOnSpheres} this implies that one of the $X_\beta$ must vanish, the case we already treated in Section~\ref{sec:ReductionToGenericX}.

\subsubsection{Double cosets in the non-open Bruhat cells}

Now assume that $Q\tilde{w}P_G\neq Q\tilde{w}_0P_G$.

\begin{lemma}\label{lem:WeylGroupLemma}
Let $w\in W$. If $Q\tilde{w}P_G\neq Q\tilde{w}_0P_G$, then there exists $\alpha\in\Sigma^+(\frakg,\fraka_G)\cap w\Sigma^+(\frakg,\fraka_G)$ with $\overline{\alpha}\neq0$.
\end{lemma}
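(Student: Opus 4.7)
My plan is to translate the openness of the double coset $Q\tilde wP_G$ into an infinitesimal condition on Lie algebras, and then to read off the required root from the root spaces missing in the sum $\frakq+\Ad(\tilde w)\frakp_G$. First, $Q\tilde w_0 P_G$ contains the open Bruhat cell $P_G\tilde w_0P_G$ (since $P_G\subseteq Q$) and is therefore open in $G$; as the $(Q,P_G)$-double cosets are finite in number and partition $G$, this $Q\tilde w_0 P_G$ is in fact the unique open one. Hence the hypothesis $Q\tilde wP_G\neq Q\tilde w_0P_G$ forces $Q\tilde wP_G$ not to be open in $G$, which by the tangent space criterion at $\tilde wP_G\in G/P_G$ is equivalent to
\[
 \frakq + \Ad(\tilde w)\frakp_G \subsetneq \frakg.
\]

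Next I would decompose both summands as $\fraka_G$-modules, using the $\fraka_G$-root space decomposition $\frakg=Z_\frakg(\fraka_G)\oplus\bigoplus_{\alpha\in\Sigma(\frakg,\fraka_G)}\frakg(\fraka_G;\alpha)$. Since $\frakq=\frakl+\frakn$ with $\frakl=Z_\frakg(\fraka_H)$ and $\frakn=\bigoplus_{\alpha\in\Sigma^+(\frakg,\fraka_G),\,\overline{\alpha}\neq 0}\frakg(\fraka_G;\alpha)$, the subalgebra $\frakq$ contains $Z_\frakg(\fraka_G)$ together with every root space $\frakg(\fraka_G;\alpha)$ such that either $\overline{\alpha}=0$ (coming from $\frakl$) or $\alpha>0$ with $\overline{\alpha}\neq 0$ (coming from $\frakn$). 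Similarly, $\Ad(\tilde w)\frakp_G$ contains $Z_\frakg(\fraka_G)$ and all $\frakg(\fraka_G;\alpha)$ for $\alpha\in w\Sigma^+(\frakg,\fraka_G)$. Combining, the only root spaces that can be absent from $\frakq+\Ad(\tilde w)\frakp_G$ are those $\frakg(\fraka_G;\alpha)$ with $\alpha\in\Sigma^-(\frakg,\fraka_G)$, $\overline{\alpha}\neq 0$, and $\alpha\notin w\Sigma^+(\frakg,\fraka_G)$.

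Finally, because the sum is proper in $\frakg$, at least one such $\alpha$ must indeed be missing. Setting $\alpha':=-\alpha$ then yields $\alpha'\in\Sigma^+(\frakg,\fraka_G)$ with $\overline{\alpha'}\neq 0$, and the condition $\alpha\notin w\Sigma^+(\frakg,\fraka_G)$ translates, after sign-flipping, into $\alpha'\in w\Sigma^+(\frakg,\fraka_G)$; this $\alpha'$ satisfies the conclusion of the lemma. The only step requiring some care is the uniqueness of the open $(Q,P_G)$-double coset in the first paragraph, where one uses that two distinct open double cosets would both be dense in $G$ yet disjoint. All remaining work is routine bookkeeping with the $\fraka_G$-root system, so I do not anticipate any substantial obstacle.
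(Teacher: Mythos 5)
Your argument is correct and is essentially the paper's proof in dual form: the paper performs the same root-space bookkeeping on the intersection $\frakq\cap\Ad(\tilde{w})\frakp_G$ and shows by a dimension count that, were there no such $\alpha$, the coset $Q\tilde{w}P_G$ would be open (contradicting uniqueness of the open cell), whereas you apply the tangent-space criterion to the sum $\frakq+\Ad(\tilde{w})\frakp_G$ and read off a missing negative root space directly, then flip its sign. The only cosmetic point is the uniqueness step: you need not claim that every open double coset is dense --- it suffices that $Q\tilde{w}_0P_G$ is dense (it contains the dense big Bruhat cell $P_G\tilde{w}_0P_G$), so a second open double coset would be a nonempty open set disjoint from a dense one, which is impossible.
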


\begin{proof}
Assume $w^{-1}\alpha<0$ for all $\alpha>0$ with $\overline{\alpha}\neq0$. The double coset $Q\tilde{w}P_G$ is the orbit of $\tilde{w}$ under the action of $Q\times P_G$ on $G$ given by $(q,p)\cdot g=qgp^{-1}$. Then the stabilizer of $\tilde{w}$ in $Q\times P_G$ is given by
$$ \{(q,\tilde{w}^{-1}q\tilde{w}):q\in Q\cap\tilde{w}P_G\tilde{w}^{-1}\} $$
and hence its Lie algebra is isomorphic to $\frakq\cap w\cdot\frakp_G$. Since $\fraka_G\subseteq\frakq\cap w\cdot\frakp_G$ we can decompose this Lie algebra into root spaces with respect to $\fraka_G$:
$$ \frakq\cap w\cdot\frakp_G = \frakm_G\oplus\fraka_G\oplus\bigoplus_{\substack{\overline{\alpha}=0\\w^{-1}\alpha>0}}\frakg(\fraka_G;\alpha)\oplus\bigoplus_{\substack{\overline{\alpha}\neq0\\\alpha>0,\,w^{-1}\alpha>0}}\frakg(\fraka_G;\alpha) = \frakm_G\oplus\fraka_G\oplus\bigoplus_{\substack{\overline{\alpha}=0\\w^{-1}\alpha>0}}\frakg(\fraka_G;\alpha). $$
Now, if $\overline{\alpha}=0$ then either $w^{-1}\alpha>0$ or $w^{-1}(-\alpha)=-w^{-1}\alpha>0$, and therefore
$$ \dim(\frakq\cap w\cdot\frakp_G) = \dim\frakm_G+\dim\fraka_G+\sum_{\substack{\alpha>0\\\overline{\alpha}=0}}\dim\frakg(\fraka_G;\alpha) = \dim\frakp_G - \dim\frakn. $$
Hence,
$$ \dim(Q\tilde{w}P_G) = \dim\frakq+\dim\frakp_G-\dim(\frakq\cap w\cdot\frakp_G) = \dim\frakq + \dim\frakn = \dim\frakg $$
so that $Q\tilde{w}P_G$ must be open, whence equal to the unique open Bruhat cell $Q\tilde{w}_0P_G$. This contradicts the assumption $Q\tilde{w}P_G\neq Q\tilde{w}_0P_G$ and the proof is complete.
\end{proof}

Choose any root $\alpha>0$ with $\overline{\alpha}\neq0$ and $w^{-1}\alpha>0$, then $\frakg(\fraka_G;\alpha)\subseteq\frakq\cap w\cdot\frakp_G$. Let us fix $Y\in\frakg(\fraka_G;\alpha)$ for now; later we will specify an appropriate choice of $Y$. Then $e^{tY}\in N$ and also $e^Xe^{tY}\in N$ for all $t\in\RR$. By \cite[Lemma 3.6]{KM14} we have $N=N^\sigma\exp(\frakn^{-\sigma})$ so that we can uniquely write
$$ e^Xe^{tY}=n_te^{X_t} $$
with $n_t\in N^\sigma$ and $X_t\in\frakn^{-\sigma}$ depending differentiably on $t\in\RR$. Recall that we may assume $X_\beta\neq0$ for all $\beta\in\Delta(\frakn^{-\sigma})$, so that $X$ is contained in an open $(M_G\cap M_H)A_H$-orbit in $\frakn^{-\sigma}$. Then there exists an interval $(-\varepsilon,\varepsilon)$ such that $X_t$ belongs to the open orbit $\Ad((M_G\cap M_H)A_H)X$, so there exists $m_ta_t\in(M_G\cap M_H)A_H$ such that $\Ad(m_ta_t)X=X_t$. Clearly, $m_ta_t$ can be chosen to depend differentiably on $t$ with $m_0=a_0=\1$. Summarizing, we have
\begin{equation}
 e^Xe^{tY} = n_te^{\Ad(m_ta_t)X}, \qquad t\in(-\varepsilon,\varepsilon).\label{eq:DefiningEquationForNtAtMt}
\end{equation}
Denoting $p_t=n_ta_tm_t\in N_HA_H(M_G\cap M_H)\subseteq P_G\cap P_H$ we have
$$ p_te^X = e^Xe^{tY}m_ta_t \in e^X(\tilde{w}P_G\tilde{w}^{-1}), $$
whence $p_t\in S(X,\tilde{w})$. Now put
$$ Z = \left.\frac{d}{dt}\right|_{t=0}p_t \in \fraks(X,\tilde{w}) $$
and write $Z=Z_M+Z_A+Z_N\in(\frakm_G\cap\frakm_H)\oplus\fraka_H\oplus\frakn_H$. It remains to show that $Y\in\frakg(\fraka_G;\alpha)$ can be chosen such that $Z_A\neq0$. For this we use the following identity which follows by taking the left logarithmic derivative of \eqref{eq:DefiningEquationForNtAtMt}:
\begin{equation}
 Y = e^{-\ad(X)}Z-(Z_M+Z_A) = (e^{-\ad(X)}-\1)(Z_M+Z_A) + e^{-\ad(X)}Z_N.\label{eq:DefiningEquationForZ}
\end{equation}

Now, write
$$ Z_N = \sum_{\beta\in\Delta(\frakn^\sigma)} Z_{N,\beta} $$
with $Z_{N,\beta}\in\frakg^\sigma(\fraka_H;\beta)$.

\begin{lemma}\label{lem:IdentityForYandZ}
If $Y\in\frakg(\fraka_G;\alpha)$ for $\alpha>0$ with $\beta=\overline{\alpha}\neq0$ and $w^{-1}\alpha>0$, and $Z=Z_M+Z_A+Z_N\in(\frakm_G\cap\frakm_H)\oplus\fraka_H\oplus\frakn_H$ satisfies \eqref{eq:DefiningEquationForZ}, then
$$ Y = \ad(Z_M+Z_A)X_\beta + Z_{N,\beta}. $$
\end{lemma}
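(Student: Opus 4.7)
The plan is to project the identity \eqref{eq:DefiningEquationForZ} onto the $\fraka_H$-weight $\beta$ subspace. Since $Y\in\frakg(\fraka_G;\alpha)$ is homogeneous of $\fraka_H$-weight $\beta=\overline{\alpha}$, the left-hand side contributes exactly $Y$. On the right-hand side, expand $e^{-\ad(X)}$ as a power series and substitute the weight decompositions $X=\sum_{\gamma\in\Delta(\frakn^{-\sigma})}X_\gamma$ and $Z_N=\sum_{\delta\in\Delta(\frakn^\sigma)}Z_{N,\delta}$. A typical summand in $(e^{-\ad(X)}-\1)(Z_M+Z_A)$ is $\tfrac{(-1)^k}{k!}[X_{\gamma_1},[\ldots,[X_{\gamma_k},Z_M+Z_A]]]$ with $k\geq 1$, carrying $\fraka_H$-weight $\gamma_1+\cdots+\gamma_k$, and a typical summand in $e^{-\ad(X)}Z_N$ is $\tfrac{(-1)^k}{k!}[X_{\gamma_1},[\ldots,[X_{\gamma_k},Z_{N,\delta}]]]$, carrying weight $\delta+\gamma_1+\cdots+\gamma_k$.

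By Lemma~\ref{lem:RealSphImpliesTransitiveActionOnSpheres}\eqref{lem:RealSphImpliesTransitiveActionOnSpheres2}, $\Delta(\frakn^{-\sigma})$ is a basis of $\fraka_H^\vee$. Under the prevailing hypothesis that $\beta=\overline{\alpha}$ itself lies in $\Delta(\frakn^{-\sigma})$ (which is the relevant case in the proof of Theorem~\ref{thm:AHProjectionStabilizers}), the expression of $\beta$ as a non-negative integer combination of the basis vectors is unique, namely $\beta=1\cdot\beta$. Consequently the equation $\sum_i\gamma_i=\beta$ admits only the solution $k=1$, $\gamma_1=\beta$, and contributes $-[X_\beta,Z_M+Z_A]=\ad(Z_M+Z_A)X_\beta$ to the weight-$\beta$ component. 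For the second sum, the constraint $\delta=\beta-\sum_i\gamma_i$ forces $\delta$ to have non-positive coordinates at all basis vectors different from $\beta$; since $\delta\in\Delta(\frakn^\sigma)\subseteq\Sigma^+(\frakg,\fraka_H)$ must in turn be a non-negative combination of $\Delta(\frakn^{-\sigma})$ (the latter playing the role of a simple system in $\Sigma(\frakg,\fraka_H)$, a consequence of the basis property combined with the compatibility of positive systems fixed in Section~\ref{sec:StructureStronglySphericalReductivePairs}), the only possibility is $k=0$ and $\delta=\beta$, contributing $Z_{N,\beta}$.

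Summing the two contributions gives $Y=\ad(Z_M+Z_A)X_\beta+Z_{N,\beta}$, as claimed. The main technical point is the simple-system statement needed to handle the $Z_N$-contribution: that every $\delta\in\Sigma^+(\frakg,\fraka_H)$ expands with non-negative coefficients in the basis $\Delta(\frakn^{-\sigma})$. This goes beyond the bare linear independence of Lemma~\ref{lem:RealSphImpliesTransitiveActionOnSpheres} and requires invoking the structure theory developed in Section~\ref{sec:StructureStronglySphericalReductivePairs}, or alternatively a case-by-case inspection against the classification of Theorem~\ref{thm:ClassificationStronglySphericalPairs}; this is the step that feels most delicate in an otherwise routine weight-bookkeeping argument.
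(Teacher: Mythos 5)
Your treatment of the first sum is fine (granted your extra hypothesis), but the treatment of the $e^{-\ad(X)}Z_N$-contribution contains a genuine gap: the assertion that every $\delta\in\Delta(\frakn^{\sigma})\subseteq\Sigma^+(\frakg,\fraka_H)$ is a non-negative combination of the basis $\Delta(\frakn^{-\sigma})$ is simply false, so $\Delta(\frakn^{-\sigma})$ does \emph{not} play the role of a simple system, and no amount of structure theory or classification will rescue it. Concretely, for $(\frakg,\frakh)=(\so(p,q+1),\so(p,q))$ with $2\leq p\leq q$ one has $\fraka_H=\fraka_G$, $\Delta(\frakn^{-\sigma})=\{e_1,\ldots,e_p\}$, while $e_1-e_2\in\Delta(\frakn^{\sigma})$ has a negative coordinate; the same happens for $(\gl(n+1,\RR),\gl(n,\RR))$. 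Consequently the equation $\delta+\gamma_1+\cdots+\gamma_k=\beta$ does admit solutions with $k\geq1$ (e.g.\ $\beta=e_1$, $\delta=e_1-e_2$, $\gamma_1=e_2$), and the corresponding bracket $[X_{e_2},Z_{N,e_1-e_2}]$ is not excluded for weight reasons --- indeed by Lemma~\ref{lem:OSLemma} such a bracket is automatically non-zero once both entries are non-zero, since $\langle e_2,e_1-e_2\rangle<0$. So your projection onto the weight-$\beta$ space picks up terms your argument cannot eliminate.

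The paper's proof supplies exactly the missing mechanism: it orders $\Sigma^+(\frakg,\fraka_H)=\{\beta_1,\ldots,\beta_p\}$ compatibly with the root order and proves, by induction over the $\beta_j$ strictly below $\beta$, that $\ad(Z_M+Z_A)X_{\beta_j}=0=Z_{N,\beta_j}$; this is obtained by projecting \eqref{eq:DefiningEquationForZ} onto the weight $\beta_j$ (where the $Y$-side vanishes) and splitting the result into its $\frakn^{-\sigma}$- and $\frakn^{\sigma}$-parts. These vanishings are then precisely what kill all higher-order cross terms (those with $k\geq 2$ in the first sum and $k\geq1$ in the second) when one finally projects onto the weight $\beta$ itself. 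In other words, the offending terms vanish not combinatorially but because the same identity at lower weights forces the relevant components of $Z_N$ and of $\ad(Z_M+Z_A)X$ to be zero; your proposal contains no substitute for this induction. A secondary point: you silently add the hypothesis $\beta\in\Delta(\frakn^{-\sigma})$, which happens to hold in the cases where the lemma is invoked in the proof of Theorem~\ref{thm:AHProjectionStabilizers}, but it is not part of the statement, and the paper's inductive argument needs no such restriction.
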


\begin{proof}
We can enumerate the positive $\fraka_H$-roots as $\Sigma^+(\frakg,\fraka_H)=\{\beta_1,\ldots,\beta_p\}$ so that $\beta_j\not<\beta_i$ whenever $i\leq j$. Form the nilpotent subalgebras
$$ \frakn_i = \bigoplus_{k=i}^p \frakg(\fraka_H;\beta_k) \subseteq \frakn, $$
then $\frakn_1=\frakn$ and $[\frakn,\frakn_i]\subseteq\frakn_{i+1}$. Since $\alpha>0$ with $\overline{\alpha}\neq0$ there exists $1\leq i\leq p$ with $\overline{\alpha}=\beta_i$. We first prove by induction that
$$ \ad(Z_M+Z_A)X_{\beta_j} = 0 = Z_{N,\beta_j} \qquad \forall\,1\leq j<i. $$
For $j=1<p$, the $\beta_1$-component of $Y$ is trivial, and therefore, taking the $\beta_1$-component of \eqref{eq:DefiningEquationForZ} yields
$$ 0 = \ad(Z_M+Z_A)X_{\beta_1} + Z_{N,\beta_1}. $$
Since $\ad(Z_M+Z_A)X_{\beta_1}\in\frakn^{-\sigma}$ and $Z_{N,\beta_1}\in\frakn^\sigma$, this implies $\ad(Z_M+Z_A)X_{\beta_1}=0=Z_{N,\beta_1}$. For the induction step assume that $\ad(Z_M+Z_A)X_{\beta_k}=0=Z_{N,\beta_k}$ for $1\leq k\leq j-1$. If $j<i$ then the $\beta_j$-component of $Y$ is trivial, and we can again take the $\beta_j$-component of \eqref{eq:DefiningEquationForZ} and find
$$ 0 = \ad(Z_M+Z_A)X_{\beta_j} + Z_{N,\beta_j}. $$
The same argument as above shows $\ad(Z_M+Z_A)X_{\beta_j}=0=Z_{N,\beta_j}$. Finally, taking the $\beta_i$-component in \eqref{eq:DefiningEquationForZ} gives the desired identity.
\end{proof}

To choose $Y\in\frakg(\fraka_G;\alpha)$ such that $Z_A\neq0$, we need to relate $\frakg(\fraka_G;\alpha)$ and $\frakg(\fraka_H;\beta)$.

\begin{lemma}\label{lem:RelationAGandAHRootSpaces}
Let $\alpha\in\Sigma^+(\frakg,\fraka_G)$ with $\overline{\alpha}\neq0$. Then precisely one of the following three statements holds:
\begin{enumerate}
\item\label{lem:RelationAGandAHRootSpaces3} $\sigma\alpha\neq\alpha$ and $\frakg^{-\sigma}(\fraka_H;\overline{\alpha})=\{Y-\sigma Y:Y\in\frakg(\fraka_G;\alpha)\}\neq\{0\}$,
\item\label{lem:RelationAGandAHRootSpaces1} $\sigma\alpha=\alpha$ and $\frakg^{-\sigma}(\fraka_H;\overline{\alpha})=\frakg(\fraka_G;\alpha)\cap\frakn^{-\sigma}\neq\{0\}$,
\item\label{lem:RelationAGandAHRootSpaces2} $\sigma\alpha=\alpha$ and $\frakg(\fraka_G;\alpha)\subseteq\frakn_H$.
\end{enumerate}
\end{lemma}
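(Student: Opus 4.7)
The plan is to branch on the dichotomy $\sigma\alpha = \alpha$ versus $\sigma\alpha \neq \alpha$, and in the fixed case to further split according to whether $\frakg(\fraka_G;\alpha)$ lies entirely in $\frakn_H$ or has a non-trivial $(-\sigma)$-part. First I will use that $\fraka_G$ has been chosen $\sigma$-stable with $\fraka_G^\sigma = \fraka_H$, so $\sigma$ acts on $\fraka_G^\vee$ by fixing the $\fraka_H$-component and negating the $\fraka_G^{-\sigma}$-component; in particular $\overline{\sigma\gamma} = \overline{\gamma}$ for every $\gamma \in \Sigma(\frakg,\fraka_G)$, and $\sigma$ induces an isomorphism $\frakg(\fraka_G;\gamma) \to \frakg(\fraka_G;\sigma\gamma)$. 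Decomposing into $\fraka_G$-weight spaces yields
\[
  \frakg(\fraka_H;\overline{\alpha}) = \bigoplus_{\gamma \in F_{\overline{\alpha}}} \frakg(\fraka_G;\gamma), \qquad F_{\overline{\alpha}} := \{\gamma \in \Sigma(\frakg,\fraka_G) : \overline{\gamma} = \overline{\alpha}\},
\]
with $F_{\overline{\alpha}}$ a $\sigma$-stable subset.

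The key structural input I will invoke is $F_{\overline{\alpha}} = \{\alpha,\sigma\alpha\}$, i.e.\ no root other than $\alpha$ and its $\sigma$-image restricts to $\overline{\alpha}$. This is a standard fact for the symmetric pair $(\frakg,\frakg^\sigma)$ with the compatible Cartan chosen as in \cite[Section~3]{KM14}, and it transfers to the strongly spherical setting via Corollary~\ref{cor:ReductiveContainedInSymmetric}, which embeds $\frakh$ into the symmetric subalgebra $\frakg^\sigma$ with matching split part.

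Granting this, suppose first $\sigma\alpha \neq \alpha$, so $\frakg(\fraka_H;\overline{\alpha}) = \frakg(\fraka_G;\alpha) \oplus \frakg(\fraka_G;\sigma\alpha)$. The map $\phi\colon Y \mapsto Y - \sigma Y$ sends $\frakg(\fraka_G;\alpha)$ into $\frakg^{-\sigma}(\fraka_H;\overline{\alpha})$, and I would check that it is injective (because $Y \in \frakg(\fraka_G;\alpha)$ and $\sigma Y \in \frakg(\fraka_G;\sigma\alpha)$ lie in distinct weight spaces, so $\phi(Y) = 0$ forces $Y = 0$) and surjective: any $Z = Y + Y'$ with $Y \in \frakg(\fraka_G;\alpha)$, $Y' \in \frakg(\fraka_G;\sigma\alpha)$ satisfying $\sigma Z = -Z$ forces $Y' = -\sigma Y$ by matching the two $\fraka_G$-weight components. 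Non-triviality is then immediate from $\frakg(\fraka_G;\alpha) \neq \{0\}$, establishing case~(1).

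If instead $\sigma\alpha = \alpha$, then $\frakg(\fraka_H;\overline{\alpha}) = \frakg(\fraka_G;\alpha)$ is $\sigma$-stable; decomposing into $\sigma$-eigenspaces and noting that $\alpha > 0$ with $\overline{\alpha} \neq 0$ forces $\frakg(\fraka_G;\alpha) \subseteq \frakn$, so $\frakg(\fraka_G;\alpha)^{\pm\sigma} \subseteq \frakn^{\pm\sigma}$, gives
\[
  \frakg^{-\sigma}(\fraka_H;\overline{\alpha}) = \frakg(\fraka_G;\alpha)^{-\sigma} = \frakg(\fraka_G;\alpha) \cap \frakn^{-\sigma}.
\]
Case~(2) holds precisely when this space is nonzero; otherwise $\frakg(\fraka_G;\alpha) = \frakg(\fraka_G;\alpha)^\sigma \subseteq \frakn^\sigma = \frakn_H$, which is case~(3). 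Mutual exclusivity is clear from the trichotomy, and the main obstacle will be establishing the fiber identity $F_{\overline{\alpha}} = \{\alpha,\sigma\alpha\}$ --- once that is in hand, the remainder reduces to elementary weight-space bookkeeping.
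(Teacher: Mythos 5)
Your reduction of the $\sigma\alpha=\alpha$ case to the dichotomy (2)/(3), and your verification that $Y\mapsto Y-\sigma Y$ is injective with image in $\frakg^{-\sigma}(\fraka_H;\overline{\alpha})$ when $\sigma\alpha\neq\alpha$, are fine. The gap is the "key structural input" $F_{\overline{\alpha}}=\{\alpha,\sigma\alpha\}$: this is not a standard fact, and it is false for several of the pairs the lemma covers. Take $(\frakg,\frakh)=(\fraku(p,q+1),\fraku(p,q))$ or $(\so(p,q+1),\so(p,q))$ with $p\geq q+2$, with $\fraka_G,\fraka_H$ as in Section~\ref{sec:Iwasawa}: here $\fraka_H=\{t_{q+1}=0\}\subsetneq\fraka_G$, $\sigma$ fixes $e_1,\ldots,e_q$ and negates $e_{q+1}$, and for $1\leq i\leq q$ the three positive roots $e_i$, $e_i-e_{q+1}$, $e_i+e_{q+1}$ all restrict to $e_i$ on $\fraka_H$ (the short root $e_i$ occurs because $p-q-1\geq1$). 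So the fiber over $\overline{\alpha}=e_i$ is strictly larger than $\{\alpha,\sigma\alpha\}$ for any single $\alpha$ in it, and your bookkeeping collapses: once $\frakg(\fraka_H;\overline{\alpha})$ contains root spaces besides $\frakg(\fraka_G;\alpha)\oplus\frakg(\fraka_G;\sigma\alpha)$, nothing in your argument shows that their $(-\sigma)$-parts do not contribute extra summands to $\frakg^{-\sigma}(\fraka_H;\overline{\alpha})$, which is exactly the equality claimed in cases (1) and (2).

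That equality is the real content of the lemma, and it is not elementary weight-space combinatorics: it is where strong sphericity enters, which your proposal never uses. The paper's proof observes that $Y\mapsto Y-\sigma Y$ (resp.\ the subspace $\frakg(\fraka_G;\alpha)\cap\frakn^{-\sigma}$) is $(M_G\cap M_H)$-equivariant (resp.\ $(M_G\cap M_H)$-invariant), and that $\frakg^{-\sigma}(\fraka_H;\overline{\alpha})$ is irreducible as a real $(M_G\cap M_H)$-module by Lemma~\ref{lem:RealSphImpliesTransitiveActionOnSpheres}~(1) (open orbits on the product of spheres), so any non-zero invariant image or subspace must exhaust it. To repair your argument you would either have to import this irreducibility step, or else justify case-by-case why all other roots in the fiber contribute nothing to the $(-\sigma)$-part (e.g.\ why $\frakg(\fraka_G;e_i)\subseteq\frakn_H$ in the example above), which is precisely what the lemma is supposed to deliver without case analysis.
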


\begin{proof}
Assume first that $\sigma\alpha\neq\alpha$; then $\sigma(\frakg(\fraka_G;\alpha))=\frakg(\fraka_G;\sigma\alpha)$ and the map
$$ \frakg(\fraka_G;\alpha) \to \frakg^{-\sigma}(\fraka_H;\overline{\alpha}), \quad Y\mapsto Y-\sigma(Y) $$
is non-trivial and $(M_G\cap M_H)$-equivariant. Since $\frakg^{-\sigma}(\fraka_H;\overline{\alpha})$ is irreducible under the action of $M_G\cap M_H$ by Lemma~\ref{lem:RealSphImpliesTransitiveActionOnSpheres}~(1), the map must be a linear isomorphism, so \eqref{lem:RelationAGandAHRootSpaces3} holds.\\
Now assume, $\sigma\alpha=\alpha$, then $\frakg(\fraka_G;\alpha)$ is $\sigma$-stable and we have a decomposition into eigenspaces
$$ \frakg(\fraka_G;\alpha) = (\frakg(\fraka_G;\alpha)\cap\frakn_H)\oplus(\frakg(\fraka_G;\alpha)\cap\frakn^{-\sigma}). $$
If $\frakg(\fraka_G;\alpha)\cap\frakn^{-\sigma}\neq\{0\}$, then this is a non-trivial $(M_G\cap M_H)$-invariant subspace of $\frakg^{-\sigma}(\fraka_H;\overline{\alpha})$ and the latter is irreducible under the action of $M_G\cap M_H$. This implies \eqref{lem:RelationAGandAHRootSpaces1}. The remaining possibility is $\frakg(\fraka_G;\alpha)\cap\frakn^{-\sigma}=\{0\}$, which clearly implies \eqref{lem:RelationAGandAHRootSpaces2}.
\end{proof}

Before we can finish the proof we need to study case \eqref{lem:RelationAGandAHRootSpaces2} in Lemma~\ref{lem:RelationAGandAHRootSpaces} in more detail. For this, the following two results will be used:

\begin{lemma}[{\cite[Lemma 2.9]{OS82}}]\label{lem:OSLemma}
Let $\alpha_1,\alpha_2\in\Sigma(\frakg,\fraka_G)$ and assume that $(\alpha_1,\alpha_2)<0$; then for any $X_1\in\frakg(\fraka_G;\alpha_1)$ and $X_2\in\frakg(\fraka_G;\alpha_2)$, $X_1,X_2\neq0$, we have $[X_1,X_2]\neq0$.
\end{lemma}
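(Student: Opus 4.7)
The plan is to apply standard $\sl_2$-representation theory inside $\frakg$. Since the roots and root spaces only see the semisimple part of the reductive algebra, I may as well assume $\frakg$ is semisimple. The first step is to extend the nonzero vector $X_1\in\frakg(\fraka_G;\alpha_1)$ to a normalized $\sl_2$-triple. Setting $Y_1:=-c\cdot\theta X_1\in\frakg(\fraka_G;-\alpha_1)$ for a suitable positive scalar $c$, one checks that $H_1:=[X_1,Y_1]\in\fraka_G$ and that $\alpha_1(H_1)$ can be normalized to equal $2$. The verification is short: $[X_1,\theta X_1]$ is $\theta$-antiinvariant, hence lies in $\fraka_G$, and $\alpha_1([X_1,\theta X_1])=B(X_1,\theta X_1)(\alpha_1,\alpha_1)<0$ since $-B(\blank,\theta\blank)$ is positive-definite on $\frakg$, where $B$ is the Killing form. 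Choosing $c>0$ appropriately makes $\{H_1,X_1,Y_1\}$ a standard $\sl_2$-triple.

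Next I would transfer the negativity hypothesis to the $\sl_2$-picture. With this normalization $H_1$ coincides with the coroot of $\alpha_1$, so
\[
 \ad(H_1)X_2=\alpha_2(H_1)X_2=\frac{2(\alpha_2,\alpha_1)}{(\alpha_1,\alpha_1)}X_2,
\]
and $\alpha_2(H_1)$ is a nonzero integer (the Cartan integer of $\alpha_2$ with respect to $\alpha_1$), strictly negative by the hypothesis $(\alpha_1,\alpha_2)<0$. Thus $X_2$ is a weight vector of negative integer weight for the $\ad$-action of the $\sl_2$-triple on the finite-dimensional module $\frakg$.

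The contradiction is now immediate from the classification of finite-dimensional irreducible $\sl_2$-modules. Suppose for contradiction that $[X_1,X_2]=0$. Then $X_2$ is annihilated by the raising operator $E:=\ad(X_1)$, and therefore generates, under iterated application of $F:=\ad(Y_1)$, a finite-dimensional irreducible $\sl_2$-submodule of $\frakg$ on which $X_2$ is the highest-weight vector of weight $\alpha_2(H_1)<0$. But the highest weight of any finite-dimensional irreducible $\sl_2$-module must be a non-negative integer, which yields the desired contradiction and proves $[X_1,X_2]\neq 0$.

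I do not anticipate any substantive difficulty: the argument is essentially classical, and only the explicit normalization of the $\sl_2$-triple (and the resulting identification of $H_1$ with the coroot of $\alpha_1$) requires brief care. The statement is in any case quoted directly from \cite{OS82}, so this sketch merely reconstructs the standard proof.
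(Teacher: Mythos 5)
Your argument is correct, and it is worth noting that the paper itself offers no proof of this lemma at all: it is quoted verbatim from Oshima--Sekiguchi \cite[Lemma 2.9]{OS82}, so there is no internal proof to compare against. Your $\sl_2$-reconstruction is the standard one and all the delicate points check out: the reduction to $\frakg$ semisimple is harmless since the restricted root spaces lie in $[\frakg,\frakg]$; $[X_1,\theta X_1]$ lies in $\frakg^{-\theta}\cap Z_\frakg(\fraka_G)=\fraka_G$ precisely because $\fraka_G$ is maximal abelian in $\frakg^{-\theta}$; and $\alpha_1([X_1,\theta X_1])=B(X_1,\theta X_1)(\alpha_1,\alpha_1)<0$ by positive-definiteness of $-B(\blank,\theta\blank)$, so the normalization to an $\sl_2$-triple with $H_1$ the coroot of $\alpha_1$ goes through. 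One small remark: the integrality of the Cartan number $\alpha_2(H_1)$ is not actually needed --- the standard $\sl_2$ fact that a nonzero vector killed by the raising operator in a finite-dimensional module has non-negative (integer) $H_1$-eigenvalue already contradicts $\alpha_2(H_1)=2(\alpha_1,\alpha_2)/(\alpha_1,\alpha_1)<0$. Note also that your argument is insensitive to the restricted root system being non-reduced or the root spaces $\frakg(\fraka_G;\alpha_i)$ being higher-dimensional, which is exactly the generality in which the lemma is applied in Section~\ref{sec:ProofOfAHProjectionStabilizersTheorem}.
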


\begin{lemma}\label{lem:RootSequence}
Let $w\in W$, then for any $\alpha\in\Sigma^+(\frakg,\fraka_G)\cap w\Sigma^+(\frakg,\fraka_G)$ there exists a sequence $\alpha=\alpha_1,\ldots,\alpha_r\in\Sigma^+(\frakg,\fraka_G)\cap w\Sigma^+(\frakg,\fraka_G)$ such that $\langle\alpha_i,\alpha_{i+1}\rangle\neq0$ for $i=1,\ldots,r-1$ and $\alpha_r$ is simple in $\Sigma^+(\frakg,\fraka_G)$.
\end{lemma}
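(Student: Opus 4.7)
The plan is to reinterpret the conclusion graph-theoretically. Set $\Phi := \Sigma^+(\frakg,\fraka_G) \cap w\Sigma^+(\frakg,\fraka_G)$ and let $G_\Phi$ denote the graph with vertex set $\Phi$ in which two distinct roots are joined by an edge whenever they have nonzero inner product. A sequence of the required form exists for $\alpha \in \Phi$ precisely when the connected component of $G_\Phi$ containing $\alpha$ meets the set of simple roots of $\Sigma^+(\frakg,\fraka_G)$, so it suffices to prove that every component of $G_\Phi$ contains a simple root.

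The key structural input is that $\Phi$ is biclosed: both $\Phi$ and $\Sigma^+(\frakg,\fraka_G) \setminus \Phi$ are stable under root addition inside $\Sigma(\frakg,\fraka_G)$. This is immediate from their descriptions in terms of the signs of $w^{-1}$-images. To prove the claim I would choose an element $\alpha_0$ of minimum height in a given component $\mathcal{C}$ and show, by contradiction, that $\alpha_0$ must be simple.

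Suppose $\alpha_0$ is non-simple and expand $\alpha_0 = \sum_k c_k \alpha_k$ in simple roots. The identity $\|\alpha_0\|^2 = \sum_k c_k \langle \alpha_0, \alpha_k \rangle > 0$ yields a simple root $\alpha_i$ in the support of $\alpha_0$ with $\langle \alpha_0, \alpha_i \rangle > 0$; the standard root-string argument then gives $\beta := \alpha_0 - \alpha_i \in \Sigma^+(\frakg,\fraka_G)$. If $\alpha_i \in \Phi$, the direct edge $\alpha_0 \sim \alpha_i$ already contradicts the minimality of $\alpha_0$. Otherwise $w^{-1}\alpha_i < 0$, so $w^{-1}\beta = w^{-1}\alpha_0 - w^{-1}\alpha_i > 0$ and $\beta \in \Phi$; if $\langle \alpha_0, \beta \rangle \ne 0$ the direct edge again contradicts minimality.

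The main obstacle is the remaining case $\langle \alpha_0, \beta \rangle = 0$, which genuinely arises in non-simply-laced situations such as $B_2$ and precludes any naive monotone descent in height. Here the $\beta$-string through $\alpha_0$ is symmetric and contains $\alpha_0 - \beta = \alpha_i$, hence also contains $\alpha_0 + \beta$. Biclosure then places $\alpha_0 + \beta \in \Phi$, and the identities $\langle \alpha_0 + \beta, \alpha_0 \rangle = \|\alpha_0\|^2 \ne 0$ and $\langle \alpha_0 + \beta, \beta \rangle = \|\beta\|^2 \ne 0$ exhibit a path $\alpha_0 \sim (\alpha_0 + \beta) \sim \beta$ in $G_\Phi$, placing $\beta$—of strictly smaller height than $\alpha_0$—in $\mathcal{C}$. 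The crucial point is that the chain may have to ascend temporarily through the bridging vertex $\alpha_0 + \beta$ before descending, and biclosure of $\Phi$ is exactly what produces this bridge.
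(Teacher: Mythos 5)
Your proof is correct, and it takes a genuinely different route from the one in the paper. The paper's argument (due to Oshima) is word-combinatorial: it fixes a reduced expression $ww_0=s_{\beta_1}\cdots s_{\beta_n}$, uses the inversion-set description $\Sigma^+(\frakg,\fraka_G)\cap w\Sigma^+(\frakg,\fraka_G)=\{s_{\beta_1}\cdots s_{\beta_{k-1}}\beta_k:1\leq k\leq n\}$, and produces the chain by locating the last place in the word where $s_{\beta_{i+1}}\cdots s_{\beta_{k-1}}\beta_k$ is simple but $s_{\beta_i}\cdots s_{\beta_{k-1}}\beta_k$ is not, which forces $\langle s_{\beta_1}\cdots s_{\beta_{i-1}}\beta_i,\,s_{\beta_1}\cdots s_{\beta_{k-1}}\beta_k\rangle\neq0$, and then iterates. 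You instead argue intrinsically on the set $\Phi=\Sigma^+(\frakg,\fraka_G)\cap w\Sigma^+(\frakg,\fraka_G)$: both $\Phi$ and its complement in $\Sigma^+(\frakg,\fraka_G)$ are closed under root addition, and the minimal-height element of each connected component of the orthogonality graph must be simple, the only delicate case ($\langle\alpha_0,\beta\rangle=0$ with $\beta=\alpha_0-\alpha_i$) being bridged via the root $\alpha_0+\beta$, which lies in $\Phi$ by closure and is non-orthogonal to both endpoints. I checked the three cases: if $\alpha_i\in\Phi$ you get an edge to a simple root; if $\alpha_i\notin\Phi$ then coclosure (equivalently your sign argument for $w^{-1}\beta$) puts $\beta\in\Phi$; and in the orthogonal subcase the unbroken-string property with $p=q$ indeed yields $\alpha_0+\beta\in\Sigma$, so the two-step path exists. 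Your approach buys several things: it avoids reduced expressions and the inversion-set enumeration altogether, it proves the more general statement that every connected component of any biclosed subset of $\Sigma^+(\frakg,\fraka_G)$ contains a simple root, and it applies verbatim to the possibly non-reduced restricted root systems (type $BC$) that actually occur here, where the reduced-word enumeration strictly speaking only lists indivisible roots (the degenerate possibility $\alpha_0=2\alpha_i$ is automatically excluded from your second case, since then $w^{-1}\alpha_0$ and $w^{-1}\alpha_i$ have the same sign). What the paper's proof buys in exchange is an explicit construction of the chain from a reduced word.
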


This result and its proof were communicated to us by Yoshiki Oshima.

\begin{proof}
Let $ww_0=s_{\beta_1}\cdots s_{\beta_n}$ be a reduced expression for $ww_0\in W$, i.e. $\beta_1,\ldots,\beta_n$ are simple roots. It is known that
$$ \Sigma^+(\frakg,\fraka_G)\cap w\Sigma^+(\frakg,\fraka_G) = \Sigma^+(\frakg,\fraka_G)\cap\big(-ww_0\Sigma^+(\frakg,\fraka_G)\big) = \{s_{\beta_1}\cdots s_{\beta_{k-1}}\beta_k:1\leq k\leq n\}, $$
and we can write $\alpha_1=\alpha=s_{\beta_1}\cdots s_{\beta_{k-1}}\beta_k$. If $\alpha_1$ is not a simple root, there exists $1\leq i<k$ such that $s_{\beta_{i+1}}\cdots s_{\beta_{k-1}}\beta_k$ is simple, but $s_{\beta_i}\cdots s_{\beta_{k-1}}\beta_k$ is not. In particular, $s_{\beta_{i+1}}\cdots s_{\beta_{k-1}}\beta_k\neq s_{\beta_i}\cdots s_{\beta_{k-1}}\beta_k$ and hence $\langle\beta_i,s_{\beta_{i+1}}\cdots s_{\beta_{k-1}}\beta_k\rangle\neq0$, which implies $\langle s_{\beta_1}\cdots s_{\beta_{i-1}}\beta_i,s_{\beta_1}\cdots s_{\beta_{k-1}}\beta_k\rangle\neq0$. Put $\alpha_2:=s_{\beta_1}\cdots s_{\beta_{i-1}}\beta_i$, then $\langle\alpha_1,\alpha_2\rangle\neq0$. Repeating this argument yields the desired sequence.
\end{proof}

The next lemma provides more information about case \eqref{lem:RelationAGandAHRootSpaces2} in Lemma~\ref{lem:RelationAGandAHRootSpaces}:

\begin{lemma}\label{lem:RemainingCase}
Let $w\in W$ with $Q\tilde{w}P_G\neq Q\tilde{w}_0P_G$. If for all $\alpha\in\Sigma^+(\frakg,\fraka_G)\cap w\Sigma^+(\frakg,\fraka_G)$ with $\overline{\alpha}\neq0$ we have $\frakg(\fraka_G;\alpha)\subseteq\frakn_H$, then for one of those $\alpha$ we must have $[\frakg(\fraka_G;\alpha),\frakn^{-\sigma}]\neq\{0\}$.
\end{lemma}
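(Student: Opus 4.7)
The plan is to argue by contradiction. I will suppose that $[\frakg(\fraka_G;\alpha),\frakn^{-\sigma}]=\{0\}$ for every $\alpha\in\Sigma^+(\frakg,\fraka_G)\cap w\Sigma^+(\frakg,\fraka_G)$ with $\overline{\alpha}\neq 0$. Under the standing hypothesis of the lemma, each such $\alpha$ lies in case~(3) of Lemma~\ref{lem:RelationAGandAHRootSpaces}, so in particular $\sigma\alpha=\alpha$ and $\frakg(\fraka_G;\alpha)\subseteq\frakn_H=\frakn^\sigma$. The goal is to trace the centralizer condition down to a simple root and then use strict indecomposability to derive a contradiction.

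The first step will be to translate the centralizer condition into an inner-product inequality: for any $\gamma\in\Sigma(\frakg,\fraka_G)$ with $\overline{\gamma}\in\Sigma^+(\frakg,\fraka_H)$ of type~(1) or~(2) in Lemma~\ref{lem:RelationAGandAHRootSpaces}, I will establish $(\alpha,\gamma)\geq 0$. The type-(2) case is immediate from $\frakg(\fraka_G;\gamma)\subseteq\frakn^{-\sigma}$ together with the contrapositive of Lemma~\ref{lem:OSLemma}. For type~(1) the point is that $\sigma\gamma\neq\gamma$, so for any nonzero $Y\in\frakg(\fraka_G;\gamma)$ the vector $Y-\sigma Y\in\frakn^{-\sigma}$ has nontrivial components in the two distinct $\fraka_G$-weight spaces $\frakg(\fraka_G;\gamma)$ and $\frakg(\fraka_G;\sigma\gamma)$; vanishing of its bracket with $\frakg(\fraka_G;\alpha)$ therefore splits along weights and forces $[\frakg(\fraka_G;\alpha),\frakg(\fraka_G;\gamma)]=\{0\}$, whence Lemma~\ref{lem:OSLemma} again gives the desired inequality.

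Next I will reduce to a simple root with the same properties. By Lemma~\ref{lem:WeylGroupLemma} I can pick $\alpha_0\in\Sigma^+\cap w\Sigma^+$ with $\overline{\alpha_0}\neq 0$, and Lemma~\ref{lem:RootSequence} then supplies a chain $\alpha_0=\alpha_1,\ldots,\alpha_r$ in $\Sigma^+\cap w\Sigma^+$ with $\langle\alpha_i,\alpha_{i+1}\rangle\neq 0$ and $\alpha_r$ simple. I will propagate the type-(3) structure and the centralizer condition along this chain: at each step, if $\overline{\alpha_{i+1}}\neq 0$ then $\alpha_{i+1}$ is of type~(3) by hypothesis and inherits the centralizer condition via the Jacobi identity, while if $\overline{\alpha_{i+1}}=0$ the non-orthogonality $\langle\alpha_i,\alpha_{i+1}\rangle\neq 0$ combined with Lemma~\ref{lem:OSLemma} lets me replace $\alpha_{i+1}$ by an appropriate $\alpha_i\pm\alpha_{i+1}$ which is still a root in $\Sigma^+\cap w\Sigma^+$ but now has nonzero restriction. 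The outcome will be a simple $\widetilde{\alpha}\in\Pi(\frakg,\fraka_G)$ satisfying $\sigma\widetilde{\alpha}=\widetilde{\alpha}$, $\overline{\widetilde{\alpha}}\neq 0$, and $[\frakg(\fraka_G;\widetilde{\alpha}),\frakn^{-\sigma}]=\{0\}$, so in particular the first step gives $(\widetilde{\alpha},\gamma)\geq 0$ for every type-(1)/(2) root $\gamma$ with $\overline{\gamma}\in\Sigma^+(\frakg,\fraka_H)$.

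Finally I will extract a contradiction. Simplicity of $\widetilde{\alpha}$ gives $(\widetilde{\alpha},\eta)\leq 0$ for every other $\eta\in\Pi(\frakg,\fraka_G)$, so combined with the inequalities above we obtain $(\widetilde{\alpha},\eta)=0$ for every simple $\eta$ of type~(1) or~(2); hence in the Dynkin diagram of $\Sigma(\frakg,\fraka_G)$, the node $\widetilde{\alpha}$ is connected only to simple roots of types~(0) or~(3). Taking the $\sigma$- and $\theta$-closure of the connected component of $\widetilde{\alpha}$ within the subdiagram of type-(0) and type-(3) simples should cut out a proper nontrivial $(\sigma,\theta)$-stable ideal of $\frakg$ whose intersection with $\frakh$ supplies the complementary ideal of $\frakh$; using that $\Delta(\frakn^{-\sigma})$ is a basis of $\fraka_H^\vee$ by Lemma~\ref{lem:RealSphImpliesTransitiveActionOnSpheres}(2) guarantees that the complementary piece is nonzero, producing a decomposition $\frakg_{\rm n}=\frakg_1\oplus\frakg_2$ with $\frakh_{\rm n}=(\frakh_{\rm n}\cap\frakg_1)\oplus(\frakh_{\rm n}\cap\frakg_2)$ that contradicts the strict indecomposability of $(\frakg,\frakh)$. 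I expect the main obstacle to lie in the propagation step, where one must rigorously transfer the centralizer condition across intermediate roots of trivial restriction, and secondarily in the Dynkin-diagram decoupling argument, which may ultimately require case-checking against the classification of Theorem~\ref{thm:ClassificationStronglySphericalPairs}.
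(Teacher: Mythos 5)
Your first two steps are essentially sound and in fact run parallel to the paper's argument. The inequality of your first step is just the contrapositive of the desired bracket statement, and the two cases (via Lemma~\ref{lem:OSLemma}, and splitting $[X,Y-\sigma Y]$ over the distinct weights $\alpha+\gamma$ and $\alpha+\sigma\gamma$ when $\sigma\gamma\neq\gamma$) are correct. The reduction to a simple root also works, but your treatment of chain members with $\overline{\alpha_{i+1}}=0$ is both unjustified and unnecessary: you never verify that $\alpha_i\pm\alpha_{i+1}$ lies in $\Sigma^+(\frakg,\fraka_G)\cap w\Sigma^+(\frakg,\fraka_G)$, and in fact this situation cannot occur, because under the hypothesis every root in $\Sigma^+(\frakg,\fraka_G)\cap w\Sigma^+(\frakg,\fraka_G)$ is either $\sigma$-fixed (type~(3) of Lemma~\ref{lem:RelationAGandAHRootSpaces}) or $\sigma$-negated ($\overline{\alpha}=0$), these two families are mutually orthogonal, and the chain of Lemma~\ref{lem:RootSequence} has non-orthogonal consecutive members; this is exactly how the paper reaches a simple $\sigma$-fixed root.

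The genuine gap is the final ``decoupling'' step, and it is not a technicality. All you know at that point is that the single simple root $\widetilde{\alpha}$ is orthogonal to every simple root of type (1)/(2); the other type-(0)/(3) simple roots in the connected component you form have no centralizer property (they need not lie in $\Sigma^+(\frakg,\fraka_G)\cap w\Sigma^+(\frakg,\fraka_G)$), so nothing prevents them from being adjacent to type-(1)/(2) simples, and no subdiagram is cut off from the rest. Moreover the contradiction you aim for cannot be strict indecomposability: after excluding the group case one may take $\frakg$ simple, and then the restricted root system is irreducible, so there is no ideal decomposition of $\frakg_{\rm n}$ to be had at all; the only contradiction available along these lines is $\frakg=\frakh$ (non-triviality), which requires \emph{all} simple roots to be of type (0)/(3), not merely the neighbours of $\widetilde{\alpha}$. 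The paper closes this hole differently: it first shows (connectivity of the Dynkin diagram plus $\frakg\neq\frakh$) that some simple root $\alpha'$ is of type (1)/(2), then connects $\widetilde{\alpha}$ to $\alpha'$ by a path of simple roots and runs a four-case induction producing a positive root $\alpha''=n_2\alpha_2+\cdots+n_p\alpha_p$ with $\overline{\alpha''}\neq0$ and $\frakg(\fraka_G;\alpha'')\not\subseteq\frakn_H$, hence $(\widetilde{\alpha},\alpha'')<0$, which contradicts your Step-1 inequality (equivalently, yields the nonzero bracket via Lemma~\ref{lem:OSLemma}). That induction — in particular the delicate case $\sigma\beta_{k+1}\neq\beta_{k+1}$, $\overline{\alpha_k}=0$, where one must sometimes replace $\alpha_k+\beta_{k+1}$ by $\sigma\beta_{k+1}=2\alpha_k+\beta_{k+1}$ — is the essential content of the lemma and is absent from your plan; your fallback of ``case-checking against the classification'' is a different, much heavier route for which you provide no details.
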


\begin{proof}
Let $w\in W$ with $Q\tilde{w}P_G\neq Q\tilde{w}_0P_G$ and assume that $\frakg(\fraka_G;\alpha)\subseteq\frakn_H$ for all $\alpha\in\Sigma^+(\frakg,\fraka_G)\cap w\Sigma^+(\frakg,\fraka_G)$ with $\overline{\alpha}\neq0$. We explicitly construct a root $\alpha\in\Sigma^+(\frakg,\fraka_G)\cap w\Sigma^+(\frakg,\fraka_G)$ with $\overline{\alpha}\neq0$ such that $[\frakg(\fraka_G;\alpha),\frakn^{-\sigma}]\neq\{0\}$.\\
\textbf{Step 1.} If $Q\tilde{w}P_G\neq Q\tilde{w}_0P_G$, then there exists $\alpha\in\Sigma^+(\frakg,\fraka_G)\cap w\Sigma^+(\frakg,\fraka_G)$ with $\overline{\alpha}\neq0$ by Lemma~\ref{lem:WeylGroupLemma}. We claim that there exists a simple root $\alpha$ with this property. In fact, by Lemma~\ref{lem:RootSequence} there exists a sequence $\alpha=\alpha_1,\ldots,\alpha_r\in\Sigma^+(\frakg,\fraka_G)\cap w\Sigma^+(\frakg,\fraka_G)$ with $\langle\alpha_i,\alpha_{i+1}\rangle\neq0$ for $i=1,\ldots,r-1$ and $\alpha_r$ simple. By our assumption, every $\beta\in\Sigma^+(\frakg,\fraka_G)\cap w\Sigma^+(\frakg,\fraka_G)$ satisfies either $\overline{\beta}=0$ or $\frakg(\fraka_G;\beta)\subseteq\frakn_H$. By Lemma~\ref{lem:RelationAGandAHRootSpaces} this implies $\sigma\beta=-\beta$ or $\sigma\beta=\beta$. These two types of roots are obviously orthogonal to each other. Hence, $\sigma\alpha=\alpha$ implies $\sigma\alpha_i=\alpha_i$ for all roots $\alpha_i$ in the sequence. In particular, $\frakg(\fraka_G;\alpha_r)\subseteq\frakn_H$ by Lemma~\ref{lem:RelationAGandAHRootSpaces}, and we can replace $\alpha$ by the simple root $\alpha_r$.\\
\textbf{Step 2.} Next, we claim that there exists a simple root $\alpha'\in\Sigma^+(\frakg,\fraka_G)$ with $\overline{\alpha'}\neq0$ and $\frakg(\fraka_G;\alpha')\not\subseteq\frakn_H$. Assume that such a simple root does not exist. Then for every simple root $\alpha'\in\Sigma^+(\frakg,\fraka_G)$ we have either $\overline{\alpha'}=0$ or $\frakg(\fraka_G;\alpha')\subseteq\frakn_H$. This implies that either $\sigma\alpha'=-\alpha'$ or $\sigma\alpha'=\alpha'$, so that the set of simple roots is the disjoint union of the two mutually orthogonal subsets $\{\alpha'\in\Sigma^+(\frakg,\fraka_G)\mbox{ simple, }\sigma\alpha'=\pm\alpha'\}$. First note that this cannot occur in the case $(\frakg,\frakh)=(\frakg'+\frakg',\diag\frakg')$, so that we may assume $\frakg$ to be simple. Then the Dynkin diagram of $\Sigma(\frakg,\fraka_G)$ is connected and we must have $\sigma\alpha'=\alpha'$ for all simple roots, whence $\frakg(\fraka_G;\alpha')\subseteq\frakn_H$ for all simple roots. This implies $\frakg(\fraka_G;\alpha')\subseteq\frakn_H$ for all positive roots and therefore $\frakn_G=\frakn_H$ and also $\overline{\frakn}_G=\overline{\frakn}_H$. But $\frakn_G$ and $\overline{\frakn}_G$ generate $\frakg$, whence $\frakg=\frakh$ which contradicts our assumption that $(\frakg,\frakh)$ is non-trivial.\\
\textbf{Step 3.} By Step 1 we find a simple root $\alpha\in\Sigma^+(\frakg,\fraka_G)\cap w\Sigma^+(\frakg,\fraka_G)$ with $\overline{\alpha}\neq0$ and hence $\frakg(\fraka_G;\alpha)\subseteq\frakn_H$. We claim that $[\frakg(\fraka_G;\alpha),\frakn^{-\sigma}]\neq\{0\}$. To see this, we use Step 2 to find another simple root $\alpha'\in\Sigma^+(\frakg,\fraka_G)$ with $\overline{\alpha'}\neq0$ and $\frakg(\fraka_G;\alpha')\not\subseteq\frakn_H$. Connecting $\alpha$ and $\alpha'$ in the Dynkin diagram for $\Sigma(\frakg,\fraka_G)$, we obtain a sequence of simple roots $\alpha=\alpha_1,\alpha_2,\ldots,\alpha_{p-1},\alpha_p=\alpha'$ such that $(\alpha_i,\alpha_j)\neq0$ if and only if $|i-j|\leq1$. By possibly replacing $\alpha_p$ by one of the $\alpha_i$ we may assume that for all $1\leq i\leq p-1$ we either have $\overline{\alpha_i}=0$ or $\frakg(\fraka_G;\alpha_i)\subseteq\frakn_H$. We now construct a root $\alpha''=n_2\alpha_2+\cdots+n_p\alpha_p$, $n_i\geq1$, such that $\overline{\alpha''}\neq0$ and $\frakg(\fraka_G;\alpha'')\not\subseteq\frakn_H$, then $[\frakg(\fraka_G;\alpha),\frakg(\fraka_G;\alpha'')]\neq\{0\}$ by Lemma~\ref{lem:OSLemma} since $(\alpha,\alpha'')=n_2(\alpha_1,\alpha_2)<0$. By Lemma~\ref{lem:RelationAGandAHRootSpaces} this implies $[\frakg(\fraka_G;\alpha),\frakn^{-\sigma}]\neq\{0\}$.\\
We inductively construct a root $\beta_k=n_k\alpha_k+\cdots+n_p\alpha_p$ ($2\leq k\leq p$) with $\overline{\beta_k}\neq0$ and $\frakg(\fraka_G;\beta_k)\not\subseteq\frakn_H$. Note that for $\beta_k$ of the above form we always have $\overline{\beta_k}=n_k\overline{\alpha_k}+\cdots+n_p\overline{\alpha_p}\neq0$ since $\overline{\alpha_i}$ is either $=0$ or a positive root, and by assumption $\overline{\alpha_p}\neq0$. For $k=p$ we can choose the simple root $\beta_p=\alpha_p$. Now assume $\beta_{k+1}=n_{k+1}\alpha_{k+1}+\cdots+n_p\alpha_p$ has been constructed with $\frakg(\fraka_G;\beta_{k+1})\not\subseteq\frakn_H$. Then by Lemma~\ref{lem:RelationAGandAHRootSpaces} there are four possibilities for $\beta_{k+1}$ and $\alpha_k$:
\begin{enumerate}
\item $\sigma\beta_{k+1}=\beta_{k+1}$ and $\overline{\alpha_k}\neq0$. Then $\frakg(\fraka_G;\beta_{k+1})\cap\frakn^{-\sigma}\neq\{0\}$ and $\frakg(\fraka_G;\alpha_k)\subseteq\frakn_H$, hence by Lemma~\ref{lem:OSLemma}
$$ \{0\} \neq [\frakg(\fraka_G;\alpha_k),\frakg(\fraka_G;\beta_{k+1})\cap\frakn^{-\sigma}] \subseteq \frakg(\fraka_G;\alpha_k+\beta_{k+1})\cap\frakn^{-\sigma}, $$
so that $\beta_k=\alpha_k+\beta_{k+1}$ satisfies $\frakg(\fraka_G;\beta_k)\not\subseteq\frakn_H$.
\item $\sigma\beta_{k+1}=\beta_{k+1}$ and $\overline{\alpha_k}=0$. Then $\frakg(\fraka_G;\beta_{k+1})\cap\frakn^{-\sigma}\neq\{0\}$ and $\sigma\alpha_k=-\alpha_k$. Hence, we have for any $X\in\frakg(\fraka_G;\beta_{k+1})\cap\frakn^{-\sigma}$ and $Y\in\frakg(\fraka_G;\alpha_k)$, $X,Y\neq0$, that $0\neq[X,Y]\in\frakg(\fraka_G;\alpha_k+\beta_{k+1})$ by Lemma~\ref{lem:OSLemma} and $\sigma[X,Y]=-[X,\sigma Y]\in\frakg(\fraka_G;-\alpha_k+\beta_{k+1})$. Therefore, $\sigma[X,Y]\neq[X,Y]$ and hence $\frakg(\fraka_G;\alpha_k+\beta_{k+1})\not\subseteq\frakn_H$, so that we can choose $\beta_k=\alpha_k+\beta_{k+1}$.
\item $\sigma\beta_{k+1}\neq\beta_{k+1}$ and $\overline{\alpha_k}\neq0$. Then $\{X-\sigma X:X\in\frakg(\fraka_G;\beta_{k+1})\}\subseteq\frakn^{-\sigma}$ and $\frakg(\fraka_G;\alpha_k)\subseteq\frakn_H$. Hence, we have for any $X\in\frakg(\fraka_G;\beta_{k+1})$ and $Y\in\frakg(\fraka_G;\alpha_k)$, $X,Y\neq0$, that $0\neq[X,Y]\in\frakg(\fraka_G;\alpha_k+\beta_{k+1})$ by Lemma~\ref{lem:OSLemma} and $\sigma[X,Y]=[\sigma X,Y]\in\frakg(\fraka_G;\alpha_k+\sigma\beta_{k+1})$. Therefore, $\sigma[X,Y]\neq[X,Y]$ and hence $\frakg(\fraka_G;\alpha_k+\beta_{k+1})\not\subseteq\frakn_H$, so that we can choose $\beta_k=\alpha_k+\beta_{k+1}$.
\item $\sigma\beta_{k+1}\neq\beta_{k+1}$ and $\overline{\alpha_k}=0$. Then $\{X-\sigma X:X\in\frakg(\fraka_G;\beta_{k+1})\}\subseteq\frakn^{-\sigma}$ and $\sigma\alpha_k=-\alpha_k$. Let $X\in\frakg(\fraka_G;\beta_{k+1})$ and $Y\in\frakg(\fraka_G;\alpha_k)$, $X,Y\neq0$; then by Lemma~\ref{lem:OSLemma} we have $[X,Y]\neq0$.
\begin{enumerate}
\item If $\sigma[X,Y]\neq[X,Y]$ then we can choose $\beta_k=\alpha_k+\beta_{k+1}$ as in (1), (2) and (3).
\item If $\sigma[X,Y]=[X,Y]$ then $\sigma(\alpha_k+\beta_{k+1})=\alpha_k+\beta_{k+1}$ so that $\sigma\beta_{k+1}=2\alpha_k+\beta_{k+1}$. In this case we choose $\beta_k=\sigma\beta_{k+1}=2\alpha_k+n_{k+1}\alpha_{k+1}+\cdots+n_p\alpha_p$ which is clearly a positive root. Further, $\frakg(\fraka_G;\beta_k)\not\subseteq\frakn_H$ since $\sigma\beta_k=\beta_{k+1}\neq\beta_k$.
\end{enumerate}
\end{enumerate}
Inductively, for $k=2$ this produces the desired root $\alpha''=\beta_2$.
\end{proof}

We can finally finish the proof of Theorem~\ref{thm:AHProjectionStabilizers} by choosing $Y\in\frakg(\fraka_G;\alpha)$ according to the three cases in Lemma~\ref{lem:RelationAGandAHRootSpaces}. Write $\beta=\overline{\alpha}$.
\begin{enumerate}
\item If $\sigma\alpha\neq\alpha$ we can write $X_\beta=Y-\sigma Y$ for some $Y\in\frakg(\fraka_G;\alpha)$. Using this $Y$ in the above construction, we have by Lemma~\ref{lem:IdentityForYandZ}
$$ Y = \ad(Z_M+Z_A)X_\beta + Z_{N,\beta}. $$
Both sides are contained in $\frakn=\frakn^\sigma\oplus\frakn^{-\sigma}$, and taking $\frakn^{-\sigma}$-components gives $2X_\beta=\ad(Z_M+Z_A)X_\beta=\ad(Z_M)X_\beta+\beta(Z_A)X_\beta$. Note that we assume $X_\beta\neq0$ by the reduction in Section~\ref{sec:ReductionToGenericX}. Then $\ad(Z_M)X_\beta=(2-\beta(Z_A))X_\beta$ so that $Z_M$ acts by a scalar on $X_\beta$. Since $Z_M\in\frakm_G\cap\frakm_H$ and $M_G\cap M_H$ is compact, this scalar has to be imaginary, so that $\beta(Z_A)=2$. In particular, $Z_A\neq0$.
\item If $\sigma\alpha=\alpha$ and $\frakg^{-\sigma}(\fraka_G;\alpha)=\frakg^{-\sigma}(\fraka_H;\beta)$ we take $Y=X_\beta$. Then Lemma~\ref{lem:IdentityForYandZ} implies
$$ X_\beta = Y = \ad(Z_M+Z_A)X_\beta = \ad(Z_M)X_\beta + \beta(Z_A)X_\beta. $$
The same argument as in (1) shows $\beta(Z_A)=1$ and in particular $Z_A\neq0$.
\item If (1) and (2) do not hold for any such $\alpha$, we have $\sigma\alpha=\alpha$ and $\frakg(\fraka_G;\alpha)\subseteq\frakn_H$ for all $\alpha>0$ with $w^{-1}\alpha>0$, $\overline{\alpha}\neq0$. Then Lemma~\ref{lem:RemainingCase} implies that there exists an $\alpha$ with $[\frakg(\fraka_G;\alpha),\frakn^{-\sigma}]\neq\{0\}$. Let $0\neq Y\in\frakg(\fraka_G;\alpha)$ and $0\neq B_i\in\frakg^{-\sigma}(\fraka_H;\beta_i)$ ($i=1,2$) with $\beta_1,\beta_2\in\Delta(\frakn^{-\sigma})$, such that $[Y,B_1]=B_2$. By Lemma~\ref{lem:RealSphImpliesTransitiveActionOnSpheres} there exists $ma\in(M_G\cap M_H)A_H$ such that $\Ad(ma)B_i=\pm X_{\beta_i}$ ($i=1,2$). Hence, $[\Ad(ma)Y,X_{\beta_1}]=\pm X_{\beta_2}$. Replacing $Y$ by $\Ad(ma)Y\in\frakg(\fraka_G;\alpha)$ we may therefore assume that $[Y,X_{\beta_1}]=\pm X_{\beta_2}$.\\
Note that
$$ e^Xe^{tY} = e^{tY}e^{\Ad(e^{-tY})X}\in N_H\exp(\frakn^{-\sigma}), $$
and $\Ad(e^{-tY})X=e^{-t\ad(Y)}X\in\frakn^{-\sigma}$ so that $n_t=e^{tY}$ and $X_t=\Ad(m_ta_t)X=e^{-t\ad(Y)}X$. Hence, $Z_N=Y$ and $[Z_M+Z_A,X]=-[Y,X]$. Taking the $\beta_2$-component gives
$$ [Z_M,X_{\beta_2}]+\beta_2(Z_A)X_{\beta_2} = -[Y,X_{\beta_1}] = \mp X_{\beta_2}. $$
By the same argument as in (1) and (2) we find $\beta_2(Z_A)=\mp1$ and hence $Z_A\neq0$.
\end{enumerate}
This finishes the proof of Theorem~\ref{thm:AHProjectionStabilizers}.\qed

\section{Spherical matrix coefficients}\label{sec:FiniteDimBranching}

We study matrix coefficients of finite-dimensional representations of $G$ which are equivariant under the action of $P_H\times P_G$ by left and right multiplication. Such matrix coefficients correspond to finite-dimensional spherical representations of $G$ whose restriction to $H$ contains a spherical representation, and we show that there exist \textit{enough} such representations (see Theorem~\ref{thm:EnoughWeights}). In Section~\ref{sec:ConstructionOfSBOs} these matrix coefficients are used to explicitly construct symmetry breaking operators.

\subsection{Reduction to complex connected groups}

Since both $G$ and $H$ might be disconnected, their finite-dimensional representations are not easily described in terms of highest weights. To overcome this difficulty we first reduce the construction of matrix coefficients to the case of complex connected groups. Since $G$ is of Harish-Chandra class, there exist
\begin{itemize}
\item a complex connected linear reductive group $\GG_\CC$ with Lie algebra $\frakg_\CC$,
\item an antiholomorphic involution $\tau:\GG_\CC\to\GG_\CC$ such that the derived involution $\tau:\frakg_\CC\to\frakg_\CC$ is the conjugation with respect to the real form $\frakg$,
\item a homomorphism $\mu:G\to\GG$ from $G$ to the real form $\GG=\GG_\CC^\tau$ of $\GG_\CC$ with finite kernel and cokernel.
\end{itemize}
Note that the Lie algebra of $\GG$ is equal to $\frakg$. We denote by $\HH_\CC$ the complex connected subgroup of $\GG_\CC$ with Lie algebra $\frakh_\CC$ and by $\HH=\mu(H)_0=\mu(H_0)\subseteq\HH_\CC$ the connected subgroup of $\HH_\CC$ with Lie algebra $\frakh$. Then the finite-dimensional holomorphic representations of $\GG_\CC$ and $\HH_\CC$ are classified in terms of their highest weights, and via the homomorphism $\mu$ they give rise to finite-dimensional representations of $G$ and $H_0$.

The image $\KK=\mu(K)$ of the maximal compact subgroup of $G$ under $\mu$ is a maximal compact subgroup of $\GG$, and the intersection $\HH\cap\KK$ is maximal compact in $\HH$. Further, let
$$ \MM_G = \mu(M_G), \quad \AA_G = \mu(A_G) \quad \mbox{and} \quad \NN_G = \mu(N_G), $$
then $\PP_G=\mu(P_G)=\MM_G\AA_G\NN_G$ is a minimal parabolic subgroup of $\GG$.

\subsection{The Cartan--Helgason Theorem}

We now recall the classification of irreducible finite-dimensional spherical representations of $\GG$ in terms of their highest weights, the so-called Cartan--Helgason Theorem. Recall that a representation of $\GG$ is called spherical if it contains a non-zero $\KK$-invariant vector.

We choose a maximal abelian subalgebra $\frakt_G$ in $\frakm_G$; then $\frakj_G=\frakt_G\oplus\fraka_G$ is a Cartan subalgebra of $\frakg$ and $\frakj_{G,\CC}$ is a Cartan subalgebra of $\frakg_\CC$. Roots in $\Sigma(\frakg_\CC,\frakj_{G,\CC})$ are real on $\fraka_G$ and imaginary on $\frakt_G$. Fix a positive system $\Sigma^+(\frakg_\CC,\frakj_{G,\CC})$ such that the non-zero restrictions to $\fraka_G$ are contained in $\Sigma^+(\frakg,\fraka_G)$. With respect to this data, the irreducible finite-dimensional representations of $\GG$ are classified by their highest weights in $\frakj_{G,\CC}^\vee$.

Recall the following theorem (see e.g. \cite[Theorem 8.49]{Kna02}):

\begin{theorem}[Cartan--Helgason]\label{thm:CartanHelgason}
For an irreducible finite-dimensional representation $\varphi$ of $\GG$ the following statements are equivalent:
\begin{enumerate}
\item $\varphi$ has a non-zero $\KK$-fixed vector.
\item $\MM_G$ acts by the $1$-dimensional trivial representation in the highest restricted weight space of $\varphi$.
\item The highest weight of $\varphi$ vanishes on $\frakt_G$, and its restriction to $\fraka_G$ is contained in the set
$$ \Lambda^+(\frakg,\fraka_G) = \{\lambda\in\fraka_G^\vee:\langle\lambda,\alpha\rangle/|\alpha|^2\in\NN\,\forall\,\alpha\in\Sigma^+(\frakg,\fraka_G)\}. $$
\end{enumerate}
\end{theorem}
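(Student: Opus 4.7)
The plan is to prove the cyclic chain $(3)\Rightarrow(1)\Rightarrow(2)\Rightarrow(3)$, using the Iwasawa decomposition $\GG=\KK\AA_G\NN_G$ together with the restricted weight decomposition of $V_\varphi$ with respect to $\fraka_G$. I write $\lambda=\mu|_{\fraka_G}$ for the highest $\fraka_G$-weight and $V_{\varphi,\lambda}$ for the corresponding weight space; the central fact used throughout is that $\MM_G\subseteq\KK$ normalises $\AA_G$ and $\NN_G$ and therefore acts on $V_{\varphi,\lambda}$.

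For $(3)\Rightarrow(1)$ I would exhibit a $\KK$-fixed vector explicitly. Realise the irreducible holomorphic representation $V_\mu$ of $\GG_\CC$, fix a highest weight vector $v_\mu$, and form $v:=\int_\KK\varphi(k)v_\mu\,dk$. This is $\KK$-fixed by construction; non-vanishing is checked by pairing with the lowest weight covector $v_{-\mu}^\ast\in V_\mu^\vee$, using the standard Iwasawa formula $\langle\varphi(\bar n a n)v_\mu,v_{-\mu}^\ast\rangle=e^{\mu(\log a)}$ valid on the open Bruhat cell of $\GG$ to reduce the pairing to an integral of a strictly positive function on a set of full measure in $\KK$.

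For $(1)\Rightarrow(2)$: take a nonzero $\KK$-fixed vector $v$ and decompose $v=\sum_\nu v_\nu$ into $\fraka_G$-weight components. Since $\MM_G$ centralises $\AA_G$, each $v_\nu$ is $\MM_G$-fixed. Asymptotic analysis of $\varphi(\exp(tH))v$ for strictly dominant $H\in\fraka_G$ as $t\to+\infty$ identifies the top component $v_\lambda$ as nonzero, so $V_{\varphi,\lambda}^{\MM_G}\neq\{0\}$. Combined with the classical fact that $V_{\varphi,\lambda}$ is irreducible as an $\MM_G$-module, this forces $V_{\varphi,\lambda}$ itself to be the trivial one-dimensional $\MM_G$-representation.

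For $(2)\Rightarrow(3)$: the highest weight vector $v_\mu\in V_{\varphi,\lambda}$ has $\frakt_G$-weight $\mu|_{\frakt_G}$, and triviality of the $\MM_G$-action on $V_{\varphi,\lambda}$ forces $\mu|_{\frakt_G}=0$ since $\frakt_G\subseteq\frakm_G$. The integrality condition on $\lambda$ then follows by the standard $\sl(2,\RR)$-argument: for each $\alpha\in\Sigma^+(\frakg,\fraka_G)$ pick generators $E_\alpha\in\frakg(\fraka_G;\alpha)$, $F_\alpha\in\frakg(\fraka_G;-\alpha)$ and $H_\alpha$ with $\alpha(H_\alpha)=2$, which generate a copy of $\sl(2,\RR)$ acting on $V_\varphi$; finite-dimensionality together with the dominance of $\lambda$ force $\lambda(H_\alpha)\in\NN$. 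The main obstacle, in my view, is the irreducibility of $V_{\varphi,\lambda}$ as an $\MM_G$-module used in $(1)\Rightarrow(2)$: this requires Kostant-type analysis of how the $\frakm_G$-root-spaces act on $V_{\varphi,\lambda}$, compounded by the possible disconnectedness of $\MM_G$, which forces one to argue on the component group by Weyl-type considerations. A secondary analytic subtlety is the non-vanishing of the $\KK$-average in $(3)\Rightarrow(1)$, which depends on the explicit Iwasawa product formula for matrix coefficients and a density argument for the open Bruhat cell.
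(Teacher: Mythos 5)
The paper does not prove this theorem itself (it quotes \cite[Theorem 8.49]{Kna02}), so your sketch must stand on its own, and it has two genuine gaps, both sitting exactly where the specific content of Cartan--Helgason lies: the group $\MM_G$ and the integrality condition. In $(3)\Rightarrow(1)$ the open cell is $\overline{\NN}_G\MM_G\AA_G\NN_G$, not $\overline{\NN}_G\AA_G\NN_G$; for almost every $k\in\KK$ one has $k=\bar n\,m\,a\,n$ and the integrand of your $\KK$-average is $e^{\mu(\log a)}\langle\varphi(m)v_\mu,v^*_{-\mu}\rangle$, so its positivity is precisely the assertion that $\MM_G$ acts trivially on the highest restricted weight space. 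That assertion is the hard core of the theorem and the only place the integrality half of (3) enters: $\frakm_G$ acts trivially because $\mu|_{\frakt_G}=0$, but the component group of $\MM_G$ is controlled by the elements $\gamma_\alpha=\exp\bigl(2\pi\sqrt{-1}\,|\alpha|^{-2}H_\alpha\bigr)\in\MM_G$, which act on $v_\mu$ by $e^{2\pi\sqrt{-1}\langle\lambda,\alpha\rangle/|\alpha|^2}$. Your argument never invokes the integrality hypothesis, and as written it would equally ``prove'' that the standard representation of $\SL(2,\RR)$ (where $\frakt_G=0$ and $\MM_G=\{\pm\1\}$) has an $\SO(2)$-fixed vector, which is false. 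The same omission breaks $(2)\Rightarrow(3)$: the $\sl(2,\RR)$-argument yields only $\lambda(H_\alpha)=2\langle\lambda,\alpha\rangle/|\alpha|^2\in\NN$, whereas the theorem asserts $\langle\lambda,\alpha\rangle/|\alpha|^2\in\NN$; the missing factor $2$ is exactly what distinguishes Cartan--Helgason from ordinary dominant integrality, and it is obtained by feeding the triviality of the $\gamma_\alpha$-action (hypothesis (2)) into the computation above, which your sketch never does.

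By contrast, the two points you single out as the main obstacles are the routine ones. Irreducibility of the highest restricted weight space under $\MM_G$ is a short PBW argument: the space of $\frakn_G$-invariants coincides with the top restricted $\fraka_G$-weight space $V_\lambda$, and for any nonzero $\frakm_G\oplus\fraka_G$-submodule $W\subseteq V_\lambda$ one has $V=U(\overline{\frakn}_G)W$, whose top $\fraka_G$-component is $W$ itself; disconnectedness of $\MM_G$ causes no trouble here since an $\MM_G$-invariant subspace is in particular $\MM_{G,0}$-invariant. And in $(1)\Rightarrow(2)$ the nonvanishing of the component $v_\lambda$ of a $\KK$-fixed vector $v$ does not follow from your asymptotic argument, which only identifies the largest weight whose component is nonzero; it does follow, for instance, by choosing (unitary trick) a $\KK$-invariant inner product for which $\fraka_G$ acts by self-adjoint operators, writing $g=\bar n a k$ via $\GG=\overline{\NN}_G\AA_G\KK$ and computing $\langle\varphi(g)v,u_\lambda\rangle=e^{\lambda(\log a)}\langle v_\lambda,u_\lambda\rangle$ for $u_\lambda\in V_\lambda$, so that $v_\lambda=0$ would make $v$ pair trivially with $V_\lambda$ against all of $\varphi(\GG)v=V$, contradicting irreducibility. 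So the cycle $(3)\Rightarrow(1)\Rightarrow(2)\Rightarrow(3)$ is workable, but only after you prove that (3) forces $\MM_G$ (including its component group) to act trivially on $V_\lambda$, and after you use hypothesis (2) through the elements $\gamma_\alpha$ rather than through a bare $\sl(2,\RR)$-weight argument.
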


Let $\Lambda^+_\GG(\frakg,\fraka_G)\subseteq\Lambda^+(\frakg,\fraka_G)$ denote the subset of all $\lambda$ for which there exists an irreducible finite-dimensional $\GG$-representation $(\varphi_\lambda,V_\lambda)$ of highest weight $\lambda$. Then $\linspan_\RR\Lambda^+_\GG(\frakg,\fraka_G)=\linspan_\RR\Lambda^+(\frakg,\fraka_G)=\fraka_G^\vee$, and if $\GG$ is simply connected even $\Lambda^+_\GG(\frakg,\fraka_G)=\Lambda^+(\frakg,\fraka_G)$. Theorem~\ref{thm:CartanHelgason} immediately gives the action of $\PP_G$ on the highest weight space of $V_\lambda$:

\begin{corollary}\label{cor:CartanHelgason}
For every $\lambda\in\Lambda^+_\GG(\frakg,\fraka_G)$ the minimal parabolic subgroup $\PP_G=\MM_G\AA_G\NN_G$ acts on the highest weight space of $V_\lambda$ by the character $\1\otimes e^\lambda\otimes\1$.
\end{corollary}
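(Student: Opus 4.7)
The plan is to verify directly that the highest weight space $V_\lambda^\lambda$ of $V_\lambda$ (that is, the weight space for the restricted $\fraka_G$-weight $\lambda$) is stable under $\PP_G$, and then compute the action of each of the three factors $\MM_G$, $\AA_G$, $\NN_G$ on it. All three pieces follow essentially for free from what has already been established, so no genuine obstacle is expected; the content is really just unpacking what the Cartan--Helgason theorem already says.

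First I would argue $\PP_G$-stability of $V_\lambda^\lambda$. Since $\MM_G$ centralizes $\fraka_G$ and $\AA_G$ obviously commutes with itself, both preserve every $\fraka_G$-weight space, and in particular $V_\lambda^\lambda$. For $\NN_G$, note that if $v\in V_\lambda^\lambda$ and $X\in\frakn_G$ lies in the root space $\frakg(\fraka_G;\alpha)$ for some $\alpha\in\Sigma^+(\frakg,\fraka_G)$, then $X\cdot v\in V_\lambda^{\lambda+\alpha}$; by maximality of $\lambda$ among the restricted weights of $V_\lambda$, this weight space is zero. Hence every element of $\frakn_G$ annihilates $V_\lambda^\lambda$, and $\NN_G=\exp(\frakn_G)$ acts by the identity.

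Next I would compute the action. The $\NN_G$ action is trivial by the previous paragraph, giving the factor $\1$. By definition of a weight vector, $H\cdot v=\lambda(H)v$ for all $H\in\fraka_G$ and $v\in V_\lambda^\lambda$, so $\AA_G=\exp(\fraka_G)$ acts by $e^\lambda$. Finally, the action of $\MM_G$ on $V_\lambda^\lambda$ is exactly the ``$\MM_G$-action on the highest restricted weight space'' appearing in statement (2) of Theorem~\ref{thm:CartanHelgason}; since $\lambda\in\Lambda^+_\GG(\frakg,\fraka_G)$ corresponds to a spherical representation (namely $V_\lambda$ itself, which exists by assumption and is spherical because $\lambda$ satisfies condition (3) of Theorem~\ref{thm:CartanHelgason}), this action is trivial, giving the factor $\1$.

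Putting the three pieces together, for $man\in\MM_G\AA_G\NN_G=\PP_G$ and $v\in V_\lambda^\lambda$ we have $\varphi_\lambda(man)v=e^{\lambda(\log a)}v$, i.e.\ $\PP_G$ acts on $V_\lambda^\lambda$ by the character $\1\otimes e^\lambda\otimes\1$, as claimed. The only subtlety worth being careful about is that $\MM_G$ may be disconnected, so that triviality on $\frakt_G$ (condition (3) of Cartan--Helgason) gives only the identity component $\MM_G^0$ trivial, and one really needs the sharper statement (2) to conclude for all of $\MM_G$; this is the reason for invoking (2) rather than (3) above.
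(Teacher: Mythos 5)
Your proof is correct, and it follows the same route as the paper: the paper states the corollary as an immediate consequence of Theorem~\ref{thm:CartanHelgason}, and your argument simply spells out the routine verification ($\NN_G$ annihilates the highest restricted weight space since $\lambda+\alpha$ is not a restricted weight, $\AA_G$ acts by $e^\lambda$ by definition, and $\MM_G$ acts trivially by statement (2), which is exactly the right point to invoke given that $\MM_G$ may be disconnected). No gaps.
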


Here $\1$ denotes the trivial representation of $\MM_G$ and $\NN_G$, respectively, and $e^\lambda$ is the character of $\AA_G=\exp(\fraka_G)$ given by $e^\lambda(e^X)=e^{\lambda(X)}$, $X\in\fraka_G$.

Now, for an irreducible finite-dimensional representation $(\varphi,V)$ of $\GG$ we denote by $(\varphi^\vee,V^\vee)$ the contragredient representation on the dual space $V^\vee=\Hom_\CC(V,\CC)$ given by
$$ \langle\varphi^\vee(g)\phi,v\rangle = \langle\phi,\varphi(g^{-1})v\rangle, \qquad g\in\GG,v\in V,\phi\in V^\vee. $$
The following statement is standard:

\begin{lemma}\label{lem:DualCartanHelgason}
\begin{enumerate}
\item $(\varphi,V)$ has a non-zero $\KK$-fixed vector if and only if $(\varphi^\vee,V^\vee)$ has a non-zero $\KK$-fixed vector.
\item For $\lambda\in\Lambda^+_\GG(\frakg,\fraka_G)$ let $\lambda^\vee\in\Lambda^+_\GG(\frakg,\fraka_G)$ be defined by $(\varphi_\lambda^\vee,V_\lambda^\vee)\simeq(\varphi_{\lambda^\vee},V_{\lambda^\vee})$, then the map
$$ \Lambda^+_\GG(\frakg,\fraka_G)\to\Lambda^+_\GG(\frakg,\fraka_G), \quad \lambda\mapsto\lambda^\vee $$
is the restriction to $\Lambda^+_\GG(\frakg,\fraka_G)$ of a linear map $\fraka_G^\vee\to\fraka_G^\vee$.
\end{enumerate}
\end{lemma}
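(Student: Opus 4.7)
My plan is to treat the two parts separately: part (1) by a compactness argument for $\KK$, and part (2) by identifying $\lambda \mapsto \lambda^\vee$ with the linear involution on weights induced by the longest Weyl group element of $\frakg_\CC$.

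For (1), I would exploit the compactness of $\KK$. Decomposing $V|_\KK$ into $\KK$-isotypes yields $V \cong \bigoplus_i W_i^{\oplus m_i}$ with the $W_i$ irreducible $\KK$-representations, and therefore $V^\vee \cong \bigoplus_i (W_i^\vee)^{\oplus m_i}$. Since the trivial $\KK$-representation is self-dual, the multiplicities of the trivial type in $V$ and $V^\vee$ coincide, i.e. $\dim V^\KK = \dim(V^\vee)^\KK$. Equivalently, a $\KK$-invariant Hermitian inner product on $V$ yields an antilinear $\KK$-equivariant bijection $V \to V^\vee$ carrying $V^\KK$ onto $(V^\vee)^\KK$.

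For (2), the plan is to invoke standard highest-weight theory for $\GG_\CC$. View $\fraka_G^\vee$ as the subspace of $\frakj_{G,\CC}^\vee$ annihilating $\frakt_G$, and let $w_0$ denote the longest element of the Weyl group $W(\frakg_\CC, \frakj_{G,\CC})$. Then the contragredient of the irreducible $\GG_\CC$-representation of highest weight $\lambda$ has highest weight $-w_0 \lambda$, so the candidate linear map is
\[ \Phi : \fraka_G^\vee \to \frakj_{G,\CC}^\vee, \qquad \lambda \mapsto -w_0\lambda, \]
which is manifestly linear. I would then verify that $\Phi$ takes values in $\fraka_G^\vee$ and agrees with $\lambda \mapsto \lambda^\vee$ on $\Lambda^+_\GG(\frakg, \fraka_G)$.

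The second condition is immediate from the highest-weight identification. For the first, part (1) combined with Theorem~\ref{thm:CartanHelgason} shows that for every $\lambda \in \Lambda^+_\GG(\frakg, \fraka_G)$ the weight $\lambda^\vee = \Phi(\lambda)$ again vanishes on $\frakt_G$; since $\Lambda^+_\GG(\frakg, \fraka_G)$ spans $\fraka_G^\vee$ over $\RR$ (as noted before Corollary~\ref{cor:CartanHelgason}), linearity of $\Phi$ forces $\Phi(\fraka_G^\vee) \subseteq \fraka_G^\vee$. I anticipate no real obstacle here: the only mild subtlety is passing between the weight lattice of $\GG_\CC$, where $-w_0$ naturally acts, and the real subspace $\fraka_G^\vee$, which is cleanly handled via the annihilator description of $\fraka_G^\vee$ in $\frakj_{G,\CC}^\vee$.
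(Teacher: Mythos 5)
The paper does not actually prove this lemma -- it is stated as ``standard'' with no argument -- so there is nothing to diverge from, and your proposal supplies exactly the expected standard proof: part (1) via the $\KK$-isotypic decomposition (or a $\KK$-invariant inner product) and self-duality of the trivial $\KK$-type, part (2) via the fact that the contragredient of the irreducible representation with highest weight $\lambda$ has highest weight $-w_0\lambda$, combined with Theorem~\ref{thm:CartanHelgason}. Both steps are correct; the only remark is that the span argument showing $\Phi(\fraka_G^\vee)\subseteq\fraka_G^\vee$ is not strictly needed, since one may simply take the linear map $\lambda\mapsto(-w_0\tilde\lambda)|_{\fraka_G}$, with $\tilde\lambda$ the extension of $\lambda$ by zero on $\frakt_G$, which already restricts to $\lambda\mapsto\lambda^\vee$ on $\Lambda^+_\GG(\frakg,\fraka_G)$ because the full highest weight of a spherical representation vanishes on $\frakt_G$.
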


\subsection{Matrix coefficients}\label{sec:MatrixCoefficients}

As for $\GG$, we denote by $\Lambda^+_\HH(\frakh,\fraka_H)\subseteq\fraka_H^\vee$ the set of all highest weights $\nu$ of irreducible finite-dimensional representations $(\psi_\nu,W_\nu)$ of $\HH$ which have a one-dimensional $\PP_H$-invariant subspace isomorphic to $\1\otimes e^\nu\otimes\1$, where $\PP_H=\MM_H\AA_H\NN_H$ is the corresponding minimal parabolic subgroup of $\HH$.

\begin{lemma}\label{lem:MatrixCoefficients}
Let $\lambda\in\Lambda_\GG^+(\frakg,\fraka_G)$ and $\nu\in\Lambda_\HH^+(\frakh,\fraka_H)$ and pick non-zero highest weight vectors $v_0\in V_\lambda$ and $\phi_0\in W_\nu^\vee$. Then for every $0\neq\eta\in\Hom_\HH(\varphi_\lambda|_\HH,\psi_\nu)$ the function
$$ f:\GG\to\CC, \quad f(g) = \langle\phi_0,\eta(\varphi_\lambda(g)v_0)\rangle $$
is non-zero, real-analytic and satisfies
$$ f(m'a'n'gman) = a^\lambda(a')^{-\nu^\vee}f(g) $$
for $g\in\GG$, $man\in\PP_G$ and $m'a'n'\in\PP_H$.
\end{lemma}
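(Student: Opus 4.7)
The plan is to verify the three assertions—real analyticity, the equivariance formula, and non-vanishing—by direct manipulation of the matrix coefficient, using only the definitions of $\Lambda_\GG^+(\frakg,\fraka_G)$ and $\Lambda_\HH^+(\frakh,\fraka_H)$ together with the intertwining property of $\eta$.

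Real analyticity is immediate: $g\mapsto\varphi_\lambda(g)$ is real-analytic as a map into $\End(V_\lambda)$, and $f$ is obtained from it by composing with the continuous linear functional $w\mapsto\langle\phi_0,\eta(w)\rangle$. For the equivariance I would handle the right and left transformations separately. On the right, since $v_0$ spans the highest weight line of $V_\lambda$, Corollary~\ref{cor:CartanHelgason} gives $\varphi_\lambda(man)v_0=a^\lambda v_0$, hence
\begin{equation*}
 \varphi_\lambda(m'a'n'\,g\,man)v_0 \;=\; a^\lambda\,\varphi_\lambda(m'a'n')\,\varphi_\lambda(g)v_0 .
\end{equation*}
On the left, the intertwining relation $\eta\circ\varphi_\lambda(h)=\psi_\nu(h)\circ\eta$ for $h\in\HH$ moves $\psi_\nu(m'a'n')$ outside $\eta$, and then the duality $\langle\phi_0,\psi_\nu(h)w\rangle=\langle\psi_\nu^\vee(h^{-1})\phi_0,w\rangle$ pushes it onto $\phi_0$. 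Because $\phi_0$ is a highest weight vector of $W_\nu^\vee\simeq W_{\nu^\vee}$, the analogue of Corollary~\ref{cor:CartanHelgason} for $\HH$, together with Lemma~\ref{lem:DualCartanHelgason}, yields $\psi_\nu^\vee(m'a'n')\phi_0=(a')^{\nu^\vee}\phi_0$ and so $\psi_\nu^\vee((m'a'n')^{-1})\phi_0=(a')^{-\nu^\vee}\phi_0$. Combining these identities produces the claimed transformation law $f(m'a'n'\,g\,man)=a^\lambda(a')^{-\nu^\vee}f(g)$.

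For non-vanishing, the image $\eta(V_\lambda)\subseteq W_\nu$ is an $\HH$-invariant subspace; irreducibility of $W_\nu$ and $\eta\neq 0$ force $\eta$ to be surjective. Since $v_0\neq 0$ and $V_\lambda$ is $\GG$-irreducible, the set $\{\varphi_\lambda(g)v_0:g\in\GG\}$ spans $V_\lambda$, hence $\{\eta(\varphi_\lambda(g)v_0):g\in\GG\}$ spans $W_\nu$. Because $\phi_0\neq 0$ separates a non-zero vector of $W_\nu$, there must exist $g\in\GG$ with $f(g)\neq 0$.

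I do not anticipate any real obstacle: the statement is essentially formal, the only non-trivial inputs being Corollary~\ref{cor:CartanHelgason} applied on both sides (once to $v_0\in V_\lambda$ for $\GG$, and once to $\phi_0\in W_\nu^\vee\simeq W_{\nu^\vee}$ for $\HH$) together with the irreducibility of $V_\lambda$ and $W_\nu$. The mildly delicate bookkeeping is the appearance of $\nu^\vee$ rather than $\nu$ in the left-equivariance, which is explained by the inversion $(m'a'n')\mapsto(m'a'n')^{-1}$ incurred when transferring $\psi_\nu(m'a'n')$ from $\eta(\varphi_\lambda(g)v_0)$ onto $\phi_0$ through the pairing.
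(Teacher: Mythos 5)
Your proof is correct and follows essentially the same route as the paper: real analyticity because $f$ is a matrix coefficient of a finite-dimensional representation, the equivariance via Corollary~\ref{cor:CartanHelgason} applied to $v_0$ and (through Lemma~\ref{lem:DualCartanHelgason}) to $\phi_0\in W_\nu^\vee\simeq W_{\nu^\vee}$, and non-vanishing from irreducibility. Your treatment of the non-vanishing (surjectivity of $\eta$ from irreducibility of $W_\nu$ plus the spanning of $V_\lambda$ by the translates of $v_0$) just spells out what the paper leaves implicit.
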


\begin{proof}
It is clear that $f$ is real analytic as a matrix coefficient of a finite-dimensional representation. Further, $f$ is non-zero since $(\varphi_\lambda,V_\lambda)$ is irreducible. By Corollary~\ref{cor:CartanHelgason} and Lemma~\ref{lem:DualCartanHelgason} we have $\varphi_\lambda(man)v_0=a^\lambda v_0$ and $\psi_\nu^\vee(m'a'n')\phi_0=(a')^{\nu^\vee}\phi_0$ and the claim follows.
\end{proof}

Note that since $(\varphi_\lambda,V_\lambda)$ and $(\psi_\nu,W_\nu)$ extend to holomorphic representations of the complex connected groups $\GG_\CC$ and $\HH_\CC$, we have $\Hom_\HH(\varphi_\lambda|_\HH,\psi_\nu)=\Hom_\frakh(\varphi_\lambda|_\frakh,\psi_\nu)$. Abusing notation, we also write $(\varphi_\lambda,V_\lambda)$ and $(\psi_\nu,W_\nu)$ for the Lie algebra representations of $\frakg$ and $\frakh$ for arbitrary $(\lambda,\nu)\in\Lambda^+(\frakg,\fraka_H)$ and $\nu\in\Lambda^+(\frakh,\fraka_H)$.

To obtain matrix coefficients with the same properties as in Lemma~\ref{lem:MatrixCoefficients}, but for the pair $(G,H)$ instead of $(\GG,\HH)$, we use the homomorphism $\mu:G\to\GG$.

\begin{proposition}\label{prop:MatrixCoefficients}
Assume that $(G,H)$ is a strongly spherical reductive pair. Then for each pair $(\lambda,\nu)\in\Lambda^+(\frakg,\fraka_G)\times\Lambda^+(\frakh,\fraka_H)$ with $\Hom_\frakh(\varphi_\lambda|_\frakh,\psi_\nu)\neq\{0\}$ there exists $k\geq1$ and a non-zero real-analytic function $F:G\to\RR$, $F\geq0$, satisfying
\begin{equation}
 F(m'a'n'gman) = a^{k\lambda}(a')^{-k\nu^\vee}F(g)\label{eq:EquivarianceMatrixCoefficients}
\end{equation}
for $g\in G$, $man\in P_G$ and $m'a'n'\in P_H$.
\end{proposition}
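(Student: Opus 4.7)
The plan is to apply Lemma~\ref{lem:MatrixCoefficients} on the complex connected image $\GG$, pull the resulting matrix coefficient back to $G$ via the homomorphism $\mu:G\to\GG$, and then take the modulus squared to obtain a real-valued non-negative function. The integer $k$ in the statement will arise for two reasons: to replace $\lambda,\nu$ by integer multiples that are integrable as representations of $\GG,\HH$, and to absorb the doubling built into the modulus squared.

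Concretely, first I would choose $k_0\geq 1$ such that $k_0\lambda\in\Lambda^+_\GG(\frakg,\fraka_G)$ and $k_0\nu\in\Lambda^+_\HH(\frakh,\fraka_H)$. Such $k_0$ exists because the weight lattice $\Lambda$ contains the lattice $\Lambda_\GG$ of $\GG$-integral weights with finite index (the obstruction being a character of the finite central kernel of $\GG_\CC^{sc}\to\GG_\CC$), and similarly for $\HH$. From a non-zero $\eta\in\Hom_\frakh(\varphi_\lambda|_\frakh,\psi_\nu)$ I would construct a non-zero $\tilde\eta\in\Hom_\frakh(\varphi_{k_0\lambda}|_\frakh,\psi_{k_0\nu})$ by Cartan product: $\varphi_{k_0\lambda}$ sits inside $\varphi_\lambda^{\otimes k_0}$ as the $\frakg$-submodule generated by the $k_0$-fold tensor power of the highest weight vector, and $\psi_{k_0\nu}$ sits inside $\psi_\nu^{\otimes k_0}$ as the analogous $\frakh$-Cartan component; restricting $\eta^{\otimes k_0}$ to the first and composing with the $\frakh$-equivariant projection onto the second yields $\tilde\eta$.

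Next, Lemma~\ref{lem:MatrixCoefficients} applied to $(k_0\lambda,k_0\nu,\tilde\eta)$ produces a non-zero real-analytic function $f:\GG\to\CC$ with $f(m'a'n'gman)=a^{k_0\lambda}(a')^{-k_0\nu^\vee}f(g)$ on $\PP_H\times\PP_G$. Define $F_0:=f\circ\mu:G\to\CC$. Since $d\mu$ is the identity on the Lie algebras, $\mu$ restricts to isomorphisms of the connected vector groups $A_G\simeq\AA_G$ and $A_H\simeq\AA_H$, so $\mu(a)^{k_0\lambda}=a^{k_0\lambda}$ and $\mu(a')^{k_0\nu^\vee}=(a')^{k_0\nu^\vee}$. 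Together with $\mu(P_G)=\PP_G$ and $\mu(P_H)$ landing in the corresponding minimal parabolic of $\HH$ compatibly with the Langlands decomposition, this transfers the equivariance of $f$ to $F_0$.

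Finally I set $F:=F_0\cdot\overline{F_0}=|F_0|^2$. Since $\lambda$ and $\nu^\vee$ take real values on $\fraka_G$ and $\fraka_H$ (they are restrictions of highest weights of finite-dimensional complex representations), the characters $a^{k_0\lambda}$ and $(a')^{k_0\nu^\vee}$ are positive real, so $|a^{k_0\lambda}|^2=a^{2k_0\lambda}$ and analogously for $\nu^\vee$. With $k:=2k_0$ the function $F$ satisfies the required equivariance; it is real-analytic (product of two real-analytic $\CC$-valued functions), non-negative, and non-zero because $\mu(G)$ is open in $\GG$ while the real-analytic non-zero function $f$ cannot vanish identically on any open subset. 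I expect the main obstacle to lie in the Cartan lifting step: it is not automatic in the abstract that the restriction of $\eta^{\otimes k_0}$ to the $\frakg$-Cartan component $\varphi_{k_0\lambda}$ projects non-trivially onto the $\frakh$-Cartan component $\psi_{k_0\nu}$, since $\eta$ need not carry the $\frakg$-highest weight vector into the $\frakh$-highest weight line. A careful highest weight analysis, or equivalently an explicit matrix coefficient computation on the simply connected cover followed by a central-character/descent argument, is needed to rule out this degeneration.
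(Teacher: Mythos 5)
Your overall route (pull back a matrix coefficient from $\GG$ via $\mu$, take the squared modulus, enlarge $k$ to make the weights $\GG$- and $\HH$-integral) is the same as the paper's, but the transfer of the \emph{left} equivariance is where your argument breaks. You assert that $\mu(P_H)$ lands in the minimal parabolic $\PP_H$ of $\HH$ ``compatibly with the Langlands decomposition''. That is false in general: $\HH$ is by construction the \emph{connected} group $\mu(H_0)$, whereas $M_H=Z_{H\cap K}(\fraka_H)$ is typically disconnected with components lying outside $H_0$; e.g.\ for $(G,H)=(\GL(n+1,\RR),\GL(n,\RR))$ one has $M_H\simeq\upO(1)^n$, and the components with an odd number of sign changes do not even map into $\HH$, so Corollary~\ref{cor:CartanHelgason} applied to $\psi_{k\nu}$ (a representation of $\HH$ only) gives no control over $f(\mu(m')x)$ for such $m'$. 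Consequently $F_0=f\circ\mu$, and hence $|F_0|^2$, satisfies \eqref{eq:EquivarianceMatrixCoefficients} only for $m'$ with $\mu(m')\in\MM_H$ (in particular for $m'\in M_{H,0}$), not for all $m'\in M_H$ as the Proposition requires. The paper closes exactly this gap by first passing to $|f|^2\geq0$ and then summing over the finite component group, $F(g)=\sum_{mM_{H,0}\in M_H/M_{H,0}}|f(\mu(mg))|^2$; the positivity is essential here, since an analogous sum of the complex-valued pullbacks could vanish identically. Note that on the $G$-side there is no such problem only because $\MM_G$ is \emph{defined} as $\mu(M_G)$ and Theorem~\ref{thm:CartanHelgason} is invoked for that group; your symmetric treatment of the two sides hides this asymmetry. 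A smaller point: your non-vanishing argument (``a non-zero real-analytic function cannot vanish on an open set'') is not quite sufficient, since $\GG$ may be disconnected and $\mu(G)$ is only a finite-index open subgroup; what saves it is that $f$ is already non-zero on the identity component $\GG_0\subseteq\mu(G)$, because $V_{k\lambda}$ is irreducible under $\frakg$ and $\eta,\phi_0,v_0$ are non-zero.

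The step you flag as the main obstacle, namely $\Hom_\frakh(\varphi_{k\lambda}|_\frakh,\psi_{k\nu})\neq\{0\}$, is indeed needed (the paper asserts it without comment; it is the semigroup property of $\Lambda(\frakg,\frakh)$), but the degeneration you fear cannot occur. With $f$ as in Lemma~\ref{lem:MatrixCoefficients} for $(\lambda,\nu,\eta)$ one has $f(g)^k=\langle\phi_0^{\otimes k},\eta^{\otimes k}(\varphi_\lambda(g)^{\otimes k}v_0^{\otimes k})\rangle$ and $f^k\neq0$. Since $\phi_0^{\otimes k}$ is a highest weight vector of weight $k\nu^\vee$, it lies in the Cartan isotypic component of $(W_\nu^{\otimes k})^\vee$ and therefore pairs to zero with every isotypic component of $W_\nu^{\otimes k}$ other than $W_{k\nu}$. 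Hence $f^k$ computes precisely the pairing of $\phi_0^{\otimes k}$ with the composition of $\eta^{\otimes k}|_{V_{k\lambda}}$ and the projection onto $W_{k\nu}$, and its non-vanishing forces that composition to be a non-zero element of $\Hom_\frakh(\varphi_{k\lambda}|_\frakh,\psi_{k\nu})$. So this part of your plan can be completed; the genuinely missing ingredient is the averaging over $M_H/M_{H,0}$.
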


\begin{proof}
First note that there exists $k\geq1$ such that $k\lambda\in\Lambda^+_\GG(\frakg,\fraka_G)$ and $k\nu\in\Lambda^+_\HH(\frakh,\fraka_H)$. Then also $\Hom_\frakh(\varphi_{k\lambda}|_\frakh,\psi_{k\nu})\neq\{0\}$ and by Lemma~\ref{lem:MatrixCoefficients} there exists a non-zero real-analytic function $f:\GG\to\CC$ with
$$ f(m'a'n'gman) = a^{k\lambda}(a')^{-k\nu^\vee}f(g) $$
for $g\in\GG$, $man\in\PP_G$ and $m'a'n'\in\PP_H$. Replacing $f$ by $|f|^2$ we may further assume that $f:\GG\to\RR$ and $f\geq0$. We consider the non-zero real-analytic function $f\circ\mu:G\to\RR$ which satisfies \eqref{eq:EquivarianceMatrixCoefficients} at least for $m'\in M_{H,0}\subseteq\mu^{-1}(\MM_H)\subseteq M_H$. Since the component group $M_H/M_{H,0}$ of $M_H$ is finite, we can form the finite sum
$$ F(g) = \sum_{mM_{H,0}\in M_H/M_{H,0}} f(\mu(mg)), $$
and this clearly defines a real-analytic function $F:G\to\RR$, $F\geq0$, with the equivariance property \eqref{eq:EquivarianceMatrixCoefficients}. Finally, $F$ is non-zero since $f\circ\mu\geq0$ and $f\circ\mu\neq0$.
\end{proof}

The main result of this section asserts that for all strongly spherical reductive pairs $(G,H)$ there exist \textit{enough} pairs $(\lambda,\nu)\in\fraka_G^\vee\times\fraka_H^\vee$ with $\Hom_\frakh(\varphi_\lambda|_\frakh,\psi_\nu)\neq\{0\}$:

\begin{theorem}\label{thm:EnoughWeights}
Assume that $(G,H)$ is a strongly spherical reductive pair such that $(\frakg,\frakh)$ is non-trivial and indecomposable. Then the set of pairs $(\lambda,\nu)\in\Lambda^+(\frakg,\fraka_G)\times\Lambda^+(\frakh,\fraka_H)$ such that $\Hom_\frakh(\varphi_\lambda|_\frakh,\psi_\nu)\neq\{0\}$ spans $\fraka_G^\vee\times\fraka_H^\vee$.
\end{theorem}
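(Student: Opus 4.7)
The plan is to interpret the set
\[
\Lambda^{\mathrm{sph}} := \{(\lambda,\nu)\in\Lambda^+(\frakg,\fraka_G)\times\Lambda^+(\frakh,\fraka_H) : \Hom_\frakh(\varphi_\lambda,\psi_\nu)\neq\{0\}\}
\]
as the weight semigroup of a spherical variety and to determine its $\QQ$-rank using Theorem~\ref{thm:AHProjectionStabilizers}.

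First I would perform two reductions. Note that $\Lambda^{\mathrm{sph}}$ is a submonoid: if $\eta_i\in\Hom_\frakh(\varphi_{\lambda_i},\psi_{\nu_i})$ are non-zero for $i=1,2$, then $\eta_1\otimes\eta_2$ restricts to a non-zero homomorphism $\varphi_{\lambda_1+\lambda_2}\to\psi_{\nu_1+\nu_2}$ between Cartan components, because highest weight vectors are sent to highest weight vectors. Hence it suffices to prove $\Lambda^{\mathrm{sph}}$ spans $\fraka_G^\vee\times\fraka_H^\vee$ rationally. Moreover, by Corollary~\ref{cor:ReductiveContainedInSymmetric} one may assume $\frakh=\frakg^\sigma$ is symmetric: the equality $(\frakg^\sigma)_{\mathrm n}=\frakh_{\mathrm n}$ preserves $\fraka_H$ and $\Lambda^+(\frakh,\fraka_H)=\Lambda^+(\frakg^\sigma,\fraka_{G^\sigma})$, and a spherical $\frakg^\sigma$-representation of $\fraka_H$-weight $\nu$ restricts to an $\frakh$-representation containing the spherical $\frakh$-representation of the same weight, so non-vanishing transfers.

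Next I would realize $\Lambda^{\mathrm{sph}}$ as a weight semigroup via algebraic Peter--Weyl. From the decomposition
\[
\CC[\GG_\CC]\simeq\bigoplus_\mu V_\mu^\vee\boxtimes V_\mu
\]
as a $\GG_\CC\times\GG_\CC$-bimodule, restricting the right factor to $\HH_\CC$ and taking $(\PP_{H,\CC}\times\PP_{G,\CC})$-semi-invariants of weight $(-\nu^\vee,\lambda)$ under $\AA_H\times\AA_G$, the Cartan--Helgason Theorem~\ref{thm:CartanHelgason} identifies the resulting space with $\Hom_\HH(\varphi_\lambda,\psi_\nu)$. Thus $\Lambda^{\mathrm{sph}}$ coincides with the weight semigroup of $(\PP_{H,\CC}\times\PP_{G,\CC})$-semi-invariant regular functions on the spherical $(\GG_\CC\times\HH_\CC)$-variety $\GG_\CC\simeq(\GG_\CC\times\HH_\CC)/\diag\HH_\CC$, where the action is $(g,h)\cdot x = gxh^{-1}$.

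The rank is then computed via the open orbit. By strong sphericality, $\PP_{H,\CC}\times\PP_{G,\CC}$ has an open orbit on $\GG_\CC$, and a rational semi-invariant is determined on this orbit, up to scalar, by its value at a basepoint $g_0$, provided the prescribed character of $\AA_H\times\AA_G$ is trivial on the isotropy subalgebra $\{(Z,\Ad(g_0^{-1})Z):Z\in\frakp_H\cap\Ad(g_0)\frakp_G\}$. Theorem~\ref{thm:AHProjectionStabilizers} kills the $\fraka_H$-projection, and I would obtain the analogous $\fraka_G$-projection triviality by running the structural argument of Section~\ref{sec:ProofOfAHProjectionStabilizersTheorem} with $g_0\leftrightarrow g_0^{-1}$ exchanged (legitimate in the symmetric setting since $\sigma$ and $\theta$ commute). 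With no constraint placed on $(\lambda,\nu)$, the resulting rational weight lattice attains full rank $\dim\fraka_G+\dim\fraka_H$; extending from rational to regular semi-invariants is handled by shifting by a fixed dominant element of $\Lambda^{\mathrm{sph}}$ to absorb possible poles along non-open orbits. The main obstacle is establishing this $\fraka_G$-analog of Theorem~\ref{thm:AHProjectionStabilizers}, which is not stated in the paper; should the symmetric argument fail, a case-by-case verification via the classification Theorem~\ref{thm:ClassificationStronglySphericalPairs} using classical finite-dimensional branching rules for each family is a viable fallback.
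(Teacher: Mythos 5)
There is a genuine gap, and it is a logical one: your main route is circular relative to the only proof of Theorem~\ref{thm:AHProjectionStabilizers} available at this point. What you need at the open orbit is the implication ``$P_Hg_0P_G$ open $\Rightarrow$ the $\fraka_H$-projection (and $\fraka_G$-projection) of the isotropy algebra is trivial''. In the paper this is exactly Corollary~\ref{cor:OpenDoubleCosetImpliesTrivialAHStabilizer}, and its proof \emph{uses} Theorem~\ref{thm:EnoughWeights}; the paper is explicit that only the converse implication (2)$\Rightarrow$(1) is established in Section~\ref{sec:ProofOfAHProjectionStabilizersTheorem}, and that Theorem~\ref{thm:AHProjectionStabilizers} is not used in Section~\ref{sec:FiniteDimBranching} precisely to avoid this circle. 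Your suggestion to obtain the missing $\fraka_G$-analog by ``running the structural argument with $g_0\leftrightarrow g_0^{-1}$'' does not help, because the structural argument of Section~\ref{sec:ProofOfAHProjectionStabilizersTheorem} proves the wrong direction: it shows non-open cosets have stabilizers with non-trivial $\fraka_H$-projection, i.e.\ (2)$\Rightarrow$(1), whereas you need (1)$\Rightarrow$(2); inverting $g_0$ does not convert one implication into the other, and the setup is not symmetric in $G$ and $H$ anyway. So as written, the key rank computation rests on a statement whose only available proof presupposes the theorem you are proving; to salvage the approach you would have to give an independent proof of (1)$\Rightarrow$(2) for both the $\fraka_H$- and $\fraka_G$-projections, which is substantial and is not sketched.

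Two further points. First, your semigroup argument is not quite right: for $\eta_i\in\Hom_\frakh(\varphi_{\lambda_i},\psi_{\nu_i})$ the image of the $\frakg$-highest weight vector under $\eta_i$ need not be non-zero (it is not an $\frakh$-highest weight vector in general), so the composite $\varphi_{\lambda_1+\lambda_2}\hookrightarrow\varphi_{\lambda_1}\otimes\varphi_{\lambda_2}\to\psi_{\nu_1}\otimes\psi_{\nu_2}\twoheadrightarrow\psi_{\nu_1+\nu_2}$ could a priori vanish; the correct argument multiplies the $(P_H\times P_G)$-equivariant matrix coefficients of Lemma~\ref{lem:MatrixCoefficients} (non-zero regular functions on an irreducible variety) and translates back, which fits your function-theoretic framework anyway. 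Second, the step ``absorb possible poles along non-open orbits by shifting by a dominant element of $\Lambda^{\mathrm{sph}}$'' needs an actual semi-invariant vanishing to high order on every boundary divisor; since $\PP_{H,\CC}\times\PP_{G,\CC}$ is not a Borel of $\GG_\CC\times\HH_\CC$, the usual Rosenlicht-type argument for rational $B$-semi-invariants requires an extra argument (control of the compact Levi directions via Cartan--Helgason) that you do not supply. For comparison, the paper avoids all of this: after the reductions to $\frakg$ semisimple and to symmetric pairs (which you reproduce), it verifies the spanning statement case-by-case through the classification of Theorem~\ref{thm:ClassificationStronglySphericalPairs}, using Satake diagrams and explicit finite-dimensional branching laws -- that is, the route you list only as a fallback is in fact the proof given.
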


\begin{remark}\label{rem:LocalStructureTheorem}
One can use the local structure theorem for real spherical varieties by Knop--Kr\"{o}tz--Schlichtkrull~\cite{KKS15} to give a classification-free proof of Theorem~\ref{thm:EnoughWeights}. In fact, let $Z=(G\times H)/\diag(H)$; then using \cite[Theorem 2.8]{KKS15} it is easy to see that for $X\in\frakn^{-\sigma}$ contained in an open $(M_G\cap M_H)A_H$-orbit, the minimal parabolic subgroup $P=e^{-X}\overline{P}_Ge^X\times P_H$ is $Z$-adapted in the sense of \cite[Definition 2.7]{KKS15}. Further, the Levi subgroup $L=e^{-X}M_GA_Ge^X\times M_HA_H$ of $P$ satisfies $L\cap\diag(H)=\diag(M)$, where $M=(M_G\cap M_H)^X$ is the stabilizer of $X$. This implies $\fraka_Z=\Ad(e^{-X})\fraka_G\times\fraka_H$ in the notation of \cite[Section 2.3]{KKS15} and therefore $\rank(Z)=\dim\fraka_Z=\dim\fraka_G+\dim\fraka_H$. By \cite[Remark 3.5]{KKS15} the statement of Theorem~\ref{thm:EnoughWeights} follows.\\
However, since the explicit form of the integral kernels of symmetry breaking operators plays an important role in the classification of symmetry breaking operators (see e.g. \cite{Cle16a,Cle16b,KS15}), we prove Theorem~\ref{thm:EnoughWeights} using the classification of strongly spherical reductive pairs. From this one can explicitly determine the matrix coefficients which serve as building blocks for the integral kernels of symmetry breaking operators.
\end{remark}

Before we come to the proof of this result, let us see how it can be used to show the implication (1)$\Rightarrow$(2) in Theorem~\ref{thm:AHProjectionStabilizers}:

\begin{corollary}\label{cor:OpenDoubleCosetImpliesTrivialAHStabilizer}
If the double coset $P_HgP_G$ is open, then the projection of $\frakp_H\cap\Ad(g)\frakp_G$ to $\fraka_H$ is trivial.
\end{corollary}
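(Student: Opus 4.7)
The plan is to feed the matrix coefficients of Proposition~\ref{prop:MatrixCoefficients} into an infinitesimal computation: for each pair $(\lambda,\nu)\in\Lambda^+(\frakg,\fraka_G)\times\Lambda^+(\frakh,\fraka_H)$ with $\Hom_\frakh(\varphi_\lambda|_\frakh,\psi_\nu)\neq\{0\}$, pick the associated integer $k\geq 1$ and a non-zero real-analytic function $F=F_{\lambda,\nu}:G\to\RR_{\geq 0}$ satisfying $F(m'a'n'gman)=a^{k\lambda}(a')^{-k\nu^\vee}F(g)$. The first observation is that $F(g)\neq 0$ under our hypothesis: the zero set $\{F=0\}$ is a real-analytic subset of $G$, and by the equivariance it is a union of $(P_H\times P_G)$-double cosets; since a proper real-analytic set has empty interior, it cannot contain the open double coset $P_HgP_G$, so $F$ is nowhere zero on it.

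Next, fix $Z\in\frakp_H\cap\Ad(g)\frakp_G$ and decompose it in the two relevant Langlands triangular decompositions: write $Z=Z_M+Z_A+Z_N\in\frakm_H\oplus\fraka_H\oplus\frakn_H$ and $\Ad(g^{-1})Z=Z'_M+Z'_A+Z'_N\in\frakm_G\oplus\fraka_G\oplus\frakn_G$. Using the unique decomposition $e^{tZ}=m(t)a(t)n(t)$ in $P_H=M_HA_HN_H$ (and analogously $e^{t\Ad(g^{-1})Z}=\tilde m(t)\tilde a(t)\tilde n(t)$ in $P_G$), one reads off $\dot a(0)=Z_A$ and $\dot{\tilde a}(0)=Z'_A$, so the equivariance of $F$ at first order yields
\[
 \frac{d}{dt}\bigg|_{t=0}F(e^{tZ}g)=-k\nu^\vee(Z_A)\,F(g), \qquad \frac{d}{dt}\bigg|_{t=0}F(g\cdot e^{t\Ad(g^{-1})Z})=k\lambda(Z'_A)\,F(g).
\]
Because $e^{tZ}g=g\cdot e^{t\Ad(g^{-1})Z}$ and $F(g)\neq 0$, dividing by $kF(g)$ produces the single linear relation
\[
 \nu^\vee(Z_A)+\lambda(Z'_A)=0. \tag{$*$}
\]

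The left-hand side of $(*)$ is linear in the pair $(\lambda,\nu)\in\fraka_G^\vee\times\fraka_H^\vee$ (recall from Lemma~\ref{lem:DualCartanHelgason} that $\nu\mapsto\nu^\vee$ is the restriction of a linear map). Theorem~\ref{thm:EnoughWeights} asserts that the pairs for which the above construction applies already span $\fraka_G^\vee\times\fraka_H^\vee$, so $(*)$ extends by linearity to all $(\lambda,\nu)$. Taking $\lambda=0$ and letting $\nu$ vary gives $\nu^\vee(Z_A)=0$ for every $\nu\in\fraka_H^\vee$, and since $\nu\mapsto\nu^\vee$ is an involutive linear bijection of $\fraka_H^\vee$ this forces $Z_A=0$. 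As $Z\in\frakp_H\cap\Ad(g)\frakp_G$ was arbitrary, the projection of this intersection to $\fraka_H$ is trivial.

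The main technical point is the non-vanishing of $F$ on the open double coset, which is exactly what lets the infinitesimal calculation extract a nontrivial linear constraint; once this is in place, the remainder is a clean linear-algebra deduction from the two spanning statements (Theorem~\ref{thm:EnoughWeights} and the bijectivity of $\nu\mapsto\nu^\vee$). Combined with the implication $(2)\Rightarrow(1)$ established earlier in Section~\ref{sec:ProofOfAHProjectionStabilizersTheorem}, this finishes the proof of Theorem~\ref{thm:AHProjectionStabilizers}.
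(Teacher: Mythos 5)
Your argument is correct and essentially the paper's own proof: both rest on the matrix coefficients of Proposition~\ref{prop:MatrixCoefficients}, their non-vanishing on the open double coset (by real-analyticity and equivariance), and the spanning statement of Theorem~\ref{thm:EnoughWeights} to force the $\fraka_H$-component $Z_A$ to vanish. The only cosmetic difference is that you differentiate the equivariance at $t=0$, whereas the paper evaluates the global identity $F(e^{tZ}g)=F(g\,e^{t\Ad(g^{-1})Z})$ for all $t$; the resulting linear relation $\lambda(X_A)+\nu^\vee(Z_A)=0$ and the concluding linear algebra (including the bijectivity of $\nu\mapsto\nu^\vee$) are the same.
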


\begin{proof}
Let $P_HgP_G$ be an open double coset and $Z=Z_M+Z_A+Z_N\in\frakp_H\cap\Ad(g)\frakp_G$. Put $X=\Ad(g)^{-1}Z=X_M+X_A+X_N\in\frakp_G$. For any $(\lambda,\nu)\in\Lambda^+(\frakg,\fraka_G)\times\Lambda^+(\frakh,\fraka_H)$ with $\Hom_\frakh(\varphi_\lambda|_\frakh,\psi_\nu)\neq\{0\}$ let $F$ be as in Proposition~\ref{prop:MatrixCoefficients}. Since $F$ is non-zero and real-analytic, it is non-zero on the open set $P_HgP_G$. In particular, $F(g)\neq0$ and for all $t\in\RR$ we have
$$ e^{-t\nu^\vee(Z_A)}F(g) = F(e^{tZ}g) = F(ge^{tX}) = e^{t\lambda(X_A)}F(g), $$
so that $\lambda(X_A)+\nu^\vee(Z_A)=0$. Since the pairs $(\lambda,\nu)\in\Lambda^+(\frakg,\fraka_G)\times\Lambda^+(\frakh,\fraka_H)$ satisfying $\Hom_\frakh(\varphi_\lambda|_\frakh,\psi_\nu)\neq\{0\}$ span $\fraka_G^\vee\times\fraka_H^\vee$, this implies $X_A=0$ and $Z_A=0$ and the proof is complete.
\end{proof}

Note that the statement in Theorem~\ref{thm:EnoughWeights} only depends on the pair of Lie algebras $(\frakg,\frakh)$. If we define
$$ \Lambda(\frakg,\frakh) = \{(\lambda,\nu)\in\Lambda^+(\frakg,\fraka_G)\times\Lambda^+(\frakh,\fraka_H):\Hom_\frakh(\varphi_\lambda|_\frakh,\psi_\nu)\neq\{0\}\}, $$
then we have to show that $\Lambda(\frakg,\frakh)$ spans $\fraka_G^\vee\times\fraka_H^\vee$. Note that $\Lambda(\frakg,\frakh)$ is a subsemigroup of $\Lambda^+(\frakg,\fraka_G)\times\Lambda^+(\frakh,\fraka_H)$. We first reduce Theorem~\ref{thm:EnoughWeights} to the case of $\frakg$ semisimple and $(\frakg,\frakh)$ symmetric and then use the classification in Theorem~\ref{thm:ClassificationStronglySphericalPairs} to show the statement case-by-case.

\begin{lemma}
Assume that Theorem~\ref{thm:EnoughWeights} holds for $\frakg$ semisimple, then it holds for $\frakg$ reductive.
\end{lemma}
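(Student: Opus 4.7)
The strategy is to split $\frakg$ according to its simple non-compact summands and its ``elementary'' summand (compact semisimple plus abelian) and treat the semisimple and central contributions to $\Lambda(\frakg,\frakh)$ separately. Write $\frakg=\frakg_{\rm n}\oplus\frakg_{\rm el}$ as in the preamble to Lemma~\ref{lem:ReductionToSemisimple}, let $p:\frakg\to\frakg_{\rm n}$ be the projection, and similarly decompose $\frakh$. Because simple compact ideals are $\theta$-fixed, both $\fraka_G$ and $\fraka_H$ split as $\fraka_G=\fraka_{G_{\rm n}}\oplus\fraka_{G,\rm z}$ and $\fraka_H=\fraka_{H_{\rm n}}\oplus\fraka_{H,\rm z}$, where $\fraka_{G,\rm z}=\frakz(\frakg)\cap\frakg^{-\theta}$ and $\fraka_{H,\rm z}=\frakz(\frakh)\cap\frakh^{-\theta}$.

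The first source of elements of $\Lambda(\frakg,\frakh)$ comes from lifting along $p$. By Lemma~\ref{lem:ReductionToSemisimple}(2), $(\frakg_{\rm n},p(\frakh))$ is strongly spherical with $\frakg_{\rm n}$ semisimple; applying the assumed semisimple case of Theorem~\ref{thm:EnoughWeights} (to its indecomposable components if necessary) yields pairs $(\lambda_0,\nu_0)\in\Lambda(\frakg_{\rm n},p(\frakh))$ whose span equals $\fraka_{G_{\rm n}}^\vee\times\fraka_{p(H)}^\vee$. Each such pair lifts to $\Lambda(\frakg,\frakh)$: pull $V_{\lambda_0}$ back to a $\frakg$-representation via $p$, on which $\frakg_{\rm el}$ acts trivially, and pull $W_{\nu_0}$ back along the surjection $\frakh\twoheadrightarrow p(\frakh)$; any $p(\frakh)$-intertwiner is then automatically an $\frakh$-intertwiner. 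The resulting weights are the pullbacks $\lambda_0$ (extended by zero on $\fraka_{G,\rm z}$) and $\nu_0\circ p|_{\fraka_H}$.

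The second source covers the central direction. Any $\chi\in\fraka_{G,\rm z}^\vee$ extends to a Lie algebra character of $\frakg$ by being zero on $[\frakg,\frakg]$ and on the compact part of $\frakz(\frakg)$, producing a one-dimensional $\frakg$-module whose restriction to $\frakh$ is automatically one-dimensional. This provides a pair $(\chi,\chi\circ p_\frakz|_{\fraka_H})\in\Lambda(\frakg,\frakh)$, where $p_\frakz:\frakg\to\frakz(\frakg)$ is the projection, with $\chi$ ranging freely over $\fraka_{G,\rm z}^\vee$. Combined with the semisimple lifts, the $\fraka_G$-components already exhaust $\fraka_{G_{\rm n}}^\vee\oplus\fraka_{G,\rm z}^\vee=\fraka_G^\vee$.

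For the $H$-side I would observe that $p:\fraka_H\to\fraka_{p(H)}$ is injective: any $H_0\in\fraka_H\cap\ker p$ lies in $\frakh\cap\fraka_{G,\rm z}$, which is central in $\frakg$ and hence in $\frakh$, so $\langle H_0\rangle$ is a non-compact abelian ideal of $\frakh$ contained in $\ker p|_\frakh$; by Lemma~\ref{lem:ReductionToSemisimple}(1) such an ideal must be trivial. Consequently the pullback $p^*:\fraka_{p(H)}^\vee\to\fraka_H^\vee$ is surjective, so the lifts alone already reach every direction in $\fraka_H^\vee$, and forming linear combinations of lifts with the central-character pairs recovers arbitrary elements of $\fraka_G^\vee\times\fraka_H^\vee$. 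The step I expect to require the most care is precisely this last compatibility: confirming that the Cartan subalgebra $\fraka_{p(H)}$ really receives $\fraka_H$ via $p$, together with a clean verification that the semisimple case of Theorem~\ref{thm:EnoughWeights} can be invoked for $(\frakg_{\rm n},p(\frakh))$ (possibly after decomposing into strictly indecomposable pieces), since the reduction hypothesis is stated only for strictly indecomposable semisimple pairs.
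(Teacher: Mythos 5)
Your argument is essentially the paper's own proof: the same splitting $\frakg=\frakg_{\rm n}\oplus\frakg_{\rm el}$ with projection $p$, the same use of Lemma~\ref{lem:ReductionToSemisimple} (part (2) to apply the semisimple case to $(\frakg_{\rm n},p(\frakh))$ and part (1) to get injectivity of $p|_{\fraka_H}$, hence surjectivity onto $\fraka_H^\vee$ of the lifted pairs), and the same use of one-dimensional representations of $\frakg$ to cover $\fraka_{G,\rm el}^\vee$. The compatibility points you flag at the end (that $p(\fraka_H)$ sits inside the relevant maximal abelian subspace of $p(\frakh)$, and that the semisimple hypothesis applies to $(\frakg_{\rm n},p(\frakh))$ after decomposing) are likewise left implicit in the paper, so your proof matches it.
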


\begin{proof}
As in Section~\ref{sec:ClassificationStronglySphericalPairs} we write $\frakg=\frakg_{\rm n}\oplus\frakg_{\rm el}$ and $p:\frakg\to\frakg_{\rm n}$ for the canonical projection. Then $\fraka_G=\fraka_{G,\rm n}\oplus\fraka_{G,\rm el}$ with $\fraka_{G,\rm n}=\fraka_G\cap\frakg_{\rm n}$ and $\fraka_{G,\rm el}=\fraka_G\cap\frakg_{\rm el}$, and by Lemma~\ref{lem:ReductionToSemisimple}~(1) the restriction $p|_{\fraka_H}:\fraka_H\to\fraka_{G,\rm n}$ is injective. Further, by Lemma~\ref{lem:ReductionToSemisimple}~(2) the pair $(\frakg_{\rm n},p(\frakh))$ is also strongly spherical, so by assumption $\Lambda(\frakg_{\rm n},p(\frakh))$ spans $\fraka_{G,\rm n}^\vee\times p(\fraka_H)^\vee$. Now for every pair $(\lambda_0,\nu_0)\in\Lambda(\frakg_{\rm n},p(\frakh))$ the pair $(\lambda,\nu)\in\fraka_G^\vee\times\fraka_H^\vee$ with $\lambda|_{\fraka_{G,\rm n}}=\lambda_0$, $\lambda|_{\fraka_{G,\rm el}}=0$ and $\nu=\nu_0\circ p|_{\fraka_H}$ is contained in $\Lambda(\frakg,\frakh)$. Hence, the span of $\Lambda(\frakg,\frakh)$ contains at least $\fraka_{G,\rm n}^\vee\times\fraka_H^\vee$. Further, for every $\lambda_1\in\fraka_{G,\rm el}^\vee$ the representation $\varphi_\lambda$ is one-dimensional and hence its restriction is an $\frakh\cap\frakk$-spherical representation of $\frakh$ with highest weight $\nu_1\in\fraka_H^\vee$, so that $(\lambda_1,\nu_1)\in\Lambda(\frakg,\frakh)$. This shows that $\Lambda(\frakg,\frakh)$ indeed spans $\fraka_G^\vee\times\fraka_H^\vee$.
\end{proof}

\begin{lemma}
Assume that Theorem~\ref{thm:EnoughWeights} holds for $(\frakg,\frakh)$ symmetric, then it holds for $(\frakg,\frakh)$ reductive.
\end{lemma}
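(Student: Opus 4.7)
The plan is to reduce the reductive statement of Theorem~\ref{thm:EnoughWeights} to the symmetric case via the inclusion $\frakh \subseteq \frakh' := \frakg^\sigma$ provided by Corollary~\ref{cor:ReductiveContainedInSymmetric}. Since $\frakh_{\rm n} = \frakh'_{\rm n}$ and $\frakh_{\rm el}$, $\frakh'_{\rm el}$ differ only in compact factors, the pair $(\frakg, \frakh')$ is non-trivial (as $\sigma\neq\id$), strictly indecomposable, and strongly spherical (since $\frakg = \frakp_G + \frakp_H \subseteq \frakp_G + \frakp_{H'}$ for compatible choices). The hypothesis of the lemma therefore applies to the symmetric pair $(\frakg,\frakh')$ and yields that $\Lambda(\frakg,\frakh')$ spans $\fraka_G^\vee \times \fraka_{H'}^\vee$. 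It then suffices to produce, after an identification $\fraka_{H'} = \fraka_H$, an inclusion $\Lambda(\frakg,\frakh') \subseteq \Lambda(\frakg,\frakh)$.

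To arrange this identification, I would first fix a Cartan involution $\theta$ of $\frakg$ that commutes with $\sigma$ and stabilises $\frakh$. Compact simple ideals and compact abelian ideals have trivial $(-\theta)$-part, so the condition that $\frakh_{\rm el}$ and $\frakh'_{\rm el}$ differ only by compact factors gives $\frakh_{\rm el}^{-\theta} = (\frakh'_{\rm el})^{-\theta}$; combined with $\frakh_{\rm n}^{-\theta} = (\frakh'_{\rm n})^{-\theta}$ this yields $\frakh^{-\theta} = (\frakh')^{-\theta}$. A maximal abelian subspace $\fraka_H \subseteq \frakh^{-\theta}$ is therefore automatically maximal abelian in $(\frakh')^{-\theta}$, so one may take $\fraka_{H'} = \fraka_H$. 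Choosing a positive system on $\Sigma(\frakh',\fraka_{H'})$ and restricting to $\Sigma(\frakh,\fraka_H) \subseteq \Sigma(\frakh',\fraka_{H'})$ produces compatible Iwasawa data with $\frakn_H \subseteq \frakn_{H'}$ and $M_H \subseteq M_{H'}$.

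Given $(\lambda,\nu) \in \Lambda(\frakg,\frakh')$, I would pick a nonzero $\eta \in \Hom_{\frakh'}(\varphi_\lambda|_{\frakh'}, \psi_\nu)$ and, by Theorem~\ref{thm:CartanHelgason}, an $M_{H'}$-invariant highest weight vector $w_0 \in \psi_\nu$. The inclusions $\frakn_H \subseteq \frakn_{H'}$, $M_H \subseteq M_{H'}$ and the equality $\fraka_H = \fraka_{H'}$ ensure that $w_0$ is simultaneously an $M_H$-invariant $\frakh$-highest weight vector of weight $\nu$. The cyclic submodule $U(\frakh) w_0 \subseteq \psi_\nu|_\frakh$ is then a finite-dimensional highest weight module for $\frakh$ with top weight $\nu$, so it is isomorphic to the irreducible module $L_\frakh(\nu)$; and because $w_0$ is $M_H$-trivial, Theorem~\ref{thm:CartanHelgason} guarantees $\nu \in \Lambda^+(\frakh,\fraka_H)$. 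Complete reducibility of the algebraic $\HH$-module $\psi_\nu|_\HH$ realises $L_\frakh(\nu)$ as a direct summand of $\psi_\nu|_\frakh$, so composing $\eta$ with the projection onto this summand produces a nonzero element of $\Hom_\frakh(\varphi_\lambda|_\frakh, L_\frakh(\nu))$. Hence $(\lambda,\nu) \in \Lambda(\frakg,\frakh)$, which gives the required inclusion of spanning sets.

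The main technical point will be the compatible choice of Cartan involution and positive systems that produces $\fraka_H = \fraka_{H'}$ and $\frakn_H \subseteq \frakn_{H'}$; once these are in place the remainder is a routine application of Cartan--Helgason together with complete reducibility of finite-dimensional representations of the complex connected reductive group $\HH$. A minor preliminary check is that $(\frakg,\frakh')$ inherits non-triviality, strict indecomposability and strong sphericality from $(\frakg,\frakh)$ via Corollary~\ref{cor:ReductiveContainedInSymmetric}, so that the symmetric-case hypothesis is genuinely applicable.
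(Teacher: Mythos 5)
Your main step—passing to the symmetric pair $(\frakg,\frakh')=(\frakg,\frakg^\sigma)$ via Corollary~\ref{cor:ReductiveContainedInSymmetric} and deducing $\Lambda(\frakg,\frakg^\sigma)\subseteq\Lambda(\frakg,\frakh)$ together with the identification $\fraka_{H'}=\fraka_H$, $\Lambda^+(\frakh,\fraka_H)=\Lambda^+(\frakg^\sigma,\fraka_H)$—is exactly the route the paper takes; the paper asserts these facts in a single line, whereas you verify them (compact ideals lie in the $\theta$-fixed part, so $\frakh^{-\theta}=(\frakh')^{-\theta}$; the $M_{H'}$-invariant highest weight vector $w_0$ is an $M_H$-invariant $\frakh$-highest weight vector; $U(\frakh)w_0$ is irreducible and splits off by complete reducibility). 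That part of your argument is correct; the only micro-step worth adding is that $\eta$ is surjective (by irreducibility of $\psi_\nu$ as an $\frakh'$-module), which is what guarantees that composing with the projection onto $U(\frakh)w_0$ is non-zero.

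There is, however, a genuine gap at the very first step: Corollary~\ref{cor:ReductiveContainedInSymmetric} is stated only for $\frakg$ \emph{semisimple}, and you apply it to an arbitrary reductive pair. The paper first reduces to the semisimple case by means of the preceding lemma (using the projection $p:\frakg\to\frakg_{\rm n}$), and this reduction is not cosmetic. For $(\frakg,\frakh)=(\gl(n+1,\RR),\gl(n,\RR))$, for instance, the only non-trivial involution $\sigma$ with $\frakh\subseteq\frakg^\sigma$ gives $\frakg^\sigma=\gl(n,\RR)\oplus\gl(1,\RR)$, which exceeds $\frakh$ by a \emph{non-compact} abelian ideal; then $(\frakh')^{-\theta}\supsetneq\frakh^{-\theta}$, so $\fraka_{H'}\neq\fraka_H$ and your identification, and with it the inclusion argument, breaks down. (After projecting to $\frakg_{\rm n}=\sl(n+1,\RR)$ one lands in case F3 of Theorem~\ref{thm:ClassificationStronglySphericalPairs}, where $\frakg^\sigma=p(\frakh)$ and everything goes through.) So as written your proof covers only semisimple $\frakg$; it becomes complete once you prepend the reduction provided by the previous lemma, exactly as the paper does.
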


\begin{proof}
By the previous lemma we may assume that $\frakg$ is semisimple. Then, thanks to Corollary~\ref{cor:ReductiveContainedInSymmetric}, there exists a non-trivial involution $\sigma$ of $\frakg$ such that $\frakh\subseteq\frakg^\sigma$, and $\frakh$ and $\frakg^\sigma$ differ only in compact factors. Hence, $\Lambda^+(\frakh,\fraka_H)=\Lambda^+(\frakg^\sigma,\fraka_H)$ and $\Lambda(\frakg,\frakg^\sigma)\subseteq\Lambda(\frakg,\frakh)$.
\end{proof}

\subsection{Finite-dimensional branching}

We now prove Theorem~\ref{thm:EnoughWeights} case-by-case for all symmetric pairs in the classification in Section~\ref{sec:ClassificationStronglySphericalPairs}. For this we first fix some notation.

Let $\frakj_H\subseteq\frakh$ be a Cartan subalgebra of $\frakh$ and extend it to a Cartan subalgebra $\frakj_H\subseteq\frakj_G\subseteq\frakg$ of $\frakg$. Note that we no longer assume that $\fraka_G\subseteq\frakj_G$ and $\fraka_H\subseteq\frakj_H$. Choose a system of positive roots $\Sigma^+(\frakg_\CC,\frakj_{G,\CC})$ for $\frakg$ such that 
$$ \Sigma^+(\frakh_\CC,\frakj_{H,\CC}) = \{\alpha|_{\frakj_{H,\CC}}:\alpha\in\Sigma^+(\frakg_\CC,\frakj_{G,\CC})\}\cap\Sigma(\frakh_\CC,\frakj_{H,\CC}) $$
is a system of positive roots for $\frakh$. Denote by $\varpi_1,\ldots,\varpi_s$ the fundamental weights for $\frakg$ with respect to $\Sigma^+(\frakg_\CC,\frakj_{G,\CC})$ and by $\zeta_1,\ldots,\zeta_t$ the fundamental weights for $\frakh$ with respect to $\Sigma^+(\frakh_\CC,\frakj_{H,\CC})$. Then any dominant integral weight of $\frakg$ with respect to $\Sigma^+(\frakg_\CC,\frakj_{G,\CC})$ is of the form $\varpi=k_1\varpi_1+\cdots+k_s\varpi_s$, $k_1,\ldots,k_s\in\NN$, and we write $F^\frakg(\varpi)$ for the corresponding finite-dimensional representation of $\frakg$. Analogous notation is used for $\frakh$ and ideals of $\frakh$. To simplify some statements we further put $\zeta_0:=0$ so that $F^\frakh(\zeta_0)$ is the trivial representation of $\frakh$.

We make use of the Satake diagrams for $\frakg$ and $\frakh$ (see e.g. \cite[Chapter X, Appendix F]{Hel78} for details). From the Satake diagram the highest weights belonging to spherical representations can be read off. In fact, for every simple root $\alpha_i$ whose vertex in the Satake diagram is white and not linked to any other vertex by an arrow, the representations $F^\frakg(2k\varpi_i)$ ($k\in\NN$) are spherical. If the vertices of two simple roots $\alpha_i$ and $\alpha_j$ are white and linked by an arrow, then $2k(\varpi_i+\varpi_j)$ ($k\in\NN$) are highest weights of spherical representations. Moreover, if $F^\frakg(\varpi)$ and $F^\frakg(\varpi')$ are spherical, then $F^\frakg(\varpi+\varpi')$ is spherical. In many cases we compute the explicit branching for $F^\frakg(\varpi_i)$ resp. $F^\frakg(\varpi_i+\varpi_j)$ and then use the semigroup property of $\Lambda(\frakg,\frakh)$ to conclude that the spherical representation $F^\frakg(2\varpi_i)$ resp. $F^\frakg(2(\varpi_i+\varpi_j))$ contains a certain spherical $\frakh$-representation.

The following reduction from complex Lie algebras to split real forms allows to minimize the number of different cases:

\begin{lemma}\label{lem:SplitImpliesComplex}
Let $(\frakg,\frakh)$ be a reductive pair with $\frakg$ and $\frakh$ split. If the statement in Theorem~\ref{thm:EnoughWeights} holds for $(\frakg,\frakh)$, then it also holds for $(\frakg_\CC,\frakh_\CC)$ viewed as real Lie algebras.
\end{lemma}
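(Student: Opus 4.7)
The plan is to exhibit an inclusion $2\cdot\Lambda(\frakg,\frakh)\subseteq\Lambda(\frakg_\CC,\frakh_\CC)$ under a natural identification $\fraka_{G_\CC}=\fraka_G$ and $\fraka_{H_\CC}=\fraka_H$. Since scaling by $2$ preserves the $\RR$-span, the spanning property then transfers from $\Lambda(\frakg,\frakh)$ to $\Lambda(\frakg_\CC,\frakh_\CC)$.

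For the identification, I would use that $\frakg$ is split, so $\fraka_G$ is a Cartan subalgebra of $\frakg$ and $i\fraka_G$ is a maximal torus in a compact real form $\frakg_u\subseteq\frakg_\CC$. Setting up the Cartan decomposition of $\frakg_\CC$ as a real Lie algebra via $\frakg_\CC=\frakg_u\oplus i\frakg_u$, a maximal abelian subspace of $i\frakg_u$ can be taken to be $i(i\fraka_G)=\fraka_G$, giving $\fraka_{G_\CC}=\fraka_G$; choosing $\frakh_u\subseteq\frakg_u$ so that $i\fraka_H\subseteq\frakh_u$ likewise yields $\fraka_{H_\CC}=\fraka_H$. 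With these identifications, the restricted roots of $\frakg_\CC$ on $\fraka_{G_\CC}$ coincide with the roots of $\frakg$ on $\fraka_G$.

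Given $(\lambda,\nu)\in\Lambda(\frakg,\frakh)$ and a nonzero $\eta\in\Hom_\frakh(V_\lambda,W_\nu)$, let $V_\lambda^\CC,W_\nu^\CC$ be the holomorphic extensions to $\frakg_\CC$ and $\frakh_\CC$ and form the complex-irreducible representations
\[ U_\lambda:=V_\lambda^\CC\otimes_\CC\overline{V_\lambda^\CC}, \qquad U_\nu:=W_\nu^\CC\otimes_\CC\overline{W_\nu^\CC} \]
of $\frakg_\CC$ and $\frakh_\CC$ viewed as real Lie algebras. Both are spherical with respect to $\frakg_u$ resp.\ $\frakh_u$ — the identity of $\End(V_\lambda^\CC)$ under the $\frakg_u$-equivariant isomorphism $\End(V_\lambda^\CC)\simeq V_\lambda^\CC\otimes\overline{V_\lambda^\CC}$ is a nonzero fixed vector — and a direct weight computation on $v_\lambda\otimes\overline{v_\lambda}$ gives highest restricted weights $2\lambda$ and $2\nu$ on $\fraka_G$ and $\fraka_H$. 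The $\CC$-linear extension $\eta^\CC\in\Hom_{\frakh_\CC}(V_\lambda^\CC,W_\nu^\CC)$ together with its conjugate yield a nonzero $\frakh_\CC$-intertwiner $\eta^\CC\otimes\overline{\eta^\CC}:U_\lambda\to U_\nu$, so $(2\lambda,2\nu)\in\Lambda(\frakg_\CC,\frakh_\CC)$.

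The only delicate point is the bookkeeping behind the identification of $\fraka$-spaces and the verification that $U_\lambda$ really is spherical with highest restricted weight $2\lambda$ and trivial character on $\frakm_{G_\CC}=i\fraka_G$; the splitness of $\frakg$ is used precisely here, since it is what ensures that $i\fraka_G$ is a maximal torus of $\frakg_u$ and makes the minimal parabolic structure on $\frakg_\CC$ compatible with the one on $\frakg$. With these checks in hand the rest is formal.
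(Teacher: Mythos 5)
Your proof is correct and takes essentially the same approach as the paper: use splitness to identify $\fraka_{G_\CC}$ with $\fraka_G$ (and $\fraka_{H_\CC}$ with $\fraka_H$), realize the $\fraku$-spherical representation of highest restricted weight $2\lambda$ as the product of the holomorphic extension with its conjugate, and tensor the intertwiner with its conjugate to get $(2\lambda,2\nu)\in\Lambda(\frakg_\CC,\frakh_\CC)$. The paper's $V_\lambda\otimes V_\lambda$ with $X$ acting by $X\otimes 1+1\otimes\overline{X}$ and the map $A\otimes A$ coincide with your $V_\lambda^\CC\otimes_\CC\overline{V_\lambda^\CC}$ and $\eta^\CC\otimes\overline{\eta^\CC}$ up to canonical identification, so the arguments are the same.
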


\begin{proof}
Since $\frakg$ and $\frakh$ are split, we can choose $\frakj_G=\fraka_G$ and $\frakj_H=\fraka_H$. Let $\fraku=\frakk+i\frakp\subseteq\frakg_\CC$; then $\fraku$ is maximally compact in $\frakg_\CC$ with complement $\fraku^\perp=i\frakk+\frakp$. Further, $\fraka_{G,\CC}$ is a (real) Cartan subalgebra of $\frakg_\CC$ which splits into $\fraka_{G,\CC}=i\fraka_G+\fraka_G$ with $i\fraka_G\subseteq\fraku$ and $\fraka_G\subseteq\fraku^\perp$. By the Cartan--Helgason Theorem, the highest weights of $\fraku$-spherical representations of $\frakg_\CC$ vanish on $i\fraka_G$ and their restrictions to $\fraka_G$ are contained in $\Lambda^+(\frakg_\CC,\fraka_G)=\Lambda^+(\frakg,\fraka_G)$. If $(\varphi_\lambda,V_\lambda)$ denotes a $\frakk$-spherical representation of $\frakg$ with highest weight $\lambda\in\Lambda^+(\frakg,\fraka_G)$, then a $\fraku$-spherical representation of $\frakg_\CC$ with highest weight $2\lambda$ is given by $V_\lambda\otimes V_\lambda$ where $\frakg_\CC$ acts by
$$ X(v\otimes v') = (Xv)\otimes v'+v\otimes(\overline{X}v'), \qquad X\in\frakg_\CC,v,v'\in V_\lambda. $$
Now let $(\lambda,\nu)\in\Lambda^+(\frakg,\fraka_G)\times\Lambda^+(\frakh,\fraka_H)$. Then for any $A\in\Hom_\frakh(V_\lambda,W_\nu)$ we clearly have $A\otimes A\in\Hom_{\frakh_\CC}(V_\lambda\otimes V_\lambda,W_\nu\otimes W_\nu)$. Hence, $(\lambda,\nu)\in\Lambda(\frakg,\frakh)$ implies $(2\lambda,2\nu)\in\Lambda(\frakg_\CC,\frakh_\CC)$ and the claim follows.
\end{proof}

Finally, we prove Theorem~\ref{thm:EnoughWeights} case-by-case for all strongly spherical symmetric pairs in the classification of Theorem~\ref{thm:ClassificationStronglySphericalPairs}:

\subsubsection*{{\rm A)} Trivial case}

This case $\frakg=\frakh$ is by assumption excluded.

\subsubsection*{{\rm C)} Compact case}

Let $\frakg$ be the Lie algebra of a compact simple Lie group; then also $\frakh$ is the Lie algebra of a compact group and $\fraka_G=\fraka_H=\{0\}$ so that $\Lambda(\frakg,\frakh)=\fraka_G^\vee\times\fraka_H^\vee=\{0\}\times\{0\}$ holds trivially.

\subsubsection*{{\rm D)} Compact subgroup case}

Let $\frakh=\frakk$ be the Lie algebra of a maximal compact subgroup $K$ of a non-compact simple Lie group $G$ with Lie algebra $\frakg$. Then $\fraka_H=\{0\}$ and the only spherical representation of $\frakh$ is the trivial representation $W_0=\CC$. By definition, for every $\lambda\in\Lambda^+(\frakg,\fraka_G)$ the representation $V_\lambda$ contains a $\frakk$-fixed vector, hence also $V_\lambda^\vee$ contains a $\frakk$-fixed vector by Lemma~\ref{lem:DualCartanHelgason}. But $\frakk$-fixed vectors in $V_\lambda^\vee=\Hom_\CC(V_\lambda,\CC)$ are simply $\frakk$-equivariant homomorphisms from $V_\lambda$ to the trivial representation $W_0$ of $\frakk$. Thus, $\Lambda(\frakg,\frakh)=\Lambda^+(\frakg,\fraka_G)\times\{0\}$ spans $\fraka_G^\vee\times\{0\}=\fraka_G^\vee\times\fraka_H^\vee$.

\subsubsection*{{\rm E1)} $(\frakg,\frakh)=(\so(1,p+q),\so(1,p)+\so(q))$}

The Satake diagrams of $\frakg$ and $\so(1,p)\subseteq\frakh$ are
\begin{align*}
\frakg:&\begin{tabular}{cc}\begin{xy}
\ar@{-} (0,0) *++!D{\alpha_1} *{\circ}="A";
  (10,0) *++!D{\alpha_2}  *{\bullet}="B"
\ar@{-} "B"; (20,0)
\ar@{.} (20,0); (30,0) 
\ar@{-} (30,0); (40,0) *++!D{\alpha_{s-1}}  *{\bullet}="C"
\ar@{=>} "C"; (50,0) *++!D{\alpha_s}  *{\bullet}="D"
\end{xy}&\begin{xy}
\ar@{-} (0,0) *++!D{\alpha_1} *{\circ}="A"; 
 (10,0)  *++!D{\alpha_2} *{\bullet}="B"
\ar@{-} "B";  (20,0)
\ar@{.} (20,0); (30,0) 
\ar@{-} (30,0); (40,0) *++!D{\alpha_{s-2}\ \ \ \ \ \ \ } *{\bullet}="F"
\ar@{-} "F"; (45,8.6)  *++!L{\alpha_{s-1}} *{\bullet}
\ar@{-} "F"; (45,-8.6)  *++!L{\alpha_s} *{\bullet}
\end{xy}\\$(p+q=2s)$&$(p+q=2s-1)$\end{tabular}\\
\so(1,p):&\begin{tabular}{cc}\begin{xy}
\ar@{-} (0,0) *++!D{\beta_1} *{\circ}="A";
  (10,0) *++!D{\beta_2}  *{\bullet}="B"
\ar@{-} "B"; (20,0)
\ar@{.} (20,0); (30,0) 
\ar@{-} (30,0); (40,0) *++!D{\beta_{t-1}}  *{\bullet}="C"
\ar@{=>} "C"; (50,0) *++!D{\beta_t}  *{\bullet}="D"
\end{xy}&\begin{xy}
\ar@{-} (0,0) *++!D{\beta_1} *{\circ}="A"; 
 (10,0)  *++!D{\beta_2} *{\bullet}="B"
\ar@{-} "B";  (20,0)
\ar@{.} (20,0); (30,0) 
\ar@{-} (30,0); (40,0) *++!D{\beta_{t-2}\ \ \ \ \ \ } *{\bullet}="F"
\ar@{-} "F"; (45,8.6)  *++!L{\beta_{t-1}} *{\bullet}
\ar@{-} "F"; (45,-8.6)  *++!L{\beta_t} *{\bullet}
\end{xy}\\$(p=2t)$&$(p=2t-1)$\end{tabular}
\end{align*}

We realize the root system of $\frakg$ as $\{\pm e_i\pm e_j:1\leq i<j\leq s\}$ and additionally $\{\pm e_i:1\leq i\leq s\}$ if $p+q$ is even. Choose the simple roots $\alpha_i=e_i-e_{i+1}$ ($1\leq i\leq s-1$) and $\alpha_s=e_s$ for $p+q$ even and $\alpha_s=e_{s-1}+e_s$ for $p+q$ odd. We distinguish two cases:
\begin{itemize}
\item Assume first that $p+q$ is even or $p$ is odd; then $\frakj_G=\frakj_H$. If we choose the simple roots for $\so(1,p)\subseteq\frakh$ as $\beta_i=\alpha_i$ ($1\leq i\leq t-1$) and $\beta_t=e_t$ for $p$ even and $\beta_t=e_{t-1}+e_t$ for $p$ odd, then $\beta_i=\alpha_i$ ($t+1\leq i\leq s-1$) and $\beta_s=e_{s-1}+e_s$ for $q$ even and $\beta_s=e_s$ for $q$ odd are the simple roots for $\so(q)\subseteq\frakh$.
\item Assume now that $p+q$ is odd and $p$ is even; then we can choose $\frakj_H\subseteq\frakj_G$ such that $e_s|_{\frakj_{H,\CC}}=0$ and the simple roots for $\so(1,p)\subseteq\frakh$ are $\beta_i=\alpha_i|_{\frakj_{H,\CC}}$ ($1\leq i\leq t-1$) plus $\beta_t=e_t|_{\frakj_{H,\CC}}$, and the simple roots for $\so(q)\subseteq\frakh$ are $\beta_i=\alpha_i|_{\frakj_{H,\CC}}$ ($t+1\leq i\leq s-1$).
\end{itemize}

Consider the fundamental weight $\varpi_1=e_1$. Clearly $e_1|_{\frakj_{H,\CC}}=\zeta_1$ is also a highest weight for $\so(1,p)\subseteq\frakh$ and hence $(2\varpi_1,2\zeta_1)\in\Lambda(\frakg,\frakh)$. Further, from the Satake diagram for the real form $\so(p+1,q)$ of $\frakg_\CC\simeq\so(p+q+1,\CC)$, it follows that $F^\frakg(2\varpi_1)$ is $\frakh$-spherical and hence also $(2\varpi_1,0)\in\Lambda(\frakg,\frakh)$. Clearly $(2\varpi_1,2\zeta_1)$ and $(2\varpi_1,0)$ span $\fraka_G^\vee\times\fraka_H^\vee$, which is $2$-dimensional.

\subsubsection*{{\rm E2)} $(\frakg,\frakh)=(\su(1,p+q),\fraks(\fraku(1,p)+\fraku(q)))$}

The Satake diagrams of $\frakg$ and $\frakh$ are
\[\frakg:\begin{xy}
\ar@{-} (0,0) *++!D{\alpha_1} *{\circ}="A";
  (10,0) *++!D{\alpha_2}  *{\bullet}="B"
\ar@{-} "B"; (20,0)
\ar@{.} (20,0); (30,0) 
\ar@{-} (30,0); (40,0) *++!D{\alpha_{p+q-1}}  *{\bullet}="C"
\ar@{-} "C"; (50,0) *++!D{\alpha_{p+q}}  *{\circ}="D"
\ar@/_1.5pc/@{<->} "A"; "D"
\end{xy}\]
\[\frakh:\begin{xy}
\ar@{-} (0,0) *++!D{\beta_1} *{\circ}="A";
  (10,0) *++!D{\beta_2}  *{\bullet}="B"
\ar@{-} "B"; (20,0)
\ar@{.} (20,0); (30,0) 
\ar@{-} (30,0); (40,0) *++!D{\beta_{p-1}}  *{\bullet}="C"
\ar@{-} "C"; (50,0) *++!D{\beta_p}  *{\circ}="D"
\ar@/_1.5pc/@{<->} "A"; "D"
\ar@{-} (60,0) *++!D{\beta_{p+2}} *{\bullet}="E"; (70,0)
\ar@{.} (70,0); (80,0) 
\ar@{-} (80,0); (90,0) *++!D{\beta_{p+q}}  *{\bullet}="F"
\end{xy}\]

We realize the root system of $\frakg$ as $\{\pm(e_i-e_j):1\leq i<j\leq p+q+1\}$ in the vector space $\{x\in\RR^{p+q+1}:x_1+\cdots+x_{p+q+1}=0\}$ and choose the simple roots $\alpha_i=e_i-e_{i+1}$ ($1\leq i\leq p+q$). If we choose the simple roots $\beta_i=\alpha_i$ ($1\leq i\leq p$) for $\su(1,p)\subseteq\frakh$, then $\beta_i=\alpha_i$ ($p+2\leq i\leq p+q$) are simple roots for $\su(q)\subseteq\frakh$ and the fundamental weight $\varpi_{p+1}$ describes a character of $\fraku(1)\subseteq\frakh$.

Consider the dominant integral weight $\varpi_1+\varpi_{p+q}=e_1-e_{p+q+1}$. In the Weyl group orbit of $e_1-e_{p+q+1}$ the weight $e_1-e_{p+1}=\zeta_1+\zeta_p$ is a highest weight for $\frakh$ and hence $(2(\varpi_1+\varpi_{p+q}),2(\zeta_1+\zeta_p))\in\Lambda(\frakg,\frakh)$. Further, from the Satake diagram for the real form $\su(p+1,q)$ of $\frakg_\CC\simeq\sl(p+q+1,\CC)$ it follows that $F^\frakg(2(\varpi_1+\varpi_{p+q}))$ is $\frakh$-spherical and hence also $(2(\varpi_1+\varpi_{p+q}),0)\in\Lambda(\frakg,\frakh)$. Clearly $(2(\varpi_1+\varpi_{p+q}),2(\zeta_1+\zeta_p))$ and $(2(\varpi_1+\varpi_{p+q}),0)$ span $\fraka_G^\vee\times\fraka_H^\vee$, which is $2$-dimensional.

\subsubsection*{{\rm E3)} $(\frakg,\frakh)=(\sp(1,p+q),\sp(1,p)+\sp(q)))$}

The Satake diagrams of $\frakg$ and $\frakh$ are
\[\frakg:\begin{xy}
\ar@{-} (0,0) *++!D{\alpha_1} *{\bullet}="A";
  (10,0) *++!D{\alpha_2}  *{\circ}="B"
\ar@{-} "B"; (20,0) *++!D{\alpha_3} *{\bullet}="C"
\ar@{-} "C"; (30,0)
\ar@{.} (30,0); (40,0)
\ar@{-} (40,0); (50,0) *++!D{\alpha_{p+q}}  *{\bullet}="D"
\ar@{<=} "D"; (60,0) *++!D{\alpha_{p+q+1}}  *{\bullet}="E"
\end{xy}\]
\[\frakh:\begin{xy}
\ar@{-} (0,0) *++!D{\beta_1} *{\bullet}="A";
  (10,0) *++!D{\beta_2}  *{\circ}="B"
\ar@{-} "B"; (20,0) *++!D{\beta_3} *{\bullet}="C"
\ar@{-} "C"; (30,0)
\ar@{.} (30,0); (40,0)
\ar@{-} (40,0); (50,0) *++!D{\beta_p}  *{\bullet}="D"
\ar@{<=} "D"; (60,0) *++!D{\beta_{p+1}}  *{\bullet}="E"
\ar@{-} (70,0) *++!D{\beta_{p+2}} *{\bullet}="F"; (80,0)
\ar@{.} "F"; (90,0)
\ar@{-} (90,0); (100,0) *++!D{\beta_{p+q}}  *{\bullet}="G"
\ar@{<=} "G"; (110,0) *++!D{\beta_{p+q+1}}  *{\bullet}="H"
\end{xy}\]

We realize the root system of $\frakg$ as $\{\pm e_i\pm e_j:1\leq i<j\leq p+q+1\}\cup\{\pm2e_i:1\leq i\leq p+q+1\}$ and choose the simple roots $\alpha_i=e_i-e_{i+1}$ ($1\leq i\leq p+q$) and $\alpha_{p+q+1}=2e_{p+q+1}$. If we choose the simple roots $\beta_i=\alpha_i$ ($1\leq i\leq p$) and $\beta_{p+1}=2e_{p+1}$ for $\sp(1,p)\subseteq\frakh$, then $\beta_i=\alpha_i$ ($p+2\leq i\leq p+q+1$) are simple roots for $\sp(q)\subseteq\frakh$.

Consider the fundamental weight $\varpi_2=e_1+e_2$. Clearly $e_1+e_2=\zeta_2$ is also a highest weight for $\sp(1,p)\subseteq\frakh$ and hence $(2\varpi_2,2\zeta_2)\in\Lambda(\frakg,\frakh)$. Further, from the Satake diagram for the real form $\sp(p+1,q)$ of $\frakg_\CC\simeq\sp(p+q+1,\CC)$, it follows that $F^\frakg(2\varpi_2)$ is $\frakh$-spherical and hence also $(2\varpi_2,0)\in\Lambda(\frakg,\frakh)$. Clearly $(2\varpi_2,2\zeta_2)$ and $(2\varpi_2,0)$ span $\fraka_G^\vee\times\fraka_H^\vee$, which is $2$-dimensional.

\subsubsection*{{\rm E4)} $(\frakg,\frakh)=(\frakf_{4(-20)},\so(8,1))$}

The Satake diagrams of $\frakg$ and $\frakh$ are
\[\frakg:\begin{xy}
\ar@{-} (0,0) *++!D{\alpha_1} *{\bullet}="A";
  (10,0) *++!D{\alpha_2}  *{\bullet}="B"
\ar@{=>} "B"; (20,0) *++!D{\alpha_3} *{\bullet}="C"
\ar@{-} (20,0); (30,0) *++!D{\alpha_4}  *{\circ}="D"
\end{xy}\]
\[\frakh:\begin{xy}
\ar@{-} (0,0) *++!D{\beta_1} *{\circ}="A";
  (10,0) *++!D{\beta_2}  *{\bullet}="B"
\ar@{-} (10,0); (20,0) *++!D{\beta_3}  *{\bullet}="C"
\ar@{=>} "C"; (30,0) *++!D{\beta_4}  *{\bullet}="D"
\end{xy}\]

By \cite{McKP81} we have
$$ F^\frakg(\varpi_4)|_\frakh \simeq F^\frakh(\zeta_1)\oplus F^\frakh(\zeta_4)\oplus F^\frakh(0) $$
and hence $(2\varpi_4,2\zeta_1),(2\varpi_4,0)\in\Lambda(\frakg,\frakh)$ span $\fraka_G^\vee\times\fraka_H^\vee$, which is $2$-dimensional.

\subsubsection*{{\rm F1)} $(\frakg,\frakh)=(\sl(n+1,\CC),\gl(n,\CC))$}

This follows from F3) with Lemma~\ref{lem:SplitImpliesComplex}.

\subsubsection*{{\rm F2)} $(\frakg,\frakh)=(\so(n+1,\CC),\so(n,\CC))$}

This follows from F5) with Lemma~\ref{lem:SplitImpliesComplex}.

\subsubsection*{{\rm F3)} $(\frakg,\frakh)=(\sl(n+1,\RR),\gl(n,\RR))$}

The Satake diagrams of $\frakg$ and $\frakh$ are
$$\frakg:\begin{xy}
\ar@{-} (0,0) *++!D{\alpha_1} *{\circ}="A";
  (10,0) *++!D{\alpha_2}  *{\circ}="B"
\ar@{-} "B"; (20,0)
\ar@{.} (20,0); (30,0) 
\ar@{-} (30,0); (40,0) *++!D{\alpha_{n-1}}  *{\circ}="C"
\ar@{-} "C"; (50,0) *++!D{\alpha_n}  *{\circ}="D"
\end{xy} $$
$$\frakh:\begin{xy}
\ar@{-} (0,0) *++!D{\beta_1} *{\circ}="A";
  (10,0) *++!D{\beta_2}  *{\circ}="B"
\ar@{-} "B"; (20,0)
\ar@{.} (20,0); (30,0) 
\ar@{-} (30,0); (40,0) *++!D{\beta_{n-1}}  *{\circ}="C"
\end{xy} $$

We choose $\beta_i=\alpha_i$ ($1\leq i\leq n-1$) as simple roots for $\sl(n,\RR)\subseteq\frakh$, then $\zeta_n:=\varpi_n$ defines a non-trivial character of $\RR\subseteq\frakh$. By Appendix~\ref{app:BranchingSLn} the pairs $(2\varpi_i,2\zeta_i+2\tfrac{i}{n}\zeta_n)$ ($1\leq i\leq n-1$), $(2\varpi_i,2\zeta_{i-1}-2\tfrac{n-i+1}{n}\zeta_n)$ ($2\leq i\leq n$) and $(2\varpi_1,-2\zeta_n),(2\varpi_n,2\zeta_n)$ are contained in $\Lambda(\frakg,\frakh)$ and they clearly span $\fraka_G^\vee\times\fraka_H^\vee$, which is $2n$-dimensional.

\subsubsection*{{\rm F4)} $(\frakg,\frakh)=(\su(p,q+1),\fraku(p,q))$}

Set $s=\min(p,q+1)$ and $t=\min(p,q)$, then the Satake diagrams of $\frakg$ and $\frakh$ are
\[\frakg:\begin{cases}\begin{xy}
\ar@{-} (0,0) *++!D{\alpha_1} *{\circ}="A";
\ar@{-} "A"; (10,0)
\ar@{.} (10,0); (20,0)
\ar@{-} (20,0); (30,0) *++!D{\alpha_s}  *{\circ}="B"
\ar@{-} "B"; (40,0) *++!D{\alpha_{s+1}}  *{\bullet}="C"
\ar@{-} "C"; (50,0)
\ar@{.} (50,0); (60,0)
\ar@{-} (60,0); (70,0) *++!DR{\alpha_{p+q-s}\!\!\!\!\!\!\!}  *{\bullet}="D"
\ar@{-} "D"; (80,0) *++!D{\,\,\,\,\,\,\,\alpha_{p+q-s+1}}  *{\circ}="E"
\ar@{-} "E"; (90,0)
\ar@{.} (90,0); (100,0)
\ar@{-} (100,0); (110,0) *++!D{\alpha_{p+q}}  *{\circ}="F"
\ar@/_1.5pc/@{<->} "A"; "F"
\ar@/_0.7pc/@{<->} "B"; "E"
\end{xy}&\mbox{for $p\neq q+1$,}\\
\begin{xy}
\ar@{-} (0,0) *++!D{\alpha_1} *{\circ}="A";
\ar@{-} "A"; (10,0)
\ar@{.} (10,0); (20,0)
\ar@{-} (20,0); (30,0) *++!D{\alpha_{s-1}}  *{\circ}="B"
\ar@{-} "B"; (40,0) *++!D{\alpha_s}  *{\circ}="C"
\ar@{-} "C"; (50,0) *++!D{\alpha_{s+1}}  *{\circ}="D"
\ar@{-} "D"; (60,0)
\ar@{.} (60,0); (70,0)
\ar@{-} (70,0); (80,0) *++!D{\alpha_{p+q}}  *{\circ}="F"
\ar@/_1.5pc/@{<->} "A"; "F"
\ar@/_0.7pc/@{<->} "B"; "D"
\end{xy}&\mbox{for $p=q+1$,}\end{cases}\]
\[\frakh:\begin{cases}\begin{xy}
\ar@{-} (0,0) *++!D{\beta_1} *{\circ}="A";
\ar@{-} "A"; (10,0)
\ar@{.} (10,0); (20,0)
\ar@{-} (20,0); (30,0) *++!D{\beta_t}  *{\circ}="B"
\ar@{-} "B"; (40,0) *++!D{\beta_{t+1}}  *{\bullet}="C"
\ar@{-} "C"; (50,0)
\ar@{.} (50,0); (60,0)
\ar@{-} (60,0); (70,0) *++!DR{\beta_{p+q-t-1}\!\!\!\!\!\!\!}  *{\bullet}="D"
\ar@{-} "D"; (80,0) *++!D{\,\,\,\,\,\,\,\beta_{p+q-s}}  *{\circ}="E"
\ar@{-} "E"; (90,0)
\ar@{.} (90,0); (100,0)
\ar@{-} (100,0); (110,0) *++!D{\beta_{p+q-1}}  *{\circ}="F"
\ar@/_1.5pc/@{<->} "A"; "F"
\ar@/_0.7pc/@{<->} "B"; "E"
\end{xy}&\mbox{for $p\neq q$,}\\
\begin{xy}
\ar@{-} (0,0) *++!D{\beta_1} *{\circ}="A";
\ar@{-} "A"; (10,0)
\ar@{.} (10,0); (20,0)
\ar@{-} (20,0); (30,0) *++!D{\beta_{t-1}}  *{\circ}="B"
\ar@{-} "B"; (40,0) *++!D{\beta_t}  *{\circ}="C"
\ar@{-} "C"; (50,0) *++!D{\beta_{t+1}}  *{\circ}="D"
\ar@{-} "D"; (60,0)
\ar@{.} (60,0); (70,0)
\ar@{-} (70,0); (80,0) *++!D{\beta_{p+q-1}}  *{\circ}="F"
\ar@/_1.5pc/@{<->} "A"; "F"
\ar@/_0.7pc/@{<->} "B"; "D"
\end{xy}&\mbox{for $p=q$.}\end{cases}\]

We choose the simple roots $\beta_i=\alpha_i$ ($1\leq i\leq p+q-1$) for $\su(p,q)\subseteq\frakh$, then $\zeta_{p+q}:=\varpi_{p+q}$ is trivial on $\su(p,q)$ and describes a character of $\fraku(1)\subseteq\frakh$. We use Appendix~\ref{app:BranchingSLn} in what follows:
\begin{itemize}
\item ($p\leq q$) Then $s=t=p$ and the pairs $(2(\varpi_i+\varpi_{p+q-i+1}),2(\zeta_i+\zeta_{p+q-i})),(2(\varpi_i+\varpi_{p+q-i+1}),2(\zeta_{i-1}+\zeta_{p+q-i+1}))$ ($1\leq i\leq p$) are contained in $\Lambda(\frakg,\frakh)$ and span $\fraka_G^\vee\times\fraka_H^\vee$, which is of dimension $s+t=2p$.
\item ($p\geq q+1$) Then $s=q+1$ and $t=q$, so that the pairs $(2(\varpi_i+\varpi_{p+q-i+1}),2(\zeta_i+\zeta_{p+q-i})),(2(\varpi_i+\varpi_{p+q-i+1}),2(\zeta_{i-1}+\zeta_{p+q-i+1}))$ ($1\leq i\leq q$) and $(2(\varpi_{q+1}+\varpi_p),2(\zeta_q+\zeta_p))$ are contained in $\Lambda(\frakg,\frakh)$ and span $\fraka_G^\vee\times\fraka_H^\vee$, which is of dimension $s+t=2q+1$.
\end{itemize}

\subsubsection*{{\rm F5)} $(\frakg,\frakh)=(\so(p,q+1),\so(p,q))$}

For $p+q=2m$ even and $s=\min(p,q+1)$, $t=\min(p,q)$ the Satake diagrams of $\frakg$ and $\frakh$ are
\[\frakg:\begin{xy}
\ar@{-} (0,0) *++!D{\alpha_1} *{\circ}="A"; (10,0)
\ar@{.} (10,0); (20,0) 
\ar@{-} (20,0); (30,0) *++!D{\alpha_s}  *{\circ}="B"
\ar@{-} "B"; (40,0) *++!D{\alpha_{s+1}}  *{\bullet}="C"
\ar@{-} "C"; (50,0)
\ar@{.} (50,0); (60,0) 
\ar@{-} (60,0); (70,0) *++!D{\alpha_{m-1}}  *{\bullet}="D"
\ar@{=>} "D"; (80,0) *++!D{\alpha_m}  *{\bullet}="E"
\end{xy}\]
\[\frakh:\begin{cases}\begin{xy}
\ar@{-} (0,0) *++!D{\beta_1} *{\circ}="A"; (10,0)
\ar@{.} (10,0); (20,0) 
\ar@{-} (20,0); (30,0) *++!D{\beta_t}  *{\circ}="B"
\ar@{-} "B"; (40,0) *++!D{\beta_{t+1}}  *{\bullet}="C"
\ar@{-} "C"; (50,0)
\ar@{.} (50,0); (60,0) 
\ar@{-} (60,0); (70,0) *++!D{\beta_{m-2}\ \ \ \ \ \ \ }  *{\bullet}="D"
\ar@{-} "D"; (75,8.6)  *++!L{\beta_{m-1}} *{\bullet}
\ar@{-} "D"; (75,-8.6)  *++!L{\beta_m} *{\bullet}
\end{xy}&\mbox{for $t\leq m-2$,}\\
\begin{xy}
\ar@{-} (0,0) *++!D{\beta_1} *{\circ}="A"; (10,0)
\ar@{.} (10,0); (20,0) 
\ar@{-} (20,0); (30,0) *++!D{\beta_{m-2}\ \ \ \ \ \ \ }  *{\circ}="B"
\ar@{-} "B"; (35,8.6)  *++!L{\beta_{m-1}} *{\circ}="C"
\ar@{-} "B"; (35,-8.6)  *++!L{\beta_m} *{\circ}="D"
\ar@/^0.5pc/@{<->} "C"; "D"
\end{xy}&\mbox{for $t=m-1$,}\\
\begin{xy}
\ar@{-} (0,0) *++!D{\beta_1} *{\circ}="A"; (10,0)
\ar@{.} (10,0); (20,0) 
\ar@{-} (20,0); (30,0) *++!D{\beta_{m-2}\ \ \ \ \ \ \ }  *{\circ}="B"
\ar@{-} "B"; (35,8.6)  *++!L{\beta_{m-1}} *{\circ}="C"
\ar@{-} "B"; (35,-8.6)  *++!L{\beta_m} *{\circ}="D"
\end{xy}&\mbox{for $t=m$,}\end{cases}\]

We use Appendix~\ref{app:BranchingSOn} in what follows:
\begin{itemize}
\item ($p+2<q$) Then $s=t=p\leq m-2$ and the pairs $(2\varpi_i,2\zeta_i),(2\varpi_i,2\zeta_{i-1})$ ($1\leq i\leq p$) are contained in $\Lambda(\frakg,\frakh)$ and span $\fraka_G^\vee\times\fraka_H^\vee$, which is of dimension $s+t=2p$.
\item ($p+2=q$) Again $s=t=p$, but now $t=m-1$. The pairs $(2\varpi_i,2\zeta_i),(2\varpi_i,2\zeta_{i-1})$ ($1\leq i\leq p-1$) and $(2\varpi_p,2(\zeta_p+\zeta_{p+1})),(2\varpi_p,2\zeta_{p-1})$ are contained in $\Lambda(\frakg,\frakh)$ and span $\fraka_G^\vee\times\fraka_H^\vee$, which is of dimension $s+t=2p$.
\item ($p=q$) Then $s=t=m=p$ and the pairs $(2\varpi_i,2\zeta_i),(2\varpi_i,2\zeta_{i-1})$ ($1\leq i\leq p-2$) and $(2\varpi_{p-1},2\zeta_{p-2}),(2\varpi_{p-1},2(\zeta_{p-1}+\zeta_p)),(2\varpi_p,2\zeta_p),(2\varpi_p,2\zeta_{p-1})$ are contained in $\Lambda(\frakg,\frakh)$ and span $\fraka_G^\vee\times\fraka_H^\vee$, which is of dimension $s+t=2p$.
\item ($p=q+2$) Here, $s=q+1$, $t=q$ and $t<s=m$. The pairs $(2\varpi_i,2\zeta_i),(2\varpi_i,2\zeta_{i-1})$ ($1\leq i\leq q-1$) and $(2\varpi_q,2\zeta_{q-1}),(2\varpi_q,2(\zeta_q+\zeta_{q+1})),(4\varpi_{q+1},2(\zeta_q+\zeta_{q+1}))$ are contained in $\Lambda(\frakg,\frakh)$ and span $\fraka_G^\vee\times\fraka_H^\vee$, which is of dimension $s+t=2q+1$.
\item ($p>q+2$) Again $s=q+1$, $t=q$, but now $t<s<m$. The pairs $(2\varpi_i,2\zeta_i),(2\varpi_i,2\zeta_{i-1})$ ($1\leq i\leq q$) and $(2\varpi_{q+1},2\zeta_q)$ are contained in $\Lambda(\frakg,\frakh)$ and span $\fraka_G^\vee\times\fraka_H^\vee$, which is of dimension $s+t=2q+1$.
\end{itemize}

The case $p+q=2m-1$ odd is treated similarly.

\subsubsection*{{\rm G1)} $(\frakg,\frakh)=(\frakg'+\frakg',\diag\frakg')$ with $\frakg'$ simple compact}

This is the same situation as in C).

\subsubsection*{{\rm G2)} $(\frakg,\frakh)=(\so(1,n)+\so(1,n),\diag\so(1,n))$}

The Satake diagram of $\so(1,n)$ is
$$ \so(1,n):\begin{tabular}{cc}\begin{xy}
\ar@{-} (0,0) *++!D{\alpha_1} *{\circ}="A";
  (10,0) *++!D{\alpha_2}  *{\bullet}="B"
\ar@{-} "B"; (20,0)
\ar@{.} (20,0); (30,0) 
\ar@{-} (30,0); (40,0) *++!D{\alpha_{s-1}}  *{\bullet}="C"
\ar@{=>} "C"; (50,0) *++!D{\alpha_s}  *{\bullet}="D"
\end{xy}&\begin{xy}
\ar@{-} (0,0) *++!D{\alpha_1} *{\circ}="A"; 
 (10,0)  *++!D{\alpha_2} *{\bullet}="B"
\ar@{-} "B";  (20,0)
\ar@{.} (20,0); (30,0) 
\ar@{-} (30,0); (40,0) *++!D{\alpha_{s-1}\ \ \ \ \ \ } *{\bullet}="F"
\ar@{-} "F"; (45,8.6)  *++!L{\alpha_{s}} *{\bullet}
\ar@{-} "F"; (45,-8.6)  *++!L{\alpha_{s+1}} *{\bullet}
\end{xy}\\$(n=2s)$&$(n=2s+1)$\end{tabular} $$

The spherical representations of $\so(1,n)$ are of the form $F^{\so(1,n)}(2k\varpi_1)$, $k\in\NN$, and hence the spherical representations of $\frakg$ are of the form $F^{\so(1,n)}(2k_1\varpi_1)\boxtimes F^{\so(n,1)}(2k_2\varpi_1)$, $k_1,k_2\in\NN$. The representation $F^{\so(1,n)}(\varpi_1)$ is self-dual, and hence the trivial representation $F^{\so(1,n)}(0)$ is contained in the tensor product $F^{\so(1,n)}(\varpi_1)\otimes F^{\so(1,n)}(\varpi_1)$. Further, clearly
$$ F^{\so(1,n)}(\varpi_1)\otimes F^{\so(1,n)}(0) \simeq F^{\so(1,n)}(0)\otimes F^{\so(1,n)}(\varpi_1) \simeq F^{\so(1,n)}(\varpi_1). $$
Hence, $((2\varpi_1,0),2\varpi_1),((0,2\varpi_1),2\varpi_1),((2\varpi_1,2\varpi_1),0)\in\Lambda(\frakg,\frakh)$ span $\fraka_G^\vee\times\fraka_H^\vee$, which is $3$-dimensional.

\subsubsection*{{\rm H1)} $(\frakg,\frakh)=(\so(2,2n),\fraku(1,n))$}

The Satake diagrams of $\frakg$ and $\frakh$ are
$$ \frakg:\begin{xy}
\ar@{-} (0,0) *++!D{\alpha_1} *{\circ}="A"; 
 (10,0)  *++!D{\alpha_2} *{\circ}="B"
\ar@{-} "B"; (20,0) *++!D{\alpha_3}  *{\bullet}="C"
\ar@{-} "C";  (30,0)
\ar@{.} (30,0); (40,0) 
\ar@{-} (40,0); (50,0) *++!D{\alpha_{n-1}\ \ \ \ \ \ } *{\bullet}="D"
\ar@{-} "D"; (55,8.6)  *++!L{\alpha_n} *{\bullet}
\ar@{-} "D"; (55,-8.6)  *++!L{\alpha_{n+1}} *{\bullet}
\end{xy}
$$

$$ \frakh:\begin{xy}
\ar@{-} (0,0) *++!D{\beta_1} *{\circ}="A";
  (10,0) *++!D{\beta_2}  *{\bullet}="B"
\ar@{-} "B"; (20,0)
\ar@{.} (20,0); (30,0) 
\ar@{-} (30,0); (40,0) *++!D{\beta_{n-1}}  *{\bullet}="C"
\ar@{-} "C"; (50,0) *++!D{\beta_n}  *{\circ}="D"
\ar@/_1.5pc/@{<->} "A"; "D"
\end{xy} $$

We realize the root system of $\frakg$ as $\{\pm e_i\pm e_j:1\leq i<j\leq n+1\}$ with simple roots $\alpha_i=e_i-e_{i+1}$, $i=1,\ldots,n$ and $\alpha_{n+1}=e_n+e_{n+1}$. Choose the simple roots $\beta_i=\alpha_i$ ($i=1,\ldots,n$) for $\su(1,n)\subseteq\frakh$; then $\zeta_{n+1}:=\varpi_{n+1}=\frac{1}{2}(e_1+\cdots+e_{n+1})$ defines a non-trivial character of $\fraku(1)\subseteq\frakh$.

First consider the fundamental weight $\varpi_2=e_1+e_2$; then the Weyl group orbit of $\varpi_2$ is equal to the set of roots $\{\pm e_i\pm e_j:1\leq i<j\leq n+1\}$. Hence, the weights $e_1+e_2$, $-e_n-e_{n+1}$ and $e_1-e_{n+1}$ of $F^\frakg(\varpi_2)$ are highest weights for $\frakh$ so that the representations $F^\frakh(e_1+e_2)$, $F^\frakh(-e_n-e_{n+1})$ and $F^\frakh(e_1-e_{n+1})$ are contained in $F^\frakg(\varpi_2)|_\frakh$. Note that $e_1-e_{n+1}=\zeta_1+\zeta_n$. Using the Weyl dimension formula we find
%\begin{align*}
% \dim F^\frakg(\varpi_2) &= (n+1)(2n+1),\\
% \dim F^\frakh(e_1+e_2) &= \dim F^\frakh(-e_n-e_{n+1}) = \tfrac{1}{2}n(n+1),\\
% \dim F^\frakh(e_1-e_{n+1}) &= n(n+2),
%\end{align*}
%and hence
$$ \dim F^\frakg(\varpi_2) = \dim F^\frakh(e_1+e_2) + \dim F^\frakh(-e_n-e_{n+1}) + \dim F^\frakh(e_1-e_{n+1}) + 1. $$
Using the Kostant Branching Formula it is further easy to see that the remaining one-dimensional representation in $F^\frakg(\varpi_2)|_\frakh$ is the trivial representation so that $(2\varpi_2,2(\zeta_1+\zeta_n)),(2\varpi_2,0)\in\Lambda(\frakg,\frakh)$.

Next consider the highest weight $2\varpi_1=2e_1$ of $\frakg$. Similar considerations to above show that $F^\frakg(2\varpi_1)|_\frakh$ contains $F^\frakh(\zeta_1+\zeta_n)$ so that $(4\varpi_1,2(\zeta_1+\zeta_n))\in\Lambda(\frakg,\frakh)$.

Together the three pairs $(4\varpi_1,2(\zeta_1+\zeta_n)),(2\varpi_2,2(\zeta_1+\zeta_n)),(2\varpi_2,0)\in\Lambda(\frakg,\frakh)$ span $\fraka_G^\vee\times\fraka_H^\vee$ which is $3$-dimensional.

\subsubsection*{{\rm H2)} $(\frakg,\frakh)=(\su^*(2n+2),\su^*(2n)+\RR+\su(2))$}

The Satake diagrams of $\frakg$ and $\frakh$ are
$$ \frakg:\begin{xy}
\ar@{-} (0,0) *++!D{\alpha_1} *{\bullet}="A"; 
 (10,0)  *++!D{\alpha_2} *{\circ}="B"
\ar@{-} "B"; (20,0) *++!D{\alpha_3}  *{\bullet}="C"
\ar@{-} "C";  (30,0)
\ar@{.} (30,0); (40,0) 
\ar@{-} (40,0); (50,0) *++!D{\alpha_{2n}}  *{\circ}="D"
\ar@{-} "D"; (60,0) *++!D{\alpha_{2n+1}}  *{\bullet}="E"
\end{xy} $$

$$ \su^*(2n):\begin{xy}
\ar@{-} (0,0) *++!D{\beta_1} *{\bullet}="A"; 
 (10,0)  *++!D{\beta_2} *{\circ}="B"
\ar@{-} "B"; (20,0) *++!D{\beta_3}  *{\bullet}="C"
\ar@{-} "C";  (30,0)
\ar@{.} (30,0); (40,0) 
\ar@{-} (40,0); (50,0) *++!D{\beta_{2n-2}}  *{\circ}="D"
\ar@{-} "D"; (60,0) *++!D{\beta_{2n-1}}  *{\bullet}="E"
\end{xy} $$

We realize the root system of $\frakg$ as $\{\pm(e_i-e_j):1\leq i<j\leq 2n+2\}$ in the vector space $V=\{x\in\RR^{2n+2}:x_1+\cdots+x_{2n+2}=0\}$. To simplify notation, denote by $\pi(x)$ the orthogonal projection of $x\in\RR^{2n+2}$ to $V$. We choose the simple roots $\alpha_i=e_i-e_{i+1}$ for $\frakg$ and the simple roots $\beta_i=\alpha_i$ ($i=1,\ldots,2n-1$) for $\su^*(2n)$. Then $\zeta_{2n}:=\varpi_{2n}$ describes a character of $\RR\subseteq\frakh$ and $\alpha_{2n+1}$ is the non-trivial root for $\su(2)\subseteq\frakh$. Let $\zeta_{2n+1}:=\tfrac{1}{2}\alpha_{2n+1}$ denote the fundamental weight for $\su(2)$. Then a general irreducible representation of $\frakh$ takes the form $F^{\su^*(2n)}(\ell_1\zeta_1+\cdots+\ell_{2n-1}\zeta_{2n-1})\boxtimes F^\RR(\ell_{2n}\zeta_{2n})\boxtimes F^{\su(2)}(\ell_{2n+1}\zeta_{2n+1})$ with $\ell_1,\ldots,\ell_{2n-1},\ell_{2n+1}\in\NN$ and $\ell_{2n}\in\CC$.

Consider the fundamental weight $\varpi_i=\pi(e_1+\cdots+e_i)=\tfrac{2n-i+2}{2n+2}(e_1+\cdots+e_i)-\tfrac{i}{2n+2}(e_{i+1}+\cdots+e_{2n+2})$ of $\frakg$. In the Weyl group orbit of $\varpi_i$ the three weights $\pi(e_1+\cdots+e_i)$, $\pi(e_1+\cdots+e_{i-1}+e_{2n+1})$ and $\pi(e_1+\cdots+e_{i-2}+e_{2n+1}+e_{2n+2})$ are highest weights for $\frakh$. Moreover,
$$ \varpi_k = \zeta_k+\tfrac{k}{2n}\zeta_{2n} \quad (1\leq k\leq 2n-1), \qquad \varpi_{2n} = \zeta_{2n} $$
and
\begin{align*}
 \pi(e_1+\cdots+e_{i-1}+e_{2n+1}) &= \zeta_{i-1}-\tfrac{n-i+1}{2n}\zeta_{2n}+\zeta_{2n+1},\\
 \pi(e_1+\cdots+e_{i-2}+e_{2n+1}+e_{2n+2}) &= \zeta_{i-2}-\tfrac{2n-i+2}{2n}\zeta_{2n},
\end{align*}
so that the representations
\begin{align*}
 & F^{\su^*(2n)}(\zeta_i)\boxtimes F^\RR(\tfrac{k}{2n}\zeta_{2n})\boxtimes F^{\su(2)}(0),\\
 & F^{\su^*(2n)}(\zeta_{i-1})\boxtimes F^\RR(-\tfrac{n-i+1}{2n}\zeta_{2n})\boxtimes F^{\su(2)}(\zeta_{2n+1}),\\
 & F^{\su^*(2n)}(\zeta_{i-2})\boxtimes F^\RR(-\tfrac{2n-i+2}{2n}\zeta_{2n})\boxtimes F^{\su(2)}(0)
\end{align*}
are contained in $F^\frakg(\varpi_i)|_\frakh$. Using the Weyl Dimension Formula we find
%$$ \dim F^\frakg(\varpi_i) = {2n+2\choose i}, \qquad \dim F^{\su^*(2n)}(\zeta_i) = {2n\choose i}, \qquad \dim F^{\su(2)}(k\zeta_{2n+1}) = k+1, $$
%and hence
\begin{multline*}
 F^\frakg(\varpi_i)|_\frakh \simeq \Big(F^{\su^*(2n)}(\zeta_i)\boxtimes F^\RR(\tfrac{k}{2n}\zeta_{2n})\boxtimes F^{\su(2)}(0)\Big)\\
 \oplus\Big(F^{\su^*(2n)}(\zeta_{i-1})\boxtimes F^\RR(-\tfrac{n-i+1}{2n}\zeta_{2n})\boxtimes F^{\su(2)}(\zeta_{2n+1})\Big)\\
 \oplus\Big(F^{\su^*(2n)}(\zeta_{i-2})\boxtimes F^\RR(-\tfrac{2n-i+2}{2n}\zeta_{2n})\boxtimes F^{\su(2)}(0)\Big).
\end{multline*}
This implies that
\begin{align*}
 & (2\varpi_2,2\zeta_2+\tfrac{2}{n}\zeta_{2n}), && (2\varpi_2,-2\zeta_{2n})\\
 & (2\varpi_{2i},2\zeta_{2i}+\tfrac{2i}{n}\zeta_{2n}), && (2\varpi_{2i},2\zeta_{2i-2}-\tfrac{2(n-i+1)}{n}\zeta_{2n}) && (2\leq i\leq n-1),\\
 & (2\varpi_{2n},2\zeta_{2n}), && (2\varpi_{2n},2\zeta_{2n-2}-\tfrac{2}{n}\zeta_{2n})
\end{align*}
are contained in $\Lambda(\frakg,\frakh)$, and since $\fraka_G^\vee\times\fraka_H^\vee$ is $2n$-dimensional, they form a basis.

\subsubsection*{{\rm H3)} $(\frakg,\frakh)=(\so^*(2n+2),\so^*(2n)+\so(2))$}

The Satake diagrams of $\frakg$ and $\frakh$ are
\begin{align*}
\frakg:&\begin{tabular}{cc}\begin{xy}
\ar@{-} (0,0) *++!D{\alpha_1} *{\bullet}="A"; 
 (10,0)  *++!D{\alpha_2} *{\circ}="B"
\ar@{-} "B"; (20,0) *++!D{\alpha_3}  *{\bullet}="C"
\ar@{-} "C";  (30,0)
\ar@{.} (30,0); (40,0) 
\ar@{-} (40,0); (50,0) *++!D{\alpha_{n-1}\ \ \ \ \ \ } *{\circ}="D"
\ar@{-} "D"; (55,8.6)  *++!L{\alpha_n} *{\bullet}
\ar@{-} "D"; (55,-8.6)  *++!L{\alpha_{n+1}} *{\circ}
\end{xy}&\begin{xy}
\ar@{-} (0,0) *++!D{\alpha_1} *{\bullet}="A"; 
 (10,0)  *++!D{\alpha_2} *{\circ}="B"
\ar@{-} "B"; (20,0) *++!D{\alpha_3}  *{\bullet}="C"
\ar@{-} "C";  (30,0)
\ar@{.} (30,0); (40,0) 
\ar@{-} (40,0); (50,0) *++!D{\alpha_{n-1}\ \ \ \ \ \ } *{\bullet}="D"
\ar@{-} "D"; (55,8.6)  *++!L{\alpha_n} *{\circ}="E"
\ar@{-} "D"; (55,-8.6)  *++!L{\alpha_{n+1}} *{\circ}="F"
\ar@/^0.5pc/@{<->} "E"; "F"
\end{xy}\end{tabular}\\
\frakh:&\begin{tabular}{cc}\begin{xy}
\ar@{-} (0,0) *++!D{\beta_1} *{\bullet}="A"; 
 (10,0)  *++!D{\beta_2} *{\circ}="B"
\ar@{-} "B"; (20,0) *++!D{\beta_3}  *{\bullet}="C"
\ar@{-} "C";  (30,0)
\ar@{.} (30,0); (40,0) 
\ar@{-} (40,0); (50,0) *++!D{\beta_{n-2}\ \ \ \ \ \ } *{\bullet}="D"
\ar@{-} "D"; (55,8.6)  *++!L{\beta_{n-1}} *{\circ}="E"
\ar@{-} "D"; (55,-8.6)  *++!L{\beta_n} *{\circ}="F"
\ar@/^0.5pc/@{<->} "E"; "F"
\end{xy}&\begin{xy}
\ar@{-} (0,0) *++!D{\beta_1} *{\bullet}="A"; 
 (10,0)  *++!D{\beta_2} *{\circ}="B"
\ar@{-} "B"; (20,0) *++!D{\beta_3}  *{\bullet}="C"
\ar@{-} "C";  (30,0)
\ar@{.} (30,0); (40,0) 
\ar@{-} (40,0); (50,0) *++!D{\beta_{n-2}\ \ \ \ \ \ } *{\circ}="D"
\ar@{-} "D"; (55,8.6)  *++!L{\beta_{n-1}} *{\bullet}="E"
\ar@{-} "D"; (55,-8.6)  *++!L{\beta_n} *{\circ}="F"
\end{xy}\\($n$ odd)&($n$ even)\end{tabular}
\end{align*}

We realize the root system of $\frakg$ as $\{\pm e_i\pm e_j:1\leq i<j\leq n+1\}$ with simple roots $\alpha_i=e_i-e_{i+1}$ ($1\leq i\leq n$) and $\alpha_{n+1}=e_n+e_{n+1}$. Then we may choose the simple roots for $\frakh$ as $\beta_i=\alpha_{i+1}$ ($1\leq i\leq n$). The fundamental weight $\zeta_{n+1}=\varpi_1=e_1$ is trivial on $\so^*(2n)\subseteq\frakh$ and hence defines a non-trivial character of $\so(2)\subseteq\frakh$.

Consider the fundamental weight $\varpi_i=e_1+\cdots+e_i$, $i=1,\ldots,n-1$. By branching in stages, first from $\frakg_\CC=\so(2n+2,\CC)$ to $\so(2n+1,\CC)$, then from $\so(2n+1,\CC)$ to $\so(2n,\CC)=\so^*(2n)_\CC$, it follows that
$$ F^\frakg(\varpi_i)|_{\so^*(2n)} \simeq F^{\so^*(2n)}(\zeta_i)\oplus F^{\so^*(2n)}(\zeta_{i-1})^{\oplus2}\oplus F^{\so^*(2n)}(\zeta_{i-2}). $$
To find the action of the $\so(2)$-factor we consider the weights of $F^\frakg(\varpi_i)$ which are given by
$$ \{\pm e_{k_1}\pm\cdots\pm e_{k_m}:1\leq k_1<\ldots<k_m\leq 2n+2, i-m\in2\NN\}\cup\{0\}. $$
Among these, clearly $e_2+\cdots+e_{i+1}=\zeta_i$, $\pm e_1+e_2+\cdots+e_i=\pm\zeta_{n+1}+\zeta_{i-1}$ and $e_2+\cdots+e_{i-1}=\zeta_{i-2}$ are highest weights for $\frakh$ and hence
\begin{align*}
 F^\frakg(\varpi_i)|_\frakh \simeq{}& \Big(F^{\so^*(2n)}(\zeta_i)\boxtimes F^{\so(2)}(0)\Big)\oplus\Big(F^{\so^*(2n)}(\zeta_{i-1})\boxtimes F^{\so(2)}(\zeta_{n+1})\Big)\\
 & \oplus\Big(F^{\so^*(2n)}(\zeta_{i-1})\boxtimes F^{\so(2)}(-\zeta_{n+1})\Big)\oplus\Big(F^{\so^*(2n)}(\zeta_{i-2})\boxtimes F^{\so(2)}(0)\Big).
\end{align*}
Similarly, for the highest weight $\varpi_n+\varpi_{n+1}=e_1+\cdots+e_n$ one obtains
\begin{align*}
 F^\frakg(\varpi_n+\varpi_{n+1})|_\frakh \simeq{}& \Big(F^{\so^*(2n)}(2\zeta_{n-1})\boxtimes F^{\so(2)}(0)\Big)\oplus\Big(F^{\so^*(2n)}(2\zeta_n)\boxtimes F^{\so(2)}(0)\Big)\\
 & \oplus\Big(F^{\so^*(2n)}(\zeta_{n-1}+\zeta_n)\boxtimes\big(F^{\so(2)}(\zeta_{n+1})\oplus F^{\so(2)}(-\zeta_{n+1})\big)\Big)\\
 & \oplus\Big(F^{\so^*(2n)}(\zeta_{n-2})\boxtimes F^{\so(2)}(0)\Big),
\end{align*}
and for the highest weight $2\varpi_{n+1}=e_1+\cdots+e_{n+1}$, accordingly
\begin{align*}
 F^\frakg(2\varpi_{n+1})|_\frakh \simeq{}& \Big(F^{\so^*(2n)}(2\zeta_n)\boxtimes F^{\so(2)}(\zeta_{n+1})\Big)\oplus\Big(F^{\so^*(2n)}(2\zeta_{n-1})\boxtimes F^{\so(2)}(-\zeta_{n+1})\Big)\\
 & \oplus\Big(F^{\so^*(2n)}(\zeta_{n-1}+\zeta_n)\boxtimes F^{\so(2)}(0)\Big).
\end{align*}

Now, if $n=2s-1$ is odd, then
$$ (2\varpi_{2i},2\zeta_{2i}), (2\varpi_{2i},2\zeta_{2i-2}) \quad (1\leq i\leq s-1), \qquad (4\varpi_{2s},2(\zeta_{2s-2}+\zeta_{2s-1})) $$
are contained in $\Lambda(\frakg,\frakh)$ and span $\fraka_G^\vee\times\fraka_H^\vee$, which is of dimension $s+(s-1)=2s-1$. If $n=2s$ is even, then
$$ (2\varpi_{2i},2\zeta_{2i}), (2\varpi_{2i},2\zeta_{2i-2}) \quad (1\leq i\leq s-1), \qquad (2(\varpi_{2s}+\varpi_{2s+1}),4\zeta_{2s}), (2(\varpi_{2s}+\varpi_{2s+1}),2\zeta_{2s-2}) $$
are contained in $\Lambda(\frakg,\frakh)$ and span $\fraka_G^\vee\times\fraka_H^\vee$, which is of dimension $s+s=2s$.

\subsubsection*{{\rm H4)} $(\frakg,\frakh)=(\sp(p,q+1),\sp(p,q)+\sp(1))$}

Let $s=\min(p,q+1)$ and $t=\min(p,q)$; then the Satake diagrams of $\frakg$ and $\sp(p,q)$ are
$$ \frakg:\begin{xy}
\ar@{-} (0,0) *++!D{\alpha_1} *{\bullet}="A"; 
 (10,0)  *++!D{\alpha_2} *{\circ}="B"
\ar@{-} "B"; (20,0) *++!D{\alpha_3}  *{\bullet}="C"
\ar@{-} "C";  (30,0)
\ar@{.} (30,0); (40,0) 
\ar@{-} (40,0); (50,0) *++!D{\alpha_{2s}} *{\circ}="D"
\ar@{-} "D"; (60,0) *++!D{\alpha_{2s+1}} *{\bullet}="E"
\ar@{-} "E";  (70,0)
\ar@{.} (70,0); (80,0) 
\ar@{-} (80,0); (90,0) *++!D{\alpha_{p+q}\ } *{\bullet}="F"
\ar@{<=} "F"; (100,0) *++!D{\ \alpha_{p+q+1}} *{\bullet}="G"
\end{xy} $$

$$ \sp(p,q):\begin{xy}
\ar@{-} (0,0) *++!D{\beta_1} *{\bullet}="A"; 
 (10,0)  *++!D{\beta_2} *{\circ}="B"
\ar@{-} "B"; (20,0) *++!D{\beta_3}  *{\bullet}="C"
\ar@{-} "C";  (30,0)
\ar@{.} (30,0); (40,0) 
\ar@{-} (40,0); (50,0) *++!D{\beta_{2t}} *{\circ}="D"
\ar@{-} "D"; (60,0) *++!D{\beta_{2t+1}} *{\bullet}="E"
\ar@{-} "E";  (70,0)
\ar@{.} (70,0); (80,0) 
\ar@{-} (80,0); (90,0) *++!D{\beta_{p+q-1}\ } *{\bullet}="F"
\ar@{<=} "F"; (100,0) *++!D{\ \beta_{p+q}} *{\bullet}="G"
\end{xy} $$

We realize the root system of $\frakg$ as $\{\pm e_i\pm e_j:1\leq i<j\leq p+q+1\}\cup\{\pm2e_i:1\leq i\leq p+q+1\}$ and choose $\alpha_i=e_i-e_{i+1}$ ($1\leq i\leq p+q$) and $\alpha_{p+q+1}=2e_{p+q+1}$ as simple roots. We further choose $\beta_i=\alpha_i$ ($1\leq i\leq p+q-1$) and $\beta_{p+q}=2e_{p+q}$ as simple roots for $\sp(p,q)\subseteq\frakh$; then $\beta_{p+q+1}=\alpha_{p+q+1}$ is a simple root for $\sp(1)$ and $\zeta_{p+q+1}=\varpi_{p+q+1}=e_{p+q+1}$ the corresponding fundamental weight. A general irreducible representation of $\frakh$ takes the form $F^{\sp(p,q)}(\ell_1\zeta_1+\cdots+\ell_{p+q}\zeta_{p+q})\boxtimes F^{\sp(1)}(\ell_{p+q+1}\zeta_{p+q+1})$ with $\ell_1,\ldots,\ell_{p+q+1}\in\NN$.

We consider the fundamental weight $\varpi_i=e_1+\cdots+e_i$. Then by \cite[Theorem 3.3]{WY09} we have
\begin{multline*}
 F^\frakg(\varpi_i)|_\frakh \simeq \Big(F^{\sp(p,q)}(\zeta_i)\boxtimes F^{\sp(1)}(0)\Big) \oplus \Big(F^{\sp(p,q)}(\zeta_{i-1})\boxtimes F^{\sp(1)}(\zeta_{p+q+1})\Big)\\
 \oplus \Big(F^{\sp(p,q)}(\zeta_{i-2})\boxtimes F^{\sp(1)}(0)\Big)
\end{multline*}
for $i=1,\ldots,p+q$, and
$$ F^\frakg(\varpi_{p+q+1})|_\frakh \simeq \Big(F^{\sp(p,q)}(\zeta_{p+q})\boxtimes F^{\sp(1)}(\zeta_{p+q+1})\Big)\\
 \oplus \Big(F^{\sp(p,q)}(\zeta_{p+q-1})\boxtimes F^{\sp(1)}(0)\Big). $$

Hence, the following pairs in $\Lambda(\frakg,\frakh)$ span $\fraka_G^\vee\times\fraka_H^\vee$:
\begin{itemize}
\item ($p\leq q$) Then $s=t=p$, so that $(2\varpi_{2i},2\zeta_{2i}),(2\varpi_{2i},2\zeta_{2i-2})$ ($1\leq i\leq p$) are contained in $\Lambda(\frakg,\frakh)$ and span $\fraka_G^\vee\times\fraka_H^\vee$.
\item ($p\geq q+1$) Then $s=q+1$ and $t=q$, so that $(2\varpi_{2i},2\zeta_{2i}),(2\varpi_{2i},2\zeta_{2i-2})$ ($1\leq i\leq q$) and $(2\varpi_{2q+2},2\zeta_{2q})$ are contained in $\Lambda(\frakg,\frakh)$ and span $\fraka_G^\vee\times\fraka_H^\vee$.
\end{itemize}

\subsubsection*{{\rm H5)} $(\frakg,\frakh)=(\frake_{6(-26)},\so(9,1)+\RR)$}

The Satake diagrams of $\frakg$ and $\so(9,1)\subseteq\frakh$ are
$$ \frakg:\begin{xy}
\ar@{-} (0,-5) *++!U{\alpha_1} *{\circ}="A";
  (10,-5) *++!U{\alpha_3}  *{\bullet}="B"
\ar@{-} "B"; (20,-5) *++!U{\alpha_4} *{\bullet}="C"
\ar@{-} (20,-5); (30,-5) *++!U{\alpha_5}  *{\bullet}="D"
\ar@{-} (30,-5); (40,-5) *++!U{\alpha_6}  *{\circ}="E"
\ar@{-} "C"; (20,5) *++!D{\alpha_2} *{\bullet}="C"
\end{xy} $$

$$ \so(9,1):\begin{xy}
\ar@{-} (0,0) *++!D{\beta_1} *{\circ}="A"; 
 (10,0)  *++!D{\beta_2} *{\bullet}="B"
\ar@{-} "B";  (20,0)
\ar@{-} (10,0); (20,0) *++!D{\beta_3\ \ \ } *{\bullet}="F"
\ar@{-} "F"; (25,8.6)  *++!L{\beta_4} *{\bullet}
\ar@{-} "F"; (25,-8.6)  *++!L{\beta_5} *{\bullet}
\end{xy} $$

We choose the simple roots for $\frakg$ such that $\alpha_1=\beta_1$, $\alpha_2=\beta_4$, $\alpha_3=\beta_2$, $\alpha_4=\beta_3$, $\alpha_5=\beta_5$, then $\zeta_6:=\varpi_6$ defines a non-trivial character of $\RR\subseteq\frakh$. A general irreducible representation of $\frakh$ is of the form $F^{\so(9,1)}(\ell_1\zeta_1+\cdots+\ell_5\zeta_5)\boxtimes F^\RR(\ell_6\zeta_6)$ for $\ell_1,\ldots,\ell_5\in\NN$ and $\ell_6\in\CC$.

By writing $\varpi_i$ as a linear combination of the simple roots $\alpha_j$ (see e.g. \cite[Appendix C.2]{Kna02}, and writing $\zeta_i$ as a linear combination of the simple roots $\beta_j$, we find that
$$ \zeta_1=\varpi_1-\tfrac{1}{2}\varpi_6, \quad \zeta_2=\varpi_3-\varpi_6, \quad \zeta_3=\varpi_4-\tfrac{3}{2}\varpi_6, \quad \zeta_4=\varpi_2-\tfrac{3}{4}\varpi_6, \quad \zeta_5=\varpi_5-\tfrac{5}{4}\varpi_6. $$
The spherical representations of $\frakg$ have highest weights $2k_1\varpi_1+2k_2\varpi_6$, $k_1,k_2\in\NN$, and the spherical representations of $\frakh$ have highest weights $2\ell_1\zeta_1+\ell_2\zeta_6$, $\ell_1\in\NN$ and $\ell_2\in\CC$.

By \cite{McKP81} we have
$$ F^\frakg(\varpi_1)|_{\so(9,1)} \simeq F^{\so(9,1)}(\zeta_1)\oplus F^{\so(9,1)}(\zeta_4)\oplus F^{\so(9,1)}(0). $$
To determine the action of the center of $\frakh$ on each summand we employ the weight space decomposition (using e.g. LiE) and find that the weights of $F^\frakg(\varpi_1)$ which are highest weights for $\so(9,1)$ are
$$ \varpi_1=\zeta_1+\tfrac{1}{2}\zeta_6, \qquad \varpi_2-\varpi_6=\zeta_4-\tfrac{1}{4}\zeta_6, \qquad \mbox{and} \qquad -\varpi_6=-\zeta_6, $$
so that
$$ F^\frakg(\varpi_1)|_\frakh \simeq \Big(F^{\so(9,1)}(\zeta_1)\boxtimes F^\RR(\tfrac{1}{2}\zeta_6)\Big)\oplus\Big(F^{\so(9,1)}(\zeta_4)\boxtimes F^\RR(-\tfrac{1}{4}\zeta_6)\Big)\oplus\Big(F^{\so(9,1)}(0)\boxtimes F^\RR(-\zeta_6)\Big). $$
Taking contragredient representations further gives
$$ F^\frakg(\varpi_6)|_\frakh \simeq \Big(F^{\so(9,1)}(\zeta_1)\boxtimes F^\RR(-\tfrac{1}{2}\zeta_6)\Big)\oplus\Big(F^{\so(9,1)}(\zeta_5)\boxtimes F^\RR(\tfrac{1}{4}\zeta_6)\Big)\oplus\Big(F^{\so(9,1)}(0)\boxtimes F^\RR(\zeta_6)\Big). $$
Hence,
$$ (2\varpi_1,2\zeta_1+\zeta_6),(2\varpi_6,2\zeta_1-\zeta_6),(2\varpi_1,-2\zeta_6),(2\varpi_6,2\zeta_6)\in\Lambda(\frakg,\frakh) $$
and they clearly generate $\fraka_G^\vee\times\fraka_H^\vee$, which is of dimension $4$.

\section{Lower multiplicity bounds -- Construction of symmetry breaking operators}\label{sec:ConstructionOfSBOs}

We use the results of Sections~\ref{sec:StructureTheory} and \ref{sec:FiniteDimBranching} to construct for every strongly spherical reductive pair $(G,H)$ intertwining operators between spherical principal series representations of $G$ and $H$ in terms of their distribution kernels (see Theorem~\ref{thm:MeromorphicContinuation}).

\subsection{Principal series representations and symmetry breaking operators}\label{sec:PrincipalSeriesAndSBOs}

For $(\xi,V_\xi)\in\widehat{M}_G$ and $\lambda\in\fraka_{G,\CC}^\vee$ we define the principal series representation
$$ \pi_{\xi,\lambda} = \Ind_{P_G}^G(\xi\otimes e^\lambda\otimes\1), $$
realized as the left-regular representation on the space $C^\infty(G/P,\calV_{\xi,\lambda})$ of smooth sections of the vector bundle $\calV_{\xi,\lambda}=G\times_{P_G}V_{\xi,\lambda}\to G/P_G$ associated to the representation $V_{\xi,\lambda}=\xi\otimes e^{\lambda+\rho_{\frakn_G}}\otimes\1$. Here, $\rho_{\frakn_G}=\frac{1}{2}\tr\ad_{\frakn_G}\in\fraka_G^\vee$, the half sum of all positive roots. Denote by $\calV_{\xi,\lambda}^*=\calV_{\xi^\vee,-\lambda}$ the dual bundle of $\calV_{\xi,\lambda}$.

Similarly, we define for $(\eta,W_\eta)\in\widehat{M}_H$ and $\nu\in\fraka_{H,\CC}^\vee$ principal series representations of $H$ by
$$ \tau_{\eta,\nu} = \Ind_{P_H}^H(\eta\otimes e^\nu\otimes\1) $$
and write $W_{\eta,\nu}=\eta\otimes e^{\nu+\rho_{\frakn_H}}\otimes\1$.

Consider the space
$$ \Hom_H(\pi_{\xi,\lambda}|_H,\tau_{\eta,\nu}) $$
of intertwining operators between principal series of $G$ and $H$, also referred to as \textit{symmetry breaking operators} by Kobayashi~\cite{Kob15}. By the Schwartz Kernel Theorem, symmetry breaking operators can be studied in terms of their distribution kernels.

\begin{proposition}[{\cite[Proposition 3.2]{KS15}}]\label{prop:IsoSBOSDistributionSections}
The natural map
$$ \Hom_H(\pi_{\xi,\lambda}|_H,\tau_{\eta,\nu}) \to \left(\calD'(G/P_G,\calV_{\xi,\lambda}^*)\otimes W_{\eta,\nu}\right)^{P_H}, \quad A\mapsto K^A, $$
which is characterized by $A\varphi(h)=\langle K^A,\varphi(h\blank)\rangle$, is an isomorphism.
\end{proposition}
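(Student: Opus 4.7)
The plan is to combine Frobenius reciprocity with the Schwartz Kernel Theorem. Since $\tau_{\eta,\nu}=\Ind_{P_H}^H(W_{\eta,\nu})$ is a smooth normalized parabolic induction, the smooth Frobenius reciprocity identifies
$$ \Hom_H(\pi_{\xi,\lambda}|_H,\tau_{\eta,\nu}) \simeq \Hom_{P_H}\bigl(\pi_{\xi,\lambda}|_{P_H},W_{\eta,\nu}\bigr), $$
the isomorphism being $A\mapsto \mathrm{ev}_e\circ A$, where $\mathrm{ev}_e\colon C^\infty(H/P_H,\calW_{\eta,\nu})\to W_{\eta,\nu}$ is evaluation at the identity coset. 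The inverse sends $T\colon C^\infty(G/P_G,\calV_{\xi,\lambda})\to W_{\eta,\nu}$ to the operator $A$ defined by $(A\varphi)(h)=T(\pi_{\xi,\lambda}(h^{-1})\varphi)$; this is well-defined and $H$-intertwining precisely because $T$ is $P_H$-equivariant.

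Next I would invoke the Schwartz Kernel Theorem on the compact manifold $G/P_G$. Continuous $\CC$-linear functionals on $C^\infty(G/P_G,\calV_{\xi,\lambda})$ are identified with distribution sections of the dual bundle $\calV_{\xi,\lambda}^{*}=\calV_{\xi^\vee,-\lambda}$, and for $W_{\eta,\nu}$-valued functionals this gives an isomorphism
$$ \Hom_\CC\bigl(C^\infty(G/P_G,\calV_{\xi,\lambda}),W_{\eta,\nu}\bigr) \simeq \calD'(G/P_G,\calV_{\xi,\lambda}^{*})\otimes W_{\eta,\nu}, $$
under which the linear functional $T$ corresponds to the unique kernel $K^A\in\calD'(G/P_G,\calV_{\xi,\lambda}^{*})\otimes W_{\eta,\nu}$ satisfying $T(\varphi)=\langle K^A,\varphi\rangle$ (compactness of $G/P_G$ removes any support issue). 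Combining with the Frobenius step yields the formula $A\varphi(h)=\langle K^A,\varphi(h\,\cdot\,)\rangle$ claimed in the statement.

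It then remains to check that the $P_H$-equivariance of $T$ matches $P_H$-invariance of $K^A$. Unravelling the definitions, $P_H$ acts on $\calD'(G/P_G,\calV_{\xi,\lambda}^{*})\otimes W_{\eta,\nu}$ as the tensor product of left translation on $G/P_G$ (lifted to distribution sections of $\calV_{\xi,\lambda}^{*}$ via the natural $G$-structure on the bundle) and the $P_H$-representation $W_{\eta,\nu}=\eta\otimes e^{\nu+\rho_{\frakn_H}}\otimes\1$ on the second factor. The condition $T\circ\pi_{\xi,\lambda}(p)=(W_{\eta,\nu}(p))\circ T$ for $p\in P_H$ translates directly into $p\cdot K^A=K^A$ under this combined action, which is the meaning of the superscript $P_H$.

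The main technical point is bookkeeping of the normalizing shifts by $\rho_{\frakn_G}$ and $\rho_{\frakn_H}$: the duality pairing between $\calV_{\xi,\lambda}$ and $\calV_{\xi,\lambda}^{*}$ together with the normalized induction conventions must be arranged so that both the $G$-action on distribution sections and the equivariance conditions come out with the correct characters. Once this is verified, both the forward map $A\mapsto K^A$ and its inverse $K\mapsto A$ (defined by $A\varphi(h)=\langle K,\varphi(h\,\cdot\,)\rangle$) are continuous, mutually inverse, and the proposition follows.
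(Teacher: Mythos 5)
Your argument is correct, and it is worth noting that the paper itself gives no proof here: it simply cites \cite[Proposition 3.2]{KS15}. The proof in that reference runs the Schwartz Kernel Theorem on the product $G/P_G\times H/P_H$: a continuous operator $C^\infty(G/P_G,\calV_{\xi,\lambda})\to C^\infty(H/P_H,\calW_{\eta,\nu})$ has a distribution kernel on the product space, $H$-intertwining makes that kernel invariant under the diagonal $H$-action, and transitivity of $H$ on $H/P_H$ then lets one restrict to the fibre over $eP_H$, producing exactly a $P_H$-invariant element of $\calD'(G/P_G,\calV_{\xi,\lambda}^*)\otimes W_{\eta,\nu}$. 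You instead apply smooth Frobenius reciprocity first, reducing to a $P_H$-equivariant continuous functional $C^\infty(G/P_G,\calV_{\xi,\lambda})\to W_{\eta,\nu}$, and then you only need topological duality (since $W_{\eta,\nu}$ is finite-dimensional this is really the identification of the strong dual of the section space with distribution sections of the dual bundle, not the kernel theorem proper — your label is a slight overstatement). What your route buys is that the two-variable kernel and the diagonal-invariance/restriction step disappear; what it requires instead is the smooth Frobenius reciprocity isomorphism, whose inverse direction forces you to check that $h\mapsto\langle K,\varphi(h\,\cdot\,)\rangle$ is smooth and that the resulting $A$ is continuous — this follows from smoothness of the $H$-action on the Fr\'echet space $C^\infty(G/P_G,\calV_{\xi,\lambda})$, and you should say so explicitly. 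Two further conventions you implicitly use and should flag: elements of $\Hom_H(\pi_{\xi,\lambda}|_H,\tau_{\eta,\nu})$ are by definition continuous (otherwise the Frobenius step needs an automatic-continuity input), and the normalization $\calV_{\xi,\lambda}^*=\calV_{\xi^\vee,-\lambda}$ already carries the $2\rho_{\frakn_G}$-density twist, which is exactly why the pairing $\langle K,\varphi\rangle$ is well defined without a separate density factor. With those points made explicit, your proof is complete and equivalent in content to the cited one.
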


As in \cite{KS15} we use generalized functions rather than distributions, so that $\calD'(G/P_G,\calV_{\xi,\lambda}^*)$ can be identified with the dual space of $C_c^\infty(G/P_G,\calV_{\xi,\lambda})$ and $L^1(G/P_G,\calV_{\xi,\lambda}^*)\hookrightarrow\calD'(G/P_G,\calV_{\xi,\lambda}^*)$.

To be able to apply the theory of Bernstein--Sato identities, we first translate line bundle valued distributions on $G/P_G$ to scalar-valued distributions on $G$. Consider the surjective linear map
$$ \flat:C_c^\infty(G)\otimes V_\xi \to C^\infty(G/P_G,\calV_{\xi,\lambda}), \quad \flat\varphi(g) = \int_{P_G}(\xi\otimes e^{\lambda+\rho_{\frakn_G}}\otimes\1)(p)\varphi(gp)\,dp, $$
where $dp$ denotes a left-invariant measure on $P_G$. Then its transpose $\flat^\top:\calD'(G/P_G,\calV_{\xi,\lambda}^*)\to\calD'(G)\otimes V_\xi^\vee$ is injective and embeds $(\calD'(G/P_G,\calV_{\xi,\lambda}^*)\otimes W_{\eta,\nu})^{P_H}$ into
\begin{multline*}
 \calD'(G)_{(\xi,\lambda),(\eta,\nu)} = \{u\in\calD'(G)\otimes\Hom_\CC(V_\xi,W_\eta):\\
 u(p_Hgp_G)=(\eta\otimes e^{\nu+\rho_{\frakn_H}}\otimes\1)(p_H)\circ u(g)\circ(\xi\otimes e^{\lambda-\rho_{\frakn_G}}\otimes\1)(p_G)\}.
\end{multline*}
We show that this map is actually an isomorphism:

\begin{lemma}\label{lem:IsoSBOsDistributionsOnG}
The natural map
$$ \left(\calD'(G/P_G,\calV_{\xi,\lambda}^*)\otimes W_{\eta,\nu}\right)^{P_H} \to \calD'(G)_{(\xi,\lambda),(\eta,\nu)} $$
is an isomorphism.
\end{lemma}

\begin{proof}
It suffices to show that the map is surjective, so let $u\in\calD'(G)_{(\xi,\lambda),(\eta,\nu)}$. Write $g=kan\in KA_GN_G=G$ for some maximal compact subgroup $K\subseteq G$ by the Iwasawa decomposition; then the distribution $a^{-(\lambda-\rho_G)}u(g)$ is right $A_GN_G$-invariant and therefore of the form $v\otimes1\otimes1$ according to the decomposition $G\simeq K\times A_G\times N_G$ with $v\in\calD'(K)\otimes\Hom_\CC(V_\xi,W_\eta)=(\calD'(K)\otimes V_\xi^\vee)\otimes W_\eta$. Define $w\in\calD'(G/P_G,\calV^*_{\xi,\lambda})\otimes W_{\eta,\nu}$ by
$$ \langle w,\varphi\rangle = \langle v,\varphi|_K\rangle \qquad \forall\,\varphi\in C^\infty(G/P_G,\calV_{\xi,\lambda}); $$
then it is easy to show that $w$ is $P_H$-invariant and maps to $u$ via the map $\flat^\top$.
\end{proof}

Now let us first consider the case where both $\xi\in\widehat{M}_G$ and $\eta\in\widehat{M}_H$ are the trivial representation. We use the notation
$$ \pi_\lambda = \pi_{\1,\lambda}, \qquad \tau_\nu = \tau_{\1,\nu} \qquad \mbox{and} \qquad \calD'(G)_{\lambda,\nu}=\calD'(G)_{(\1,\lambda),(\1,\nu)}. $$
For the statement of the main result of this section we denote by $\fraka_{G,+}^\vee$ and $\fraka_{H,+}^\vee$ the positive Weyl chambers corresponding to $\Sigma^+(\frakg,\fraka_G)$ and $\Sigma^+(\frakh,\fraka_H)$.

\begin{theorem}\label{thm:MeromorphicContinuation}
Assume that $(G,H)$ is a strongly spherical reductive pair such that $(\frakg,\frakh)$ is non-trivial and indecomposable. Then for every open double coset $\Omega=P_HgP_G\in P_H\backslash G/P_G$ there exists a family of distributions $K_{\lambda,\nu}\in\calD'(G)$, unique up to scalar multiples and depending holomorphically on $(\lambda,\nu)\in\fraka_{G,\CC}^\vee\times\fraka_{H,\CC}^\vee$, such that
\begin{enumerate}
\item $K_{\lambda,\nu}\in\calD'(G)_{\lambda,\nu}$ and $\supp K_{\lambda,\nu}\subseteq\overline{\Omega}$ for all $(\lambda,\nu)\in\fraka_{G,\CC}^\vee\times\fraka_{H,\CC}^\vee$,
\item $K_{\lambda,\nu}\in L^1_{\rm loc}(G)$ and $\supp K_{\lambda,\nu}=\overline{\Omega}$ for $\Re(\lambda-\rho_{\frakn_G})\in\fraka_{G,+}^\vee$ and $\Re(\nu+\rho_{\frakn_H})\in-\fraka_{H,+}^\vee$.
\end{enumerate}
\end{theorem}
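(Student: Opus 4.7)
My strategy is to construct $K_{\lambda,\nu}$ as a meromorphically continued product of complex powers of the equivariant matrix coefficients built in Section~\ref{sec:FiniteDimBranching}, and then to normalise to obtain a holomorphic family.

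\textbf{Setup.} By Theorem~\ref{thm:EnoughWeights} together with the linearity of $\nu\mapsto\nu^\vee$ (Lemma~\ref{lem:DualCartanHelgason}), I may choose finitely many pairs $(\lambda_i,\nu_i)\in\Lambda(\frakg,\frakh)$, $i=1,\ldots,r$, such that $\{(\lambda_i,\nu_i^\vee)\}$ spans $\fraka_G^\vee\times\fraka_H^\vee$. Proposition~\ref{prop:MatrixCoefficients} then produces non-zero non-negative real-analytic functions $F_i\colon G\to\RR_{\geq0}$ with $F_i(m'a'n'gman)=a^{k_i\lambda_i}(a')^{-k_i\nu_i^\vee}F_i(g)$; let $f_i:=F_i|_K$. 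For every $(\lambda,\nu)$ there is a unique $\CC$-linear choice of exponents $s_i(\lambda,\nu)$, determined by
\[
 \sum_{i}s_i(\lambda,\nu)\,k_i\lambda_i=\rho_{\frakn_G}-\lambda,\qquad \sum_{i}s_i(\lambda,\nu)\,k_i\nu_i^\vee=-(\nu+\rho_{\frakn_H}),
\]
so that, wherever defined, $\prod_i f_i^{s_i(\lambda,\nu)}$ transforms under the $\star$-action of $P_H$ with precisely the character required by Lemma~\ref{lem:IsoSBOsDistributionsOnK}; i.e.\ it belongs to $\calD'(K)_{\lambda,\nu}$.

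\textbf{Initial region.} On the open $P_H$-orbit $\Omega\subseteq K$ each $f_i$ is strictly positive, since a non-negative real-analytic function that vanishes on an open set is identically zero. Using Lemma~\ref{lem:PHOrbitsInOpenBruhatCell} to parametrise $\Omega$ (for a representative in the open Bruhat cell) by $\frakn^{-\sigma}$ and tracking the polynomial behaviour of each $f_i$ in this chart, one verifies that the extension by zero
\[
 K_{\lambda,\nu}:=\1_{\Omega}\cdot\prod_i f_i^{s_i(\lambda,\nu)}
\]
is an $L^1$-density on $K$ for $(\lambda,\nu)$ in a non-empty open cone, and that by inverting the linear system this cone contains the region in (2). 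On this cone $(\lambda,\nu)\mapsto K_{\lambda,\nu}$ is manifestly holomorphic with values in $\calD'(K)_{\lambda,\nu}$ and has support equal to $\overline\Omega$, which establishes (2).

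\textbf{Holomorphic extension and main obstacle.} For the global holomorphic extension I appeal to the multi-parameter version of Bernstein's meromorphic continuation theorem: the iterated Bernstein functional equations for the $f_i$ produce a meromorphic continuation of $\prod_i f_i^{s_i}$ to all $s\in\CC^r$, with poles along a locally finite union of affine hyperplanes. Composing with the linear map $s(\lambda,\nu)$ and dividing by an entire scalar factor (a finite product of reciprocals of shifted Gamma functions inverting the b-function denominators) produces the desired holomorphic family. The equivariance survives because the normaliser is scalar in $(\lambda,\nu)$, and the support condition (1) survives by analytic continuation: for $\varphi\in C_c^\infty(K\setminus\overline\Omega)$ the pairing $\langle K_{\lambda,\nu},\varphi\rangle$ vanishes on the initial cone and hence identically. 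The main technical obstacle is precisely this step: verifying that the b-function hypersurfaces are cut out by locally finitely many affine functionals in $(\lambda,\nu)$, and that one can choose a single entire normaliser that simultaneously cancels every pole without destroying either the equivariance or the support property. Uniqueness up to a scalar holomorphic factor in $(\lambda,\nu)$ will then follow from the generic one-dimensionality of $P_H$-equivariant distributions supported on the closure of a single open $(P_H\times P_G)$-orbit, which is proved independently by the Bruhat-theoretic arguments of Section~\ref{sec:InvariantDistributions}.
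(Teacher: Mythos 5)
Your setup and overall strategy coincide with the paper's (products of complex powers of the equivariant matrix coefficients from Proposition~\ref{prop:MatrixCoefficients} and Theorem~\ref{thm:EnoughWeights}, a multi-variable Bernstein--Sato continuation, a Gamma-type normalization, and the linear change of variables back to $(\lambda,\nu)$), but there is a genuine gap at the decisive step. The Bernstein--Sato functional equation \eqref{eq:BernsteinSatoIdentity} continues the \emph{global} product $K_{s}=\prod_i f_i^{s_i}$, whereas the family you must continue is the cutoff $\chi_\Omega K_{s}$. These are different problems: the differential operator $D_{s}$ does not commute with multiplication by $\chi_\Omega$, so the identity $D_{s}[\chi_\Omega K_{s}]=b(s)\,\chi_\Omega K_{s-1}$ is not automatic --- a priori distributional boundary terms along $\partial\Omega$ appear unless $K_{s}$ vanishes to sufficiently high order there. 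Your proposed rescue, ``the support condition survives by analytic continuation,'' is circular: it presupposes that the continued family agrees with the cutoff family on the initial cone, which is exactly what has to be proven. If instead you really continue the uncut product, the support claim fails outright, since each $f_i$ is strictly positive on \emph{every} open double coset and there may be several of them --- the whole point of Theorem~\ref{intro:MainThm} is that $\#(P_H\backslash G/P_G)_{\rm open}$ can exceed one, and Corollary~\ref{cor:LowerBoundsMultiplicities} needs one family per open coset with distinct supports.

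The paper closes precisely this gap with two ingredients absent from your proposal: Lemma~\ref{lem:VanishingOnNonOpenOrbits}, which shows that every double coset in $\partial\Omega$ lies in the zero set of some $F_j$ --- this is where the structural Theorem~\ref{thm:AHProjectionStabilizers} on non-open cosets (existence of $Z=Z_M+Z_A+Z_N\in\frakp_H\cap\Ad(g)\frakp_G$ with $Z_A\neq0$, combined with the fact that the $(\lambda_j,\nu_j^\vee)$ form a basis) enters --- and the elementary regularity Lemma~\ref{lem:RegularityStatement}, giving $\chi_\Omega K_{s}\in C^k(K)$ for $\Re(s_i)>k$. Together these make both sides of \eqref{eq:BernsteinSatoIdentity2} continuous for $\Re(s_i)$ large, so the functional equation, valid on $\Omega$ and trivially off $\overline{\Omega}$, holds everywhere, and the downward induction then continues the cutoff family itself, with support in the closure of $\Omega$ and with the equivariance propagating by analytic continuation. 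By contrast, the step you single out as the main obstacle --- local finiteness and affineness of the b-function hyperplanes and the choice of an entire normalizer --- is the standard part, supplied by Sabbah's multi-variable Bernstein--Sato theorem, which the paper simply cites. Your uniqueness argument via Section~\ref{sec:InvariantDistributions} is acceptable but secondary; the essential missing idea is the boundary-vanishing mechanism that legitimizes applying the functional equation to the orbit-restricted kernel.
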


By holomorphic dependence on the parameters $(\lambda,\nu)$ we mean that for every $\varphi\in C^\infty(G)$ the map $(\lambda,\nu)\mapsto \langle K_{\lambda,\nu},\varphi\rangle$ is a holomorphic function on $\fraka_{G,\CC}^\vee\times\fraka_{H,\CC}^\vee$.

The proof of Theorem~\ref{thm:MeromorphicContinuation} is carried out in Section~\ref{sec:BernsteinSatoIdentities}.

\subsection{Lower bounds for multiplicities}

Before we come to the proof, let us observe that Theorem~\ref{thm:MeromorphicContinuation} implies lower bounds for multiplicities of intertwining operators. For this denote by $(P_H\backslash G/P_G)_{\rm open}\subseteq P_H\backslash G/P_G$ the collection of open double cosets and by $\#(P_H\backslash G/P_G)_{\rm open}$ its cardinality (which is finite by Proposition~\ref{prop:CharacterizationStronglySpherical}).

\begin{corollary}\label{cor:LowerBoundsMultiplicities}
Assume that $(G,H)$ is a strongly spherical reductive pair such that $(\frakg,\frakh)$ is non-trivial and indecomposable. Then for all $(\lambda,\nu)\in\fraka_{G,\CC}^\vee\times\fraka_{H,\CC}^\vee$ we have
$$ \dim\Hom_H(\pi_\lambda|_H,\tau_\nu) \geq \#(P_H\backslash G/P_G)_{\rm open}. $$
\end{corollary}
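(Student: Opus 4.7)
The plan is to combine the holomorphic families supplied by Theorem~\ref{thm:MeromorphicContinuation} with a regularization argument. Enumerate the open double cosets as $\Omega_1,\ldots,\Omega_N$ and let $K^{(i)}_{\lambda,\nu}\in\calD'(K)_{\lambda,\nu}$ be the holomorphic family attached to $\Omega_i$. By Lemma~\ref{lem:IsoSBOsDistributionsOnK} and Proposition~\ref{prop:IsoSBOSDistributionSections}, each $K^{(i)}$ corresponds to a holomorphic family of symmetry breaking operators $A^{(i)}_{\lambda,\nu}\in\Hom_H(\pi_\lambda|_H,\tau_\nu)$, and the task is to produce $N$ linearly independent symmetry breaking operators at each given $(\lambda_0,\nu_0)$.

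The first step is to verify that the families $K^{(1)},\ldots,K^{(N)}$ are linearly independent over the field of meromorphic functions on $\fraka_{G,\CC}^\vee\times\fraka_{H,\CC}^\vee$. After clearing denominators, any putative relation reads $\sum_if_i(\lambda,\nu)K^{(i)}_{\lambda,\nu}=0$ with $f_i$ holomorphic and not all identically zero. Testing this at a parameter at which each $K^{(i)}_{\lambda,\nu}$ is a non-zero $L^1$-function with support exactly $\overline{\Omega_i}$, and restricting to the pairwise disjoint open cells $\Omega_i\subseteq K$, forces each $f_i$ to vanish at such points and hence identically. In particular, for such generic parameters the $K^{(i)}_{\lambda,\nu}$ are already $\CC$-linearly independent, so the desired bound holds on a dense open subset of $\fraka_{G,\CC}^\vee\times\fraka_{H,\CC}^\vee$.

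For arbitrary $(\lambda_0,\nu_0)$ I would reduce to a one-variable problem by restricting to a generic complex affine line $\ell\colon\CC\to\fraka_{G,\CC}^\vee\times\fraka_{H,\CC}^\vee$ with $\ell(0)=(\lambda_0,\nu_0)$ along which the germs $\widetilde{K}^{(i)}(t):=K^{(i)}_{\ell(t)}$ remain linearly independent over $\CC(t)$, and then normalize iteratively. As long as some $\widetilde{K}^{(i)}(0)$ vanishes, replace $\widetilde{K}^{(i)}$ by $\widetilde{K}^{(i)}/t$, which is still a holomorphic $\calD'(K)$-valued germ; as long as all $\widetilde{K}^{(i)}(0)$ are non-zero but satisfy a non-trivial $\CC$-linear relation $\sum_ic_i\widetilde{K}^{(i)}(0)=0$, replace some $\widetilde{K}^{(j)}$ with $c_j\neq0$ by $\sum_ic_i\widetilde{K}^{(i)}$, producing a germ that now vanishes at $t=0$. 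Each such step strictly decreases the non-negative integer $\sum_i\operatorname{ord}_0(\widetilde{K}^{(i)})$ while preserving $\CC(t)$-linear independence, so the procedure terminates with $N$ holomorphic germs whose values at $t=0$ are $\CC$-linearly independent in $\calD'(K)$.

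Each final germ is a $\CC(t)$-linear combination of the original $\widetilde{K}^{(i)}$ that happens to be holomorphic at $t=0$, and since the defining condition of $\calD'(K)_{\ell(t)}$ is linear and depends holomorphically on $t$, each such value at $t=0$ automatically lies in $\calD'(K)_{\lambda_0,\nu_0}$. Proposition~\ref{prop:IsoSBOSDistributionSections} then yields $N$ linearly independent elements of $\Hom_H(\pi_{\lambda_0}|_H,\tau_{\nu_0})$, as required. The main obstacle is orchestrating the regularization so that linear independence is preserved throughout; the one-variable reduction turns this into a Gaussian-elimination argument over the discrete valuation ring $\CC\{t\}$, and the equivariance issue is dealt with by holomorphic continuity in $t$ of the defining relations.
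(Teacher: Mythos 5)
Your overall route is the same as the paper's: take the holomorphic families from Theorem~\ref{thm:MeromorphicContinuation}, note that their disjoint supports make them generically linearly independent, restrict to a one-parameter family through the given $(\lambda_0,\nu_0)$, and regularize at $t=0$ to extract $\#(P_H\backslash G/P_G)_{\rm open}$ independent kernels. The genuine gap is in your termination argument for the regularization, which is exactly the non-trivial point. You claim that each step strictly decreases $\sum_i\operatorname{ord}_0(\widetilde K^{(i)})$, but this is false for the recombination step: there you replace a germ $\widetilde K^{(j)}$ with $\operatorname{ord}_0=0$ by $\sum_i c_i\widetilde K^{(i)}$, which vanishes at $t=0$ and hence has order at least $1$, so the sum \emph{increases}. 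After the subsequent divisions by $t$ the sum returns to its previous value, so over a full round of ``recombine, then divide'' your proposed monovariant is constant, and nothing in your argument rules out infinitely many such rounds. This is precisely the scenario that the paper's Lemma~\ref{lem:RenormHol} has to exclude: there, non-termination is ruled out by assembling the relation coefficients produced at each stage into convergent power series $\lambda_i(z)$ and deducing a linear dependence $\sum_i\lambda_i(z)f_i(z)=f_{p+1}(z)$ valid for all $z$, contradicting generic independence. Without an argument of this kind (or some other valid potential function), your Gaussian elimination over $\CC\{t\}$ is not known to stop, and the proof is incomplete.

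The gap is repairable within your framework. One fix is simply to invoke (or reprove) Lemma~\ref{lem:RenormHol} as in the paper. Alternatively, you can make your valuation-theoretic idea work by choosing a better potential: since the $\widetilde K^{(i)}(t)$ are $\CC$-independent for some $t$, pick continuous functionals $\phi_1,\ldots,\phi_N$ on $\calD'(K)$ with $\delta(t):=\det\bigl(\phi_j(\widetilde K^{(i)}(t))\bigr)\not\equiv0$; then $\operatorname{ord}_0\delta$ is a non-negative integer which drops by exactly $1$ at each division by $t$ and is unchanged under invertible recombinations, so at most $\operatorname{ord}_0\delta$ divisions can occur and the process terminates. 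Two smaller points: you should justify that a line through the \emph{arbitrary} point $(\lambda_0,\nu_0)$ can be chosen so that independence survives restriction -- the paper does this concretely by taking the direction $(\lambda_+,-\nu_+)$ with $\lambda_+\in\fraka_{G,+}^\vee$, $\nu_+\in\fraka_{H,+}^\vee$, so that for $\Re(z)\gg0$ the parameters land in the tube where Theorem~\ref{thm:MeromorphicContinuation}~(2) gives the support statement; and your observation that the limiting kernels still lie in $\calD'(K)_{\lambda_0,\nu_0}$ by holomorphy of the equivariance conditions is correct and suffices to conclude via Lemma~\ref{lem:IsoSBOsDistributionsOnK} and Proposition~\ref{prop:IsoSBOSDistributionSections}.
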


\begin{proof}
We choose representatives $x_1,\ldots,x_n\in G$ for the open double cosets in $P_H\backslash G/P_G$. By Theorem~\ref{thm:MeromorphicContinuation} there exist holomorphic families of distributions $K_{\lambda,\nu}^i$, $1\leq i\leq n$, such that $\supp K_{\lambda,\nu}^i=\overline{P_Hx_iP_G}$ for $\Re(\lambda-\rho_{\frakn_G})\in\fraka_{G,+}^\vee$ and $\Re(\nu+\rho_{\frakn_H})\in-\fraka_{H,+}^\vee$. Let $\lambda_+\in\fraka_{G,+}^\vee$ and $\nu_+\in\fraka_{H,+}^\vee$; then for fixed $(\lambda_0,\nu_0)\in\fraka_{G,\CC}^\vee\times\fraka_{H,\CC}^\vee$ we have $\Re(\lambda_0+z\lambda_+)\in\fraka_{G,+}^\vee$ and $\Re(\nu_0-z\nu_+)\in-\fraka_{H,+}^\vee$ for $z\in\CC$, $\Re(z)\gg0$. This implies that the holomorphic functions
$$ f_i:\CC\to\calD'(G), \quad f_i(z) = K_{\lambda_0+z\lambda_+,\nu_0-z\nu_+}^i $$
satisfy $\supp f_i(z)=\overline{P_Hx_iP_G}$ whenever $\Re(z)\gg0$. Hence, $f_1(z),\ldots,f_n(z)$ are generically linearly independent and the claim follows from the next lemma combined with Proposition~\ref{prop:IsoSBOSDistributionSections} and Lemma~\ref{lem:IsoSBOsDistributionsOnG}.
\end{proof}

\begin{lemma}\label{lem:RenormHol}
Let $f_1,\ldots,f_n:\CC\to V$ be holomorphic functions with values in a complete locally convex topological vector space $V$. If $f_1(z),\ldots,f_n(z)$ are generically linearly independent, then there exists $A=(a_{ij})\in\GL(n,\CC)$ and $m_1,\ldots,m_n\geq0$ such that the functions
$$ g_i(z) = \sum_{j=1}^n a_{ij}z^{-m_i}f_i(z) $$
are holomorphic in $z=0$ and $g_1(0),\ldots,g_n(0)$ are linearly independent.
\end{lemma}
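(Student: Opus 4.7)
The plan is to recast the lemma as a normal form statement for a finitely generated torsion-free module over the principal ideal domain $\CC[[z]]$. Consider the $\CC[[z]]$-submodule
\[ \calM := \sum_{i=1}^n \CC[[z]]\,f_i(z) \subseteq V[[z]], \]
together with its saturation inside $V[[z]]$,
\[ \widetilde{\calM} := V[[z]] \cap \bigl(\CC((z))\otimes_{\CC[[z]]} \calM\bigr). \]
Generic linear independence of $f_1(z),\ldots,f_n(z)$ is exactly the statement that $f_1,\ldots,f_n$ remain linearly independent over the fraction field $\CC((z))$, so that $\calM$ is free of rank $n$ over $\CC[[z]]$ and $\widetilde{\calM}$ is a torsion-free $\CC[[z]]$-module of the same generic rank. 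A Cramer-rule argument (writing the coordinates in any $\CC((z))$-basis) shows that $z^N\widetilde{\calM}\subseteq\calM$ for some integer $N\geq 0$, so $\widetilde{\calM}$ is sandwiched between the finitely generated modules $\calM$ and $z^{-N}\calM\cap V[[z]]$; being torsion-free and finitely generated over the PID $\CC[[z]]$, it is free of rank $n$.

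Next I would apply the elementary divisors theorem to the inclusion $\calM\hookrightarrow\widetilde{\calM}$. This produces a $\CC[[z]]$-basis $g_1,\ldots,g_n$ of $\widetilde{\calM}$, non-negative integers $m_1,\ldots,m_n$, and an invertible matrix $B(z)=(b_{ij}(z))\in\mathrm{GL}_n(\CC[[z]])$ such that
\[ z^{m_i}g_i(z) \;=\; \sum_{j=1}^n b_{ij}(z)\,f_j(z), \qquad i=1,\ldots,n. \]
The linear independence of $g_1(0),\ldots,g_n(0)$ in $V$ is automatic from saturation: $\widetilde{\calM}\cap zV[[z]]=z\widetilde{\calM}$, so the evaluation map $g\mapsto g(0)$ descends to an injection $\widetilde{\calM}/z\widetilde{\calM}\hookrightarrow V$, and freeness of $\widetilde{\calM}$ of rank $n$ then gives $n$ linearly independent values.

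The main obstacle, and where the careful bookkeeping resides, is to replace the $\CC[[z]]$-valued matrix $B(z)$ by a constant matrix $A=(a_{ij})\in\mathrm{GL}_n(\CC)$ in such a way that the functions $g_i(z):=z^{-m_i}\sum_j a_{ij}f_j(z)$ remain holomorphic at $z=0$ and retain linearly independent values there. Organising the $m_i$'s in increasing order and processing the columns of $B(z)$ from smallest to largest $m_i$, one performs at each stage a constant change of basis inside $\mathrm{GL}_n(\CC)$ that absorbs the next batch of Taylor coefficients of $B(z)$ into the already-constructed lower-order $g_i$'s; this block-triangular reduction is where the inductive structure of the argument really sits, and where the hypothesis of generic linear independence (rather than mere linear independence as holomorphic functions) is essential to guarantee that the reduction terminates with an invertible constant matrix $A=B(0)^{-1}$ whose action on the $f_j$ recovers the desired $g_i$.
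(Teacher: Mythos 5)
Your module-theoretic setup is a genuinely different route from the paper's proof (which iteratively subtracts constant linear combinations, divides by $z$, and rules out non-termination by assembling the correction coefficients into holomorphic functions $\lambda_i(z)$ that would give an identity $f_{p+1}(z)=\sum_{i\leq p}\lambda_i(z)f_i(z)$, contradicting generic independence). Up through the elementary-divisor step your argument is essentially sound, modulo two points you assert rather than prove: (i) the passage from generic \emph{pointwise} linear independence of the values $f_i(z)\in V$ to linear independence of the Taylor series over $\CC((z))$ is not a tautology; it follows by choosing $\phi_1,\ldots,\phi_n\in V'$ with $\det\big(\langle\phi_k,f_i(z_0)\rangle\big)\neq0$ and applying the adjugate of $\big(\langle\phi_k,\hat f_i\rangle\big)$ to a putative relation (the same choice of functionals is what makes your Cramer bound $z^N\widetilde{\calM}\subseteq\calM$ work); (ii) the entries $b_{ij}(z)$ coming from the elementary divisor theorem are only \emph{formal} power series, so to get honest holomorphic functions you should truncate them at order $m_i$ — this changes $z^{m_i}g_i$ only in degrees $>m_i$ and does not affect $g_i(0)$.

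The genuine gap is the final step: the matrix $B(z)\in\GL(n,\CC[[z]])$ cannot in general be replaced by a constant matrix, and the sketched block-triangular reduction (and the formula $A=B(0)^{-1}$) cannot be repaired. Take $V=\CC^2$, $f_1(z)=e_1$, $f_2(z)=(1+z)e_1+z^2e_2$: these are linearly independent for every $z\neq0$, yet for any $(a,b)\neq(0,0)$ and any $m\geq0$ for which $z^{-m}(af_1(z)+bf_2(z))$ is holomorphic at $0$, the value at $0$ lies in $\CC e_1$ (if $a+b\neq0$ one is forced to $m=0$ with value $(a+b)e_1$; if $a+b=0$ one is forced to $m\leq1$ with value $be_1$ or $0$), so no constant $A\in\GL(2,\CC)$ can produce linearly independent values; one must allow $g_2(z)=z^{-2}\big(f_2(z)-(1+z)f_1(z)\big)=e_2$, with a $z$-dependent coefficient. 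Note that the paper's own proof also only yields $g_i(z)=z^{-m_i}\sum_j p_{ij}(z)f_j(z)$ with polynomial coefficients $p_{ij}$ (the printed statement, with its constant $A$ and its $f_i$ in place of $f_j$, overstates what is proved), and this weaker conclusion is all that is used in Corollary~\ref{cor:LowerBoundsMultiplicities}, since for each fixed $z$ the vector $g_i(z)$ is still a scalar combination of $f_1(z),\ldots,f_n(z)$ and hence lies in the right space $\calD'(K)_{\lambda,\nu}$. So your argument, stopped at the elementary-divisor stage and with the truncation fix, gives a clean alternative proof of the version of the lemma that is actually true and needed; the attempted upgrade to a constant matrix is not a missing detail but an impossibility and should be dropped.
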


\begin{proof}
Rearrange $f_1,\ldots,f_n$ such that $f_1(0),\ldots,f_p(0)$ are linearly independent and the remaining $f_{p+1}(0),\ldots,f_n(0)$ are linear combinations of $f_1(0),\ldots,f_p(0)$. We now show that $f_{p+1}(z)$ can be replaced by a renormalized linear combination of $f_1(z),\ldots,f_{p+1}(z)$ such that $f_1(0),\ldots,f_{p+1}(0)$ are linearly independent. Applying this argument recursively to $f_{p+1},\ldots,f_n$ shows the statement.\\
So assume that
$$ f_{p+1}(0) = \sum_{i=1}^p\lambda_i^0f_i(0). $$
We form the new function $f_{p+1}^1(z)=f_{p+1}(z)-\sum_{i=1}^p\lambda_i^0f_i(z)$; then $f_{p+1}^1(0)=0$ and renormalizing
$$ \tilde{f}_{p+1}^1(z) = \frac{f_{p+1}^1(z)}{z} $$
gives a holomorphic function $\tilde{f}_{p+1}^1(z)$ such that $f_1(z),\ldots,f_p(z),\tilde{f}_{p+1}^1(z)$ are still generically linearly independent. If now $f_1(0),\ldots,f_p(0),\tilde{f}_{p+1}^1(0)$ are linearly independent, we are done, otherwise we have
$$ \tilde{f}_{p+1}^1(0) = \sum_{i=1}^p\lambda_i^1f_i(0). $$
As before, we form $f_{p+1}^2(z)=\tilde{f}_{p+1}^1(z)-\sum_{i=1}^p\lambda_i^1f_i(z)$; then $f_{p+1}^2(0)=0$ and renormalizing
$$ \tilde{f}_{p+1}^2(z) = \frac{f_{p+1}^2(z)}{z} $$
gives a holomorphic function $\tilde{f}_{p+1}^2(z)$ such that $f_1(z),\ldots,f_p(z),\tilde{f}_{p+1}^2(z)$ are still generically linearly independent. We repeat this procedure as long as possible. If the procedure does not terminate, we obtain $\lambda_i^0,\lambda_i^1,\lambda_i^2,\ldots\in\CC$, $1\leq i\leq p$, and holomorphic functions $\tilde{f}_{p+1}^k(z)$ with
$$ z\tilde{f}_{p+1}^{k+1}(z) = \tilde{f}_{p+1}^k(z) - \sum_{i=1}^p\lambda_i^kf_i(z), $$
where we put $\tilde{f}_{p+1}^0=f_{p+1}$. Form the holomorphic functions (the convergence of the sums follows by applying continuous linear functionals to the above identities)
$$ \lambda_i:\CC\to\CC, \quad \lambda_i(z) = \sum_{k=0}^\infty\lambda_i^kz^k, $$
then
$$ \sum_{i=1}^p\lambda_i(z)f_i(z) = \sum_{k=0}^\infty\sum_{i=1}^p\lambda_i^kf_i(z)z^k = \sum_{k=0}^\infty\left(\tilde{f}_{p+1}^k(z)-z\tilde{f}_{p+1}^{k+1}(z)\right)z^k = \tilde{f}_{p+1}^0(z) = f_{p+1}(z) $$
for all $z\in\CC$. But this implies that $f_1(z),\ldots,f_p(z),f_{p+1}(z)$ are linearly dependent for all $z\in\CC$, contradicting the assumption. Hence, the procedure has to terminate at some stage, which implies that $f_1(0),\ldots,f_p(0),\tilde{f}_{p+1}^k(0)$ are linearly independent for some $k$, proving our claim.
\end{proof}

\subsection{Bernstein--Sato identities and meromorphic extension}\label{sec:BernsteinSatoIdentities}

Write
$$ r = \rank_\RR(G) + \rank_\RR(H) = \dim\fraka_G + \dim\fraka_H. $$
By Proposition~\ref{prop:MatrixCoefficients} and Theorem~\ref{thm:EnoughWeights} there exists a basis $\{(\lambda_j,\nu_j)\}_{j=1,\ldots,r}\subseteq\Lambda^+(\frakg,\fraka)\times\Lambda^+(\frakh,\fraka_H)$ of $\fraka_G^\vee\times\fraka_H^\vee$ and non-zero real-analytic functions $F_j:G\to\RR$, $F_j\geq0$, such that
\begin{equation}
 F_j(m'a'n'gman) = a^{\lambda_j}(a')^{-\nu_j^\vee}F_j(g)\label{eq:EquivarianceFeta}
\end{equation}
for $g\in G$, $man\in P_G$ and $m'a'n'\in P_H$.

Now, for $(s_1,\ldots,s_r)\in\CC^r$ with $\Re(s_1),\ldots,\Re(s_r)\geq0$ we define
$$ K_{s_1,\ldots,s_r}(g) = F_1(g)^{s_1}\cdots F_r(g)^{s_m}, \qquad g\in G. $$
Let $\Omega\in P_H\backslash G/P_G$ be an open double coset and write $\chi_\Omega:G\to\{0,1\}$ for its characteristic function. Then it remains to show that $\chi_\Omega K_{s_1,\ldots,s_r}$ defines a family of distributions on $G$ which extends meromorphically in $(s_1,\ldots,s_r)\in\CC^r$. For this we use Bernstein--Sato identities.

By \cite{Sab87a,Sab87b} there exists a differential operator $D_{s_1,\ldots,s_r}$ on $G$ depending polynomially on $(s_1,\ldots,s_r)\in\CC^r$ such that
\begin{equation}
 D_{s_1,\ldots,s_r}K_{s_1,\ldots,s_r} = b(s_1,\ldots,s_r)K_{s_1-1,\ldots,s_r-1} \qquad \Re(s_1),\ldots,\Re(s_r)\gg0,\label{eq:BernsteinSatoIdentity}
\end{equation}
where
$$ b(s_1,\ldots,s_r) = c\cdot\prod_{i=1}^m \big(a_{i,1}s_1+\cdots+a_{i,r}s_r+b_i\big) $$
with $(a_{i,1},\ldots,a_{i,r})\in\CC^r\setminus\{0\}$, $b_i\in\CC$ and $c\neq0$. This identity immediately shows the meromorphic extension of the family $K_{s_1,\ldots,s_r}\in\calD'(G)$. However, if we want to meromorphically extend $\chi_\Omega K_{s_1,\ldots,s_r}$, we have to control $K_{s_1,\ldots,s_r}$ at the boundary $\partial\Omega$ of $\Omega$.

\begin{lemma}\label{lem:VanishingOnNonOpenOrbits}
For every double coset $\omega\subseteq\partial\Omega$ there exists $1\leq j\leq r$ such that
$$ F_j|_\omega=0. $$
\end{lemma}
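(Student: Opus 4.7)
The plan is to prove the contrapositive: if none of the $F_j$ vanishes identically on $\omega$, then $\omega$ is open. This contradicts $\omega \subseteq \partial\Omega$, since a non-empty open double coset cannot sit inside the boundary of the single double coset $\Omega$.

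First I reduce to a single representative. Fix $g \in \omega$. The equivariance \eqref{eq:EquivarianceFeta} transforms $F_j$ on each double coset $P_H g P_G$ by the nonvanishing factor $a^{\lambda_j}(a')^{-\nu_j^\vee}$, so $F_j|_\omega \equiv 0$ is equivalent to $F_j(g) = 0$. The task therefore becomes: if $F_j(g) \neq 0$ for every $j \in \{1,\ldots,r\}$, then $P_H g P_G$ is open.

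The main step is a derivative identity that mirrors (but runs in the opposite direction to) the argument of Corollary~\ref{cor:OpenDoubleCosetImpliesTrivialAHStabilizer}. For an arbitrary $Z \in \frakp_H \cap \Ad(g)\frakp_G$, set $X := \Ad(g^{-1}) Z \in \frakp_G$, so that $e^{tZ} g = g e^{tX}$ for small $t$. Decompose $Z = Z_M + Z_A + Z_N$ in $\frakm_H \oplus \fraka_H \oplus \frakn_H$ and $X = X_M + X_A + X_N$ in $\frakm_G \oplus \fraka_G \oplus \frakn_G$. For $t$ in a neighborhood of $0$, factor smoothly $e^{tZ} = m_H(t) a_H(t) n_H(t) \in M_H A_H N_H$ and $e^{tX} = m_G(t) a_G(t) n_G(t) \in M_G A_G N_G$, noting that $\frac{d}{dt}\big|_{t=0}\log a_H(t) = Z_A$ and $\frac{d}{dt}\big|_{t=0}\log a_G(t) = X_A$. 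Applying \eqref{eq:EquivarianceFeta} on each side of $F_j(e^{tZ} g) = F_j(g e^{tX})$ and differentiating at $t=0$ gives
$$ -\nu_j^\vee(Z_A)\,F_j(g) \;=\; \lambda_j(X_A)\,F_j(g). $$

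To finish: assuming $F_j(g) \neq 0$ for every $j$, the identity forces $\lambda_j(X_A) + \nu_j^\vee(Z_A) = 0$ for all $j$. By Theorem~\ref{thm:EnoughWeights} the pairs $\{(\lambda_j,\nu_j)\}_{j=1}^r$ form a basis of $\fraka_G^\vee \times \fraka_H^\vee$, and by Lemma~\ref{lem:DualCartanHelgason} the map $\nu \mapsto \nu^\vee$ is a linear automorphism of $\fraka_H^\vee$, so $\{(\lambda_j,\nu_j^\vee)\}_j$ also spans $\fraka_G^\vee \times \fraka_H^\vee$. Hence $X_A = 0$ and $Z_A = 0$. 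Since this holds for every $Z \in \frakp_H \cap \Ad(g)\frakp_G$, the projection of this intersection to $\fraka_H$ is trivial, and the implication (2)$\Rightarrow$(1) of Theorem~\ref{thm:AHProjectionStabilizers} yields that $P_H g P_G$ is open. No substantial obstacle is expected: the equivariance of the $F_j$ was constructed precisely to feed into this derivative computation, and Theorem~\ref{thm:AHProjectionStabilizers} together with the spanning statement of Theorem~\ref{thm:EnoughWeights} supplies exactly the dichotomy needed to conclude.
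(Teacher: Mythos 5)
Your proposal is correct and follows essentially the same route as the paper: the identity $\lambda_j(X_A)+\nu_j^\vee(Z_A)=0$ for $Z\in\frakp_H\cap\Ad(g)\frakp_G$, $X=\Ad(g^{-1})Z$, obtained from the equivariance \eqref{eq:EquivarianceFeta}, combined with the spanning property from Theorem~\ref{thm:EnoughWeights} and the implication (2)$\Rightarrow$(1) of Theorem~\ref{thm:AHProjectionStabilizers} (which the paper invokes in the equivalent contrapositive form: non-open cosets admit $Z$ with $Z_A\neq0$, proved in Section~\ref{sec:StructureTheory}, so there is no circularity). The only cosmetic difference is that you argue by contraposition while the paper fixes such a $Z$ directly and derives the contradiction, and it uses the exact relation $F_j(e^{tZ}g)=e^{-t\nu_j^\vee(Z_A)}F_j(g)$ rather than its derivative at $t=0$; both versions are valid.
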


\begin{proof}
Let $g\in\omega$; then by the equivariance property \eqref{eq:EquivarianceFeta} it suffices to show that $F_j(g)=0$ for some $1\leq j\leq r$. Since $\omega\subseteq\partial\Omega$, the double coset $P_HgP_G$ is not open. Using Theorem~\ref{thm:AHProjectionStabilizers} we find an element $Z=Z_M+Z_A+Z_N\in\frakp_H$ with $Z_A\neq0$ such that $X=\Ad(g^{-1})Z=X_M+X_A+X_N\in\frakp_G$. This implies
\begin{align*}
 e^{-t(s_1\nu_1^\vee+\cdots+s_r\nu_r^\vee)(Z_A)} K_{s_1,\ldots,s_r}(g) &= K_{s_1,\ldots,s_r}(e^{tZ}g) = K_{s_1,\ldots,s_r}(ge^{tX})\\
 &= e^{t(s_1\lambda_1+\cdots+s_r\lambda_r)(X_A)} K_{s_1,\ldots,s_r}(g)
\end{align*}
for all $t\in\RR$. Now assume that $F_j(g)\neq0$ for all $1\leq j\leq r$; then $K_{s_1,\ldots,s_r}(g)\neq0$ for all $(s_1,\ldots,s_r)\in\CC^r$ with $\Re(s_1),\ldots,\Re(s_r)\geq0$. Hence
$$ (s_1\lambda_1+\cdots+s_r\lambda_r)(X_A) = -(s_1\nu_1^\vee+\cdots+s_r\nu_r^\vee)(Z_A), $$
or equivalently
$$ s_1(\lambda_1(X_A)+\nu_1^\vee(Z_A))+\cdots+s_r(\lambda_r(X_A)+\nu_r^\vee(Z_A)) = 0 $$
for $(s_1,\ldots,s_r)\in\CC^r$ with $\Re(s_1),\ldots,\Re(s_r)\geq0$. This implies $\lambda_j(X_A)+\nu_j^\vee(Z_A)=0$ for all $1\leq j\leq r$. But since the pairs $(\lambda_j,\nu_j)$ form a basis of $\fraka_G^\vee\times\fraka_H^\vee$, the pairs $(\lambda_j,\nu_j^\vee)$ also form a basis and hence $Z_A=0$ which gives a contradiction.
\end{proof}

By the previous lemma we have $K_{s_1,\ldots,s_r}(g)=0$ whenever $g\in\partial\Omega$ and $\Re(s_1),\ldots,\Re(s_r)>0$. This implies a regularity statement for the functions $\chi_\Omega K_{s_1,\ldots,s_r}$:

\begin{lemma}\label{lem:RegularityStatement}
Let $M$ be a smooth manifold, $\Omega\subseteq M$ open and $\varphi\in C^\infty(M)$ with $\varphi\geq0$, $\varphi|_{\partial\Omega}=0$. Then $\chi_\Omega\varphi^s\in C^k(\RR)$ whenever $\Re s>k$.
\end{lemma}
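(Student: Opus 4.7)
The plan is to verify $f := \chi_\Omega \varphi^s \in C^k$ locally around an arbitrary point $x_0\in M$ (interpreting the target space as $C^k(M)$, the stated $C^k(\RR)$ being a typo). Away from the closed set $Z := \partial\Omega \cup (\Omega \cap \varphi^{-1}(0))$ the function $f$ is locally either identically zero (if $x_0 \notin \overline{\Omega}$) or a smooth power of a positive smooth function (if $x_0 \in \Omega$ with $\varphi(x_0) > 0$), so there is nothing to prove in either case. All work is therefore concentrated at points of $Z$, where $f$ takes the value $0$ and we need to show that every partial derivative of $f$ up to order $k$ exists and equals zero.

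The main estimate comes from Fa\`a di Bruno's chain rule. At any $x$ with $\varphi(x)>0$ and any multi-index $\alpha$ with $|\alpha|\leq k$ one has, by a straightforward induction on $|\alpha|$,
\begin{equation*}
\partial^\alpha(\varphi^s)(x) \;=\; \sum_{j=1}^{|\alpha|} s(s-1)\cdots(s-j+1)\,\varphi(x)^{s-j}\,R_{\alpha,j}(x),
\end{equation*}
where each $R_{\alpha,j}$ is a universal polynomial in partial derivatives of $\varphi$ of order at most $|\alpha|$, and is in particular locally bounded on $M$. The hypothesis $\Re s>k$ gives $\Re(s-j)>k-j\geq 0$ for every $j\leq|\alpha|\leq k$, so $\varphi^{s-j}\to 0$ locally uniformly as $\varphi\to 0$. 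Thus each $\partial^\alpha(\varphi^s)$ with $|\alpha|\leq k$, extended by zero across $\varphi^{-1}(0)$, is continuous on $M$ and vanishes on $Z$.

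To finish, I would invoke the standard $C^k$-extension principle: if a bounded function $g$ is $C^k$ on the open set $U := \Omega\setminus\varphi^{-1}(0)$, and if each of its classical derivatives $\partial^\alpha g$ ($|\alpha|\leq k$) extends continuously to $M$, then the common extension lies in $C^k(M)$ and its classical derivatives are exactly the extensions of the $\partial^\alpha g$. The proof is by induction on $k$ via the fundamental theorem of calculus along line segments in a chart, applied on $U$ where the identities are classical and then passed to the closure using continuity, the exceptional set $M\setminus U\subseteq Z$ being negligible because it has empty interior and the integrands in question are continuous across it. Combined with the Fa\`a di Bruno bound this yields $f\in C^k(M)$.

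The only mildly delicate point is this last step: one must handle interior zeros of $\varphi$ on the same footing as points of $\partial\Omega$, since both types of points belong to $Z$. Both are automatically controlled by the derivative bound above, but the classical extension-across-a-small-set argument has to be organized carefully, for instance by inducting jointly on $|\alpha|$ and using that the continuous extensions of successive derivatives are compatible through integration along generic line segments.
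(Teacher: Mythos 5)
Your core computation is correct, and it is exactly the content the paper dismisses as an ``easy calculus exercise'' (no proof is given there): on $\{\varphi>0\}$ one has $\partial^\alpha(\varphi^s)=\sum_{j=1}^{|\alpha|}s(s-1)\cdots(s-j+1)\,\varphi^{s-j}R_{\alpha,j}$ with $R_{\alpha,j}$ locally bounded, and $\Re(s-j)\geq\Re s-k>0$ for $|\alpha|\leq k$, so all candidate derivatives of $\chi_\Omega\varphi^s$ up to order $k$ extend continuously by zero across $Z=\partial\Omega\cup(\Omega\cap\varphi^{-1}(0))$. The gap is in your final step. The ``standard $C^k$-extension principle'' you invoke is false at that level of generality, and the justification you sketch (FTC along segments, the exceptional set being negligible ``because it has empty interior'') fails: a closed set with empty interior can meet a line segment in a Cantor-type set, and continuity of the extended derivatives does not give absolute continuity of the function along such a segment. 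The Cantor function is a counterexample already for $k=1$, $M=\RR$: it is continuous, smooth on the complement $U$ of the Cantor set, its derivative on $U$ is identically $0$ and extends continuously to $\RR$, yet the function is not $C^1$. Passing to ``generic'' line segments does not rescue this, because differentiability has to be verified along every direction at every point of $Z$. (Moreover $Z$ need not have empty interior at all, since $\varphi$ may vanish on an open subset of $\Omega$; this is harmless for the lemma but contradicts your negligibility claim as stated.)

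The fix is short and uses an ingredient you already have but never exploit: the function \emph{and} all candidate derivatives vanish on $Z$, and $\varphi(y)\leq C|y-x_0|$ near any $x_0\in Z$ because $\varphi$ is smooth (hence locally Lipschitz) and $\varphi(x_0)=0$. Argue by induction on $m\leq k$ that $g_\beta:=\chi_\Omega\,\partial^\beta(\varphi^s)$ (extended by zero), $|\beta|=m-1$, is differentiable at each $x_0\in Z$ with vanishing differential: the difference quotient is $g_\beta(y)/|y-x_0|$, and for $y$ near $x_0$ either $g_\beta(y)=0$ or $|g_\beta(y)|\leq C\,\varphi(y)^{\Re s-(m-1)}\leq C'|y-x_0|^{\Re s-k+1}$, whose exponent exceeds $1$, so the quotient tends to $0$; continuity of the resulting partial derivatives is precisely your limit computation, and the induction closes. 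Alternatively, your FTC argument becomes valid if along each segment you integrate separately over every connected component of the complement of $Z$, starting from its endpoint in $Z$ where $g_\beta$ vanishes --- it is this vanishing, not any smallness of $Z$, that makes the argument work.
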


The proof is an easy calculus exercise. We can now proceed to prove Theorem~\ref{thm:MeromorphicContinuation}.

\begin{proof}[Proof of Theorem~\ref{thm:MeromorphicContinuation}]
Let $\Omega\in P_H\backslash G/P_G$ be an open double coset, then $K_{s_1,\ldots,s_r}|_{\partial\Omega}=0$ for $\Re(s_1),\ldots,\Re(s_r)>0$ by Lemma~\ref{lem:VanishingOnNonOpenOrbits}. By Lemma~\ref{lem:RegularityStatement} this implies that $\chi_\Omega K_{s_1,\ldots,s_r}\in C^k(G)$ for $\Re(s_1),\ldots,\Re(s_r)>k$. Since the Bernstein--Sato operator $D_{s_1,\ldots,s_r}$ is of finite order, say $k\geq0$, this implies that $D_{s_1,\ldots,s_r}[\chi_\Omega K_{s_1,\ldots,s_r}]$ is a continuous function for $\Re(s_1),\ldots,\Re(s_r)>k$. Replacing $K_{s_1,\ldots,s_r}$ in the Bernstein--Sato identity \eqref{eq:BernsteinSatoIdentity} by $\chi_\Omega K_{s_1,\ldots,s_r}$ yields
\begin{equation}
 D_{s_1,\ldots,s_r}[\chi_\Omega K_{s_1,\ldots,s_r}] = b(s_1,\ldots,s_r)\chi_\Omega K_{s_1-1,\ldots,s_r-1}\label{eq:BernsteinSatoIdentity2}
\end{equation}
which holds on $\Omega$ by \eqref{eq:BernsteinSatoIdentity}, and trivially also on $K\setminus\overline{\Omega}$ since both sides vanish. As observed above, for $\Re(s_1),\ldots,\Re(s_r)>k$ both sides are continuous functions, and hence \eqref{eq:BernsteinSatoIdentity2} holds everywhere. This shows the meromorphic continuation of $\chi_\Omega K_{s_1,\ldots,s_r}\in\calD'(G)$ to $(s_1,\ldots,s_r)\in\CC^r$, or more precisely the holomorphic continuation of
$$ \prod_{i=1}^m\Gamma\big(a_{i,1}s_1+\cdots+a_{i,r}s_r+b_i\big)\cdot\chi_\Omega K_{s_1,\ldots,s_r}. $$
Finally, the change of variables
$$ \CC^r\to\fraka_{G,\CC}^\vee\times\fraka_{H,\CC}^\vee, \quad (s_1,\ldots,s_r)\mapsto(\lambda,\nu)=s_1(\lambda_1,-\nu_1^\vee)+\cdots+s_r(\lambda_r,-\nu_r^\vee)+(\rho_{\frakn_G},-\rho_{\frakn_H}) $$
constructs the desired family $K_{\lambda,\nu}$. The equivariance property extends by analytic continuation.
\end{proof}

\section{Upper multiplicity bounds -- Invariant distributions}\label{sec:InvariantDistributions}

In this section we establish upper bounds for the multiplicities $\dim\Hom_H(\pi_{\xi,\lambda}|_H,\tau_{\eta,\nu})$ using Bruhat's theory of invariant distributions. In particular, we obtain that for $\xi$ and $\eta$ the trivial representations, the intertwining operators constructed in Section~\ref{sec:ConstructionOfSBOs} form a basis of $\Hom_H(\pi_{\lambda}|_H,\tau_\nu)$ for generic $(\lambda,\nu)\in\fraka_{G,\CC}^\vee\times\fraka_{H,\CC}^\vee$.

\subsection{Upper bounds for multiplicities}\label{sec:UpperBoundsMultiplicities}

Let $M\subseteq M_G\cap M_H$ denote the stabilizer of a generic element $X\in\frakn^{-\sigma}$. More precisely,
$$ M = (M_G\cap M_H)^X = \{g\in M_G\cap M_H:\Ad(g)X=X\}, $$
where $X\in\frakn^{-\sigma}$ with $X=\sum_{\beta\in\Delta(\frakn^{-\sigma})} X_\beta$, $X_\beta\in\frakg^{-\sigma}(\fraka_H;\beta)\setminus\{0\}$. Since for every $\beta\in\Delta(\frakn^{-\sigma})$, the group $M_G\cap M_H$ acts either transitively on the unit sphere $S_\beta\subseteq\frakg^{-\sigma}(\fraka_H;\beta)$ or with possibly two orbits if $\dim\frakg^{-\sigma}(\fraka_H;\beta)=1$, it follows that for generic $X,Y\in\frakn^{-\sigma}$ the stabilizers $(M_G\cap M_H)^X$ and $(M_G\cap M_H)^Y$ are conjugate in $M_G\cap M_H$.

The main statement of this section is the following theorem:

\begin{theorem}\label{thm:MultiplicityBounds}
Assume that $(G,H)$ is a strongly spherical reductive pair such that $(\frakg,\frakh)$ is non-trivial and indecomposable. Then for $(\lambda,\nu)\in\fraka_{G,\CC}^\vee\times\fraka_{H,\CC}^\vee$ satisfying the generic condition
\begin{equation}
 w(\Re\lambda-\rho_{\frakn_G})|_{\fraka_H}-(\Re\nu-\rho_{\frakn_H}) \notin -w\left(\NN\mbox{-}\linspan\Sigma^+(\frakg,\fraka_G)\right)|_{\fraka_H} \qquad \forall\,w\in W(\fraka_G),\label{eq:GenericCondition}
\end{equation}
we have
$$ \dim\Hom_H(\pi_{\xi,\lambda}|_H,\tau_{\eta,\nu}) \leq \#(P_H\backslash G/P_G)_{\rm open}\cdot\dim\Hom_M(\xi^{\tilde{w}_0}|_M,\eta|_M). $$
In particular, for $\xi$ and $\eta$ the trivial representations, we have generically
$$ \dim\Hom_H(\pi_\lambda|_H,\tau_\nu) \leq \#(P_H\backslash G/P_G)_{\rm open}. $$
\end{theorem}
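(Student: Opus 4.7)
The plan is to identify $\Hom_H(\pi_{\xi,\lambda}|_H,\tau_{\eta,\nu})$ via Proposition~\ref{prop:IsoSBOSDistributionSections} with the space of $P_H$-equivariant $W_{\eta,\nu}$-valued distribution sections of the bundle $\calV_{\xi,\lambda}^*\to G/P_G$, and then estimate this space using Bruhat's theory. The first step is to enumerate the finitely many $P_H$-orbits $\calO_1,\ldots,\calO_N$ on $G/P_G$ (finiteness being part of Proposition~\ref{prop:CharacterizationStronglySpherical}), arrange them by increasing dimension so that $U_j:=\calO_1\cup\cdots\cup\calO_j$ is open, and use the associated filtration on equivariant distributions. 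The graded piece attached to $\calO_j$ is, by standard arguments, isomorphic to the space of equivariant sections of a twisted normal-jet bundle on $\calO_j$: concretely, writing $g_j$ for a representative of $\calO_j$ and $S_j=P_H\cap g_jP_Gg_j^{-1}$ for the stabilizer, it is
\[
 \bigoplus_{k\geq 0}\Hom_{S_j}\bigl(V_{\xi,\lambda}\otimes\upS^k(N_j^\vee)\otimes\mathbb{C}_{-2\rho_{\frakn_H}},\,W_{\eta,\nu}\bigr),
\]
where $N_j$ is the normal space to $\calO_j$ at $g_jP_G$. The strategy is to bound each such graded piece.

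For an open orbit $\calO_j$ there is no normal direction, so only the $k=0$ piece contributes. By Lemma~\ref{lem:RepresentativesDoubleCosets} I may take $g_j=e^{X}\tilde{w}_0$ with $X\in\frakn^{-\sigma}$, and by Theorem~\ref{thm:AHProjectionStabilizers} the projection of $\frakp_H\cap\Ad(g_j)\frakp_G$ to $\fraka_H$ is trivial. Combined with Lemma~\ref{lem:MGplusMH} and the identification $\Ad(e^{-X})L\cap(\tilde{w}_0P_G\tilde{w}_0^{-1})\cap P_H$ reducing to its compact part, the stabilizer $S_j$ has maximal compact subgroup conjugate to $M$, acts trivially on $\fraka_H$ via the $P_H$-character, and $V_{\xi,\lambda}$ pulls back along $g_j$ to $\xi^{\tilde{w}_0}\otimes e^{w_0\lambda+\rho_{\frakn_G}}$. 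Tracking the characters $e^\lambda$, $e^{\nu}$ and the modular shifts shows that the $k=0$ contribution of each open orbit is exactly $\Hom_M(\xi^{\tilde{w}_0}|_M,\eta|_M)$, summing to $\#(P_H\backslash G/P_G)_{\mathrm{open}}\cdot\dim\Hom_M(\xi^{\tilde{w}_0}|_M,\eta|_M)$.

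The main obstacle is to show that under the generic condition \eqref{eq:GenericCondition} every non-open orbit contributes zero. For a non-open $\calO_j$, Theorem~\ref{thm:AHProjectionStabilizers} provides an element $Z=Z_M+Z_A+Z_N\in\frakp_H\cap\Ad(g_j)\frakp_G$ with $Z_A\neq 0$. A nonzero element of the $k$-th graded piece at $\calO_j$ must intertwine the two one-parameter characters by which $\exp(tZ_A)$ acts: on the source, $e^{t(\lambda+\rho_{\frakn_G})}$ twisted by the transversal weight $-k\cdot(\text{sum of weights})$ coming from $\upS^k(N_j^\vee)$, and on the target $e^{t(\nu+\rho_{\frakn_H})}$. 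The transversal weights that arise are precisely restrictions $-w(\alpha)|_{\fraka_H}$ for $\alpha\in\Sigma^+(\frakg,\fraka_G)$ with $w$ determined by the Bruhat cell of $g_j$, so the compatibility condition becomes an identity of the shape
\[
 w(\lambda-\rho_{\frakn_G})|_{\fraka_H}-(\nu-\rho_{\frakn_H})\in -w\bigl(\NN\text{-}\linspan\Sigma^+(\frakg,\fraka_G)\bigr)|_{\fraka_H},
\]
which is excluded by \eqref{eq:GenericCondition} on the real parts. Thus every graded piece for a non-open orbit vanishes.

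Finally I would assemble these bounds into a proof by a downward induction on the filtration: the closed stratum $U_1$ being open (hence a union of open orbits), one first bounds the contribution there, then at each step the piece on $U_{j+1}\setminus U_j$ is either open, already handled, or non-open, in which case it vanishes. Summing gives the claimed estimate, and specializing to $\xi=\1$, $\eta=\1$ yields the stated generic bound $\dim\Hom_H(\pi_\lambda|_H,\tau_\nu)\leq\#(P_H\backslash G/P_G)_{\mathrm{open}}$. The most delicate step is the bookkeeping of the characters under the transversal jets to derive the excluded identity in exactly the form of \eqref{eq:GenericCondition}; the key input that makes this possible at all is the nontriviality of the $\fraka_H$-projection provided by Theorem~\ref{thm:AHProjectionStabilizers}.
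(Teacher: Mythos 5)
Your proposal follows essentially the same route as the paper's proof: identify symmetry breaking operators with their $P_H$-equivariant distribution kernels via Proposition~\ref{prop:IsoSBOSDistributionSections}, apply Bruhat's theory of invariant distributions so that each open orbit contributes exactly $\dim\Hom_M(\xi^{\tilde{w}_0}|_M,\eta|_M)$ through its zeroth transversal jet, and annihilate every non-open stratum by combining the element $Z=Z_M+Z_A+Z_N$ with $Z_A\neq0$ from Theorem~\ref{thm:AHProjectionStabilizers} with the generic condition \eqref{eq:GenericCondition}, the delicate character bookkeeping you defer being carried out in the paper in Lemmas~\ref{lem:ModularFctStabilizer} and \ref{lem:NonOpenOrbitsSymPowers} (where diagonalizing the compact part $Z_M$ explains why only real parts enter \eqref{eq:GenericCondition}). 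The only cosmetic caveats: the paper works with $(P_H\times P_G)$-invariant distributions on $G$ rather than a filtration of $G/P_G$, and in your stratification the orbits should be ordered by decreasing dimension for the partial unions $U_j$ to be open.
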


Here $\xi^g$ denotes the representation of $gM_Gg^{-1}$ given by $\xi^g(m)=\xi(g^{-1}mg)$.

\begin{remark}
If $\frakm_G\cap\frakm_H=\{0\}$ then condition \eqref{eq:GenericCondition} in Theorem~\ref{thm:MultiplicityBounds} can be replaced by the weaker condition
\begin{equation}
 w(\lambda-\rho_{\frakn_G})|_{\fraka_H}-(\nu-\rho_{\frakn_H}) \notin -w\left(\NN\mbox{-}\linspan\Sigma^+(\frakg,\fraka_G)\right)|_{\fraka_H} \qquad \forall w\in W(\fraka_G),\label{eq:GenericConditionWeak}
\end{equation}
This follows from the proof in Section~\ref{sec:InvDistributionsNonOpenOrbits}.
\end{remark}

\begin{remark}
From \cite[Corollary 2.7 and Lemma 5.2]{KO13} one can deduce the following estimate:
\begin{align*}
	\dim\Hom_H(\pi_\lambda|_H,\tau_\nu) &= \dim\Hom_{G\times H}(\pi_\lambda\otimes\tau_{-\nu},C^\infty((G\times H)/\diag(H)))\\
	&\leq \dim\Hom_{(\frakg\oplus\frakh,K_G\times K_H)}(\pi_\lambda\otimes\tau_{-\nu},C^\infty((G\times H)/\diag(H)))\\
	&\leq \# W(\fraka_G\oplus\fraka_H)
\end{align*}
for $\lambda\in\fraka_{G,\CC}^\vee$ and $-\nu\in\fraka_{H,\CC}^\vee$ satisfying a certain regularity and a positivity condition. Note that Theorem~\ref{thm:MultiplicityBounds} only requires a regularity condition and no positivity condition. Here $W(\fraka_G\oplus\fraka_H)\simeq W(\fraka_G)\times W(\fraka_H)$ denotes the Weyl group of the pair $(\frakg\oplus\frakh,\fraka_G\oplus\fraka_H)$ and by \cite[Corollary E]{KO13} we have $\# W(\fraka_G\oplus\fraka_H)\geq\#(P_H\backslash G/P_G)_{\rm open}$. Note that this inequality is not sharp. For instance, for the multiplicity one pairs in Fact~\ref{fact:MultiplicityOne} we have $\#(P_H\backslash G/P_G)_{\rm open}=1$ (see Lemma~\ref{lem:OpenOrbitsForMultOnePairs}) while the order of the Weyl group $W(\fraka_G\oplus\fraka_H)$ goes to infinity as the rank increases. Therefore, the estimate in Theorem~\ref{thm:MultiplicityBounds} is sharper than the one derived from \cite{KO13} and holds for an open dense subset of parameters $(\lambda,\nu)\in\fraka_{G,\CC}^\vee\times\fraka_{H,\CC}^\vee$, whereas \cite{KO13} requires the parameters to be contained in some Weyl chamber.
\end{remark}

Before we prove the theorem, note that combined with Corollary~\ref{cor:LowerBoundsMultiplicities} it implies the following formula for the generic multiplicities:

\begin{corollary}\label{cor:GenericMultiplicities}
Assume that $(G,H)$ is a strongly spherical reductive pair such that $(\frakg,\frakh)$ is non-trivial and indecomposable. Then for $(\lambda,\nu)\in\fraka_{G,\CC}^\vee\times\fraka_{H,\CC}^\vee$ satisfying the generic condition \eqref{eq:GenericCondition} we have
$$ \dim\Hom_H(\pi_\lambda|_H,\tau_\nu) = \#(P_H\backslash G/P_G)_{\rm open}. $$
\end{corollary}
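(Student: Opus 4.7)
The proof will be a direct combination of the two previously stated results, so the plan is short. The lower bound
\[
\dim\Hom_H(\pi_\lambda|_H,\tau_\nu) \geq \#(P_H\backslash G/P_G)_{\rm open}
\]
holds for \emph{all} $(\lambda,\nu)\in\fraka_{G,\CC}^\vee\times\fraka_{H,\CC}^\vee$ by Corollary~\ref{cor:LowerBoundsMultiplicities}, which was already established by the holomorphic-family construction $K_{\lambda,\nu}^{i}$ attached to the open double cosets, combined with the renormalization Lemma~\ref{lem:RenormHol}. So no further work is needed on the lower-bound side.

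The matching upper bound comes from Theorem~\ref{thm:MultiplicityBounds} applied to the special case where $\xi$ and $\eta$ are the trivial representations of $M_G$ and $M_H$. In that case $\Hom_M(\xi^{\tilde{w}_0}|_M,\eta|_M)=\Hom_M(\1,\1)$ is one-dimensional, so the upper-bound inequality in Theorem~\ref{thm:MultiplicityBounds} reduces to
\[
\dim\Hom_H(\pi_\lambda|_H,\tau_\nu)\leq \#(P_H\backslash G/P_G)_{\rm open},
\]
valid whenever the genericity condition \eqref{eq:GenericCondition} is satisfied.

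Combining the two inequalities under the hypothesis~\eqref{eq:GenericCondition} yields the claimed equality, finishing the proof. Thus the only real content of the corollary is bookkeeping: verify that the spherical case $\xi=\1$, $\eta=\1$ falls under the hypotheses of both cited statements (the strict indecomposability and non-triviality of $(\frakg,\frakh)$ are common to both, and the genericity condition \eqref{eq:GenericCondition} is exactly what is needed to invoke Theorem~\ref{thm:MultiplicityBounds}), and observe that the $M$-Hom factor trivializes. There is no substantive obstacle here; the two hard results have already been done in Sections~\ref{sec:ConstructionOfSBOs} and the present section.
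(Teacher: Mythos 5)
Your proposal is correct and matches the paper exactly: the corollary is obtained by combining the unconditional lower bound of Corollary~\ref{cor:LowerBoundsMultiplicities} with the generic upper bound of Theorem~\ref{thm:MultiplicityBounds} specialized to $\xi=\1$, $\eta=\1$, where the factor $\dim\Hom_M(\xi^{\tilde{w}_0}|_M,\eta|_M)$ equals one. No further argument is needed.
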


The rest of this section is devoted to the proof of Theorem~\ref{thm:MultiplicityBounds}.

\subsection{Invariant distributions}\label{sec:InvariantDistrbutionsSub}

Recall from Proposition~\ref{prop:IsoSBOSDistributionSections} and Lemma~\ref{lem:IsoSBOsDistributionsOnG} that
$$ \dim\Hom_H(\pi_{\xi,\lambda}|_H,\tau_{\eta,\nu}) = \dim\calD'(G)_{(\xi,\lambda),(\eta,\nu)}. $$
Upper bounds for spaces of invariant distributions such as $\calD'(G)_{(\xi,\lambda),(\eta,\nu)}$ are provided by Bruhat's theory of invariant distributions (see e.g. \cite[Chapter 5.2]{War72} for a detailed account; see also \cite[Section 3.4]{MOO16} for an application in a similar setting). This method applies to our situation since the group $P_H\times P_G$ that acts on $G$ has only finitely many orbits. In this case, Bruhat's theory implies the following upper bound:
\begin{multline*}
 \dim\calD'(G)_{(\xi,\lambda),(\eta,\nu)} \leq \sum_{\substack{P_HgP_G\\\in P_H\backslash G/P_G}}\sum_{m=0}^\infty \dim\Hom_{P_H\cap gP_Gg^{-1}}\Big(\big(\xi\otimes e^{\lambda-\rho_{\frakn_G}}\otimes\1\big)^g,\\
 \big(\eta\otimes e^{\nu+\rho_{\frakn_H}}\otimes\1\big)\otimes\frac{\delta_{P_H}\delta_{gP_Gg^{-1}}}{\delta_{P_H\cap gP_Gg^{-1}}}\otimes S^m(V(g))\Big),
\end{multline*}
where for a Lie group $S$ we denote by $\delta_S(x)=|\det\Ad(x^{-1})|$, $x\in S$, its modular function, and $V(g)=\frakg/(\Ad(g)\frakp_G+\frakp_H)$. Clearly $\delta_{P_G}(man)=a^{-2\rho_{\frakn_G}}$ and $\delta_{P_H}(man)=a^{-2\rho_{\frakn_H}}$. We estimate the contributions from open and non-open orbits separately.

\subsubsection{Open orbits}

If $P_HgP_G$ is an open orbit, we may choose $g=e^X\tilde{w}_0$ with $X\in\frakn^{-\sigma}$ generic. Then $P_H\cap gP_Gg^{-1}=M$ and $(\xi\otimes e^{\lambda-\rho_{\frakn_G}}\otimes\1)^g|_M=\xi^{\tilde{w}_0}|_M$. Further, since $M$ is compact, $\delta_{P_H}=\delta_{gP_Gg^{-1}}=\delta_{P_H\cap gP_Gg^{-1}}=1$ on $M$. Finally, $S^m(V(g))=\{0\}$ for $m>0$, because $V(g)=\{0\}$ in this case, and $S^0(V(g))=\CC$ is the trivial representation. This implies that
\begin{multline*}
 \sum_{m=0}^\infty \dim\Hom_{P_H\cap gP_Gg^{-1}}\Big(\big(\xi\otimes e^{\lambda-\rho_{\frakn_G}}\otimes\1\big)^g,\big(\eta\otimes e^{\nu+\rho_{\frakn_H}}\otimes\1\big)\otimes\frac{\delta_{P_H}\delta_{gP_Gg^{-1}}}{\delta_{P_H\cap gP_Gg^{-1}}}\otimes S^m(V(g))\Big)\\
 = \dim\Hom_M(\xi^{\tilde{w}_0}|_M,\eta|_M).
\end{multline*}

\subsubsection{Non-open orbits}\label{sec:InvDistributionsNonOpenOrbits}

In this case it is more convenient to work with
\begin{multline}
 \Hom_{P_G\cap g^{-1}P_Hg}\Big(\big(\xi\otimes e^{\lambda-\rho_{\frakn_G}}\otimes\1\big),\big(\eta\otimes e^{\nu+\rho_{\frakn_H}}\otimes\1\big)^{g^{-1}}\otimes\frac{\delta_{P_G}\delta_{g^{-1}P_Hg}}{\delta_{P_G\cap g^{-1}P_Hg}}\otimes S^m(W(g))\Big)\\
 \simeq \Big(\big(\xi\otimes e^{\lambda-\rho_{\frakn_G}}\otimes\1\big)^\vee\otimes\big(\eta\otimes e^{\nu+\rho_{\frakn_H}}\otimes\1\big)^{g^{-1}}\otimes\frac{\delta_{P_G}\delta_{g^{-1}P_Hg}}{\delta_{P_G\cap g^{-1}P_Hg}}\otimes S^m(W(g))\Big)^{P_G\cap g^{-1}P_Hg},\label{eq:NonOpenOrbitsInvariants}
\end{multline}
where $W(g)=\frakg/(\frakp_G+\Ad(g^{-1})\frakp_H)$. If $P_HgP_G$ is a non-open orbit, then by Theorem~\ref{thm:AHProjectionStabilizers} the stabilizer $P_H\cap gP_Gg^{-1}$ of $gP_G$ in $P_H$ contains a one-parameter subgroup $\exp(\RR Z)$, where $Z=Z_M+Z_A+Z_N\in(\frakm_G\cap\frakm_H)+\fraka_H+\frakn_H$ with $Z_A\neq0$. Note that
$$ X = \Ad(g^{-1})Z = X_M + X_A + X_N \in \frakp_G\cap\Ad(g^{-1})\frakp_H $$
with $X_M=\Ad(\tilde{w}^{-1})Z_M\in\frakm_G$ and $X_A=\Ad(\tilde{w}^{-1})Z_A\in\fraka_G$, $X_A\neq0$. Then
\begin{align*}
 \big(\xi\otimes e^{\lambda-\rho_{\frakn_G}}\otimes\1\big)(e^{tX}) &= e^{t(\lambda-\rho_{\frakn_G})(X_A)}\cdot\xi(e^{tX_M}),\\
 \big(\eta\otimes e^{\nu+\rho_{\frakn_H}}\otimes\1\big)^{g^{-1}}(e^{tX}) &= e^{t(\nu+\rho_{\frakn_H})(Z_A)}\cdot\eta(e^{tZ_M}).
\end{align*}
Further, $\delta_{P_G}(e^{tX})=e^{-2t\rho_{\frakn_G}(X_A)}$ and $\delta_{g^{-1}P_Hg}(e^{tX})=e^{-2t\rho_{\frakn_H}(Z_A)}$.

\begin{lemma}\label{lem:ModularFctStabilizer}
$\delta_{P_G\cap g^{-1}P_Hg}(e^{tX})=\exp(-t\sum_{\alpha\in\Sigma^+(\frakg,\fraka_G)}n_\alpha\alpha(X_A))$ for some integers $0\leq n_\alpha\leq\dim\frakg(\fraka_G;\alpha)$.
\end{lemma}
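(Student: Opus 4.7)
Since $X \in \fraks := \frakp_G\cap\Ad(g^{-1})\frakp_H$ lies in the Lie algebra of the group in question, I would start from
\[
 \delta_{P_G\cap g^{-1}P_Hg}(e^{tX}) = |\det\Ad(e^{-tX})|_{\fraks}| = \exp\bigl(-t\,\Re\tr\ad(X)|_{\fraks}\bigr),
\]
reducing the problem to computing $\Re\tr\ad(X)|_\fraks$ and expressing it as $\sum_\alpha n_\alpha\alpha(X_A)$ with the desired integers.

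My strategy is to filter $\frakp_{G,\CC}$ by $\fraka_G$-root spaces and pass to the associated graded, where the contributions of the three summands $X_A$, $X_M$, $X_N$ decouple. Enumerate $\Sigma^+(\frakg,\fraka_G)\cup\{0\}=\{\alpha_0=0,\alpha_1,\alpha_2,\ldots\}$ refining root height, so that whenever $\beta\in\Sigma^+(\frakg,\fraka_G)$ and $\alpha_i+\beta\in\Sigma^+(\frakg,\fraka_G)$ the index of $\alpha_i+\beta$ is strictly larger than $i$, and set $F_k=\bigoplus_{i\geq k}\frakg_\CC(\fraka_G;\alpha_i)$. Then $\ad(X_A)$ and $\ad(X_M)$ both preserve every weight space $\frakg_\CC(\fraka_G;\alpha)$ (the latter because $[\frakm_G,\fraka_G]=0$), while $\ad(X_N)$ strictly raises the filtration, $\ad(X_N)(F_k)\subseteq F_{k+1}$. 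Consequently $\ad(X)$ preserves $\fraks_{\CC,k}:=\fraks_\CC\cap F_k$ and descends to each associated graded piece, with $\ad(X_N)$ acting as zero there.

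Let $V_k\subseteq\frakg_\CC(\fraka_G;\alpha_k)$ be the image of $\gr_k\fraks_\CC=\fraks_{\CC,k}/\fraks_{\CC,k+1}$ under the isomorphism $F_k/F_{k+1}\cong\frakg_\CC(\fraka_G;\alpha_k)$, and set $n_{\alpha_k}=\dim V_k$. A short calculation (decompose $u\in\fraks_{\CC,k}$ as $u=v+u'$ with $v\in\frakg_\CC(\fraka_G;\alpha_k)$ and $u'\in F_{k+1}$, then project $\ad(X)u$ to $\frakg_\CC(\fraka_G;\alpha_k)$) shows the induced endomorphism of $V_k$ is
\[
 \phi_k\colon v\longmapsto\alpha_k(X_A)v+\ad(X_M)v.
\]
Since $\phi_k$ maps $V_k$ into $V_k$ and the scalar part trivially does, $\ad(X_M)$ must preserve $V_k$ as well. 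But $X_M\in\frakm_G\subseteq\frakk$ makes $\ad(X_M)$ skew-Hermitian on $\frakg_\CC$ with respect to the standard Hermitian inner product $-B(\blank,\theta\overline{\blank})$, so $\tr(\ad(X_M)|_{V_k})$ is purely imaginary. Summing over $k$ and taking real parts (the $k=0$ term vanishes since $\alpha_0=0$) gives $\Re\tr\ad(X)|_\fraks=\sum_{\alpha\in\Sigma^+(\frakg,\fraka_G)}n_\alpha\alpha(X_A)$, and the bound $n_\alpha\leq\dim\frakg(\fraka_G;\alpha)$ is immediate from $V_k\subseteq\frakg_\CC(\fraka_G;\alpha_k)$.

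The delicate point is the assertion that $\ad(X_M)$ preserves each $V_k$ despite $X_M$ not necessarily normalizing $\fraks_\CC$ itself. What saves us is that one does not require the individual summands of $X$ to respect the filtration on $\fraks_\CC$; it is enough that $X$ does, and then isolating the visible scalar contribution $\alpha_k(X_A)\,\mathrm{id}$ on the graded piece forces the remainder, namely the action of $\ad(X_M)$, to preserve $V_k$ as well. Once this is in hand, the compactness of $X_M$ kills its real contribution to the trace automatically.
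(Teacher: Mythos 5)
Your proof is correct and takes essentially the same route as the paper's: filter $\frakp_G\cap\Ad(g^{-1})\frakp_H$ by $\fraka_G$-weights so that $\ad(X_N)$ strictly raises the filtration, read off the contribution $\alpha(X_A)\cdot\dim V_k$ on each graded piece, and kill the $\ad(X_M)$-contribution using the compactness of $M_G$. The differences are only presentational — the paper argues with explicit bases (orthogonal $\frakm_G$-components to discard the $\frakm_G\oplus\fraka_G$-type directions, then a filtered basis of $\frakn_G\cap\Ad(g^{-1})\frakp_H$) instead of your associated-graded/skew-Hermitian packaging, and since $\frakg$ is only assumed reductive you should replace $-B(\blank,\theta\overline{\blank})$ by an arbitrary $\Ad(M_G)$-invariant inner product (as the paper does), which affects nothing.
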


\begin{proof}
We compute $\delta_{P_G\cap g^{-1}P_Hg}(e^X)$ for $X=X_M+X_A+X_N\in\frakm_G+\fraka_G+\frakn_G$ using the formula
$$ \delta_{P_G\cap g^{-1}P_Hg}(e^X) = \exp(-\tr\ad(X)|_{\frakp_G\cap\Ad(g^{-1})\frakp_H}). $$
Choose a basis $(Y_\alpha)_\alpha\cup(Y_\beta)_\beta\cup(Y_\gamma)_\gamma$ of $\frakp_G\cap\Ad(g^{-1})\frakp_H$ with the following properties:
\begin{enumerate}
\item $Y_\alpha=Y_{\alpha,M}+Y_{\alpha,A}+Y_{\alpha,N}\in\frakm_G+\fraka_G+\frakn_G$ such that $(Y_{\alpha,M})_\alpha$ is an orthogonal basis of the projection of $\frakp_G\cap\Ad(g^{-1})\frakp_H$ to $\frakm_G$.
\item $Y_\beta=Y_{\beta,A}+Y_{\beta,N}\in\fraka_G+\frakn_G$ such that $(Y_{\beta,A})_\beta$ is linearly independent.
\item $Y_\gamma=Y_{\gamma,N}\in\frakn_G$ such that $(Y_{\gamma,N})_\gamma$ is a basis of $\frakn_G\cap\Ad(g^{-1})\frakp_H$.
\end{enumerate}
Here orthogonal bases are taken with respect to any $\frakm_G$-invariant inner product on $\frakg$. We now study the action of $\ad(X)$ on $Y_\alpha$, $Y_\beta$ and $Y_\gamma$. First,
$$ \ad(X)Y_\alpha = [X_M,Y_{\alpha,M}] + [X_N,Y_{\alpha,M}+Y_{\alpha,A}] + [X,Y_{\alpha,N}] \in [X_M,Y_{\alpha,M}] + \frakn_G. $$
Note that $[X_M,Y_{\alpha,M}]\perp Y_{\alpha,M}$ since the inner product is $\frakm_G$-invariant. Hence, the coefficient of $Y_\alpha$ in the expression of $\ad(X)Y_\alpha$ as a linear combination of the basis elements is zero. This implies that the contribution of the basis elements $Y_\alpha$ to the trace is trivial. Next we have
$$ \ad(X)Y_\beta = [X_N,Y_{\beta,A}] + [X,Y_{\beta,N}] \in \frakn_G $$
so that $\ad(X)Y_\beta$ is a linear combination of the basis elements $Y_\gamma$. Therefore, also the basis elements $Y_\beta$ do not contribute to the trace and we have
$$ \tr\ad(X)|_{\frakp_G\cap\Ad(g^{-1})\frakp_H} = \tr\ad(X)|_{\frakn_G\cap\Ad(g^{-1})\frakp_H}. $$
We now specify a basis $(Y_\gamma)_\gamma$ of $\frakn_G\cap\Ad(g^{-1})\frakp_H$. Order the positive roots $\Sigma^+(\frakg,\fraka_G)=\{\alpha_1,\ldots,\alpha_n\}$ such that $[\frakn_G,\frakg(\fraka_G;\alpha_j)]\subseteq\bigoplus_{i=j+1}^n\frakg(\fraka_G;\alpha_i)$. Then we can choose the basis $\{Y_\gamma=Y_{j,k}:1\leq j\leq n,1\leq k\leq n_j\}$ such that 
\begin{itemize}
\item $Y_{j,k}=\sum_{i=j}^nY_{j,k}^i\in\bigoplus_{i=j}^n\frakg(\fraka_G;\alpha_i)$ for all $1\leq j\leq n$, $1\leq k\leq n_j$,
\item $(Y_{j,k}^j)_{k=1,\ldots,n_j}$ are mutually orthogonal with respect to a $\frakm_G$-invariant inner product on $\frakg(\fraka_G;\alpha_j)$.
\end{itemize}
We have
$$ \ad(X)Y_{j,k} = [X_M+X_A,Y_{j,k}]+[X_N,Y_{j,k}] \in [X_M+X_A,Y_{j,k}^j] + \bigoplus_{i=j+1}^n\frakg(\fraka_G;\alpha_i). $$
Since $[X_M,Y_{j,k}^j]\perp Y_{j,k}^j$ we obtain
$$ \ad(X)Y_{j,k} \in \alpha_j(X_A)Y_{j,k} + \bigoplus_{\ell\neq k}\CC Y_{j,\ell} + \bigoplus_{i=j+1}^n\frakg(\fraka_G;\alpha_i), $$
so that
\begin{equation*}
 \tr\ad(X)|_{\frakn_G\cap\Ad(g^{-1})\frakp_H} = \sum_{j=1}^n n_j\alpha_j(X_A).\qedhere
\end{equation*}
\end{proof}

\begin{lemma}\label{lem:NonOpenOrbitsSymPowers}
Let $P\in S^m(W(g))$, $P\neq0$, with $\ad(X)P=\lambda P$ for some $\lambda\in\CC$; then we have $\Re\lambda=-\sum_{\alpha\in\Sigma^+(\frakg,\fraka_G)}m_\alpha\alpha(X_A)$ for some integers $m_\alpha\geq0$. If additionally $X_M=0$ then $\lambda=-\sum_{\alpha\in\Sigma^+(\frakg,\fraka_G)}m_\alpha\alpha(X_A)$.
\end{lemma}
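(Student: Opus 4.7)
The plan is to determine the eigenvalues of $\ad(X)$ on $\frakg$ by an explicit weight analysis, and then descend the information to the quotient $W(g)$ and to $S^m(W(g))$. Since $X\in\frakp_G\cap\Ad(g^{-1})\frakp_H$ lies in both Lie subalgebras whose sum we are quotienting out by, $\ad(X)$ preserves $\frakp_G+\Ad(g^{-1})\frakp_H$ and descends to an endomorphism of $W(g)$, whose derivation extension governs the action on $S^m(W(g))$.

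Next, I decompose $\frakg=\bigoplus_\alpha\frakg(\fraka_G;\alpha)$ into $\fraka_G$-weight spaces and fix a basis of weight vectors, with weights ordered so that $\ad(\frakn_G)$ is strictly weight-raising. In this basis $\ad(X_A)$ is diagonal with scalar $\alpha(X_A)$ on $\frakg(\fraka_G;\alpha)$; $\ad(X_M)$ is block-diagonal since $X_M\in\frakm_G$ commutes with $\fraka_G$, and acts on each weight space with purely imaginary eigenvalues, because $\frakm_G$ is the Lie algebra of a compact group; and $\ad(X_N)$ is strictly block upper triangular since $X_N\in\frakn_G$. Thus $\ad(X)$ itself is block upper triangular, and its eigenvalues on $\frakg$ (with multiplicity) are precisely $\alpha(X_A)+\mu$, where $\mu$ ranges over the eigenvalues of $\ad(X_M)|_{\frakg(\fraka_G;\alpha)}$ and satisfies $\Re\mu=0$.

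Since $\frakp_G=\frakm_G\oplus\fraka_G\oplus\frakn_G$ contains every non-negative weight space, the canonical projection identifies $\frakg/\frakp_G\cong\overline{\frakn}_G$ and retains only contributions from $\alpha<0$. Consequently the eigenvalues of $\ad(X)$ on $\frakg/\frakp_G$, and hence on the further quotient $W(g)$, all have the form $-\alpha(X_A)+\mu$ with $\alpha\in\Sigma^+(\frakg,\fraka_G)$ and $\Re\mu=0$. Passing to $S^m(W(g))$ via the derivation action, each eigenvalue is a sum of $m$ eigenvalues of $\ad(X)$ on $W(g)$ (as can be read off from the Jordan normal form after base change to $\CC$). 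Collecting terms by root then yields
\[
 \Re\lambda=-\sum_{\alpha\in\Sigma^+(\frakg,\fraka_G)} m_\alpha\alpha(X_A)
\]
with nonnegative integers $m_\alpha$. If moreover $X_M=0$, every $\mu$ vanishes and $\lambda$ itself is real and equal to this sum.

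The only subtlety, and the step that warrants care rather than being routine, is verifying that the block upper triangular form of $\ad(X)$ obtained from the $\fraka_G$-weight analysis on $\frakg$ genuinely forces its eigenvalues to come from the diagonal blocks, and that this persists after quotienting by $\frakp_G+\Ad(g^{-1})\frakp_H$ (whose second summand is not $\fraka_G$-stable) and after passing to symmetric powers. I would handle this cleanly by working with the Jordan decomposition of $\ad(X)$ on $\frakg$ and noting that eigenvalues of $\ad(X)$ on a stable subquotient form a sub-multiset, while the eigenvalues on $S^m$ are computed from the induced Jordan decomposition on $W(g)$.
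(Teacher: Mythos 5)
Your proof is correct, and its backbone---triangularizing $\ad(X)$ with respect to the $\fraka_G$-weight grading, with diagonal contributions $\alpha(X_A)$ plus purely imaginary eigenvalues of $\ad(X_M)$---is the same as the paper's, but the execution is genuinely different. The paper identifies $W(g)\simeq\overline{\frakn}_G/(\overline{\frakn}_G\cap(\frakp_G+\Ad(g^{-1})\frakp_H))$ and builds an explicit adapted basis: vectors $Y_{j,k}$ of the intersection with leading terms in single negative root spaces, completed by $\ad(X_M)$-eigenvectors $Z_{j,k}$ whose classes give a basis of $W(g)$; the eigenvalue is then read off from the coefficient of a minimal monomial $\overline{Z}_{j_1,k_1}\cdots\overline{Z}_{j_m,k_m}$ in $P$. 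You replace this construction by two pieces of abstract linear algebra: the spectrum of $\ad(X)$ on an $\ad(X)$-stable subquotient is a sub-multiset of its spectrum on $\frakg_\CC$ (computed once from the block-triangular form over the weight spaces), and the derivation action on $S^m$ has eigenvalues that are $m$-fold sums. This is legitimate and arguably cleaner: the only stability needed for the quotient $W(g)$ is $\ad(X)$-stability of $\frakp_G+\Ad(g^{-1})\frakp_H$, which follows from $X\in\frakp_G\cap\Ad(g^{-1})\frakp_H$, so the failure of $\fraka_G$-stability of $\Ad(g^{-1})\frakp_H$---the subtlety you flag---is harmless, and your resolution via the Jordan form and the sub-multiset principle is complete. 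What the paper's explicit basis buys is concrete representatives and eigenvectors inside $\overline{\frakn}_{G,\CC}$, convenient for the surrounding computation in Section~\ref{sec:InvDistributionsNonOpenOrbits}; what your version buys is brevity and robustness, since no adapted bases or monomial orderings need to be tracked, and your signs are consistent with the statement of the lemma throughout.
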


\begin{proof}
First note that $W(g)=\frakg/(\frakp_G+\Ad(g)^{-1}\frakp_H)\simeq\overline{\frakn}_G/(\overline{\frakn}_G\cap(\frakp_G+\Ad(g)^{-1}\frakp_H))$. We order the positive roots $\Sigma^+(\frakg,\fraka_G)=\{\alpha_1,\ldots,\alpha_n\}$ such that $[\frakn_G,\frakg(\fraka_G;-\alpha_j)]\subseteq\bigoplus_{i=j+1}^n\frakg(\fraka_G;-\alpha_i)$. Then we can choose a basis $\{Y_{j,k}:1\leq j\leq n,1\leq k\leq n_j\}$ of $\overline{\frakn}_G\cap(\frakp_G+\Ad(g)^{-1}\frakp_H)$ such that
\begin{itemize}
\item $Y_{j,k}=\sum_{i=j}^nY_{j,k}^i\in\bigoplus_{i=j}^n\frakg(\fraka_G;-\alpha_i)$ for all $1\leq j\leq n$, $1\leq k\leq n_j$,
\item $(Y_{j,k}^j)_{k=1,\ldots,n_j}\subseteq\frakg(\fraka_G;-\alpha_j)$ are linearly independent for every $1\leq j\leq n$.
\end{itemize}
For every $1\leq j\leq n$ we extend $(Y_{j,k}^j)_{k=1,\ldots,n_j}$ to a $\CC$-basis $(Y_{j,k}^j)_{k=1,\ldots,n_j}\cup(Z_{j,k})_{k=1,\ldots,m_j}$ of $\frakg(\fraka_G;-\alpha_j)_\CC$; then the equivalence classes $\overline{Z}_{j,k}=Z_{j,k}+(\overline{\frakn}_G\cap(\frakp_G+\Ad(g)^{-1}\frakp_H))$, $1\leq j\leq n$, $1\leq k\leq m_j$, form a basis of $\overline{\frakn}_G/(\overline{\frakn}_G\cap(\frakp_G+\Ad(g)^{-1}\frakp_H))$. Since $X_M\in\frakm_G$ is contained in a maximal torus in $\frakm_G$, we may assume that every $Z_{j,k}$ is an eigenvector of $\ad(X_M)$ with imaginary eigenvalue: $\ad(X_M)Z_{j,k}=\sqrt{-1}\lambda_{j,k}Z_{j,k}$. Further, by definition we have $\ad(X_A)Z_{j,k}=\alpha_j(X_A)Z_{j,k}$, so that
$$ \ad(X)Z_{j,k} \in (\alpha_j(X_A)+\sqrt{-1}\lambda_{j,k})Z_{j,k} + \bigoplus_{i=j+1}^n\frakg(\fraka_G;-\alpha_i)_\CC. $$
Now assume $P\in S^m(W(g))$, $P\neq0$, with $\ad(X)P=\lambda P$. Write $P$ as a linear combination of the basis elements $\overline{Z}_{j_1,k_1}\cdots\overline{Z}_{j_m,k_m}$ of $S^m(W(g))$; then there exist $1\leq j_1\leq\ldots\leq j_m\leq n$ and $1\leq k_i\leq n_{j_i}$ such that the coefficient of $\overline{Z}_{j_1,k_1}\cdots\overline{Z}_{j_m,k_m}$ in $P$ is non-zero. Choose such $(j_1,k_1),\ldots,(j_m,k_m)$ with the property that $j_1,\ldots,j_m$ are minimal. Considering only the coefficient of $\overline{Z}_{j_1,k_1}\cdots\overline{Z}_{j_m,k_m}$ in $\ad(X)P=\lambda P$ we obtain
$$ \lambda = \alpha_{j_1}(X_A)+\cdots+\alpha_{j_m}(X_A)+\sqrt{-1}\lambda_{j_1,k_1}+\cdots\sqrt{-1}\lambda_{j_m,k_m}, $$
which implies the claim.
\end{proof}

Now assume that $\phi\in V_\xi^\vee\otimes W_\eta\otimes S^m(W(g))$, $\phi\neq0$, is contained in the space of invariants \eqref{eq:NonOpenOrbitsInvariants}. Since $X_M$ resp. $Z_M$ is contained in a maximal torus in $\frakm_G$ resp. $\frakm_H$, there exist bases $(v_i)_i$ of $V_\xi^\vee$ and $(w_j)_j$ of $W_\eta$ such that $\xi^\vee(X_M)v_i=\sqrt{-1}a_iv_i$ and $\eta(Z_M)w_j=\sqrt{-1}b_jw_j$ with $a_i,b_j\in\RR$. Write $\phi$ in terms of this basis as
$$ \phi = \sum_{i,j}v_i\otimes w_j\otimes P_{i,j} $$
with $P_{i,j}\in S^m(W(g))$; then $X\cdot\phi=0$ implies, using Lemma~\ref{lem:ModularFctStabilizer}
\begin{multline*}
 \ad(X)P_{i,j} = -\Big(a_i\sqrt{-1}-(\lambda-\rho_{\frakn_G})(X_A)+b_j\sqrt{-1}+(\nu+\rho_{\frakn_H})(wX_A)\\
 -2\rho_{\frakn_G}(X_A)-2\rho_{\frakn_H}(wX_A)+\sum_{\alpha\in\Sigma^+(\frakg,\fraka_G)}n_\alpha\alpha(X_A)\Big)P_{i,j}.
\end{multline*}
At least one $P_{i,j}$ is non-trivial and hence it follows from Lemma~\ref{lem:NonOpenOrbitsSymPowers} that
\begin{multline*}
 w(\lambda-\rho_{\frakn_G})(Z_A)-(\nu-\rho_{\frakn_H})(Z_A)\\
 = -\sum_{\alpha\in\Sigma^+(\frakg,\fraka_G)}\underbrace{\big(m_\alpha+(\dim\frakg(\fraka_G;\alpha)-n_\alpha)\big)}_{\geq0}w\alpha(Z_A)+(a_i+b_j)\sqrt{-1}.
\end{multline*}
Hence, the space \eqref{eq:NonOpenOrbitsInvariants} of invariants must be trivial if \eqref{eq:GenericCondition} is satisfied. This completes the proof of Theorem~\ref{thm:MultiplicityBounds}.

\section{Application to Shintani functions}\label{sec:ShintaniFunctions}

In \cite{Kob14} Kobayashi established a connection between symmetry breaking operators and Shintani functions of a pair $(G,H)$ of real reductive groups. Combining his results with Corollary~\ref{cor:LowerBoundsMultiplicities} we prove lower bounds for the dimension of the space of Shintani functions (see Theorem~\ref{thm:LowerBoundsShintaniFunctions}).

\subsection{Shintani functions for real reductive groups}

Let $\frakt_G\subseteq\frakm_G$ and $\frakt_H\subseteq\frakm_H$ be Cartan subalgebras of $\frakm_G$ and $\frakm_H$; then $\frakj_G=\frakt_G+\fraka_G\subseteq\frakg$ and $\frakj_H=\frakt_H+\fraka_H\subseteq\frakh$ are maximally split Cartan subalgebras of $\frakg$ and $\frakh$. We identify $\frakj_{G,\CC}^\vee\simeq\frakt_{G,\CC}^\vee\oplus\fraka_{G,\CC}^\vee$ and $\frakj_{H,\CC}^\vee\simeq\frakt_{H,\CC}^\vee\oplus\fraka_{H,\CC}^\vee$. Let us choose positive systems $\Sigma^+(\frakg_\CC,\frakj_{G,\CC})\subseteq\Sigma(\frakg_\CC,\frakj_{G,\CC})$ and $\Sigma^+(\frakh_\CC,\frakj_{H,\CC})\subseteq\Sigma(\frakh_\CC,\frakj_{H,\CC})$ such that the restriction of a positive root to $\fraka_G$ resp. $\fraka_H$ is either zero or contained in $\Sigma^+(\frakg,\fraka_G)$ resp. $\Sigma^+(\frakh,\fraka_H)$. Write $\rho_\frakg$ resp. $\rho_\frakh$ for the half sum of all roots in $\Sigma^+(\frakg_\CC,\frakj_{G,\CC})$ resp. $\Sigma^+(\frakh_\CC,\frakj_{H,\CC})$. Then $\rho_\frakg=\rho_{\frakm_G}+\rho_{\frakn_G}$ with $\rho_{\frakm_G}=\rho_\frakg|_{\frakt_G}$, and similarly $\rho_\frakh=\rho_{\frakm_H}+\rho_{\frakn_H}$. Further, let $W(\frakj_{G,\CC})$ and $W(\frakj_{H,\CC})$ denote the Weyl groups of $\Sigma(\frakg_\CC,\frakj_{G,\CC})$ and $\Sigma(\frakh_\CC,\frakj_{H,\CC})$.

The Harish-Chandra isomorphism provides a natural identification
$$ \frakj_{G,\CC}^\vee/W(\frakj_{G,\CC}) \stackrel{\sim}{\longrightarrow} \Hom_{\CC-\rm alg}(Z(\frakg_\CC),\CC), \quad \Lambda\mapsto\chi_\Lambda, $$
where $Z(\frakg_\CC)$ denotes the center of the universal enveloping algebra $U(\frakg_\CC)$ of $\frakg_\CC$. We use the same notation $\chi_\Xi\in\Hom_{\CC-\rm alg}(Z(\frakh_\CC),\CC)$ for $\Xi\in\frakj_{H,\CC}^\vee$. The left resp. right action of $U(\frakg_\CC)$ on $C^\infty(G)$ will be denoted by $L_u$ resp. $R_u$, $u\in U(\frakg_\CC)$.

\begin{definition}[{see \cite[Definition 1.1]{Kob14}, \cite{MS96}}]
A function $f\in C^\infty(G)$ is called a \textit{Shintani function of type $(\Lambda,\Xi)\in\frakj_{G,\CC}^\vee\times\frakj_{H,\CC}^\vee$} if $f$ satisfies the following three properties:
\begin{enumerate}
\item $f(k'gk)=f(g)$ for any $k'\in H\cap K$, $k\in K$.
\item $R_uf=\chi_\Lambda(u)f$ for any $u\in Z(\frakg_\CC)$.
\item $L_vf=\chi_\Xi(v)f$ for any $v\in Z(\frakh_\CC)$.
\end{enumerate}
The space of Shintani functions of type $(\Lambda,\Xi)$ is denoted by $\Sh(\Lambda,\Xi)$. We write $\Sh_{\rm mod}(\Lambda,\Xi)$ for the subspace of Shintani functions of moderate growth.
\end{definition}

For the definition of \textit{moderate growth} see e.g. \cite[Definition 3.3]{Kob14}. The following result is due to Kobayashi:

\begin{fact}[{see \cite[Theorem 1.2 and 8.1]{Kob14}}]
Let $(G,H)$ be a pair of real reductive algebraic groups.
\begin{enumerate}
\item $\dim\Sh(\Lambda,\Xi)<\infty$ for all $(\Lambda,\Xi)\in\frakj_{G,\CC}^\vee\times\frakj_{H,\CC}^\vee$ if and only if $(G,H)$ is strongly spherical.
\item $\dim\Sh(\Lambda,\Xi)\neq0$ implies $\Lambda\in W(\frakj_{G,\CC})(\rho_{\frakm_G}+\fraka_{G,\CC}^\vee)$ and $\Xi\in W(\frakj_{H,\CC})(\rho_{\frakm_H}+\fraka_{H,\CC}^\vee)$.
\end{enumerate}
\end{fact}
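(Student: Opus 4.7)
The plan is to exhibit a canonical isomorphism between $\Sh(\Lambda,\Xi)$ and a space of symmetry breaking operators between spherical principal series representations of $G$ and $H$, and then deduce both assertions from Fact~\ref{fact:FiniteMultiplicities} together with standard structural facts about $K$-spherical $(\frakg_\CC,K)$-modules.

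I would first dispose of (2). Let $f\in\Sh(\Lambda,\Xi)$ be nonzero and let $V\subseteq C^\infty(G)$ be the cyclic $(\frakg_\CC,K)$-submodule generated by $f$ under the right regular representation. By hypothesis $Z(\frakg_\CC)$ acts on $V$ by $\chi_\Lambda$, and $f$ itself is a nonzero $K$-fixed vector, so every irreducible subquotient of $V$ is a $K$-spherical irreducible $(\frakg_\CC,K)$-module with infinitesimal character $\chi_\Lambda$. Such modules are classified by the Cartan--Helgason theorem combined with the Casselman--Harish-Chandra subrepresentation theorem: each arises as a subquotient of some spherical principal series $\pi_\lambda$ with $\lambda\in\fraka_{G,\CC}^\vee$, and therefore has infinitesimal character $\chi_{\rho_{\frakm_G}+\lambda}$. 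Hence $\Lambda\in W(\frakj_{G,\CC})(\rho_{\frakm_G}+\fraka_{G,\CC}^\vee)$, and the same reasoning applied to the left action on $H$ proves the constraint on $\Xi$.

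For (1), by (2) we may take $\Lambda=\rho_{\frakm_G}+\lambda$ and $\Xi=\rho_{\frakm_H}+\nu$ with $(\lambda,\nu)\in\fraka_{G,\CC}^\vee\times\fraka_{H,\CC}^\vee$. Fix the canonical nonzero $K$-fixed vector $\eta_K\in\pi_\lambda$ (one-dimensional by Frobenius reciprocity) and an analogous nonzero $(H\cap K)$-fixed distribution vector $\eta_{H\cap K}\in\tau_\nu^\vee$. For $A\in\Hom_H(\pi_\lambda|_H,\tau_\nu)$ set
$$ f_A(g) = \bigl\langle\eta_{H\cap K},\,A(\pi_\lambda(g)\eta_K)\bigr\rangle, \qquad g\in G. $$
A direct calculation shows $f_A\in\Sh(\Lambda,\Xi)$: the right-$K$-invariance is forced by $\pi_\lambda(k)\eta_K=\eta_K$, the left-$(H\cap K)$-invariance by the $H$-equivariance of $A$ together with invariance of $\eta_{H\cap K}$, and the two infinitesimal character conditions by the fact that $Z(\frakg_\CC)$ acts on $\pi_\lambda$ through $\chi_\Lambda$ and $Z(\frakh_\CC)$ on $\tau_\nu$ through $\chi_\Xi$. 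Injectivity of $A\mapsto f_A$ follows from cyclicity of $\eta_K$ in $\pi_\lambda$ as a $G$-module; for surjectivity, given $f\in\Sh(\Lambda,\Xi)$, the cyclic $(\frakg_\CC,K)$-submodule it generates admits, by universality of $K$-spherical modules with fixed infinitesimal character, a canonical map to $\pi_\lambda$ transferring $f$ to a matrix coefficient of the desired form. Thus $\Sh(\Lambda,\Xi)\cong\Hom_H(\pi_\lambda|_H,\tau_\nu)$, and Fact~\ref{fact:FiniteMultiplicities} delivers the strongly spherical direction of (1).

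The main obstacle is the converse in (1). One needs to exhibit, under failure of strong sphericity, some $(\Lambda,\Xi)$ with $\dim\Sh(\Lambda,\Xi)=\infty$. Through the Frobenius isomorphism this reduces to producing $(\lambda,\nu)$ with $\dim\Hom_H(\pi_\lambda|_H,\tau_\nu)=\infty$. The natural route combines the geometric criterion of Proposition~\ref{prop:CharacterizationStronglySpherical}---which in the non-strongly-spherical case yields $\#(P_H\backslash G/P_G)=\infty$---with a Bruhat-type construction producing an infinite family of linearly independent $(P_H\times P_G)$-invariant distribution kernels supported on distinct double cosets. The delicate point is ensuring that these kernels survive the passage back to Shintani functions, i.e.\ that they pair non-trivially with the canonical spherical vectors $\eta_K,\eta_{H\cap K}$; this is essentially the algebraic content of the converse part of Fact~\ref{fact:FiniteMultiplicities} under the additional algebraicity hypothesis on $G$ and $H$.
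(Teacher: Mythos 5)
This statement is quoted in the paper as an external Fact from \cite{Kob14}; the paper gives no proof of it, so your proposal has to stand on its own, and it does not. Your treatment of (2) is essentially sound after a small repair: it is not true that \emph{every} irreducible subquotient of the cyclic module $V=U(\frakg_\CC)f$ is $K$-spherical, but since $f$ generates $V$ and is $K$-fixed, any irreducible \emph{quotient} $V/W$ contains the nonzero $K$-fixed image of $f$; combined with the fact that a finitely generated $(\frakg,K)$-module with an infinitesimal character is admissible of finite length, and with the subrepresentation theorem plus Frobenius reciprocity (rather than Cartan--Helgason, which concerns finite-dimensional representations), this gives $\Lambda\in W(\frakj_{G,\CC})(\rho_{\frakm_G}+\fraka_{G,\CC}^\vee)$, and likewise for $\Xi$.

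The genuine gap is in (1). Your argument rests on the claimed isomorphism $\Sh(\Lambda,\Xi)\cong\Hom_H(\pi_\lambda|_H,\tau_\nu)$, and this is false as stated, or at least far beyond what is known. What holds (and is exactly what this paper quotes from \cite[Theorem 8.2, Remark 8.3]{Kob14} in Section~\ref{sec:ShintaniFunctions}) is an embedding $\Hom_H(\pi_{\lambda_+}|_H,\tau_{\nu_-})\hookrightarrow\Sh_{\rm mod}(\lambda,\nu)$ into the \emph{moderate growth} Shintani functions, an isomorphism only for generic $(\lambda,\nu)$ and after choosing suitable Weyl chamber representatives; whether $\Sh_{\rm mod}(\lambda,\nu)=\Sh(\lambda,\nu)$ is precisely the open point recorded after Theorem~\ref{thm:LowerBoundsShintaniFunctions} (cf. \cite[Remark 10.2~(4)]{Kob14}). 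Your surjectivity step (``universality of $K$-spherical modules\dots a canonical map to $\pi_\lambda$ transferring $f$ to a matrix coefficient'') is where this breaks: a Shintani function with no growth hypothesis need not be a matrix coefficient of a \emph{continuous} operator between the smooth principal series, and even under moderate growth one needs Casselman--Wallach/automatic continuity input, while at singular parameters the map $A\mapsto f_A$ can fail to be injective (the spherical vector $\eta_K$ need not be cyclic in a reducible $\pi_\lambda$, nor need $\eta_{H\cap K}$ be separating). Since assertion (1) demands finiteness of $\Sh(\Lambda,\Xi)$ for \emph{every} parameter, a generically valid correspondence does not suffice, so the direction ``strongly spherical $\Rightarrow$ finiteness'' is not a formal corollary of Fact~\ref{fact:FiniteMultiplicities}; Kobayashi's proof proceeds instead through the analysis of joint eigenfunctions on the real spherical space $(G\times H)/\diag(H)$ and the distributional finite-multiplicity machinery of \cite{KO13}. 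The converse direction in your sketch is likewise incomplete: the converse of Fact~\ref{fact:FiniteMultiplicities} produces representations $\pi,\tau$ with infinite multiplicity that need not be spherical principal series, and the ``delicate point'' you defer --- that infinitely many invariant kernels yield infinitely many linearly independent Shintani functions --- is exactly the substance that would have to be proved.
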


In view of this statement we abuse notation and write
$$ \Sh(\lambda,\nu) = \Sh(\rho_{\frakm_G}+\lambda,\rho_{\frakm_H}+\nu), \qquad (\lambda,\nu)\in\fraka_{G,\CC}^\vee\times\fraka_{H,\CC}^\vee. $$

\subsection{Lower bounds for $\dim\Sh(\lambda,\nu)$}

The following statement gives more detailed information about $\dim\Sh(\lambda,\nu)$:

\begin{theorem}\label{thm:LowerBoundsShintaniFunctions}
Assume that $(G,H)$ is a strongly spherical reductive pair such that $(\frakg,\frakh)$ is non-trivial and indecomposable. Then for all $(\lambda,\nu)\in\fraka_{G,\CC}^\vee\times\fraka_{H,\CC}^\vee$ we have
$$ \dim\Sh(\lambda,\nu)\geq\dim\Sh_{\rm mod}(\lambda,\nu)\geq\#(P_H\backslash G/P_G)_{\rm open}, $$
and for generic $(\lambda,\nu)\in\fraka_{G,\CC}^\vee\times\fraka_{H,\CC}^\vee$ we have
$$ \dim\Sh_{\rm mod}(\lambda,\nu) = \#(P_H\backslash G/P_G)_{\rm open}. $$
\end{theorem}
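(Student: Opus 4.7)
The plan is to leverage Kobayashi's realization of Shintani functions of moderate growth in terms of symmetry breaking operators between spherical principal series, and then feed in the lower and generic upper bounds for these Hom-spaces that were already established in Sections~\ref{sec:ConstructionOfSBOs} and \ref{sec:InvariantDistributions}.

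First I would recall from \cite{Kob14} the linear map
$$ \iota:\Hom_H(\pi_\lambda|_H,\tau_\nu) \longrightarrow \Sh_{\rm mod}(\lambda,\nu), \qquad T \longmapsto f_T, $$
where $f_T(g)=\langle T(\pi_\lambda(g)\varphi_\lambda),\psi_\nu\rangle$ for a fixed nonzero $K$-fixed vector $\varphi_\lambda$ in $\pi_\lambda$ and a fixed nonzero $(H\cap K)$-fixed vector $\psi_\nu$ in $\tau_\nu^\vee$. One checks that $f_T$ is $(H\cap K)\times K$-bi-invariant, satisfies the two infinitesimal character conditions because $\varphi_\lambda$ generates $\pi_\lambda$ as a $(\frakg,K)$-module with central character $\chi_{\rho_{\frakm_G}+\lambda}$ (and analogously for $\psi_\nu$), and has moderate growth since matrix coefficients of principal series do. Kobayashi proves that $\iota$ is always injective, and an isomorphism for generic $(\lambda,\nu)\in\fraka_{G,\CC}^\vee\times\fraka_{H,\CC}^\vee$.

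Second, combining the injectivity of $\iota$ with the unconditional lower bound of Corollary~\ref{cor:LowerBoundsMultiplicities} and the obvious inclusion $\Sh_{\rm mod}(\lambda,\nu)\subseteq\Sh(\lambda,\nu)$ yields
$$ \dim\Sh(\lambda,\nu) \geq \dim\Sh_{\rm mod}(\lambda,\nu) \geq \dim\Hom_H(\pi_\lambda|_H,\tau_\nu) \geq \#(P_H\backslash G/P_G)_{\rm open} $$
for every $(\lambda,\nu)\in\fraka_{G,\CC}^\vee\times\fraka_{H,\CC}^\vee$, which is exactly the first assertion of the theorem.

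Third, for the generic identity I would combine Kobayashi's generic surjectivity of $\iota$ with Corollary~\ref{cor:GenericMultiplicities}, which provides the matching upper bound. The intersection of the two ``generic'' conditions (the one from \eqref{eq:GenericCondition} and the one appearing in Kobayashi's theorem) is still the complement of a proper subvariety of $\fraka_{G,\CC}^\vee\times\fraka_{H,\CC}^\vee$, and on this locus we obtain
$$ \dim\Sh_{\rm mod}(\lambda,\nu) = \dim\Hom_H(\pi_\lambda|_H,\tau_\nu) = \#(P_H\backslash G/P_G)_{\rm open}, $$
as required. The main obstacle in this proof is not of our making: it is precisely the construction and injectivity of the transform $\iota$ above, whose verification is carried out in \cite{Kob14}; everything else is a direct substitution of already-proved multiplicity bounds.
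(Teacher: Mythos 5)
Your proposal is correct and follows essentially the same route as the paper: cite Kobayashi's result that symmetry breaking operators between spherical principal series embed into $\Sh_{\rm mod}(\lambda,\nu)$ (an isomorphism for generic parameters), then substitute Corollary~\ref{cor:LowerBoundsMultiplicities} for the unconditional lower bound and Corollary~\ref{cor:GenericMultiplicities} for the generic equality. The only cosmetic difference is that the paper applies Kobayashi's theorem to dominant/anti-dominant Weyl-group representatives $\lambda_+\in W(\fraka_G)\lambda$ and $\nu_-\in W(\fraka_H)(-\nu)$ rather than to $(\lambda,\nu)$ directly, which is immaterial here since the multiplicity bounds hold for all parameters.
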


\begin{proof}
Let $\lambda_+\in W(\fraka_G)\lambda$ and $\nu_-\in W(\fraka_H)(-\nu)$ such that $\Re\langle\lambda_+,\alpha\rangle\geq0$ for all $\alpha\in\Sigma^+(\frakg,\fraka_G)$ and $\Re\langle\nu_-,\beta\rangle\leq0$ for all $\beta\in\Sigma^+(\frakh,\fraka_H)$. Then by \cite[Theorem 8.2 and Remark 8.3]{Kob14} there exists a natural embedding
$$ \Hom_H(\pi_{\lambda_+}|_H,\tau_{\nu_-}) \hookrightarrow \Sh_{\rm mod}(\lambda,\nu) $$
which is an isomorphism for generic $(\lambda,\nu)$. Now the statement follows from Corollaries~\ref{cor:LowerBoundsMultiplicities} and \ref{cor:GenericMultiplicities}.
\end{proof}

\begin{remark}
According to Kobayashi~\cite[Remark 10.2~(4)]{Kob14} it is plausible that $\Sh_{\rm mod}(\lambda,\nu)=\Sh(\lambda,\nu)$ for all $(\lambda,\nu)\in\frakj_{G,\CC}^\vee\times\frakj_{H,\CC}^\vee$ if the pair $(G,H)$ is strongly spherical. In this case, Theorem~\ref{thm:LowerBoundsShintaniFunctions} would imply
$$ \dim\Sh(\lambda,\nu) = \#(P_H\backslash G/P_G)_{\rm open} $$
for generic $(\lambda,\nu)$.
\end{remark}

\begin{remark}
For some strongly spherical pairs $(G,H)$ of low rank, the space of Shintani functions was studied in more detail, and in some cases its dimension was computed for all parameters $(\lambda,\nu)\in\fraka_{G,\CC}^\vee\times\fraka_{H,\CC}^\vee$. The pairs $(\GL(2,\FF),\GL(1,\FF)\times\GL(1,\FF))$, $\FF=\RR,\CC$, were studied by Hirano~\cite{Hir00,Hir01}, the pairs $(\upO(1,n+1),\upO(1,n))$ by Kobayashi~\cite{Kob14}, the pairs $(\upU(1,n+1),\upU(1,n)\times\upU(1))$ by Tsuzuki~\cite{Tsu01a,Tsu01b,Tsu02}, the pairs $(\Sp(2,\RR),\Sp(1,\RR)\times\Sp(1,\RR))$ and $(\Sp(2,\RR),\SL(2,\CC))$ by Moriyama~\cite{Mor99,Mor02}. and the pair $(\SL(3,\RR),\GL(2,\RR))$ by Sono~\cite{Son11}.
\end{remark}

\section{Multiplicity one pairs}\label{sec:MultiplicityOnePairs}

For the pairs
\begin{equation}
\begin{gathered}
 (\GL(n+1,\CC),\GL(n,\CC)), \quad (\GL(n+1,\RR),\GL(n,\RR)), \quad (\upU(p,q+1),\upU(p,q)),\\
 (\SO(n+1,\CC),\SO(n,\CC)), \quad (\SO(p,q+1),\SO(p,q))
\end{gathered}\label{eq:MultOnePairs}
\end{equation}
it was proven by Sun--Zhu~\cite{SZ12} that $\dim\Hom_H(\pi|_H,\tau)\leq1$ for all irreducible smooth admissible Fréchet representations $\pi$ of $G$ and $\tau$ of $H$ of moderate growth. In this section we determine for which principal series representations $\pi=\pi_{\xi,\lambda}$ and $\tau=\tau_{\eta,\nu}$ the multiplicities are $=1$, at least for generic $(\lambda,\nu)\in\fraka_{G,\CC}^\vee\times\fraka_{H,\CC}^\vee$. The main tool to pass from spherical principal series $\pi_\lambda$ and $\tau_\nu$ to general vector-valued principal series $\pi_{\xi,\lambda}$ and $\tau_{\eta,\nu}$ is a variant of the Jantzen--Zuckerman translation principle. This method was previously used in the construction of symmetry breaking operators in \cite{MO17} and we briefly recall the main steps, adapted to our setting.

\subsection{The translation principle}

Let $(\varphi,E)$ be an irreducible finite-dimensional representation of $G$; then the space
$$ E' := E^{\overline{\frakn}_G} = \{v\in E:\varphi(X)v=0\,\forall\,X\in\overline{\frakn}_G\} $$
of $\overline{\frakn}_G$-fixed vectors is an irreducible representation of $M_GA_G$, and we obtain a $\overline{P}_G$-equivariant embedding $E'\hookrightarrow E$. Similarly, we let $(\psi,F)$ be an irreducible finite-dimensional representation of $H$ and consider the irreducible $M_HA_H$-representation $F'=F^{\overline{\frakn}_H}$. Then the projection $F\twoheadrightarrow F'$ onto the highest restricted weight space is $P_H$-equivariant, where we let $N_H$ act trivially on $F'$. We assume that $\Hom_H(E|_H,F)\neq\{0\}$.

Now suppose we are given an $H$-intertwining operator
$$ A:\Ind_{P_G}^G(V)\to\Ind_{P_H}^H(W) $$
for $V$ and $W$ finite-dimensional representations of $P_G$ and $P_H$. Tensoring with a non-trivial $H$-intertwiner $\eta:E\to F$ gives an intertwiner
\begin{equation}
 A\otimes\eta:\Ind_{P_G}^G(V)\otimes E\to\Ind_{P_H}^H(W)\otimes F,\label{eq:IntertwinerTensorEta}
\end{equation}
from which we want to construct an intertwiner between certain vector-valued principal series.

For this, we first consider the representation $\Ind_{P_G}^G(V)\otimes E$. Since $\tilde{w}_0P_G\tilde{w}_0^{-1}=\overline{P}_G$, the map $f\mapsto f(\blank\tilde{w}_0^{-1})$ defines an isomorphism
$$ \Ind_{\overline{P}_G}^G(V^{\tilde{w}_0})\to\Ind_{P_G}^G(V). $$
Next, we make use of the isomorphism
$$ \iota:\Ind_{\overline{P}_G}^G(V^{\tilde{w}_0}\otimes E|_{\overline{P}_G})\to\Ind_{\overline{P}_G}^G(V^{\tilde{w}_0})\otimes E, \quad \iota f(g)=(\id_V\otimes\varphi(g))f(g), $$
where we view both sides as $(V\otimes E)$-valued smooth functions on $G$. Now, the $\overline{P}_G$-equivariant embedding $E'\hookrightarrow E$ induces an embedding
$$ \Ind_{\overline{P}_G}^G(V^{\tilde{w}_0}\otimes E')\hookrightarrow\Ind_{\overline{P}_G}^G(V^{\tilde{w}_0}\otimes E|_{\overline{P}_G}) $$
which we compose with the isomorphism
$$ \Ind_{P_G}^G(V\otimes(E')^{\tilde{w}_0^{-1}})\to\Ind_{\overline{P}_G}^G(V^{\tilde{w}_0}\otimes E'), \quad f\mapsto f(\blank\tilde{w}_0) $$
to obtain an embedding
\begin{equation}
 \Ind_{P_G}^G(V\otimes(E')^{\tilde{w}_0^{-1}}) \hookrightarrow \Ind_{P_G}^G(V)\otimes E.\label{eq:EmbeddingE'}
\end{equation}

Similarly, without having to invoke $\tilde{w}_0$, we have a surjection
\begin{equation}
 \Ind_{P_H}^H(W)\otimes F \stackrel{\sim}{\to} \Ind_{P_H}^H(W\otimes F|_{P_H}) \twoheadrightarrow \Ind_{P_H}^H(W\otimes F'),\label{eq:SurjectionE''}
\end{equation}
where the second map is induced by the $P_H$-equivariant projection $F\to F'$.

Finally, composing \eqref{eq:IntertwinerTensorEta} with the embedding \eqref{eq:EmbeddingE'} and the surjection \eqref{eq:SurjectionE''} defines an $H$-intertwining operator
$$ \Phi(A):\Ind_{P_G}^G(V\otimes(E')^{\tilde{w}_0^{-1}})\to\Ind_{P_H}^H(W\otimes F'). $$

Now let $V=\1\otimes e^\lambda\otimes\1$ and $W=\1\otimes e^\nu\otimes\1$, and write $(E')^{\tilde{w}_0^{-1}}\simeq\xi\otimes e^{\lambda_0}\otimes\1$ and $F'=\eta\otimes e^{\nu_0}\otimes\1$ for $\xi\in\widehat{M}_G$, $\eta\in\widehat{M}_H$ and $(\lambda_0,\nu_0)\in\fraka_G^\vee\times\fraka_H^\vee$. Then $A\mapsto\Phi(A)$ defines a map
$$ \Phi:\Hom_H(\pi_\lambda|_H,\tau_\nu) \to \Hom_H(\pi_{\xi,\lambda+\lambda_0}|_H,\tau_{\eta,\nu+\nu_0}). $$
Recall that by Proposition~\ref{prop:IsoSBOSDistributionSections} every intertwining operator $A\in\Hom_H(\pi_{\xi,\lambda}|_H,\tau_{\eta,\nu})$ is given by a distribution kernel $K_A\in(\calD'(G/P_G,\calV_{\xi,\lambda}^*)\otimes W_{\eta,\nu})^{P_H}$. To describe how the distribution kernel of $\Phi(A)$ arises from the distribution kernel of $A$ we denote by $i:E'\hookrightarrow E$ the $\overline{P}_G$-equivariant embedding and by $p:F\twoheadrightarrow F'$ the $P_H$-equivariant quotient. Further let $\calE'=G\times_{P_G}(E')^{\tilde{w}_0^{-1}}$ and denote by $(\calE')^\vee=G\times_{P_G}((E')^{\tilde{w}_0^{-1}})^\vee$ the contragredient bundle.

\begin{theorem}[{\cite[Section 2]{MO17}}]\label{thm:TranslationPrinciple}
The map
$$ \Phi:\Hom_H(\pi_\lambda|_H,\tau_\nu) \to \Hom_H(\pi_{\xi,\lambda+\lambda_0}|_H,\tau_{\eta,\nu+\nu_0}) $$
has the property that the distribution kernel $K_{\Phi(A)}\in(\calD'(G/P_G,\calV_{\xi,\lambda+\lambda_0}^*)\otimes W_{\eta,\nu+\nu_0})^{P_H}$ is given in terms of the distribution kernel $K_A\in(\calD'(G/P_G,\calV_{\lambda}^*)\otimes W_\nu)^{P_H}$ as
$$ K_{\Phi(A)} = \sigma\otimes K_A, $$
where $\sigma\in(C^\infty(G/P_G,(\calE')^\vee)\otimes F')^{P_H}$ is given by
$$ \sigma(g) = p\circ\eta\circ\varphi(g\tilde{w}_0)\circ i \in \Hom_\CC(E',F')\simeq(E')^\vee\otimes F', \qquad g\in G. $$
\end{theorem}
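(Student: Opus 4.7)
The plan is to trace through the three maps defining $\Phi(A)$ on explicit sections and read off the distribution kernel by comparing with the characterization $A\varphi(h) = \langle K_A, \varphi(h\blank)\rangle$ from Proposition~\ref{prop:IsoSBOSDistributionSections}. The key point is that all three ingredients---the tensor product $A\otimes\eta$, the embedding \eqref{eq:EmbeddingE'}, and the surjection \eqref{eq:SurjectionE''}---can be written as concrete operations on $V\otimes E$- and $W\otimes F$-valued functions on $G$ and $H$, after which the claim becomes a bookkeeping identity.

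First I would unpack the embedding \eqref{eq:EmbeddingE'}. Composing the $\tilde{w}_0$-twist $f \mapsto f(\blank\tilde{w}_0)$, the inclusion induced by $i:E'\hookrightarrow E$, the isomorphism $\iota$ (which multiplies by $\varphi(g)$ on the $E$-factor), and the reverse $\tilde{w}_0$-twist on the first factor, one obtains an explicit pointwise formula for the image $\hat f \in \Ind_{P_G}^G(V)\otimes E$ of $f \in \Ind_{P_G}^G(V\otimes(E')^{\tilde{w}_0^{-1}})$, viewed as a $V\otimes E$-valued function on $G$, of the shape $\hat f(g) = (\id_V \otimes \varphi(g\tilde{w}_0)\circ i)(f(g))$ (with the appropriate $\tilde{w}_0$-convention). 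The surjection \eqref{eq:SurjectionE''} is similarly realized as pointwise application of $p:F\twoheadrightarrow F'$ on the $F$-factor. The equivariance checks are routine consequences of the $\overline{P}_G$-equivariance of $i$, the definition of the twist $(E')^{\tilde{w}_0^{-1}}$ as $p\cdot v = \varphi(\tilde{w}_0p\tilde{w}_0^{-1})v$, and the $P_H$-equivariance of $p$.

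Second, I would compute the composition. Since the distribution kernel of $A\otimes\eta$ is simply $K_A \otimes \eta$ with $\eta\in\Hom_\CC(E,F)$ acting pointwise on the $E$-factor, substituting the explicit formulas for the embedding and surjection yields
\[
\Phi(A)f(h) = \bigl\langle K_A,\ g \mapsto (p\circ\eta\circ\varphi(hg\tilde{w}_0)\circ i)(f(hg))\bigr\rangle.
\]
The multiplier $p\circ\eta\circ\varphi(\blank\tilde{w}_0)\circ i$ is a smooth $\Hom_\CC(E',F')$-valued function that can be absorbed into the kernel; factoring out the $h$-dependence (or, equivalently, translating by $h$ in the distribution pairing), one reads off $K_{\Phi(A)}(g) = \sigma(g)\otimes K_A(g)$ with $\sigma(g) = p\circ\eta\circ\varphi(g\tilde{w}_0)\circ i$, exactly as claimed.

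Finally, I would verify that $\sigma$ lies in $(C^\infty(G/P_G,(\calE')^\vee)\otimes F')^{P_H}$: the right $P_G$-equivariance follows from the $\overline{P}_G$-equivariance of $i$ together with the $\tilde{w}_0$-conjugation built into the definition of $(E')^{\tilde{w}_0^{-1}}$, and the left $P_H$-invariance follows from the $H$-equivariance of $\eta$ and the $P_H$-equivariance of $p$. The main obstacle is bookkeeping: carefully tracking the $\tilde{w}_0$-twists and switching between the two descriptions (induced representations with various equivariance twists, versus spaces of vector-valued functions) under which the embedding and surjection are naturally expressed. Once these are reconciled, no further analytic or representation-theoretic input is needed beyond the observation that the distribution kernel of a tensor product of intertwining operators is the tensor product of the individual kernels, of the sort already carried out in \cite[Section~2]{MO17}.
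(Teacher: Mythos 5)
Your strategy---trace a section through \eqref{eq:EmbeddingE'}, $A\otimes\eta$ and \eqref{eq:SurjectionE''} and read off the kernel from the characterization $A\varphi(h)=\langle K^A,\varphi(h\blank)\rangle$ of Proposition~\ref{prop:IsoSBOSDistributionSections}---is the intended one; the paper itself gives no proof but refers to \cite[Section 2]{MO17}, where exactly this computation is carried out. Your formula for the composite embedding is correct up to the harmless $\tilde{w}_0$ versus $\tilde{w}_0^{-1}$ normalization that you flag yourself.

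The one genuine flaw is in your treatment of the surjection \eqref{eq:SurjectionE''}. It is not ``pointwise application of $p$ on the $F$-factor'': the first arrow $\Ind_{P_H}^H(W)\otimes F\stackrel{\sim}{\to}\Ind_{P_H}^H(W\otimes F|_{P_H})$ is the inverse of the $\iota$-type isomorphism and therefore inserts $\psi(h)^{-1}$ at the evaluation point (indeed, applying $p$ alone to a $W\otimes F$-valued function that is equivariant only in the $W$-slot does not even land in $\Ind_{P_H}^H(W\otimes F')$). Consequently your displayed identity for $\Phi(A)f(h)$, with multiplier $p\circ\eta\circ\varphi(hg\tilde{w}_0)\circ i$, is incorrect as written, and the crucial step ``factoring out the $h$-dependence'' would fail there: by $H$-equivariance of $\eta$ that multiplier equals $p\circ\psi(h)\circ\eta\circ\varphi(g\tilde{w}_0)\circ i$, and $p\circ\psi(h)$ does not simplify for $h\in H\setminus P_H$, since $p$ is only $P_H$-equivariant. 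The correct multiplier is $p\circ\psi(h)^{-1}\circ\eta\circ\varphi(hg\tilde{w}_0)\circ i$, and it is precisely the cancellation $\psi(h)^{-1}\circ\eta\circ\varphi(h)=\eta$ that renders it $h$-independent and equal to $\sigma(g)$, after which $K_{\Phi(A)}=\sigma\otimes K_A$ follows from the kernel characterization exactly as you say. So the gap is local and easily repaired by reinstating the $\psi(h)^{-1}$ twist, but as written the cancellation that produces an $h$-independent section $\sigma$---the heart of the statement---is missing; the remainder of your argument, including the equivariance checks for $\sigma$, is fine.
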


Note that in the statement we identify $\calV_\lambda^*\otimes(\calE')^\vee\simeq\calV_{\xi,\lambda+\lambda_0}^*$ and $W_\nu\otimes F'\simeq W_{\eta,\nu+\nu_0}$.

\begin{corollary}\label{cor:TranslationPrincipleSupport}
Let $\xi\in\widehat{M}_G$ and $\eta\in\widehat{M}_H$ and assume that there exist irreducible finite-dimensional representations $E$ of $G$ and $F$ of $H$ such that $\xi^{\tilde{w}_0}\simeq E^{\frakn_G}|_{M_G}$, $\eta\simeq F^{\frakn_H}|_{M_H}$ and $\Hom_H(E|_H,F)\neq\{0\}$. Then there exist $(\lambda_0,\nu_0)\in\fraka_G^\vee\times\fraka_H^\vee$ and a linear map
$$ \Phi:\Hom_H(\pi_\lambda|_H,\tau_\nu) \to \Hom_H(\pi_{\xi,\lambda+\lambda_0}|_H,\tau_{\eta,\nu+\nu_0}) $$
which is on the level of distribution kernels given by tensoring with a fixed non-trivial real-analytic section. In particular, $\supp(K_{\Phi(A)})=G/P_G$ whenever $\supp K_A=G/P_G$.
\end{corollary}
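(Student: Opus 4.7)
\emph{Proof plan.} The strategy is to apply Theorem~\ref{thm:TranslationPrinciple} directly. Fix a nonzero intertwiner $T\in\Hom_H(E|_H,F)$, which exists by assumption. The compatibility hypotheses $\xi^{\tilde{w}_0}\simeq E^{\frakn_G}|_{M_G}$ and $\eta\simeq F^{\frakn_H}|_{M_H}$ then determine uniquely $\lambda_0\in\fraka_G^\vee$ and $\nu_0\in\fraka_H^\vee$ such that $(E')^{\tilde{w}_0^{-1}}\simeq\xi\otimes e^{\lambda_0}\otimes\1$ as an $M_GA_GN_G$-representation and $F'\simeq\eta\otimes e^{\nu_0}\otimes\1$ as an $M_HA_HN_H$-representation. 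Applying Theorem~\ref{thm:TranslationPrinciple} with $V=\1\otimes e^\lambda\otimes\1$, $W=\1\otimes e^\nu\otimes\1$ and the intertwiner $T$ produces the desired linear map $\Phi:\Hom_H(\pi_\lambda|_H,\tau_\nu)\to\Hom_H(\pi_{\xi,\lambda+\lambda_0}|_H,\tau_{\eta,\nu+\nu_0})$, together with the formula $K_{\Phi(A)}=\sigma\otimes K_A$ for $\sigma(g)=p\circ T\circ\varphi(g\tilde{w}_0)\circ i$, where $i:E'\hookrightarrow E$ and $p:F\twoheadrightarrow F'$ are the canonical embedding and projection.

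Being a matrix coefficient of the finite-dimensional representation $\varphi$ of $G$, the section $\sigma\in(C^\infty(G/P_G,(\calE')^\vee)\otimes F')^{P_H}$ is automatically real-analytic, so the only nontrivial step is to verify that $\sigma\not\equiv0$. This is where one uses irreducibility in an essential way. Fix $0\neq v_0\in E'$: since $E$ is $G$-irreducible, the orbit $\{\varphi(g\tilde{w}_0)v_0:g\in G\}$ linearly spans $E$. The image $T(E)$ is a nonzero $H$-invariant subspace of $F$, and $H$-irreducibility of $F$ forces $T(E)=F$. Hence $\{T\circ\varphi(g\tilde{w}_0)v_0:g\in G\}$ spans $F$, and composing with the nonzero surjection $p:F\twoheadrightarrow F'$ yields a spanning set of $F'$, which is nonzero. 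In particular $\sigma(g_0)v_0\neq0$ for some $g_0\in G$, so $\sigma\not\equiv0$.

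The support statement is now a routine consequence of real-analyticity: the zero set $Z=\{g\in G/P_G:\sigma(g)=0\}$ of a nontrivial real-analytic section is a proper real-analytic subvariety with empty interior. If $\supp K_A=G/P_G$, then for every nonempty open $U\subseteq G/P_G$ the open subset $V=U\setminus Z$ is nonempty, $\sigma$ is nowhere-vanishing on $V$, and $K_A|_V\neq0$, so $(\sigma\otimes K_A)|_V\neq0$ and therefore $U\cap\supp K_{\Phi(A)}\neq\emptyset$. Since $U$ was arbitrary, $\supp K_{\Phi(A)}=G/P_G$. The main (and essentially only) obstacle is the non-vanishing of $\sigma$; everything else is a mechanical application of Theorem~\ref{thm:TranslationPrinciple}.
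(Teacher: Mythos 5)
There is a genuine gap at the very first step of your plan, namely in the claim that the hypotheses determine $(\lambda_0,\nu_0)$ with $(E')^{\tilde{w}_0^{-1}}\simeq\xi\otimes e^{\lambda_0}\otimes\1$ and $F'\simeq\eta\otimes e^{\nu_0}\otimes\1$. In Theorem~\ref{thm:TranslationPrinciple} the spaces $E'=E^{\overline{\frakn}_G}$ and $F'=F^{\overline{\frakn}_H}$ are the $\overline{\frakn}$-invariants (lowest restricted weight spaces), whereas your hypotheses concern the $\frakn$-invariants $E^{\frakn_G}$ and $F^{\frakn_H}$. As $M_G$- resp.\ $M_H$-modules these differ by conjugation with (a representative of) the longest Weyl group element: $E^{\overline{\frakn}_G}=\varphi(\tilde{w}_0)E^{\frakn_G}$ gives $E^{\overline{\frakn}_G}|_{M_G}\simeq(E^{\frakn_G}|_{M_G})^{\tilde{w}_0}$, and similarly for $F$ with the longest Weyl element of $\Sigma(\frakh,\fraka_H)$. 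Hence a direct application of the theorem to the given $E,F$ yields $(E')^{\tilde{w}_0^{-1}}|_{M_G}\simeq\xi^{\tilde{w}_0}$ (not $\xi$), and on the $H$-side, where the theorem contains no compensating twist at all, $F'|_{M_H}$ is the longest-Weyl-element twist of $\eta$ (not $\eta$). So your construction lands in $\Hom_H(\pi_{\xi^{\tilde{w}_0},\lambda+\lambda_0}|_H,\tau_{\eta',\nu+\nu_0})$ with $\eta'$ a twist of $\eta$, which is in general a different target than asserted; for instance for $H=\GL(n,\RR)$ the twist permutes the characters of $M_H\simeq\upO(1)^n$, so $\eta'\not\simeq\eta$ in general. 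The missing step, which is exactly what the paper's proof does, is to first replace $E$ and $F$ by their twists with the Cartan involution $\theta$: since $\theta\frakn_G=\overline{\frakn}_G$, $\theta\frakn_H=\overline{\frakn}_H$ and $\theta$ fixes $M_G$ and $M_H$ pointwise, the twisted representations satisfy $E^{\overline{\frakn}_G}|_{M_G}\simeq\xi^{\tilde{w}_0}$ and $F^{\overline{\frakn}_H}|_{M_H}\simeq\eta$ on the nose, the condition $\Hom_H(E|_H,F)\neq\{0\}$ is preserved, and then $(E')^{\tilde{w}_0^{-1}}|_{M_G}\simeq(\xi^{\tilde{w}_0})^{\tilde{w}_0^{-1}}=\xi$ and $F'|_{M_H}\simeq\eta$ hold exactly, so Theorem~\ref{thm:TranslationPrinciple} applies with the stated source and target.

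The remaining parts of your argument are fine and agree with the paper's proof: real-analyticity of $\sigma$ as a matrix coefficient; non-vanishing of $\sigma$ from irreducibility (note that you use irreducibility of both $E$, to get that $\{\varphi(g\tilde{w}_0)v_0\}$ spans $E$, and of $F$, to get $T(E)=F$ so that $p\circ T\neq0$ — the paper only mentions $E$, but your more careful version is the correct one); and the support statement via the fact that the zero set of a non-trivial real-analytic section has empty interior (implicitly using connectedness of $G/P_G$, which holds here since $M_G$ meets every component of $K$ for groups of Harish-Chandra class).
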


\begin{proof}
Replacing $E$ and $F$ by their twists with the Cartan involution $\theta$ of $G$ we may assume that $\xi^{\tilde{w}_0}\simeq E^{\overline{\frakn}_G}|_{M_G}$, $\eta\simeq F^{\overline{\frakn}_H}|_{M_H}$ and $\Hom_H(E|_H,F)\neq\{0\}$. Then $E'=E^{\overline{\frakn}_G}$ and $F'=F^{\overline{\frakn}_H}$ satisfy $(E')^{\tilde{w}_0^{-1}}|_{M_G}\simeq\xi$ and $F'|_{M_H}\simeq\eta$, so that the statement follows from Theorem~\ref{thm:TranslationPrinciple}. It remains to show that $\sigma$ is a non-trivial real-analytic section. That $\sigma$ is non-trivial is a consequence of the irreducibility of $E$. Further, as a matrix coefficient of a finite-dimensional representation it is clearly real analytic.
\end{proof}

\subsection{Iwasawa decompositions}\label{sec:Iwasawa}

To construct finite-dimensional representations $E$ of $G$ and $F$ of $H$ for the translation principle, we first give the explicit Iwasawa decompositions for all pairs $(G,H)$ in \eqref{eq:MultOnePairs} (following the notation of Section~\ref{sec:StructureStronglySphericalReductivePairs}), compute the generic stabilizer $M\subseteq M_G\cap M_H$ of an element of $\frakn^{-\sigma}$ defined in Section~\ref{sec:UpperBoundsMultiplicities}, and prove in particular the following statement:

\begin{lemma}\label{lem:OpenOrbitsForMultOnePairs}
Let $(G,H)$ be one of the pairs in \eqref{eq:MultOnePairs}; then $\#(P_H\backslash G/P_G)_{\rm open}=1$.
\end{lemma}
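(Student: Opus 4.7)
The plan is to reduce the enumeration of open $(P_H,P_G)$-double cosets to a combinatorial count on the $(M_G\cap M_H)$-orbit space of a product of spheres, then verify it case by case using the Iwasawa data that Section~\ref{sec:Iwasawa} is about to record. First I would observe that any open double coset $P_HgP_G$ is forced to lie in the unique open Bruhat cell $Q\tilde{w}_0P_G$, simply because every other Bruhat cell has strictly smaller dimension than $\dim G$ and so cannot contain an open subset. By Lemma~\ref{lem:PHOrbitsInOpenBruhatCell}, open $P_H$-orbits inside this cell correspond bijectively to open $(M_G\cap M_H)A_H$-orbits on $\frakn^{-\sigma}$. Since $\Delta(\frakn^{-\sigma})$ is a basis of $\fraka_H^\vee$ by Lemma~\ref{lem:RealSphImpliesTransitiveActionOnSpheres}(2), the $A_H$-action rescales each weight component independently, and the count reduces to that of $(M_G\cap M_H)$-orbits on $\prod_{\beta\in\Delta(\frakn^{-\sigma})}S_\beta$, which by Lemma~\ref{lem:RealSphImpliesTransitiveActionOnSpheres}(1) are unions of connected components.

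The second step is to show that the $(M_G\cap M_H)$-action is in fact transitive on these components for each pair in \eqref{eq:MultOnePairs}. Whenever each weight space $\frakg^{-\sigma}(\fraka_H;\beta)$ has real dimension at least two, the sphere $S_\beta$ is connected and the claim is immediate. This handles the three complex-analytic pairs $(\GL(n+1,\CC),\GL(n,\CC))$, $(\SO(n+1,\CC),\SO(n,\CC))$ and $(\upU(p,q+1),\upU(p,q))$ automatically, because there the restricted weight spaces of $\frakg^{-\sigma}$ are complex subspaces of real dimension $\geq 2$, and similarly deals with the bulk of the $(\SO(p,q+1),\SO(p,q))$ family.

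The main obstacle is the split pair $(\GL(n+1,\RR),\GL(n,\RR))$ together with the boundary instances of the orthogonal and unitary families, where some weight spaces are one-dimensional over $\RR$ so that $\prod S_\beta$ is disconnected with $2^k$ components. Here the transitivity must be exhibited explicitly from the Iwasawa decomposition: in the general linear case for example, $\frakn^{-\sigma}$ is modeled by the last column of a matrix in $\gl(n+1,\RR)$, and the diagonal sign matrices $\diag(\epsilon_1,\ldots,\epsilon_n,1)$ with $\epsilon_i\in\{\pm 1\}$ belong to $M_G\cap M_H$ and permute the $2^n$ sign patterns transitively. Analogous direct verifications for the remaining families are what Section~\ref{sec:Iwasawa} is about to supply.

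As a sanity check, one can bypass all case work by invoking the Sun--Zhu bound: Corollary~\ref{cor:GenericMultiplicities} with $\xi=\eta=\mathbf{1}$ identifies $\#(P_H\backslash G/P_G)_{\rm open}$ with the generic value of $\dim\Hom_H(\pi_\lambda|_H,\tau_\nu)$, which Fact~\ref{fact:MultiplicityOne} bounds by one on the generic set where the principal series are irreducible, while strong sphericality of $(G,H)$ combined with Proposition~\ref{prop:CharacterizationStronglySpherical} provides the matching lower bound. I would still prefer the direct orbit-theoretic proof, however, since the explicit description of $M_G\cap M_H$ and the generic stabilizer $M$ entering the open orbit will be reused in the translation-principle arguments of the rest of Section~\ref{sec:MultiplicityOnePairs}.
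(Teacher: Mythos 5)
Your outline is essentially the paper's own argument: an open double coset must lie in the open Bruhat cell, Lemma~\ref{lem:PHOrbitsInOpenBruhatCell} linearizes the count to open $(M_G\cap M_H)A_H$-orbits on $\frakn^{-\sigma}$, and Lemma~\ref{lem:RealSphImpliesTransitiveActionOnSpheres} reduces this to $(M_G\cap M_H)$-orbits on $\prod_{\beta}S_\beta$; the paper's proof of the lemma is then exactly the case-by-case Iwasawa computation of Section~\ref{sec:Iwasawa}, where in each case $M_G\cap M_H$ is checked to act transitively on the product of unit spheres. So in structure your proposal is correct and matches the paper, with the deferred ``direct verifications'' being precisely what the paper supplies.

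There is, however, one factual slip in your division into automatic and non-automatic cases. Connectedness of the spheres disposes of exactly the pairs $(\GL(n+1,\CC),\GL(n,\CC))$, $(\SO(n+1,\CC),\SO(n,\CC))$ and $(\upU(p,q+1),\upU(p,q))$, where every weight space of $\frakn^{-\sigma}$ has real dimension $2$. For the real orthogonal family $(\SO(p,q+1),\SO(p,q))$ the restricted root $e_i$ has multiplicity $q-p+1$ in $\frakg$ and $q-p$ in $\frakh$ (for $p\leq q$; symmetrically for $p\geq q+1$), so \emph{every} weight space $\frakg^{-\sigma}(\fraka_H;e_i)$ is one-dimensional and $\prod_\beta S_\beta\simeq\{\pm1\}^{\min(p,q)}$ is disconnected for the whole family, not merely for ``boundary instances''; conversely, the unitary family never produces one-dimensional weight spaces. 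This does not sink the proof, since the mechanism you exhibit for $(\GL(n+1,\RR),\GL(n,\RR))$ — the sign subgroup $\upO(1)^{\min(p,q)}\subseteq M_G\cap M_H$ acting transitively on the sign patterns — applies verbatim to all of $(\SO(p,q+1),\SO(p,q))$, but the claim that connectivity handles ``the bulk'' of that family is wrong and should be corrected before executing the plan. Your Sun--Zhu sanity check is legitimate and non-circular (Corollary~\ref{cor:GenericMultiplicities} nowhere uses this lemma), though it additionally requires generic irreducibility of the spherical principal series, which the paper does not record; your reason for preferring the direct route — that the explicit $M_G\cap M_H$ and the stabilizer $M$ are needed again for Theorem~\ref{thm:MultiplicitiesMultOnePairs} — is exactly why the paper argues as it does.
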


Note that Lemma~\ref{lem:OpenOrbitsForMultOnePairs} also follows from Corollary~\ref{cor:GenericMultiplicities} combined with Fact~\ref{fact:MultiplicityOne}. However, since we need the relevant structure theory of the pairs $(G,H)$ in Section~\ref{sec:FinDimBranching}, we include an independent proof here.

\subsubsection{$(G,H)=(\GL(n+1,\CC),\GL(n,\CC))$}

Let $G=\GL(n+1,\CC)$, $n\geq2$, and define a Cartan involution on $G$ by $\theta(g)=(g^*)^{-1}$; then $K=G^\theta=\upU(n+1)$. We embed $H=\GL(n,\CC)$ in the upper-left corner of $G$; then $\theta$ leaves $H$ invariant and $H\cap K=H^\theta=\upU(n)$. We choose the maximal abelian subalgebra
$$ \fraka_H = \{\diag(t_1,\ldots,t_n,0):t_i\in\RR\} $$
of $\frakh^{-\theta}$ and extend it to the maximal abelian subalgebra
$$ \fraka_G = \{\diag(t_1,\ldots,t_n,t_{n+1}):t_i\in\RR\} $$
of $\frakg^{-\theta}$. Then
\begin{align*}
 M_G &= Z_K(\fraka_G) = \{\diag(z_1,\ldots,z_n,z_{n+1}):|z_i|=1\} \simeq \upU(1)^{n+1},\\
 M_H &= Z_{H\cap K}(\fraka_H) = \{\diag(z_1,\ldots,z_n,1):|z_i|=1\} \simeq \upU(1)^n.
\end{align*}
Further,
$$ \frakn^{-\sigma}=\left\{\left(\begin{array}{cc}\0_n&z\\0&0\end{array}\right):z\in\CC^n\right\} $$
and such an element of $\frakn^{-\sigma}$ is contained in an open $(M_G\cap M_H)A_H$-orbit if and only if $z_1,\ldots,z_n\neq0$. Hence $M=\{\1\}$ is trivial.

\subsubsection{$(G,H)=(\GL(n+1,\RR),\GL(n,\RR))$}

Let $G=\GL(n+1,\RR)$, $n\geq2$, and define a Cartan involution on $G$ by $\theta(g)=(g^\top)^{-1}$; then $K=G^\theta=\upO(n+1)$. We embed $H=\GL(n,\RR)$ in the upper left corner of $H$; then $\theta$ leaves $H$ invariant and $H\cap K=H^\theta=\upO(n)$. We choose the maximal abelian subalgebra
$$ \fraka_H = \{\diag(t_1,\ldots,t_n,0):t_i\in\RR\} $$
of $\frakh^{-\theta}$ and extend it to the maximal abelian subalgebra
$$ \fraka_G = \{\diag(t_1,\ldots,t_n,t_{n+1}):t_i\in\RR\} $$
of $\frakg^{-\theta}$. Then
\begin{align*}
 M_G &= Z_K(\fraka_G) = \{\diag(x_1,\ldots,x_n,x_{n+1}):x_i=\pm1\} \simeq \upO(1)^{n+1},\\
 M_H &= Z_{H\cap K}(\fraka_H) = \{\diag(x_1,\ldots,x_n,1):x_i=\pm1\} \simeq \upO(1)^n.
\end{align*}
Further,
$$ \frakn^{-\sigma}=\left\{\left(\begin{array}{cc}\0_n&x\\0&0\end{array}\right):x\in\RR^n\right\} $$
and such an element of $\frakn^{-\sigma}$ is contained in an open $(M_G\cap M_H)A_H$-orbit if and only if $x_1,\ldots,x_n\neq0$. Hence $M=\{\1\}$ is trivial.

\subsubsection{$(G,H)=(\upU(p,q+1),\upU(p,q))$}\label{sec:IwasawaUpq}

Let $G=\upU(p,q+1)$, $p,q\geq1$, and define a Cartan involution on $G$ by $\theta(g)=(g^*)^{-1}$, then $K=G^\theta=\upU(p)\times\upU(q+1)$. We embed $H=\upU(p,q)$ in the upper-left corner of $G$; then $\theta$ leaves $H$ invariant and $H\cap K=H^\theta=\upU(p)\times\upU(q)$. Assume that $p\geq q+1$; the case $p\leq q$ is handled similarly (see Section~\ref{sec:IwasawaOpq} for a related computation). We choose the maximal abelian subalgebra
$$ \fraka_G = \left\{\left(\begin{array}{ccc}\0_{q+1}&&D\\&\0_{p-q-1}&\\D&&\0_{q+1}\end{array}\right):D=\diag(t_1,\ldots,t_{q+1}),t_1,\ldots,t_{q+1}\in\RR\right\} $$
of $\frakg^{-\theta}$; then $\fraka_H=\fraka_G\cap\frakh$ is maximal abelian in $\frakh^{-\theta}$. Write $e_i\in\fraka_G^\vee$ for the functional mapping a matrix of the above form to $t_i$. Then
$$ \Sigma(\frakg,\fraka_G) = \begin{cases}\{\pm e_i\pm e_j:1\leq i<j\leq q+1\}\cup\{\pm2e_i:1\leq i\leq q+1\}&\mbox{for $p=q+1$,}\\\{\pm e_i\pm e_j:1\leq i<j\leq q+1\}\cup\{\pm e_i,\pm2e_i:1\leq i\leq q+1\}&\mbox{for $p>q+1$,}\end{cases} $$
with root spaces given by
\begin{align*}
 \frakg(\fraka_G;\pm(e_i+e_j)) ={}& \Big\{z(E_{i,j}\mp E_{i,p+j}\pm E_{p+i,j}-E_{p+i,p+j})\\
 & \hspace{4cm} -\overline{z}(E_{j,i}\mp E_{j,p+i}\pm E_{p+j,i}-E_{p+j,p+i}):z\in\CC\Big\},\\
 \frakg(\fraka_G;\pm(e_i-e_j)) ={}& \Big\{z(E_{i,j}\pm E_{i,p+j}\pm E_{p+i,j}+E_{p+i,p+j})\\
 & \hspace{4cm} -\overline{z}(E_{j,i}\mp E_{j,p+i}\mp E_{p+j,i}+E_{p+j,p+i}):z\in\CC\Big\},\\
 \frakg(\fraka_G;\pm2e_i) ={}& \RR\sqrt{-1}(E_{i,i}\mp E_{i,p+i}\pm E_{p+i,i}-E_{p+i,p+i}),
\intertext{and if $p>q+1$, additionally,}
 \frakg(\fraka_G;\pm e_i) ={}& \Big\{\sum_{k=1}^{p-q-1}\Big(z_k(E_{i,q+k+1}\pm E_{p+i,q+k+1})-\overline{z_k}(E_{q+k+1,i}\mp E_{q+k+1,p+i})\Big):\\
 & \hspace{10.5cm}z_k\in\CC\Big\}.
\end{align*}
We choose the positive system
$$ \Sigma^+(\frakg,\fraka_G) = \begin{cases}\{e_i\pm e_j:1\leq i<j\leq q+1\}\cup\{2e_i:1\leq i\leq q+1\}&\mbox{for $p=q+1$,}\\\{e_i\pm e_j:1\leq i<j\leq q+1\}\cup\{e_i,2e_i:1\leq i\leq q+1\}&\mbox{for $p>q+1$,}\end{cases} $$
then
\begin{align*}
 M_G &= \{\diag(z_1,\ldots,z_{q+1},k,z_1,\ldots,z_{q+1}):|z_i|=1,k\in\upU(p-q-1)\}\\
 &\simeq \upU(1)^{q+1}\times\upU(p-q-1),\\
 M_H &= \{\diag(z_1,\ldots,z_q,k,z_1,\ldots,z_q,1):|z_i|=1,k\in\upU(p-q)\}\\
 &\simeq \upU(1)^q\times\upU(p-q)
\end{align*}
and
$$ \frakn^{-\sigma} = \Big\{X(z)=\sum_{i=1}^q\Big(z_i(E_{i,p+q+1}+E_{p+i,p+q+1})+\overline{z_i}(E_{p+q+1,i}-E_{p+q+1,p+i})\Big):z_i\in\CC\Big\}. $$
The action of $M_G\cap M_H\simeq\upU(1)^q\times\upU(p-q-1)$ is given by the natural action of $\upU(1)^q$ on $z\in\CC^q$, and the second factor $\upU(p-q-1)$ acts trivially. Hence, the element $X(t)$ is contained in an open $(M_G\cap M_H)A_H$-orbit if and only if $t_1,\ldots,t_q\neq0$ and then $M=\upU(p-q-1)$. Further, $\upU(1)^q\subseteq M_G\cap M_H$ acts transitively on the product $(S^1)^q\subseteq\CC^q$ of the unit spheres $S^1\subseteq\CC$ and therefore there is a unique open orbit.

\subsubsection{$(G,H)=(\SO(n+1,\CC),\SO(n,\CC))$}

Let $G=\SO(n+1,\CC)$, $n\geq2$, and define a Cartan involution on $G$ by $\theta(g)=(g^*)^{-1}=\overline{g}$; then $K=G^\theta=\SO(n+1)$. We embed $H=\SO(n,\CC)$ in the upper-left corner of $G$; then $\theta$ leaves $H$ invariant and $H\cap K=H^\theta=\SO(n)$. Write $n=2m$ if $n$ is even and $n=2m-1$ if it is odd. We choose the maximal abelian subalgebra
$$ \fraka_G = \begin{cases}\left\{\diag(D(\sqrt{-1}t_1),\ldots,D(\sqrt{-1}t_m)):t_1,\ldots,t_m\in\RR\right\}&\mbox{for $n$ odd,}\\\left\{\diag(D(\sqrt{-1}t_1),\ldots,D(\sqrt{-1}t_m),0):t_1,\ldots,t_m\in\RR\right\}&\mbox{for $n$ even,}\end{cases} $$
of $\frakg^{-\theta}$, where
$$ D(t) = \left(\begin{array}{cc}0&t\\-t&0\end{array}\right). $$
Then $\fraka_H=\fraka_G\cap\frakh$ is maximal abelian in $\frakh^{-\theta}$. Write $e_i\in\fraka_G^\vee$ for the functional mapping a matrix of the above form to $t_i$. Then
$$ \Sigma(\frakg,\fraka_G) = \begin{cases}\{\pm e_i\pm e_j:1\leq i<j\leq m\}&\mbox{for $n$ odd,}\\\{\pm e_i\pm e_j:1\leq i<j\leq m\}\cup\{\pm e_i:1\leq i\leq m\}&\mbox{for $n$ even,}\end{cases} $$
with root spaces given by
\begin{align*}
 \frakg(\fraka_G;\pm(e_i+e_j)) ={}& \CC(E_{2i-1,2j-1}\pm\sqrt{-1}E_{2i-1,2j}\mp\sqrt{-1}E_{2i,2j-1}+E_{2i,2j}\\
 & \ \ -E_{2j-1,2i-1}\mp\sqrt{-1}E_{2j,2i-1}\pm\sqrt{-1}E_{2j-1,2i}-E_{2j,2i}) && (1\leq i<j\leq m),\\
 \frakg(\fraka_G;\pm(e_i-e_j)) ={}& \CC(E_{2i-1,2j-1}\mp\sqrt{-1}E_{2i-1,2j}\mp\sqrt{-1}E_{2i,2j-1}-E_{2i,2j}\\
 & \ \ -E_{2j-1,2i-1}\pm\sqrt{-1}E_{2j,2i-1}\pm\sqrt{-1}E_{2j-1,2i}+E_{2j,2i}) && (1\leq i<j\leq m),
\intertext{and if $n$ is even, additionally,}
 \frakg(\fraka_G;\pm e_i) ={}& \CC(E_{2i-1,n+1}\mp\sqrt{-1}E_{2i,n+1}-E_{n+1,2i-1}\pm\sqrt{-1}E_{n+1,2i}) && (1\leq i\leq m).
\end{align*}
We choose the positive system
$$ \Sigma^+(\frakg,\fraka_G) = \begin{cases}\{e_i\pm e_j:1\leq i<j\leq m\}&\mbox{for $n$ odd,}\\\{e_i\pm e_j:1\leq i<j\leq m\}\cup\{e_i:1\leq i\leq m\}&\mbox{for $n$ even;}\end{cases} $$
then
\begin{align*}
 M_G &= \begin{cases}\{\exp(\diag(D(t_1),\ldots,D(t_m))):t_i\in\RR\}\simeq\SO(2)^m&\mbox{for $n$ odd,}\\\{\exp(\diag(D(t_1),\ldots,D(t_m),0)):t_i\in\RR\}\simeq\SO(2)^m&\mbox{for $n$ even,}\end{cases}\\
 M_H &= \begin{cases}\{\exp(\diag(D(t_1),\ldots,D(t_{m-1}),0)):t_i\in\RR\}\simeq\SO(2)^{m-1}&\mbox{for $n$ odd,}\\\{\exp(\diag(D(t_1),\ldots,D(t_m))):t_i\in\RR\}\simeq\SO(2)^m&\mbox{for $n$ even,}\end{cases}
\end{align*}
and
$$ \frakn^{-\sigma} = \begin{cases}\{X(t)=\sum_{i=1}^{m-1}t_i(E_{2i-1,n+1}-\sqrt{-1}E_{2i,n+1}-E_{n+1,2i-1}+\sqrt{-1}E_{n+1,2i}):t_i\in\CC\}\\\hspace{11.7cm}\mbox{for $n$ odd,}\\\{X(t)=\sum_{i=1}^mt_i(E_{2i-1,n+1}-\sqrt{-1}E_{2i,n+1}-E_{n+1,2i-1}+\sqrt{-1}E_{n+1,2i}):t_i\in\CC\}\\\hspace{11.7cm}\mbox{for $n$ even.}\end{cases} $$
The action of
$$ M_G\cap M_H \simeq \begin{cases}\SO(2)^{m-1}&\mbox{for $n$ odd,}\\\SO(2)^m&\mbox{for $n$ even,}\end{cases} $$
is given by the natural action of $\SO(2)\simeq\upU(1)$ on $t_i\in\CC$ by rotation. Hence, the element $X(t)$ is contained in an open $(M_G\cap M_H)A_H$-orbit if and only if $t_1,\ldots,t_{m-1}\neq0$ resp. $t_1,\ldots,t_m\neq0$ and then $M=\{\1\}$. Further, $\SO(2)$ acts transitively on the unit sphere in $\CC$ and hence $M_G\cap M_H$ acts transitively on the product of the unit spheres in $\frakn^{-\sigma}$, so there is a unique open orbit.

\subsubsection{$(G,H)=(\SO(p,q+1),\SO(p,q))$}\label{sec:IwasawaOpq}

Let $G=\SO(p,q+1)$, $p,q\geq1$, and define a Cartan involution on $G$ by $\theta(g)=(g^\top)^{-1}$; then $K=G^\theta=\upS(\upO(p)\times\upO(q+1))$. We embed $H=\SO(p,q)$ in the upper-left corner of $G$; then $\theta$ leaves $H$ invariant and $H\cap K=H^\theta=\upS(\upO(p)\times\upO(q))$. Assume that $p\leq q$; the case $p\geq q+1$ is handled similarly (see Section~\ref{sec:IwasawaUpq} for a related computation). We choose the maximal abelian subalgebra
$$ \fraka_G = \fraka_H = \left\{\left(\begin{array}{ccc}\0_p&D&\\D&\0_p&\\&&\0_{q-p+1}\end{array}\right):D=\diag(t_1,\ldots,t_p),t_1,\ldots,t_p\in\RR\right\} $$
of $\frakg^{-\theta}$ and $\frakh^{-\theta}$. Write $e_i\in\fraka_G^\vee$ for the functional mapping a matrix of the above form to $t_i$. Then
$$ \Sigma(\frakg,\fraka_G) = \{\pm e_i\pm e_j:1\leq i<j\leq p\}\cup\{\pm e_i:1\leq i\leq p\} $$
with root spaces given by
\begin{align*}
 \frakg(\fraka_G;\pm(e_i+e_j)) ={}& \RR(E_{i,j}-E_{j,i}\mp E_{i,j+p}\pm E_{j,i+p}\\
 & \ \ \ \pm E_{i+p,j}\mp E_{j+p,i}\mp E_{i+p,j+p}\pm E_{j+p,i+p}) && (1\leq i<j\leq p),\\
 \frakg(\fraka_G;\pm(e_i-e_j)) ={}& \RR(E_{i,j}-E_{j,i}\pm E_{i,j+p}\pm E_{j,i+p}\\
 & \ \ \ \pm E_{i+p,j}\pm E_{j+p,i}\pm E_{i+p,j+p}\mp E_{j+p,i+p}) && (1\leq i<j\leq p),\\
 \frakg(\fraka_G;\pm e_i) ={}& \bigoplus_{k=1}^{q-p+1} \RR(E_{i,2p+k}\pm E_{i+p,2p+k}+E_{2p+k,i}\mp E_{2p+k,i+p}) && (1\leq i\leq p).
\end{align*}
We choose the positive system
$$ \Sigma^+(\frakg,\fraka_G)=\{e_i\pm e_j:1\leq i<j\leq p\}\cup\{e_i:1\leq i\leq p\}; $$
then
\begin{align*}
 M_G &= \{\diag(x_1,\ldots,x_p,x_1,\ldots,x_p,k):x_i=\pm1,k\in\SO(q-p+1)\} \simeq \upO(1)^p\times\SO(q-p+1),\\
 M_H &= \{\diag(x_1,\ldots,x_p,x_1,\ldots,x_p,k,1):x_i=\pm1,k\in\SO(q-p)\} \simeq \upO(1)^p\times\SO(q-p),
\end{align*}
and
$$ \frakn^{-\sigma} = \Big\{X(t)=\sum_{i=1}^pt_i(E_{i,p+q+1}+E_{i+p,p+q+1}+E_{p+q+1,i}-E_{p+q+1,i+p}):t_i\in\RR\Big\}. $$
The action of $M_G\cap M_H\simeq\upO(1)^p\times\SO(q-p)$ is given by the natural action of $\upO(1)^p$ on $t\in\RR^p$ by sign changes of the coordinates, and the second factor $\SO(q-p)$ acts trivially. Hence, the element $X(t)$ is contained in an open $(M_G\cap M_H)A_H$-orbit if and only if $t_1,\ldots,t_p\neq0$ and then $M=\SO(q-p)$. Further, $\upO(1)^p\subseteq M_G\cap M_H$ acts transitively on the product $\{\pm1\}^p\subseteq\RR^p$ of the unit spheres $\{\pm1\}\subseteq\RR$ and therefore there is a unique open orbit.

\subsection{Finite-dimensional branching}\label{sec:FinDimBranching}

Using case-by-case arguments, we prove in this subsection the following statement:

\begin{proposition}\label{prop:ExGRepsForMultOnePairs}
Let $(G,H)$ be one of the pairs in \eqref{eq:MultOnePairs} and $(\xi,\eta)\in\widehat{M}_G\times\widehat{M}_H$. Then
\begin{enumerate}
\item\label{prop:ExGRepsForMultOnePairs1} $\dim\Hom_M(\xi^{\tilde{w}_0}|_M,\eta|_M)=\dim\Hom_M(\xi|_M,\eta|_M)\leq1$,
\item\label{prop:ExGRepsForMultOnePairs2} whenever $\Hom_M(\xi|_M,\eta|_M)\neq\{0\}$, there exist finite-dimensional representations $E$ of $G$ and $F$ of $H$ such that $\xi\simeq E^{\frakn_G}|_{M_G}$, $\eta\simeq F^{\frakn_H}|_{M_H}$ and $\Hom_H(E|_H,F)\neq\{0\}$.
\end{enumerate}
\end{proposition}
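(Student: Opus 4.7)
The proof splits naturally into two parts, and both proceed case-by-case across the five multiplicity-one families. The key input is the explicit description of $M_G$, $M_H$ and $M$ computed in Section~\ref{sec:Iwasawa}: in each case $M$ is either trivial (the $\GL$-cases and $(\SO(n+1,\CC),\SO(n,\CC))$), or of the form $\upU(|p-q-1|)$ (for $(\upU(p,q+1),\upU(p,q))$), or $\SO(|q-p+1|)$ (for $(\SO(p,q+1),\SO(p,q))$).

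For part \eqref{prop:ExGRepsForMultOnePairs1}, the identification $\xi^{\tilde{w}_0}|_M \simeq \xi|_M$ will be verified by choosing an explicit matrix representative of $\tilde{w}_0$ from the matrix model of Section~\ref{sec:Iwasawa}: in each case $\tilde{w}_0$ may be chosen to commute with the block of $M_G$ containing $M$, so that conjugation by $\tilde{w}_0$ descends to an inner automorphism of $M$. The bound $\dim\Hom_M(\xi|_M,\eta|_M)\leq 1$ is immediate when $M$ is trivial. For the remaining two families, $\xi$ splits as $\xi_1\boxtimes\xi_2$ with $\xi_2\in\widehat{M}$ (since $M$ is a direct factor of $M_G$), while $\eta|_M$ is the restriction of an irreducible representation of a classical group one rank higher ($\upU(|p-q|)$ or $\SO(|q-p+1|)$) to its standard subgroup of one rank lower. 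The conclusion then follows from the classical multiplicity-free branching rules for $\upU(k+1)\downarrow\upU(k)$ and $\SO(k+1)\downarrow\SO(k)$.

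For part \eqref{prop:ExGRepsForMultOnePairs2} the task is to realize any given $(\xi,\eta)$ with $\Hom_M(\xi|_M,\eta|_M)\neq\{0\}$ as highest restricted weight data of finite-dimensional representations $(E,F)$ with $\Hom_H(E|_H,F)\neq\{0\}$. My strategy is to build $E$ and $F$ from fundamental blocks whose highest $\frakn_G$- respectively $\frakn_H$-weight space is known explicitly: exterior powers $\wedge^i(\mathrm{std})$ of the standard representation (and, in the complex cases, $\wedge^j(\overline{\mathrm{std}})$), determinantal characters, and the two half-spin representations for $(\SO(n+1,\CC),\SO(n,\CC))$. The abelian part of $M_G$, which is a product of copies of $\upU(1)$ or $\upO(1)$, receives an arbitrary character by taking suitable tensor products of these blocks (the signs of the $\upO(1)$-characters being supplied by parity of the exterior power). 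To realize the non-abelian factor $M=\upU(|p-q-1|)$ or $M=\SO(|q-p|)$ inside $\xi$ and $\eta$, I will use the classical fact that every irreducible representation of $\upU(k)$, resp.\ $\SO(k)$, embeds into a suitable tensor power of the standard representation, and choose the $E$ and $F$ so that this embedding is realized in $E^{\frakn_G}$ and $F^{\frakn_H}$; the freedom in the $A_G$- and $A_H$-characters $\lambda_0,\nu_0$ (which are unconstrained by the statement) absorbs all the resulting weight shifts.

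The final step is to verify $\Hom_H(E|_H,F)\neq\{0\}$, which I will do using the finite-dimensional branching laws for $(\GL_{n+1},\GL_n)$ (Gelfand--Tsetlin interlacing), $(\upU_{p,q+1},\upU_{p,q})$ (interlacing of complexified highest weights, equivalently GT for the compact form), and the $\SO_{n+1}\downarrow\SO_n$ interlacing rule. The main obstacle will be the $(\upU(p,q+1),\upU(p,q))$ and $(\SO(p,q+1),\SO(p,q))$ cases when the rank imbalance $|p-q|\geq 1$ makes $M$ non-trivial; here the challenge is to arrange the data so that the compatibility on $M$ provided by the hypothesis $\Hom_M(\xi|_M,\eta|_M)\neq\{0\}$ translates into an actual occurrence of $F$ in $E|_H$, rather than merely in an abstract $M$-isotypic component. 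This will be handled by coordinating the choice of blocks on both sides using the observation that the multiplicity-freeness proved in part \eqref{prop:ExGRepsForMultOnePairs1} forces the matching of $\xi_2$ with the $M$-lowest weight component of $\eta_2|_M$, which in turn dictates the precise interlacing pattern to use.
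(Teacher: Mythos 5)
Your part (\ref{prop:ExGRepsForMultOnePairs1}) is essentially the paper's argument: it is trivial when $M=\{\1\}$, and in the two remaining families it follows from the fact that $\tilde{w}_0$ centralizes $M_G$ together with the multiplicity-free branching $\upU(k)\downarrow\upU(k-1)$, $\SO(k)\downarrow\SO(k-1)$. One small slip: for $(\SO(p,q+1),\SO(p,q))$ with $p\leq q$ the non-abelian factor $\SO(q-p+1)$ sits inside $M_G$ and $M=\SO(q-p)\subseteq M_H$, so it is $\xi$, not $\eta$, that gets restricted to $M$; this does not affect the multiplicity-one conclusion.

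Part (\ref{prop:ExGRepsForMultOnePairs2}) has a genuine gap. First, $E$ and $F$ must in effect be irreducible: if $E$ had two irreducible summands then $E^{\frakn_G}|_{M_G}$ could not be the irreducible $\xi$, and Corollary~\ref{cor:TranslationPrincipleSupport} is formulated for irreducible $E,F$ anyway. So ``suitable tensor products'' of exterior powers, determinants and half-spins can only mean passing to the Cartan component, and then the identification of the $M_G$-module $E^{\frakn_G}$ is precisely the point at issue. In the unequal-rank cases ($\upU(p,q+1)$ with $p>q+1$, $\SO(p,q+1)$ with $|p-q|>1$) this space carries a nontrivial irreducible representation of the non-abelian factor $\upU(p-q-1)$ resp.\ $\SO(q-p+1)$ of $M_G$, and the paper's proof rests on computing it explicitly from the highest weight: it is $F^{\upU(p-q-1)}(\lambda_{q+2}\varepsilon_{q+2}+\cdots+\lambda_p\varepsilon_p)$, resp.\ $F^{\SO(q-p+1)}(\lambda_{p+1}e_{p+1}+\cdots+\lambda_{p+m}e_{p+m})$, i.e.\ the ``middle block'' of $\lambda$, with the abelian part of $M_G$ acting through the remaining entries. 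The classical fact that every irreducible of $\upU(k)$ or $\SO(k)$ embeds in a tensor power of its standard representation says nothing about which $M_G$-type occurs on the highest restricted weight space of a representation of $G$, so this key computation is missing from your sketch. Second, your proposed mechanism for the compatibility on $M$ is incorrect: multiplicity-freeness does not ``force the matching of $\xi_2$ with the $M$-lowest weight component of $\eta_2|_M$''; the hypothesis $\Hom_M(\xi|_M,\eta|_M)\neq\{0\}$ only asserts that the one factor occurs in the restriction of the other, which (by the classical branching rules) is exactly the interlacing condition between the two middle blocks. The paper uses precisely this: it fixes the middle entries of $\lambda$ and $\nu$ to be the highest weights of $\xi''$ and $\eta''$, observes that the hypothesis supplies the interlacing there, and then chooses the remaining entries inductively so that the full Gelfand--Tsetlin-type interlacing holds while the $\upU(1)$- resp.\ $\upO(1)$-characters reproduce $\xi'$ and $\eta'$. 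Without the highest-restricted-weight computation and with the faulty ``matching'' step, your verification of $\Hom_H(E|_H,F)\neq\{0\}$ in the unequal-rank cases does not go through.
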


In all five cases, $G$ and $H$ have connected complexifications $G_\CC$ and $H_\CC$, and we can parametrize irreducible finite-dimensional representations of $G_\CC$ and $H_\CC$ by their highest weights $\lambda\in\frakj_{G,\CC}^\vee$ and $\nu\in\frakj_{H,\CC}^\vee$ for $\frakj_G\subseteq\frakg$ and $\frakj_H\subseteq\frakh$ Cartan subalgebras. Denote the restrictions of the corresponding representations to $G$ and $H$ by $F^G(\lambda)$ and $F^H(\nu)$.

In each case, we use the notation introduced in Section~\ref{sec:Iwasawa}.

\subsubsection{$(G,H)=(\GL(n+1,\CC),\GL(n,\CC))$}

We extend $\fraka_G$ to the Cartan subalgebra $\frakj_G=\frakt_G\oplus\fraka_G$ with
$$ \frakt_G = \frakm_G = \sqrt{-1}\fraka_G = \{\sqrt{-1}\diag(t_1,\ldots,t_{n+1}):t_i\in\RR\}. $$
Let $f_i\in\frakt_{G,\CC}^\vee$ ($1\leq i\leq n+1$) be the linear functionals mapping a diagonal matrix as above to $\sqrt{-1}t_i$. Then the root system $\Sigma(\frakg_\CC,\frakj_{G,\CC})$ is of the form
\begin{equation*}
 \{\pm(e_i-e_j)\pm(f_i-f_j):1\leq i<j\leq n+1\}
\end{equation*}
and putting
$$ \varepsilon_i' = e_i+f_i \qquad \mbox{and} \qquad \varepsilon_i'' = e_i-f_i $$
we have $\Sigma(\frakg_\CC,\frakj_{G,\CC})=\{\pm(\varepsilon_i'-\varepsilon_j'),\pm(\varepsilon_i''-\varepsilon_j''):1\leq i<j\leq n+1\}$. Further, the positive system $\Sigma^+(\frakg_\CC,\frakj_{G,\CC})=\{\varepsilon_i'-\varepsilon_j',\varepsilon_i''-\varepsilon_j'':1\leq i<j\leq n+1\}$ is compatible with the positive system $\Sigma^+(\frakg,\fraka_G)$.

We choose the complexification $G_\CC=\GL(n+1,\CC)\times\GL(n+1,\CC)$ with embedding $G\hookrightarrow G_\CC$ given by $g\mapsto(g,\overline{g})$; then $\frakj_{G,\CC}$ is a Cartan subalgebra of $\frakg_\CC=\gl(n+1,\CC)+\gl(n+1,\CC)$ and the irreducible finite-dimensional representations of $G_\CC$ are parametrized by their highest weights $\lambda=\lambda_1'\varepsilon_1'+\cdots+\lambda_{n+1}'\varepsilon_{n+1}'+\lambda_1''\varepsilon_1''+\cdots+\lambda_{n+1}''\varepsilon_{n+1}''$, $\lambda_1'\geq\ldots\geq\lambda_{n+1}'$, $\lambda_1''\geq\ldots\geq\lambda_{n+1}''$, $\lambda_i',\lambda_i''\in\ZZ$. Clearly, an element $g=\exp(\sqrt{-1}\diag(t_1,\ldots,t_{n+1}))\in M_G$ acts on the highest weight space of $F^G(\lambda)$ by $e^{\sqrt{-1}(\lambda_1'-\lambda_1'')t_1}\cdots e^{\sqrt{-1}(\lambda_m'-\lambda_m'')t_m}$.

In the same way we parametrize irreducible finite-dimensional representations of $H_\CC=\GL(n,\CC)\times\GL(n,\CC)$ by highest weight $\nu=\nu_1'\varepsilon_1'+\cdots+\nu_n'\varepsilon_n'+\nu_1''\varepsilon_1''+\cdots+\nu_n''\varepsilon_n''$, $\nu_1'\geq\ldots\geq\nu_n'$, $\nu_1''\geq\ldots\geq\nu_n''$, $\nu_i',\nu_i''\in\ZZ$. On its highest weight space an element of the form $h=\exp(\sqrt{-1}\diag(t_1,\ldots,t_n,0))\in M_H$ acts by $e^{\sqrt{-1}(\nu_1'-\nu_1'')t_1}\cdots e^{\sqrt{-1}(\nu_n'-\nu_n'')t_n}$.

Now, $M_G=\upU(1)^{n+1}$, $M_H=\upU(1)^n$ and $M=\{\1\}$. Hence, irreducible representations of $M_G$ and $M_H$ are one-dimensional and $\dim\Hom_M(\xi^{\tilde{w}_0}|_M,\eta|_M)=\dim\Hom_M(\xi_M,\eta|_M)=1$ for all $(\xi,\eta)\in\widehat{M}_G\times\widehat{M}_H$. Every $\xi\in\widehat{M}_G$ has the form $\xi(\exp(\sqrt{-1}\diag(t_1,\ldots,t_{n+1})))=e^{\sqrt{-1}\xi_1t_1}\cdots e^{\sqrt{-1}\xi_{n+1}t_{n+1}}$ with $\xi_1,\ldots,\xi_{n+1}\in\ZZ$. Similarly, every $\eta\in\widehat{M}_H$ has the form $\eta(\exp(\sqrt{-1}\diag(t_1,\ldots,t_n,0)))=e^{\sqrt{-1}\eta_1t_1}\cdots e^{\sqrt{-1}\eta_nt_n}$ with $\eta_1,\ldots,\eta_n\in\ZZ$. By the above observations $\xi\simeq F^G(\lambda)^{\frakn_G}|_{M_G}$ if and only if $\lambda_i'-\lambda_i''=\xi_i$ ($1\leq i\leq n+1$). Further, $\eta\simeq F^H(\nu)^{\frakn_H}|_{M_H}$ if and only if $\nu_i'-\nu_i''=\eta_i$ ($1\leq i\leq n$). Moreover, we have $\Hom_H(F^G(\lambda),F^H(\nu))\neq\{0\}$ if and only if
$$ \lambda_1'\geq\nu_1'\geq\lambda_2'\geq\ldots\geq\lambda_n'\geq\nu_n'\geq\lambda_{n+1}' \qquad \mbox{and} \qquad \lambda_1''\geq\nu_1''\geq\lambda_2''\geq\ldots\geq\lambda_n''\geq\nu_n''\geq\lambda_{n+1}''. $$

We first choose $(\lambda_{n+1}',\lambda_{n+1}'')\in\ZZ\times\ZZ$ with $\lambda_{n+1}'-\lambda_{n+1}''=\xi_{n+1}$. Next we choose $(\nu_n',\nu_n'')\in\NN\times\NN$ with $\nu_n'\geq\lambda_{n+1}'$, $\nu_n''\geq\lambda_{n+1}''$ and $\nu_n'-\nu_n''=\eta_n$. Iterating this procedure constructs (not necessarily unique) highest weights $\lambda$ and $\nu$ with the desired properties.

\subsubsection{$(G,H)=(\GL(n+1,\RR),\GL(n,\RR))$}

We choose the natural complexification $G_\CC=\GL(n+1,\CC)$; then $\fraka_{G,\CC}$ is a maximal torus in $\frakg_\CC=\gl(n+1,\CC)$ and the irreducible finite-dimensional representations of $G_\CC$ are parametrized by their highest weights $\lambda=\lambda_1e_1+\cdots+\lambda_{n+1}e_{n+1}$, $\lambda_1\geq\ldots\geq\lambda_{n+1}$, $\lambda_i\in\ZZ$. For $H$ we use similar notation: $\nu=\nu_1e_1+\cdots+\nu_ne_n$, $\nu_1\geq\ldots,\nu_n$, $\nu_i\in\ZZ$.

Note that an element $g=\diag((-1)^{k_1},\ldots,(-1)^{k_{n+1}})\in M_G$ acts on the highest weight space $F^G(\lambda)^{\frakn_G}$ by $(-1)^{k_1\lambda_1+\cdots+k_{n+1}\lambda_{n+1}}$. Similarly, $h=\diag((-1)^{\ell_1},\ldots,(-1)^{\ell_n},1)\in M_H$ acts on $F^H(\nu)^{\frakn_H}$ by $(-1)^{\ell_1\nu_1+\cdots+\ell_n\nu_n}$. By the classical branching laws, the restriction of $F^G(\lambda)$ to $H$ contains all representations $F^H(\nu)$ with
\begin{equation}
 \lambda_1\geq\nu_1\geq\lambda_2\geq\ldots\geq\lambda_n\geq\nu_n\geq\lambda_{n+1}.\label{eq:InterlacingGLn}
\end{equation}

Now, $M_G=\upO(1)^{n+1}$, $M_H=\upO(1)^n$ and $M=\{\1\}$. Hence, irreducible representations of $M_G$ and $M_H$ are one-dimensional and $\dim\Hom_M(\xi^{\tilde{w}_0}|_M,\eta|_M)=\dim\Hom_M(\xi|_M,\eta|_M)=1$ for all $(\xi,\eta)\in\widehat{M}_G\times\widehat{M}_H$. Every $\xi\in\widehat{M}_G$ has the form $\xi(\diag((-1)^{k_1},\ldots,(-1)^{k_{n+1}}))=(-1)^{k_1\xi_1+\cdots+k_{n+1}\xi_{n+1}}$ with $\xi_i\in\ZZ/2\ZZ$. Similarly, every representation $\eta\in\widehat{M}_H$ has the form $\eta(\diag((-1)^{\ell_1},\ldots,(-1)^{\ell_n},1))=(-1)^{\ell_1\eta_1+\cdots+\ell_n\eta_n}$ with $\eta_i\in\ZZ/2\ZZ$. By the above observations, $\xi\simeq F^G(\lambda)^{\frakn_G}|_{M_G}$ if and only if $\xi_i=\lambda_i+2\ZZ$. Further, $\eta\simeq F^H(\nu)^{\frakn_H}|_{M_H}$ if and only if $\eta_i=\nu_i+2\ZZ$. It is clear that for fixed $\xi_1,\ldots,\xi_{n+1},\eta_1,\ldots,\eta_n\in\ZZ/2\ZZ$ there always exist integers $\lambda_1,\ldots,\lambda_{n+1}$ and $\nu_1,\ldots,\nu_n$ satisfying the interlacing condition \eqref{eq:InterlacingGLn} and $\xi_i=\lambda_i+2\ZZ$, $\eta_i=\nu_i+2\ZZ$.

\subsubsection{$(G,H)=(\upU(p,q+1),\upU(p,q))$}

Assume that $p\geq q+1$; the case $p\leq q$ is handled similarly. We extend $\fraka_G$ to the Cartan subalgebra $\frakj_G=\frakt_G\oplus\fraka_G$ with
$$ \frakt_G = \{\sqrt{-1}\diag(t_1,\ldots,t_{q+1},s_1,\ldots,s_{p-q-1},t_1,\ldots,t_{q+1}):s_i,t_i\in\RR\} \subseteq \frakm_G. $$
Let $f_i\in\frakt_{G,\CC}^\vee$ ($1\leq i\leq q+1$) be the linear functionals mapping a diagonal matrix as above to $\sqrt{-1}t_i$, and $g_i\in\frakt_{G,\CC}^\vee$ ($1\leq i\leq p-q-1$) the ones mapping to $\sqrt{-1}s_i$. Then the root system $\Sigma(\frakg_\CC,\frakj_{G,\CC})$ is of the form
\begin{multline*}
 \{\pm e_i\pm e_j\pm(f_i-f_j):1\leq i<j\leq q+1\}\cup\{\pm2e_i:1\leq i\leq q+1\}\\
 \cup\{\pm e_i\pm(f_i-g_j):1\leq i\leq q+1,1\leq j\leq p-q-1\}\cup\{\pm(g_i-g_j):1\leq i<j\leq p-q-1\}
\end{multline*}
and putting
$$ (\varepsilon_1,\ldots,\varepsilon_{p+q+1})=(f_1+e_1,\ldots,f_{q+1}+e_{q+1},g_1,\ldots,g_{p-q-1},f_{q+1}-e_{q+1},\ldots,f_1-e_1) $$
we have $\Sigma(\frakg_\CC,\frakj_{G,\CC})=\{\pm(\varepsilon_i-\varepsilon_j):1\leq i<j\leq p+q+1\}$. Further, the positive system $\Sigma^+(\frakg_\CC,\frakj_{G,\CC})=\{\varepsilon_i-\varepsilon_j:1\leq i<j\leq p+q+1\}$ is compatible with the positive system $\Sigma^+(\frakg,\fraka_G)$.

We choose the complexification $G_\CC=\GL(p+q+1,\CC)$, then $\frakj_{G,\CC}$ is a Cartan subalgebra of $\frakg_\CC=\gl(p+q+1,\CC)$ and the irreducible finite-dimensional representations of $G_\CC$ are parametrized by their highest weights $\lambda=\lambda_1\varepsilon_1+\cdots+\lambda_{p+q+1}\varepsilon_{p+q+1}$, $\lambda_1\geq\ldots\geq\lambda_{p+q+1}$, $\lambda_i\in\ZZ$.

An element $g=\diag(z_1,\ldots,z_{q+1},\1_{p-q-1},z_1,\ldots,z_{q+1})\in M_G$ acts on the highest restricted weight space of $F^G(\lambda)$ by $z_1^{\lambda_1+\lambda_{p+q+1}}\cdots z_{q+1}^{\lambda_{q+1}+\lambda_{p+1}}$. Further, $\upU(p-q-1)\subseteq M_G$ has roots $\pm(\varepsilon_i-\varepsilon_j)$ ($q+2\leq i<j\leq p$) and therefore its action on the highest restricted weight space is given by $F^{\upU(p-q-1)}(\lambda_{q+2}\varepsilon_{q+2}+\cdots+\lambda_p\varepsilon_p)$.

In the same way we parametrize irreducible finite-dimensional representations of $H_\CC=\GL(p+q,\CC)$ by their highest weights $\nu=\nu_1\varepsilon_1+\cdots+\nu_{p+q}\varepsilon_{p+q}$. An element of the form $h=\diag(z_1,\ldots,z_q,\1_{p-q},z_1,\ldots,z_q)\in M_H$ acts on the highest restricted weight space by $z_1^{\nu_1+\nu_{p+q}}\cdots z_q^{\nu_q+\nu_{p+1}}$ and $\upU(p-q)\subseteq M_H$ acts by $F^{\upU(p-q)}(\nu_{q+1}\varepsilon_{q+1}+\cdots+\nu_p\varepsilon_p)$.

Now, $M_G=\upU(1)^{q+1}\times\upU(p-q-1)$, $M_H=\upU(1)^q\times\upU(p-q)$ and $M=\upU(p-q-1)$. An irreducible representation $\xi\in\widehat{M}_G$ is of the form $\xi=\xi'\boxtimes\xi''$ with
$$ \xi'(\diag(z_1,\ldots,z_{q+1},\1_{p-q-1},z_1,\ldots,z_{q+1})) = z_1^{\xi_1'}\cdots z_{q+1}^{\xi_{q+1}'}, $$
$\xi_1',\ldots,\xi_{q+1}'\in\ZZ$, and $\xi''=F^{\upU(p-q-1)}(\xi_1''\varepsilon_{q+2}+\cdots+\xi_{p-q-1}''\varepsilon_p)$, $\xi_1''\geq\ldots\geq\xi_{p-q-1}''$. Similarly, every $\eta\in\widehat{M}_H$ has the form $\eta=\eta'\boxtimes\eta''$ with
$$ \eta'(\diag(z_1,\ldots,z_q,\1_{p-q},z_1,\ldots,z_q)) = z_1^{\eta_1'}\cdots z_q^{\eta_q''}, $$
$\eta_1',\ldots,\eta_q'\in\ZZ$, and $\eta''=F^{\upU(p-q)}(\eta_1''\varepsilon_{q+1}+\cdots+\eta_{p-q}''\varepsilon_p)$, $\eta_1''\geq\ldots\geq\eta_{p-q}''$.

This implies that we have to put
$$ (\lambda_{q+2},\ldots,\lambda_p) = (\xi_1'',\ldots,\xi_{p-q-1}'') \qquad \mbox{and} \qquad (\nu_{q+1},\ldots,\nu_p) = (\eta_1'',\ldots,\eta_{p-q}''). $$
Since $\tilde{w}_0=\diag(\1_p,-\1_{q+1})$ commutes with $M_G$ we have $\xi^{\tilde{w}_0}=\xi$, so that $\Hom_M(\xi^{\tilde{w}_0}|_M,\eta|_M)=\Hom_M(\xi|_M,\eta|_M)$, and the condition $\Hom_M(\xi|_M,\eta|_M)\neq\{0\}$ is equivalent to the condition $\Hom_{\upU(p-q-1)}(\xi'',\eta''|_{\upU(p-q-1)})\neq\{0\}$ which is in turn equivalent to
$$ \eta_1''\geq\xi_1''\geq\eta_2''\geq\ldots\geq\xi_{p-q-1}''\geq\eta_{p-q}''. $$
Therefore, the already chosen $\lambda_i$'s and $\nu_j$'s satisfy the necessary interlacing condition for $\Hom_H(F^G(\lambda),F^H(\nu))\neq\{0\}$. It remains to show that one can choose the remaining $\lambda_i$'s and $\nu_j$'s such that the interlacing condition still holds and additionally $\lambda_i+\lambda_{p+q-i+2}=\xi_i'$ and $\nu_j+\nu_{p+q-j+1}=\eta_j'$, which is an easy exercise.

\subsubsection{$(G,H)=(\SO(n+1,\CC),\SO(n,\CC))$}

Assume that $n=2m$ is even; the case of odd $n$ is treated similarly. We extend $\fraka_G$ to the Cartan subalgebra $\frakj_G=\frakt_G\oplus\fraka_G$ with
$$ \frakt_G = \frakm_G = \sqrt{-1}\fraka_G = \{\sqrt{-1}\diag(D(t_1),\ldots,D(t_m)):t_i\in\RR\}. $$
Let $f_i\in\frakt_{G,\CC}^\vee$ ($1\leq i\leq m$) be the linear functionals mapping a diagonal matrix as above to $\sqrt{-1}t_i$. Then the root system $\Sigma(\frakg_\CC,\frakj_{G,\CC})$ is of the form
\begin{multline*}
 \{\pm(e_i+f_i)\pm(e_j+f_j):1\leq i<j\leq m\}\cup\{\pm(e_i+f_i):1\leq i\leq m\}\\
 \cup\{\pm(e_i-f_i)\pm(e_j-f_j):1\leq i<j\leq m\}\cup\{\pm(e_i-f_i):1\leq i\leq m\}
\end{multline*}
and putting
$$ \varepsilon_i' = e_i+f_i \qquad \mbox{and} \qquad \varepsilon_i'' = e_i-f_i $$
we have $\Sigma(\frakg_\CC,\frakj_{G,\CC})=\{\pm\varepsilon_i'\pm\varepsilon_j',\pm\varepsilon_i''\pm\varepsilon_j'':1\leq i<j\leq m\}\cup\{\pm\varepsilon_i',\pm\varepsilon_i'':1\leq i\leq m\}$. Further, the positive system $\Sigma^+(\frakg_\CC,\frakj_{G,\CC})=\{\varepsilon_i'\pm\varepsilon_j',\varepsilon_i''\pm\varepsilon_j'':1\leq i<j\leq m\}\cup\{\varepsilon_i',\varepsilon_i'':1\leq i\leq m\}$ is compatible with the positive system $\Sigma^+(\frakg,\fraka_G)$.

We choose the complexification $G_\CC=\SO(n+1,\CC)\times\SO(n+1,\CC)$ with embedding $G\hookrightarrow G_\CC$ given by $g\mapsto(g,\overline{g})$; then $\frakj_{G,\CC}$ is a Cartan subalgebra of $\frakg_\CC=\so(n+1,\CC)+\so(n+1,\CC)$ and the irreducible finite-dimensional representations of $G_\CC$ are parametrized by their highest weights $\lambda=\lambda_1'\varepsilon_1'+\cdots+\lambda_m'\varepsilon_m'+\lambda_1''\varepsilon_1''+\cdots+\lambda_m''\varepsilon_m''$, $\lambda_1'\geq\ldots\geq\lambda_{m-1}'\geq|\lambda_m'|$, $\lambda_1''\geq\ldots\geq\lambda_{m-1}''\geq|\lambda_m''|$, $\lambda_i',\lambda_i''\in\ZZ$. An element $g=\exp(\sqrt{-1}\diag(D(t_1),\ldots,D(t_m)))\in M_G$ acts on the highest weight space of $F^G(\lambda)$ by $e^{\sqrt{-1}(\lambda_1'-\lambda_1'')t_1}\cdots e^{\sqrt{-1}(\lambda_m'-\lambda_m'')t_m}$.

In the same way, we parametrize irreducible finite-dimensional representations of $H_\CC=\SO(n,\CC)\times\SO(n,\CC)$ by their highest weights $\nu=\nu_1'\varepsilon_1'+\cdots+\nu_{m-1}'\varepsilon_{m-1}'+\nu_1''\varepsilon_1''+\cdots+\nu_{m-1}''\varepsilon_{m-1}''$, $\nu_1'\geq\ldots\geq\nu_{m-1}'$, $\nu_1''\geq\ldots\geq\nu_{m-1}''$, $\nu_i',\nu_i''\in\ZZ$. On its highest weight space, an element $h=\exp(\sqrt{-1}\diag(D(t_1),\ldots,D(t_{m-1}),0))\in M_H$ acts by $e^{\sqrt{-1}(\nu_1'-\nu_1'')t_1}\cdots e^{\sqrt{-1}(\nu_{m-1}'-\nu_{m-1}'')t_{m-1}}$.

Now, $M_G=\SO(2)^m$, $M_H=\SO(2)^{m-1}$ and $M=\{\1\}$. Hence, irreducible representations of $M_G$ and $M_H$ are one-dimensional and $\dim\Hom_M(\xi^{\tilde{w}_0}|_M,\eta|_M)=\dim\Hom_M(\xi|_M,\eta|_M)=1$ for all $(\xi,\eta)\in\widehat{M}_G\times\widehat{M}_H$. Every $\xi\in\widehat{M}_G$ has the form
$$ \xi(\exp(\sqrt{-1}\diag(D(t_1),\ldots,D(t_m)))) = e^{\sqrt{-1}\xi_1t_1}\cdots e^{\sqrt{-1}\xi_mt_m} $$
with $\xi_1,\ldots,\xi_m\in\ZZ$. Similarly, every $\eta\in\widehat{M}_H$ has the form
$$ \eta(\exp(\sqrt{-1}\diag(D(t_1),\ldots,D(t_{m-1}),0))) = e^{\sqrt{-1}\eta_1t_1}\cdots e^{\sqrt{-1}\eta_{m-1}t_{m-1}} $$
with $\eta_1,\ldots,\eta_{m-1}\in\ZZ$. By the above observations, $\xi\simeq F^G(\lambda)^{\frakn_G}|_{M_G}$ if and only if $\lambda_i'-\lambda_i''=\xi_i$ ($1\leq i\leq m$). Further, $\eta\simeq F^H(\nu)^{\frakn_H}|_{M_H}$ if and only if $\nu_i'-\nu_i''=\eta_i$ ($1\leq i\leq m-1$). Moreover, we have $\Hom_H(F^G(\lambda),F^H(\nu))\neq\{0\}$ if and only if
\begin{align*}
 & \lambda_1'\geq\nu_1'\geq\lambda_2'\geq\ldots\geq\lambda_{m-1}'\geq\nu_{m-1}'\geq|\lambda_m'| & \mbox{and}\\
 & \lambda_1''\geq\nu_1''\geq\lambda_2''\geq\ldots\geq\lambda_{m-1}''\geq\nu_{m-1}''\geq|\lambda_m''|.
\end{align*}

We first choose $(\lambda_m',\lambda_m'')\in\ZZ\times\ZZ$ with $\lambda_m'-\lambda_m''=\xi_m$. Next we choose $(\nu_{m-1}',\nu_{m-1}'')\in\NN\times\NN$ with $\nu_{m-1}'\geq|\lambda_m'|$, $\nu_{m-1}''\geq|\lambda_m''|$ and $\nu_{m-1}'-\nu_{m-1}''=\eta_{m-1}$. Iterating this procedure shows the claim.

\subsubsection{$(G,H)=(\SO(p,q+1),\SO(p,q))$}

Assume that $p\leq q$; the case $p\geq q+1$ is handled similarly. We further assume that $q-p=2m$ is even, leaving the odd case to the reader. We extend $\fraka_G$ to the Cartan subalgebra $\frakj_G=\frakt_G\oplus\fraka_G$ with
$$ \frakt_G = \{\diag(\0_{2p},D(t_1),\ldots,D(t_m),0):t_i\in\RR\} \subseteq \frakm_G. $$
Let $e_{p+i}\in\frakt_{G,\CC}^\vee$ ($1\leq i\leq m$) be the linear functional mapping a matrix as above to $\sqrt{-1}t_i$. Then the root system $\Sigma(\frakg_\CC,\frakj_{G,\CC})$ is of the form
\begin{equation*}
 \{\pm e_i\pm e_j:1\leq i<j\leq p+m\}\cup\{\pm e_i:1\leq i\leq p+m\}
\end{equation*}
and the positive system
$$ \Sigma^+(\frakg_\CC,\frakj_{G,\CC}) = \{e_i\pm e_j:1\leq i<j\leq p+m\}\cup\{e_i:1\leq i\leq p+m\} $$
is compatible with the positive system $\Sigma^+(\frakg,\fraka_G)$.

We choose the complexification $G_\CC=\SO(p+q+1,\CC)$, then $\frakj_{G,\CC}$ is a Cartan subalgebra of $\frakg_\CC=\so(p+q+1,\CC)$ and the irreducible finite-dimensional representations of $G_\CC$ are parametrized by their highest weights $\lambda=\lambda_1e_1+\cdots+\lambda_{p+m}e_{p+m}$, where $\lambda_1\geq\ldots\geq\lambda_{p+m}\geq0$, $\lambda_i\in\ZZ$. An element $g=\diag((-1)^{k_1},\ldots,(-1)^{k_p},(-1)^{k_1},\ldots,(-1)^{k_p},\1_{q-p-1})\in M_G$ acts on the highest restricted weight space of $F^G(\lambda)$ by $(-1)^{k_1\lambda_1+\cdots+k_p\lambda_p}$. Further, $\SO(q-p+1)\subseteq M_G$ acts on the highest restricted weight space by $F^{\SO(q-p+1)}(\lambda_{p+1}e_{p+1}+\cdots+\lambda_{p+m}e_{p+m})$.

In the same way we parametrize irreducible finite-dimensional representations of $H_\CC=\SO(p+q,\CC)$ by their highest weights $\nu=\nu_1e_1+\cdots+\nu_{p+m}e_{p+m}$. An element $h=\diag(x_1,\ldots,x_p,x_1,\ldots,x_p,\1_{q-p+1})\in M_H$ acts by $x_1^{\nu_1}\cdots x_p^{\nu_p}$ and $\SO(q-p)\subseteq M_H$ acts by $F^{\SO(q-p)}(\nu_{p+1}e_{p+1}+\cdots+\nu_{p+m}e_{p+m})$.

Now, $M_G=\upO(1)^p\times\SO(q-p+1)$, $M_H=\upO(1)^p\times\SO(q-p)$ and $M=\SO(q-p)$. An irreducible representation $\xi\in\widehat{M}_G$ is of the form $\xi=\xi'\boxtimes\xi''$ with
$$ \xi'(x_1,\ldots,x_p,x_1,\ldots,x_p,\1_{q-p+1})=x_1^{\xi_1'}\cdots x_p^{\xi_p'}, $$
$\xi_1',\ldots,\xi_p'\in\ZZ/2\ZZ$, and $\xi''=F^{\SO(q-p+1)}(\xi_1''e_{p+1}+\cdots+\xi_m''e_{p+m})$, $\xi_1''\geq\ldots\geq\xi_m''\geq0$. Similarly, every $\eta\in\widehat{M}_H$ has the form $\eta=\eta'\boxtimes\eta''$ with
$$ \eta'(\diag(x_1,\ldots,x_p,x_1,\ldots,x_p,\1_{q-p+1})) = x_1^{\eta_1'}\cdots x_p^{\eta_p'}, $$
$\eta_1',\ldots,\eta_p'\in\ZZ/2\ZZ$, and $\eta''=F^{\SO(q-p)}(\eta_1''e_{p+1}+\cdots+\eta_m''e_{p+m})$, $\eta_1''\geq\ldots\geq\eta_{m-1}''\geq|\eta_m''|$.

This implies that we have to put
$$ (\lambda_{p+1},\ldots,\lambda_{p+m}) = (\xi_1'',\ldots,\xi_m'') \qquad \mbox{and} \qquad (\nu_{p+1},\ldots,\nu_{p+m}) = (\eta_1'',\ldots,\eta_m''). $$
Since $\tilde{w}_0=\diag(-\1_p,\1_{q+1})$ commutes with $M_G$ we have $\xi^{\tilde{w}_0}=\xi$, so that $\Hom_M(\xi^{\tilde{w}_0}|_M,\eta|_M)=\Hom_M(\xi|_M,\eta|_M)$, and the condition $\Hom_M(\xi|_M,\eta|_M)\neq\{0\}$ is equivalent to the condition $\Hom_{\SO(q-p)}(\xi''|_{\SO(q-p)},\eta'')\neq\{0\}$ which is in turn equivalent to
$$ \xi_1''\geq\eta_1''\geq\xi_2''\geq\ldots\geq\eta_{m-1}''\geq\xi_m''\geq|\eta_m''|. $$
Therefore, the already chosen $\lambda_i$'s and $\nu_j$'s satisfy the necessary interlacing condition for $\Hom_H(F^G(\lambda),F^H(\nu))\neq\{0\}$. It remains to show that one can choose the remaining $\lambda_i$'s and $\nu_j$'s such that the interlacing condition holds and additionally $\xi_i'=\lambda_i+2\ZZ$ and $\eta_j'=\nu_j+2\ZZ$, which is an easy exercise.

\subsection{Generic multiplicities}

Using the intertwining operators between spherical principal series constructed in Section~\ref{sec:ConstructionOfSBOs} and the upper multiplicity bounds obtained in Section~\ref{sec:InvariantDistributions}, we prove the following generic multiplicity formula for general principal series representations of multiplicity one pairs:

\begin{theorem}\label{thm:MultiplicitiesMultOnePairs}
Assume that $(G,H)$ is one of the multiplicity one pairs in \eqref{eq:MultOnePairs}. Then for all $(\xi,\eta)\in\widehat{M}_G\times\widehat{M}_H$ and $(\lambda,\nu)\in\fraka_{G,\CC}^\vee\times\fraka_{H,\CC}^\vee$ we have the lower multiplicity bound
$$ \dim\Hom_H(\pi_{\xi,\lambda}|_H,\tau_{\eta,\nu}) \geq 1 \qquad \mbox{whenever }\Hom_M(\xi|_M,\eta|_M)\neq\{0\}, $$
and for $(\lambda,\nu)\in\fraka_{G,\CC}^\vee\times\fraka_{H,\CC}^\vee$ satisfying the generic condition~\eqref{eq:GenericCondition} we have
$$ \dim\Hom_H(\pi_{\xi,\lambda}|_H,\tau_{\eta,\nu}) = \begin{cases}1&\mbox{for $\Hom_M(\xi|_M,\eta|_M)\neq\{0\}$,}\\0&\mbox{for $\Hom_M(\xi|_M,\eta|_M)=\{0\}$.}\end{cases} $$
\end{theorem}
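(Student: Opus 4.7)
The plan is to combine the upper multiplicity bound from Bruhat's theory with a lower bound constructed by applying the translation principle to the holomorphic families of Section~\ref{sec:ConstructionOfSBOs}. The generic upper bound is essentially immediate: by Theorem~\ref{thm:MultiplicityBounds},
$$ \dim\Hom_H(\pi_{\xi,\lambda}|_H,\tau_{\eta,\nu}) \leq \#(P_H\backslash G/P_G)_{\rm open}\cdot\dim\Hom_M(\xi^{\tilde{w}_0}|_M,\eta|_M) $$
whenever $(\lambda,\nu)$ satisfies \eqref{eq:GenericCondition}, and by Lemma~\ref{lem:OpenOrbitsForMultOnePairs} together with Proposition~\ref{prop:ExGRepsForMultOnePairs}~\eqref{prop:ExGRepsForMultOnePairs1} both factors on the right are at most $1$, with the product vanishing precisely when $\Hom_M(\xi|_M,\eta|_M)=\{0\}$.

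For the lower bound assume $\Hom_M(\xi|_M,\eta|_M)\neq\{0\}$. Proposition~\ref{prop:ExGRepsForMultOnePairs}~\eqref{prop:ExGRepsForMultOnePairs1} then gives $\Hom_M(\xi^{\tilde{w}_0}|_M,\eta|_M)\neq\{0\}$, and Proposition~\ref{prop:ExGRepsForMultOnePairs}~\eqref{prop:ExGRepsForMultOnePairs2} applied to the pair $(\xi^{\tilde{w}_0},\eta)$ produces irreducible finite-dimensional representations $E$ of $G$ and $F$ of $H$ with $\xi^{\tilde{w}_0}\simeq E^{\frakn_G}|_{M_G}$, $\eta\simeq F^{\frakn_H}|_{M_H}$ and $\Hom_H(E|_H,F)\neq\{0\}$. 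This is precisely the input required by Corollary~\ref{cor:TranslationPrincipleSupport}, which delivers $(\lambda_0,\nu_0)\in\fraka_G^\vee\times\fraka_H^\vee$ together with a non-trivial real-analytic section $\sigma$ such that the map
$$ \Phi:\Hom_H(\pi_\lambda|_H,\tau_\nu)\to\Hom_H(\pi_{\xi,\lambda+\lambda_0}|_H,\tau_{\eta,\nu+\nu_0}), \qquad K_A\mapsto\sigma\otimes K_A, $$
is well-defined on distribution kernels. Substituting $(\lambda,\nu)\mapsto(\lambda-\lambda_0,\nu-\nu_0)$ and applying $\Phi$ to the holomorphic family $A_{\lambda,\nu}$ from Theorem~\ref{thm:MeromorphicContinuation} (available since $\#(P_H\backslash G/P_G)_{\rm open}=1$ by Lemma~\ref{lem:OpenOrbitsForMultOnePairs}) yields a holomorphic family $B_{\lambda,\nu}\in\Hom_H(\pi_{\xi,\lambda}|_H,\tau_{\eta,\nu})$.

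It remains to deduce that $B_{\lambda,\nu}\neq0$ at every $(\lambda,\nu)$. For generic parameters the distribution kernel $K_{A_{\lambda-\lambda_0,\nu-\nu_0}}$ has full support $G/P_G$ by Theorem~\ref{thm:MeromorphicContinuation}~(2); since $\sigma$ is a nonzero real-analytic section its zero set is nowhere dense, so the product $\sigma\otimes K_{A_{\lambda-\lambda_0,\nu-\nu_0}}$ is nonzero, and hence $B_{\lambda,\nu}\neq 0$ generically. At an arbitrary point $(\lambda_*,\nu_*)$ we restrict $B$ to a complex line through $(\lambda_*,\nu_*)$ along which it is generically nonzero and divide out the appropriate power of the parameter, exactly as in the $n=1$ case of Lemma~\ref{lem:RenormHol}, to obtain a renormalized holomorphic family that is nonzero at $(\lambda_*,\nu_*)$. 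Combined with the upper bound this establishes the claimed generic equality. The real technical work in this strategy is not in the argument above, which is essentially formal once all pieces are in place, but in establishing the second part of Proposition~\ref{prop:ExGRepsForMultOnePairs}---the case-by-case construction of finite-dimensional $E$ and $F$ whose extremal weight spaces realize arbitrary prescribed characters of $M_G$ and $M_H$ while simultaneously satisfying the interlacing conditions of the classical branching laws for $H\subseteq G$.
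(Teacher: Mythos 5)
Your proposal is correct and follows essentially the same route as the paper: the generic upper bound via Theorem~\ref{thm:MultiplicityBounds}, Lemma~\ref{lem:OpenOrbitsForMultOnePairs} and Proposition~\ref{prop:ExGRepsForMultOnePairs}~(1), and the lower bound by applying the translation principle (Corollary~\ref{cor:TranslationPrincipleSupport}) to the holomorphic family of Theorem~\ref{thm:MeromorphicContinuation} and then renormalizing along a complex line via Lemma~\ref{lem:RenormHol}. Your explicit application of Proposition~\ref{prop:ExGRepsForMultOnePairs}~(2) to the pair $(\xi^{\tilde{w}_0},\eta)$, so as to match the hypothesis $\xi^{\tilde{w}_0}\simeq E^{\frakn_G}|_{M_G}$ of Corollary~\ref{cor:TranslationPrincipleSupport}, is a point the paper passes over more quickly, and your handling of it is correct.
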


\begin{proof}
By Lemma~\ref{lem:OpenOrbitsForMultOnePairs} we have $\#(P_H\backslash G/P_G)_{\rm open}=1$. Hence, it follows from Theorem~\ref{thm:MultiplicityBounds} and Proposition~\ref{prop:ExGRepsForMultOnePairs}~\eqref{prop:ExGRepsForMultOnePairs1} that
$$ \dim\Hom_H(\pi_{\xi,\lambda}|_H,\tau_{\eta,\nu}) \leq \dim\Hom_M(\xi^{\tilde{w}_0}|_M,\eta|_M) = \dim\Hom_M(\xi|_M,\eta|_M) \leq 1 $$
for $(\lambda,\nu)\in\fraka_{G,\CC}^\vee\times\fraka_{H,\CC}^\vee$ satisfying \eqref{eq:GenericCondition}. It therefore suffices to show that for $\Hom_M(\xi|_M,\eta|_M)\neq\{0\}$ we have $\dim\Hom_H(\pi_{\xi,\lambda}|_H,\tau_{\eta,\nu})\geq1$ for all $(\lambda,\nu)\in\fraka_{G,\CC}^\vee\times\fraka_{H,\CC}^\vee$.\\
By Proposition~\ref{prop:ExGRepsForMultOnePairs}~(2) and Corollary~\ref{cor:TranslationPrincipleSupport} there exist $(\lambda_0,\nu_0)\in\fraka_G^\vee\times\fraka_H^\vee$ and a linear map
$$ \Phi:\Hom_H(\pi_\lambda|_H,\tau_\nu)\to\Hom_H(\pi_{\xi,\lambda+\lambda_0}|_H,\tau_{\eta,\nu+\nu_0}) $$
which is on the level of distribution kernels given by tensoring with a fixed non-trivial real-analytic section. We apply this map to the holomorphic family of intertwining operators obtained in Theorem~\ref{thm:MeromorphicContinuation}. More precisely, by Theorem~\ref{thm:MeromorphicContinuation} there exists a family $K_{\lambda,\nu}$ of distribution kernels of intertwining operators $A_{\lambda,\nu}\in\Hom_H(\pi_\lambda|_H,\tau_\nu)$, depending holomorphically on $(\lambda,\nu)\in\fraka_{G,\CC}^\vee\times\fraka_{H,\CC}^\vee$, such that generically $\supp K_{\lambda,\nu}=G/P_G$. Then the distribution kernels of $\Phi(A_{\lambda,\nu})$ depend holomorphically on $(\lambda,\nu)\in\fraka_{G,\CC}^\vee\times\fraka_{H,\CC}^\vee$ since they are given by tensoring the holomorphic family $K_{\lambda,\nu}$ with a fixed smooth section. Further, by Corollary~\ref{cor:TranslationPrincipleSupport} they are generically supported on $G/P_G$ and hence the holomorphic family $\Phi(A_{\lambda,\nu})\in\Hom_H(\pi_{\xi,\lambda+\lambda_0}|_H,\tau_{\eta,\nu+\nu_0})$ is non-trivial. Now the desired lower multiplicity bound follows from Lemma~\ref{lem:RenormHol}.
\end{proof}

Combining Theorem~\ref{thm:MultiplicitiesMultOnePairs} with the multiplicity one statement in Fact~\ref{fact:MultiplicityOne} we immediately obtain the following corollary:

\begin{corollary}\label{cor:IrreducibleMultiplicitiesMultOnePairs}
Let $(G,H)$ be one of the pairs in \eqref{eq:MultOnePairs} and assume that $\pi_{\xi,\lambda}$ and $\tau_{\eta,\nu}$ are irreducible. Then, if $\Hom_M(\xi|_M,\eta|_M)\neq\{0\}$, we have
$$ \dim\Hom_H(\pi_{\xi,\lambda}|_H,\tau_{\eta,\nu}) = 1. $$
\end{corollary}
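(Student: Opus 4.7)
The proof will be essentially immediate once we combine the lower bound already established with the Sun--Zhu multiplicity one theorem. The plan is as follows.

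First I would invoke Theorem~\ref{thm:MultiplicitiesMultOnePairs}, which applies unconditionally in the parameters $(\lambda,\nu)$ and gives the lower bound
$$ \dim\Hom_H(\pi_{\xi,\lambda}|_H,\tau_{\eta,\nu}) \geq 1 $$
as soon as $\Hom_M(\xi|_M,\eta|_M)\neq\{0\}$. Note that no genericity hypothesis on $(\lambda,\nu)$ is needed here: the genericity was only used for the reverse inequality, while the lower bound was obtained by the holomorphic renormalization argument (Lemma~\ref{lem:RenormHol}) applied to the family $\Phi(A_{\lambda,\nu})$ constructed via the translation principle.

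Next I would obtain the matching upper bound from Fact~\ref{fact:MultiplicityOne}. The hypothesis of the corollary is precisely that both $\pi_{\xi,\lambda}$ and $\tau_{\eta,\nu}$ are irreducible. Principal series representations are smooth admissible Fréchet representations of moderate growth, so under this irreducibility assumption the Sun--Zhu theorem directly applies and yields
$$ \dim\Hom_H(\pi_{\xi,\lambda}|_H,\tau_{\eta,\nu}) \leq 1. $$

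Combining the two inequalities gives equality, which completes the proof. There is no real obstacle here: both directions are already in hand, and the only point worth emphasizing is that the lower bound from Theorem~\ref{thm:MultiplicitiesMultOnePairs} holds for \emph{all} $(\lambda,\nu)\in\fraka_{G,\CC}^\vee\times\fraka_{H,\CC}^\vee$ (not just generic ones), so the irreducibility assumption is used only to invoke Fact~\ref{fact:MultiplicityOne} for the upper bound. In particular, the statement of the corollary remains valid at those singular parameters where Theorem~\ref{thm:MultiplicitiesMultOnePairs} might only give a strict inequality in the upper direction, because irreducibility of $\pi_{\xi,\lambda}$ and $\tau_{\eta,\nu}$ then forces the Sun--Zhu bound to kick in.
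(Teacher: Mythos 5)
Your proposal is correct and coincides with the paper's own argument: the corollary is obtained by combining the unconditional lower bound of Theorem~\ref{thm:MultiplicitiesMultOnePairs} (valid for all $(\lambda,\nu)$ when $\Hom_M(\xi|_M,\eta|_M)\neq\{0\}$) with the Sun--Zhu upper bound of Fact~\ref{fact:MultiplicityOne}, which applies because $\pi_{\xi,\lambda}$ and $\tau_{\eta,\nu}$ are assumed irreducible. Nothing further is needed.
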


\subsection{The Gross--Prasad conjecture for complex orthogonal groups}

In 1992 B. Gross and D. Prasad~\cite{GP92} formulated a conjecture about the multiplicities $\dim_H(\pi|_H,\tau)$ for the reductive pair $(G,H)=(\SO(n+1),\SO(n))$ over local and global fields. For the field $k=\CC$ the local conjecture takes the following form:

\begin{conjecture}[{\cite[Conjecture 11.5]{GP92}}]\label{conj:GrossPrasadComplex}
Let $(G,H)=(\SO(n+1,\CC),\SO(n,\CC))$ and assume that $\pi_{\xi,\lambda}$ and $\tau_{\eta,\nu}$ are irreducible; then $\dim\Hom_H(\pi_{\xi,\lambda}|_H,\tau_{\eta,\nu})=1$.
\end{conjecture}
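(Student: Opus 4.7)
The plan is to deduce Conjecture~\ref{conj:GrossPrasadComplex} directly from Corollary~\ref{cor:IrreducibleMultiplicitiesMultOnePairs} (or equivalently from Theorem~\ref{thm:MultiplicitiesMultOnePairs} combined with Fact~\ref{fact:MultiplicityOne}). The only nontrivial verification needed is that the hypothesis $\Hom_M(\xi|_M,\eta|_M)\neq\{0\}$ is \emph{automatically} satisfied for the pair $(G,H)=(\SO(n+1,\CC),\SO(n,\CC))$ regardless of $\xi\in\widehat{M}_G$ and $\eta\in\widehat{M}_H$. This is where the special structure of the complex orthogonal case enters.

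First, I would observe that in the Iwasawa computation of Section~\ref{sec:Iwasawa} for $(\SO(n+1,\CC),\SO(n,\CC))$, the action of $M_G\cap M_H\cong\SO(2)^{\lfloor n/2\rfloor}$ on each one-dimensional complex root space in $\frakn^{-\sigma}$ is by rotation, and hence transitive on the corresponding unit circle. Consequently, the stabilizer of a generic element $X=\sum_\beta X_\beta\in\frakn^{-\sigma}$ under $M_G\cap M_H$ is trivial, that is, $M=\{\1\}$. This is precisely the calculation carried out for the complex orthogonal pair in Section~\ref{sec:Iwasawa}.

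Since $M=\{\1\}$, for every $(\xi,\eta)\in\widehat{M}_G\times\widehat{M}_H$ the space $\Hom_M(\xi|_M,\eta|_M)=\Hom_\CC(V_\xi,W_\eta)$ is non-zero. Hence Corollary~\ref{cor:IrreducibleMultiplicitiesMultOnePairs}, applied to the multiplicity one pair $(\SO(n+1,\CC),\SO(n,\CC))$, yields
\[
 \dim\Hom_H(\pi_{\xi,\lambda}|_H,\tau_{\eta,\nu}) = 1
\]
whenever $\pi_{\xi,\lambda}$ and $\tau_{\eta,\nu}$ are irreducible, proving the conjecture. Unpacking Corollary~\ref{cor:IrreducibleMultiplicitiesMultOnePairs}, the upper bound $\leq 1$ is the Sun--Zhu multiplicity one theorem (Fact~\ref{fact:MultiplicityOne}), while the lower bound $\geq 1$ comes from Theorem~\ref{thm:MultiplicitiesMultOnePairs}, which in turn is obtained by translating (via Theorem~\ref{thm:TranslationPrinciple} and Corollary~\ref{cor:TranslationPrincipleSupport}) the holomorphic family of spherical symmetry breaking operators constructed in Theorem~\ref{thm:MeromorphicContinuation} into a holomorphic family of operators between vector-valued principal series, and then applying the renormalization Lemma~\ref{lem:RenormHol} to extract a non-zero operator at the given parameter $(\lambda,\nu)$.

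There is essentially no obstacle: all heavy lifting has been done earlier, and the main observation is the triviality of $M$, which follows from the transitivity of the circle action on each $\frakg^{-\sigma}(\fraka_H;\beta)\cong\CC$. The only mild subtlety worth highlighting is that the lower bound from Theorem~\ref{thm:MultiplicitiesMultOnePairs} holds at \emph{all} parameters $(\lambda,\nu)\in\fraka_{G,\CC}^\vee\times\fraka_{H,\CC}^\vee$, not merely generic ones, so that irreducibility of $\pi_{\xi,\lambda}$ and $\tau_{\eta,\nu}$ (rather than genericity) is enough to combine it with the Sun--Zhu upper bound.
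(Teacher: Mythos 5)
Your proposal is correct and follows essentially the same route as the paper: its proof of Corollary~\ref{cor:GPConjecture} likewise notes that $M=\{\1\}$ for $(\SO(n+1,\CC),\SO(n,\CC))$ (from the Iwasawa computation in Section~\ref{sec:Iwasawa}), so $\Hom_M(\xi|_M,\eta|_M)\neq\{0\}$ automatically, and then invokes Corollary~\ref{cor:IrreducibleMultiplicitiesMultOnePairs}. Your remark that the lower bound of Theorem~\ref{thm:MultiplicitiesMultOnePairs} holds at all parameters, so that irreducibility rather than genericity suffices to combine it with the Sun--Zhu bound, matches the paper's reasoning exactly.
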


Using our results from the previous section we can prove this conjecture. It follows from the following more general statement:

\begin{corollary}\label{cor:GPConjecture}
Let $(G,H)$ be one of the pairs in \eqref{eq:MultOnePairs} and assume that $p=q$ or $p=q+1$ in the case of indefinite unitary or orthogonal groups. Then, if the representations $\pi_{\xi,\lambda}$ and $\tau_{\eta,\nu}$ are irreducible, we have
$$ \dim\Hom_H(\pi_{\xi,\lambda}|_H,\tau_{\eta,\nu}) = 1. $$
\end{corollary}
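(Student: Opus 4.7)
The plan is to deduce this immediately from Corollary~\ref{cor:IrreducibleMultiplicitiesMultOnePairs}, reducing the entire statement to a structural verification. That earlier corollary already gives $\dim\Hom_H(\pi_{\xi,\lambda}|_H,\tau_{\eta,\nu}) = 1$ for irreducible $\pi_{\xi,\lambda}$ and $\tau_{\eta,\nu}$ under the side hypothesis $\Hom_M(\xi|_M,\eta|_M) \neq \{0\}$, so the task reduces to showing that, under the balanced-rank restriction $p = q$ or $p = q+1$, the compact group $M \subseteq M_G \cap M_H$ of Section~\ref{sec:UpperBoundsMultiplicities} is trivial: once $M = \{\1\}$, the space $\Hom_M(\xi|_M,\eta|_M) = \Hom_\CC(V_\xi,W_\eta)$ is automatically nonzero for any $(\xi,\eta) \in \widehat{M}_G \times \widehat{M}_H$.

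I would check triviality of $M$ family by family using the Iwasawa data assembled in Section~\ref{sec:Iwasawa}. For the three pairs $(\GL(n+1,\CC),\GL(n,\CC))$, $(\GL(n+1,\RR),\GL(n,\RR))$, and $(\SO(n+1,\CC),\SO(n,\CC))$ no restriction is needed: the computations in Sections 7.2.1, 7.2.2, and 7.2.4 already show $M = \{\1\}$ unconditionally. For the indefinite pair $(\upU(p,q+1),\upU(p,q))$ with $p = q+1$, Section~\ref{sec:IwasawaUpq} gives $M = \upU(p-q-1) = \upU(0) = \{\1\}$; analogously for $(\SO(p,q+1),\SO(p,q))$ with $p = q+1$, Section~\ref{sec:IwasawaOpq} gives $M = \SO(p-q-1) = \SO(0) = \{\1\}$.

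The only case missing from the explicit write-up is $p = q$ for the two indefinite families, which I would treat by a parallel Iwasawa computation. Here $\dim\fraka_G = \dim\fraka_H = p$ and one may take $\fraka_G = \fraka_H$. In the unitary case the restricted root system of $\frakg = \fraku(q,q+1)$ is $BC_q$ while that of $\frakh = \fraku(q,q)$ is $C_q$, so that $\frakn^{-\sigma}$ is the sum of the $q$ short-root spaces $\frakg(\fraka_G;e_i)$, each of complex dimension one; a direct check shows that $M_G \cap M_H \simeq \upU(1)^q$ acts on $\frakn^{-\sigma} \simeq \CC^q$ by the standard coordinatewise phase action, so the generic stabilizer is trivial. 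The orthogonal case $\SO(p,p+1) \supset \SO(p,p)$ is handled identically with $\upO(1)^p$ replacing $\upU(1)^q$. In both situations $M = \{\1\}$.

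I expect no real obstacle. The entire argument is a short reduction to Corollary~\ref{cor:IrreducibleMultiplicitiesMultOnePairs} together with the observation that the hypothesis $p = q$ or $p = q+1$ is precisely what forces the compact factor $\upU(|p-q-1|)$ or $\SO(|p-q-1|)$ that governs $M$ to collapse to the identity. The only step requiring a touch of care is the $p = q$ Iwasawa computation not already spelled out in Section~\ref{sec:Iwasawa}, but it proceeds in direct analogy to the $p \geq q+1$ computations given there.
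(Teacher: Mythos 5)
Your proposal is correct and follows essentially the same route as the paper: the proof there is exactly the reduction to Corollary~\ref{cor:IrreducibleMultiplicitiesMultOnePairs} via the observation that $M=\{\1\}$ in all the listed cases under the hypothesis $p=q$ or $p=q+1$, with the values of $M$ read off from Section~\ref{sec:Iwasawa}. Your only slip is bookkeeping, not mathematics: Section~\ref{sec:IwasawaOpq} treats $p\leq q$ (so the orthogonal case $p=q$ is the one written out, giving $M=\SO(q-p)=\{\1\}$, while $p=q+1$ is the case left to the analogous computation), and symmetrically Section~\ref{sec:IwasawaUpq} treats $p\geq q+1$, leaving the unitary $p=q$ case to the analogue you sketch, which indeed yields $M=\upU(q-p)=\{\1\}$.
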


\begin{proof}
In all cases we have $M=\{\1\}$ so that $\Hom_M(\xi|_M,\eta|_M)\neq\{0\}$ for all $\xi\in\widehat{M}_G$ and $\eta\in\widehat{M}_H$. Then the statement follows from Corollary~\ref{cor:IrreducibleMultiplicitiesMultOnePairs}.
\end{proof}

\section{An example: $(G,H)=(\GL(n+1,\RR),\GL(n,\RR))$}\label{sec:ExGLn}

For the multiplicity one pair $(G,H)=(\GL(n+1,\RR),\GL(n,\RR))$ we describe the meromorphic families of intertwining operators between principal series as constructed in Section~\ref{sec:MultiplicityOnePairs} explicitly. Over $p$-adic fields such operators were previously constructed by Murase--Sugano~\cite{MS96} (see also the recent work by Neretin~\cite{Ner15} in the context of finite-dimensional representations) and the formulas for the distribution kernels turn out to be formally the same in the real case.

\subsection{Distribution kernels}

For $1\leq p\leq n+1$ and $1\leq q\leq n$, we define the following polynomial functions on $M((n+1)\times(n+1),\RR)$:
$$ \Phi_p(x) = \det((x_{ij})_{1\leq i,j\leq p}), \qquad \Psi_q(x) = \det((x_{ij})_{2\leq i\leq q+1,1\leq j\leq q}). $$
With the representative
$$ \tilde{w}_0 = \left(\begin{array}{ccc}&&1\\&\reflectbox{$\ddots$}&\\1&&\end{array}\right) $$
for the longest Weyl group element $w_0\in W(\fraka_G)$, we then consider the functions $g\mapsto\Phi_k(\tilde{w}_0g),\Psi_k(\tilde{w}_0g)$ on $G$. For $d=\diag(d_1,\ldots,d_{n+1})\in M_GA_G$ and $n\in N_G$ we have
\begin{align*}
 \Phi_k(\tilde{w}_0gdn) &= d_1\cdots d_k\Phi_k(\tilde{w}_0g), & \Psi_k(\tilde{w}_0gdn) &= d_1\cdots d_k\Psi_k(\tilde{w}_0g),
\intertext{and for $d=\diag(d_1,\ldots,d_n,1)\in M_HA_H$ and $n\in N_H$, additionally,}
 \Phi_k(\tilde{w}_0dng) &= d_{n-k+2}\cdots d_n\Phi_k(\tilde{w}_0g), & \Psi_k(\tilde{w}_0dng) &= d_{n-k+1}\cdots d_n\Psi_k(\tilde{w}_0g).
\end{align*}

We identify $\widehat{M}_G\simeq(\ZZ/2\ZZ)^{n+1}$ by mapping $\xi=(\xi_1,\ldots,\xi_{n+1})\in(\ZZ/2\ZZ)^{n+1}$ to the character
$$ M_G\to\{\pm1\}, \quad \diag(x_1,\ldots,x_{n+1})\mapsto\sgn(x_1)^{\xi_1}\cdots\sgn(x_{n+1})^{\xi_{n+1}}. $$
Similarly $\widehat{M}_H\simeq(\ZZ/2\ZZ)^n$. Further, we identify $\fraka_{G,\CC}^\vee\simeq\CC^{n+1}$ by $\lambda\mapsto(\lambda(E_{1,1}),\ldots,\lambda(E_{n+1,n+1}))$ and similarly $\fraka_{H,\CC}^\vee\simeq\CC^n$. Then for $\xi=(\xi_1,\ldots,\xi_{n+1})\in(\ZZ/2\ZZ)^{n+1}\simeq\widehat{M}_G$, $\eta=(\eta_1,\ldots,\eta_n)\in(\ZZ/2\ZZ)^n\simeq\widehat{M}_H$ and $\lambda\in\CC^{n+1}\simeq\fraka_{G,\CC}^\vee$, $\nu\in\CC^n\simeq\fraka_{H,\CC}^\vee$ we put
$$ K_{(\xi,\lambda),(\eta,\nu)}(g) = \Phi_1(\tilde{w}_0g)^{s_1,\delta_1}\cdots\Phi_{n+1}(\tilde{w}_0g)^{s_{n+1},\delta_{n+1}}\Psi_1(\tilde{w}_0g)^{t_1,\varepsilon_1}\cdots\Psi_n(\tilde{w}_0g)^{t_n,\varepsilon_n}, \qquad g\in G, $$
where
$$ s_i = \lambda_i-\nu_{n-i+1}-\tfrac{1}{2} \quad (1\leq i\leq n), \quad s_{n+1} = \lambda_{n+1}+\tfrac{n}{2}, \quad t_j = \nu_{n-j+1}-\lambda_{j+1}-\tfrac{1}{2} \quad (1\leq j\leq n) $$
and
$$ \delta_i = \xi_i-\eta_{n-i+1} \quad (1\leq i\leq n), \quad \delta_{n+1} = \xi_{n+1}, \quad \varepsilon_j = \eta_{n-j+1}-\xi_{j+1} \quad (1\leq j\leq n). $$
Here we have used the notation
$$ x^{s,\varepsilon} = \sgn(x)^\varepsilon|x|^s, \qquad x\in\RR^\times,s\in\CC,\varepsilon\in\ZZ/2\ZZ. $$
Then $K_{(\xi,\lambda),(\eta,\nu)}$ satisfies
$$ K_{(\xi,\lambda),(\eta,\nu)}(m'a'n'gman) = \xi(m)a^{\lambda-\rho_{\frakn_G}}\cdot\eta(m')(a')^{\nu+\rho_{\frakn_H}}\cdot K_{(\xi,\lambda),(\eta,\nu)}(g) $$
for $g\in G$, $man\in P_G$, $m'a'n'\in P_H$. Hence, the functions $K_{(\xi,\lambda),(\eta,\nu)}$ define a meromorphic family of intertwining operators $A_{(\xi,\lambda),(\eta,\nu)}:\pi_{\xi,\lambda}|_H\to\tau_{\eta,\nu}$ by
$$ A_{(\xi,\lambda),(\eta,\nu)}f(h) = \int_K K_{(\xi,\lambda),(\eta,\nu)}(h^{-1}k)f(k)\,dk, \qquad h\in H. $$

\begin{remark}
It is easy to see that the functions $g\mapsto\Phi_k(g),\Psi_k(g)$ are matrix coefficients for the irreducible finite-dimensional representation of $\GL(n+1,\RR)$ on $\bigwedge^k\RR^{n+1}$.
\end{remark}

\appendix

\section{Finite-dimensional branching rules for strong Gelfand pairs}

We list the explicit branching rules for some small representations of the strong Gelfand pairs $(\sl(n+1,\CC),\gl(n,\CC))$ and $(\so(n+1,\CC),\so(n,\CC))$. The classical branching rules for these pairs can be found e.g. in \cite[Chapter 8]{GW09}.

\subsection{$(\sl(n+1,\CC),\gl(n,\CC))$}\label{app:BranchingSLn}

We label the Dynkin diagrams of $\sl(n+1,\CC)$ and $\sl(n,\CC)$ as usual:

$$ \sl(n+1,\CC):\begin{xy}
\ar@{-} (0,0) *+!D{\alpha_1} *{\circ}="A"; 
 (10,0)  *+!D{\alpha_2} *{\circ}="B"
\ar@{-} "B";  (20,0)
\ar@{.} (20,0); (30,0) 
\ar@{-} (30,0); (40,0) *+!D{\alpha_{n-1}}  *{\circ}="C"
\ar@{-} "C"; (50,0) *+!D{\alpha_n}  *{\circ}="D"
\end{xy} $$

$$ \sl(n,\CC):\begin{xy}
\ar@{-} (0,0) *+!D{\beta_1} *{\circ}="A"; 
 (10,0)  *+!D{\beta_2} *{\circ}="B"
\ar@{-} "B";  (20,0)
\ar@{.} (20,0); (30,0) 
\ar@{-} (30,0); (40,0) *+!D{\beta_{n-1}}  *{\circ}="C"
\end{xy} $$

Realize the root system of $\sl(n+1,\CC)$ as $\{\pm(e_i-e_j):1\leq i<j\leq n+1\}$ in the vector space $V=\{x\in\RR^{n+1}:x_1+\cdots+x_{n+1}=0\}$. To simplify notation, denote by $\pi(x)$ the orthogonal projection of $x\in\RR^{n+1}$ to $V$. We choose the simple roots $\alpha_i=e_i-e_{i+1}$ ($1\leq i\leq n$) for $\sl(n+1,\CC)$ and the simple roots $\beta_i=\alpha_i$ ($i=1,\ldots,n-1$) for $\sl(n,\CC)$. Denote by $\varpi_1,\ldots,\varpi_n$ the corresponding fundamental weights for $\sl(n+1,\CC)$ and by $\zeta_1,\ldots,\zeta_{n-1}$ the fundamental weights for $\sl(n,\CC)$. Further put $\zeta_n:=\varpi_n$, then $\zeta_n$ describes a character of $\frakz(\gl(n,\CC))\simeq\CC$.

Consider the fundamental weight $\varpi_i=\pi(e_1+\cdots+e_i)$. From the classical branching laws we know that $F^\frakg(\varpi_i)|_\frakh$ decomposes into the direct sum of the two $\frakh$-representations with highest weights $\pi(e_1+\cdots+e_i)$ and $\pi(e_1+\cdots+e_{i-1}+e_{n+1})$. Using
$$ \varpi_i = \zeta_i+\tfrac{i}{n}\zeta_n, \qquad 1\leq i\leq n-1, $$
it follows that
$$ F^{\sl(n+1,\CC)}(\varpi_i)|_{\gl(n,\CC)} \simeq \begin{cases}\Big(F^{\sl(n,\CC)}(\zeta_1)\boxtimes F^\CC(\tfrac{1}{n}\zeta_n)\Big)\oplus\Big(F^{\sl(n,\CC)}(0)\boxtimes F^\CC(-\zeta_n)\Big)\hspace{.6cm}\mbox{for $i=1$,}\\\Big(F^{\sl(n,\CC)}(\zeta_i)\boxtimes F^\CC(\tfrac{i}{n}\zeta_n)\Big)\oplus\Big(F^{\sl(n,\CC)}(\zeta_{i-1})\boxtimes F^\CC(-\tfrac{n-i+1}{n}\zeta_n)\Big)\\\hspace{8cm}\mbox{for $2\leq i\leq n-1$,}\\\Big(F^{\sl(n,\CC)}(0)\boxtimes F^\CC(\zeta_n)\Big)\oplus\Big(F^{\sl(n,\CC)}(\zeta_{n-1})\boxtimes F^\CC(-\tfrac{1}{n}\zeta_n)\Big)\\\hspace{9.4cm}\mbox{for $i=n$.}\end{cases} $$

Now consider the fundamental weight $\varpi_i+\varpi_{n-i+1}=(e_1+\cdots+e_i)-(e_{n-i+2}+\cdots+e_{n+1})$ ($1\leq i\leq\frac{n}{2}$). From the classical branching laws we know that $F^\frakg(\varpi_i+\varpi_{n-i+1})|_\frakh$ decomposes into the direct sum of the four $\frakh$-representations with highest weights
\begin{align*}
 & (e_1+\cdots+e_i)-(e_{n-i+1}+\cdots+e_n), && (e_1+\cdots+e_i)-(e_{n-i+2}+\cdots+e_{n+1}),\\
 & (e_1+\cdots+e_{i-1})-(e_{n-i+1}+\cdots+e_n)+e_{n+1}, && (e_1+\cdots+e_{i-1})-(e_{n-i+2}+\cdots+e_n),
\end{align*}
so that
\begin{align*}
 & F^{\sl(n+1,\CC)}(\varpi_i+\varpi_{n-i+1})|_{\gl(n,\CC)}\\
 & \qquad\simeq \Big(F^{\sl(n,\CC)}(\zeta_i+\zeta_{n-i})\boxtimes F^\CC(0)\Big)\oplus\Big(F^{\sl(n,\CC)}(\zeta_i+\zeta_{n-i+1})\boxtimes F^\CC(\tfrac{n+1}{n}\zeta_n)\Big)\\
 & \qquad\qquad\oplus\Big(F^{\sl(n,\CC)}(\zeta_{i-1}+\zeta_{n-i})\boxtimes F^\CC(-\tfrac{n+1}{n}\zeta_n)\Big)\oplus\Big(F^{\sl(n,\CC)}(\zeta_{i-1}+\zeta_{n-i+1})\boxtimes F^\CC(0)\Big).
\end{align*}
Note that for $i=\frac{n}{2}$ with $n=2m$ even, the formula still holds and we have $\zeta_i+\zeta_{n-i}=2\zeta_m$. Similarly one obtains for $i=\frac{n+1}{2}$ with $n=2m-1$ odd,
\begin{align*}
 F^{\sl(n+1,\CC)}(2\varpi_m)|_{\gl(n,\CC)} \simeq{}& \Big(F^{\sl(n,\CC)}(2\zeta_m)\boxtimes F^\CC(\tfrac{n+1}{n}\zeta_n)\Big)\oplus\Big(F^{\sl(n,\CC)}(2\zeta_{m-1})\boxtimes F^\CC(-\tfrac{n+1}{2}\zeta_n)\Big)\\
 & \qquad\qquad\qquad\oplus\Big(F^{\sl(n,\CC)}(\zeta_{m-1}+\zeta_m)\boxtimes F^\CC(0)\Big).
\end{align*}

\subsection{$(\so(n+1,\CC),\so(n,\CC))$}\label{app:BranchingSOn}

We label the Dynkin diagrams of $\so(n+1,\CC)$ and $\so(n,\CC)$ as usual:

$$ \begin{tabular}{rcc}$\so(n+1,\CC):$&\begin{xy}
\ar@{-} (0,0) *++!D{\alpha_1} *{\circ}="A";
  (10,0) *++!D{\alpha_2}  *{\circ}="B"
\ar@{-} "B"; (20,0)
\ar@{.} (20,0); (30,0) 
\ar@{-} (30,0); (40,0) *++!D{\alpha_{m-1}}  *{\circ}="C"
\ar@{=>} "C"; (50,0) *++!D{\alpha_m}  *{\circ}="D"
\end{xy}&\begin{xy}
\ar@{-} (0,0) *+!D{\alpha_1} *{\circ}="A"; 
 (10,0)  *+!D{\alpha_2} *{\circ}="B"
\ar@{-} "B";  (20,0)
\ar@{.} (20,0); (30,0) 
\ar@{-} (30,0); (40,0) *+!DR{\alpha_{m-2}} *{\circ}="F"
\ar@{-} "F"; (45,8.6)  *+!L{\alpha_{m-1}} *{\circ}
\ar@{-} "F"; (45,-8.6)  *+!L{\alpha_m} *{\circ}
\end{xy}\\
$\so(n,\CC):$&\begin{xy}
\ar@{-} (0,0) *+!D{\beta_1} *{\circ}="A"; 
 (10,0)  *+!D{\beta_2} *{\circ}="B"
\ar@{-} "B";  (20,0)
\ar@{.} (20,0); (30,0) 
\ar@{-} (30,0); (40,0) *+!DR{\beta_{m-2}} *{\circ}="F"
\ar@{-} "F"; (45,8.6)  *+!L{\beta_{m-1}} *{\circ}
\ar@{-} "F"; (45,-8.6)  *+!L{\beta_m} *{\circ}
\end{xy}&\begin{xy}
\ar@{-} (0,0) *++!D{\beta_1} *{\circ}="A";
  (10,0) *++!D{\beta_2}  *{\circ}="B"
\ar@{-} "B"; (20,0)
\ar@{.} (20,0); (30,0) 
\ar@{-} (30,0); (40,0) *++!D{\beta_{m-2}}  *{\circ}="C"
\ar@{=>} "C"; (50,0) *++!D{\beta_{m-1}}  *{\circ}="D"
\end{xy}\\
&$(n=2m)$&$(n=2m-1)$\end{tabular} $$

From the classical branching rules it follows that, for $n=2m$ even,
\begin{align*}
 F^{\so(n+1,\CC)}(\varpi_i)|_{\so(n,\CC)} \simeq{}& F^{\so(n,\CC)}(\zeta_i)\oplus F^{\so(n,\CC)}(\zeta_{i-1}) \qquad\qquad\qquad (1\leq i\leq m-2),\\
 F^{\so(n+1,\CC)}(\varpi_{m-1})|_{\so(n,\CC)} \simeq{}& F^{\so(n,\CC)}(\zeta_{m-1}+\zeta_m)\oplus F^{\so(n,\CC)}(\zeta_{m-2}),\\
 F^{\so(n+1,\CC)}(\varpi_m)|_{\so(n,\CC)} \simeq{}& F^{\so(n,\CC)}(\zeta_m)\oplus F^{\so(n,\CC)}(\zeta_{m-1}),\\
 F^{\so(n+1,\CC)}(2\varpi_m)|_{\so(n,\CC)} \simeq{}& F^{\so(n,\CC)}(2\zeta_m)\oplus F^{\so(n,\CC)}(2\zeta_{m-1})\oplus F^{\so(n,\CC)}(\zeta_{m-1}+\zeta_m),
\end{align*}
and for $n=2m-1$ odd,
\begin{align*}
 F^{\so(n+1,\CC)}(\varpi_i)|_{\so(n,\CC)} &\simeq F^{\so(n,\CC)}(\zeta_i)\oplus F^{\so(n,\CC)}(\zeta_{i-1}) \qquad\qquad\qquad (1\leq i\leq m-2),\\
 F^{\so(n+1,\CC)}(\varpi_{m-1})|_{\so(n,\CC)} &\simeq F^{\so(n+1,\CC)}(\varpi_m)|_{\so(n,\CC)} \simeq F^{\so(n,\CC)}(\zeta_{m-1}),\\
 F^{\so(n+1,\CC)}(\varpi_{m-1}+\varpi_m)|_{\so(n,\CC)} &\simeq F^{\so(n,\CC)}(2\zeta_{m-1})\oplus F^{\so(n,\CC)}(\zeta_{m-2}).
\end{align*}

\providecommand{\bysame}{\leavevmode\hbox to3em{\hrulefill}\thinspace}
\providecommand{\href}[2]{#2}

%\bibliographystyle{amsplain}
%\bibliography{bibdb}

\end{document}